\documentclass{article}
\usepackage {amsmath, amsfonts, amssymb, mathrsfs, amsthm,stmaryrd, sectsty,hyphenat,url,accents,graphicx}
\usepackage[usenames]{color}
\usepackage{palatino}
\usepackage[all]{xy}
\usepackage[toc,page]{appendix}
\usepackage{enumitem}

\def\mc#1{\mathcal{#1}}

\def\tx#1{\textrm{#1}}

\def\R{\mathbb{R}}

\def\C{\mathbb{C}}
\def\Q{\mathbb{Q}}

\def\Z{\mathbb{Z}}

\def\lmod{\backslash}

\def\hat{\widehat}

\def\sm{\smallsetminus}

\def\<{\langle}
\def\>{\rangle}
\def\Wedge{\bigwedge}

\newenvironment{mytitle}
{\begin{center}\large\sc}
{\end{center}}

\newtheorem{thm}{Theorem}[subsection]
\newtheorem{lem}[thm]{Lemma}
\newtheorem{pro}[thm]{Proposition}
\newtheorem{cor}[thm]{Corollary}
\newtheorem{fct}[thm]{Fact}
\newtheorem{clm}[thm]{Claim}

\theoremstyle{definition}
\newtheorem{rem}[thm]{Remark}
\newtheorem{dfn}[thm]{Definition}

\sectionfont{\center\sc\normalsize}
\subsectionfont{\bf\normalsize}

\numberwithin{equation}{subsection}

\begin{document}

\begin{mytitle}
The Aubert and Bernstein involutions for disconnected groups
\end{mytitle}

\begin{center}
    Yongshen Cheng and Tasho Kaletha
\end{center}

\begin{abstract}
    We extend to arbitrary disconnected reductive $p$-adic groups the duality on the category of smooth finite-length complex representations defined by Aubert, as well as its cohomological analog defined by Bernstein, and prove various properties of these functors, such as uniqueness, preservation of irreducibility, compatibility with parabolic induction and restriction, and a character formula. As a sample application, we obtain a definition of the Steinberg representation for a disconnected reductive $p$-adic group, compute its character, and discuss its twisted endoscopic properties.
\end{abstract}

\tableofcontents

\section{Introduction}

Let $G$ be a connected reductive group over a non-archimedean local field $F$. Let $\mc{R}(G)$ be the category of finite-length smooth representations with complex coefficients of the group $G(F)$. In \cite[\S3]{Aub95} (see also \cite{Aub96}) a covariant functor $D_A^G : \mc{R}(G) \to \mc{R}(G)$ was defined; it is now known as ``Aubert duality'', or sometimes ``Aubert--Zelevinsky duality''. This functor has had many important applications to the representation theory of $p$-adic groups and to automorphic forms, for example \cite{MW89}, \cite[\S7]{Art13}, \cite{JiangLiu2025}, \cite{CMBO2024}.

A related, but contravariant, functor $D_B^G : \mc{R}(G) \to \mc{R}(G)$ was defined in \cite[\S5.1]{BernsteinPadicGroups}. Bernstein begins by considering the functor 
\[ \tx{RHom}_\mc{H}(-,\mc{H}) : \mc{D}^b(G) \to \mc{D}^b(G) \]
on the bounded derived category $\mc{D}^b(G)$ of smooth representations of $G(F)$, reinterpreted as modules over the ``big Hecke algebra''  $\mc{H}=\mc{C}^\infty_c(G(F))$ of $G$. He shows that on each Bernstein component of $\mc{R}(G) \subset \mc{D}^b(G)$ the output of this functor is concentrated in a single cohomological degree and after shifting by this degree this functor descends to an endofunctor $D_B^G$ of this Bernstein block.

The functors $D_A^G$ and $D_B^G$ satisfy the following properties. Let $P=MU \subset G$ be a parabolic subgroup and let $i_P^G$ and $r_P^G$ denote the normalized parabolic induction and restriction (i.e. Jacquet module) functors. Let $P^-$ be the unique parabolic subgroup that is $M$-opposite to $P$.

\begin{enumerate}
    \item $(D_A^G)^2 \cong \tx{id}$ and $(D_B^G)^2 \cong \tx{id}$.
    \item $D_A^G$ and $D_B^G$ commute with the contragredient functor $(-)^\vee$.
    \item $D_A^G \circ (-)^\vee \cong D_B^G$.
    \item $D_A^G \circ i_P^G  \cong i_{P^-}^G \circ D_A^M$ and $D_B^G \circ i_P^G \cong i_{P^-}^G\circ D_B^M$.
    \item $D_A^M \circ r_P^G \cong r_{P^-}^G \circ D_A^G$ and $D_B^M \circ r_P^G\cong r_{P}^G \circ D_B^G$.
    \item $D_A^G$ and $D_B^G$ preserve irreducibility.
    \item $D_A^G \cong \tx{id}$ and $D_B^G \cong (-)^\vee$  on each supercuspidal Bernstein block.
\end{enumerate}
Properties (1,4,5) are well-known for $D_B^G$. Property (3) is the principal result of \cite{BBK18}. Property (2) was not known until very recently, and is the principal result of \cite{YS26}. Using that, properties (1,4,5) for $D_A^G$ follow from those for $D_B^G$. Property (6) is \cite[Corollary 3.9(b)]{Aub95}, while property (7) is elementary. It turns out that properties (4,5,7) together with the (co- resp. contra-)variance of $D_A^G$ resp $D_B^G$ and a certain compatibility with Frobenius reciprocity uniquely characterize these functors, see Propositions \ref{pro:dguniq} and \ref{pro:dhuniq}.

In this paper we extend the functors $D_A^G$ and $D_B^G$ to the setting where $G$ is replaced by a ``disconnected reductive group'', by which we mean an affine algebraic $F$-group $\tilde G$ whose identity component $G=\tilde G^\circ$ is reductive. We make no assumptions about the component group $\pi_0(\tilde G)$, which is thus allowed to be non-cyclic and even non-abelian, although it is always finite due to the affineness of $\tilde G$. The topological group $\tilde G(F)$ is then again a locally compact, totally disconnected group, in fact a locally pro-$p$ group, where $p$ is the residual characteristic of $F$, so the category $\mc{R}(\tilde G)$ of finite-length smooth representations of $\tilde G(F)$ with complex coefficients is defined in the same way as for the connected group $G$. We define functors 
\[ D_A^{\tilde G},D_B^{\tilde G} : \mc{R}(\tilde G) \to \mc{R}(\tilde G), \]
the first covariant while the second contravariant, and show that they satisfy the analogs of many of the properties of $D_A^G$ and $D_B^G$.

The definition of the endofunctor $D_B^{\tilde G}$ is simpler than that of $D_A^{\tilde G}$ and we begin with it. Let $\mc{\tilde H}=\mc{C}^\infty_c(\tilde G(F))$ be the analog of the big Hecke algebra for the disconnected group $\tilde G$. We consider the endofunctor
\[ \tx{RHom}_\mc{\tilde H}(-,\mc{\tilde H}) : \mc{D}^b(\tilde G) \to \mc{D}^b(\tilde G) \]
on the bounded derived category $\mc{D}^b(\tilde G)$ of smooth representations of $\tilde G(F)$. We show that again on each Bernstein component of $\mc{R}(\tilde G) \subset \mc{D}^b(\tilde G)$ the output of this functor is concentrated in a single cohomological degree (in fact, the same degree as for $G$); hence after degree shift this functor descends to an endofunctor of this Bernstein block, which we denote by $D_B^{\tilde G}$.

The definition of the endofunctor $D_A^{\tilde G}$ is more subtle. It can be given in two equivalent ways, as well as a third way that produces a variant. The endofunctor $D_A^G$ of $\mc{R}(G)$ is defined in \cite[\S3]{Aub95} on each Bernstein block of $\mc{R}(G)$ by means of a certain exact complex
    \[ 0 \to Y_r^G \to Y_{r-1}^G \to \dots \to Y_t^G \to D_A^G \to 0 \]
of endofunctors $Y_i^G : \mc{R}(G) \to \mc{R}(G)$, where $t$ is a parameter depending on the block (the notation in \cite{Aub95} is $\tilde X_i$ in place of our notation $Y_i$). The first construction of $D_A^{\tilde G}$, given in Definition \ref{dfn:aubdisc1}, utilizes the same complex used in the definition of $D_A^G$, but endows it with an action of $\tilde G(F)$. More precisely, we show that for each $i$ the functor $Y_i^G \circ \tx{Res}^{\tilde G}_{G} : \mc{R}(\tilde G) \to \mc{R}(G)$ can be naturally lifted to an endofunctor $Y_i^{\tilde G} : \mc{R}(\tilde G) \to \mc{ R}(\tilde G)$, where $\tx{Res}^{\tilde G}_{G} : \mc{R}(\tilde G) \to \mc{R}(G)$ denotes the restriction functor, and that we again obtain an exact complex 
\[ 0 \to Y_r^{\tilde G} \to Y_{r-1}^{\tilde G} \to \dots \to Y_t^{\tilde G} \to D_A^{\tilde G} \to 0 \]
on each Bernstein block of $\mc{R}(\tilde G)$. This definition has the advantage of making some properties of $D_A^{\tilde G}$ easy to show (in particular compatibility with restriction and preservation of irreducibility), but has the disadvantage that the $\tilde G(F)$-action is a bit cumbersome to define.

A second definition, or rather a reinterpretation of the above definition, given in Corollary \ref{cor:dgdisc-ri},  uses a natural generalization $'Y_i^{\tilde G}$ of $Y_i^G$ to the disconnected setting, based on the concept of parabolic subgroups and parabolic induction and restriction in the disconnected setting. It seems more natural conceptually and the $\tilde G(F)$-action is more transparent. On the flip side, the compatibility with restriction to $G(F)$ and the preservation of irreducibility are less clear from this point of view.

A third definition, which produces a variant $'D_A^{\tilde G}$ of the functor $D_A^{\tilde G}$, is obtained as follows. In their work \cite{BoSe76} on the cohomology of the spherical building of $G$, Borel and Serre introduce a certain cochain complex that a priori appears different from the complex $Y$ mentioned above. In fact, their complex is only defined for $\Z$-modules rather than $G(F)$-representations, but is easily adapted to the latter situation. It can be shown that their complex is isomorphic to the complex $Y$, but the isomorphism involves a subtle sign. However, if one tries to equip the Borel--Serre complex with an action of $\tilde G(F)$ in a natural way, one is led to a construction that produces a different outcome than that for $Y$. It turns out that the two outcomes differ from each other by a twist by a certain interesting sign character on $\tilde G(F)/G(F)$. This produces a variant $'D_A^{\tilde G}$ of the functor $D_A^{\tilde G}$.

The properties of $D_A^{\tilde G}$ and $D_B^{\tilde G}$ that we prove in this paper are as follows. Let $\tilde P=\tilde M U \subset \tilde G$ be a parabolic subgroup of $\tilde G$, a notion reviewed in \S\ref{sub:pardisc}. Let $\tilde P^-$ be the $\tilde M$-opposite parabolic subgroup, as in Definition \ref{dfn:paropdisc}. We show that

\begin{enumerate}
    \item $(D_A^{\tilde G})^2 \cong \tx{id}$ and $(D_B^{\tilde G})^2 \cong \tx{id}$.
    \item $D_A^{\tilde G}$ and $D_B^{\tilde G}$ commute with the contragredient functor $(-)^\vee$.
    \item $D_A^{\tilde G} \circ (-)^\vee \cong D_B^{\tilde G}$.
    \item $D_A^{\tilde G} \circ i_{\tilde P}^{\tilde G} \cong i_{\tilde P^-}^{\tilde G} \circ D_A^{\tilde M}$ and $D_B^{\tilde G} \circ i_{\tilde P}^{\tilde G} \cong i_{\tilde P^-}^{\tilde G} \circ D_B^{\tilde M}$.
    \item $D_A^{\tilde M} \circ r_{\tilde P}^{\tilde G} \cong r_{\tilde P^-}^{\tilde G} \circ D_A^{\tilde G}$ and $D_B^{\tilde M} \circ r_{\tilde P}^{\tilde G} \cong r_{\tilde P}^{\tilde G} \circ D_B^{\tilde G}$.
    \item $D_A^{\tilde G}$ and $D_B^{\tilde G}$ preserve irreducibility.
    \item $D_A^{\tilde G} \cong \tx{id}$ and $D_B^{\tilde G} \cong (-)^\vee$ on each supercuspidal Bernstein block.
    \item $D_A^{\tilde G}$ extends $D_A^G$ in the sense that $\tx{Res}^{\tilde G}_G \circ D_A^{\tilde G}$ is equivalent to $D_A^G \circ \tx{Res}^{\tilde G}_{G}$. The same holds for $D_B^{\tilde G}$.
\end{enumerate}
Properties (6-8) for $D_A^{\tilde G}$ are proved directly. Many of the other properties are first proved for $D_B^{\tilde G}$ by arguments similar to the case of connected groups and then transferred to $D_A^{\tilde G}$ via properties (2,3), following the strategy of \cite{YS26}.  For the key property (3) the argument is different from that of the connected case, and goes as follows. Using the fact that $D^{\tilde G}_B$, $D^{\tilde G}_A$ and $(-)^\vee$, when restricting to $G(F)$, coincide with $D^G_B$, $D^G_A$ and $(-)^\vee$ respectively, we obtain for any $\tilde \pi\in \mc{R}(\tilde G)$ a functorial isomorphism $\tx{Res}^{\tilde G}_G(D_A^{\tilde G}(\tilde \pi^\vee))\to \tx{Res}^{\tilde G}_G(D_B^{\tilde G}(\tilde \pi))$. Using the equivariance properties for $D_A^{\tilde G}$ and $D_B^{\tilde G}$ discussed in \S\ref{sub:dgequiv} we then show that this isomorphism comes from an isomorphism $D_A^{\tilde G}(\tilde \pi^\vee)\to D_B^{\tilde G}(\tilde \pi)$.

As in the connected case, Properties (4,5,7) together with the (co- resp. contra-) variance of $D_A^{\tilde G}$ resp. $D_B^{\tilde G}$ and a certain compatibility with Frobenius reciprocity characterizes these functors uniquely, see Propositions \ref{pro:dguniqdisc} and \ref{pro:dhuniqdisc}.

We now give a brief overview of the contents of the paper. In \S\ref{sec:aub} we review the construction of the functors $D_A^G$ and $D_B^G$ for a connected reductive group $G$. In particular, in \S\ref{sub:defcomp} we review the complex $Y$, in \S\ref{sub:aubconn} we review $D_A^G$, in \S\ref{sub:bercon} we review $D_B^G$, in \S\ref{sub:dgequiv} we establish the equivariance properties of the complex $Y$ in order to prepare for the definition of $D_A^{\tilde G}$. In \S\ref{sub:dguniq} we discuss the uniqueness of $D_A^G$ and $D_B^G$. In \S\ref{sub:bs} the Borel--Serre complex is introduced and compared with the complex of \S\ref{sub:aubconn}.

In \S\ref{sec:aubdisc} we define and study the functors $D_A^{\tilde G}$ and $D_B^{\tilde G}$. Besides proving the above properties, we also derive in \S\ref{sub:cf} a character formula for $D_A^{\tilde G}(\tilde \pi)$ for any $\tilde \pi \in \mc{R}(\tilde G)$. This formula is equivalent to a formula for the action of $D_A^{\tilde G}$ on the Grothendieck group of $\mc{R}(\tilde G)$. We use it to compare $D_A^{\tilde G}$ with an involution defined on the space of twisted characters of certain twisted classical groups defined by Bin Xu in \cite{Xu17Canad}. In \S\ref{sub:sign} we introduce the sign character $\epsilon_{\tilde G}$ and the variant $'D_A^{\tilde G}$ of $D_A^{\tilde G}$ that is based on the Borel--Serre complex and show that $'D_A^{\tilde G}=D_A^{\tilde G} \otimes \epsilon_{\tilde G}$. It turns out that $\epsilon_{\tilde G}$ is relevant in the applications of $D_A^{\tilde G}$ to the study of endoscopy. As already mentioned above, besides the initial definition of $D_A^{\tilde G}$ given in \S\ref{sub:defdgdis}, which uses the complex that defines $D_A^G$, we give a reinterpretation of $D_A^{\tilde G}$ in \S\ref{sub:rp} using a different complex that is built from parabolic induction and restriction relative to disconnected versions of parabolic subgroups. These subgroups are defined and studied in \S\ref{sub:pardisc}. This material is not new and has already appeared in various forms in many different places, for example \cite[\S2.D]{BDR17} and \cite[\S2.3]{DillerySchwein23}, and the ideas go all the way back to \cite{Iwahori65}. Due to the various forms in which this material appears in the literature we find it useful to present it here in the form that we need. Using that, we define generalizations to the disconnected setting of parabolic induction and restriction in \S\ref{sub:parindresdisc}, which are then used in \S\ref{sub:rp}.

In \S\ref{sec:stein} we apply the general discussion of $D_A^{\tilde G}$ to obtain an extension of the Steinberg representation of $G(F)$ to $\tilde G(F)$. In fact, it turns out that $'D_A^{\tilde G}$ is more appropriate for this. We then discuss the character of this representation as well as its properties with respect to twisted endoscopy. This is the material that originally motivated the present paper. It brought forth the sign $\epsilon_{\tilde G}$ which can be seen as a component of the analog for disconnected groups of the Kottwitz sign that was originally defined in \cite{Kot83} for connected groups.

\textbf{Acknowledgements:} This paper was initially conceived in 2021 as a canonical extension of the Steinberg representation to disconnected groups, motivated by the study of twisted versions of the Kottwitz sign \cite{Kot83} that appear in the Langlands conjectures for disconnected groups, as put forth in \cite{KalLLCD}. We thank Bernhard Mühlherr for answering some questions about an analogous construction over finite fields given in \cite{MS95}. The Steinberg construction was extended to the duality functor $D_A^{\tilde G}$ for all disconnected groups $\tilde G$ in 2023, motivated by applications to automorphic forms, and a brief summary of the results in the special case of $\tilde G = G \rtimes \<\theta\>$ for a connected reductive quasi-split $F$-group $G$ and a pinned involution $\theta$ appears in \cite[Appendix B]{AGIKMS}, because they are used in some arguments of that paper. We thank David Hansen for pointing out that, contrary to what was stated in an earlier version of \cite{AGIKMS}, the property $(D_A^G)^2 \cong \tx{id}$ was not known at the time, even for connected $G$. This prompted the work \cite{YS26}, which proves the property $(D_A^G)^2 \cong \tx{id}$ for connected $G$, and motivated us to finally finish this paper.

\section{Review of the Aubert involution for connected groups} \label{sec:aub}

Let $F$ be a non-archimedean local field and let $G$ be a connected reductive $F$-group. Let $\mc{R}(G)$ denote the category of smooth representations of $G(F)$ of finite length. In this section we recall the definition of the Aubert duality functor $D_A^G : \mc{R}(G) \to \mc{R}(G)$ defined in \cite[\S3]{Aub95} (see also \cite{Aub96}).

For $\pi \in \mc{R}(G)$ we denote by $V_\pi$ the underlying vector space, i.e. the image of $\pi$ under the forgetful functor $\mc{R}(G) \to \tx{Vect}_\C$.

\subsection{Adjoint functors}

Let $\mc{C},\mc{D}$ be categories. Recall that two covariant functors $L : \mc{D} \leftrightarrow \mc{C} : R$ are called \emph{adjoint} if there exist mutually inverse bijections
\[ \Phi_{Y,X}  : \tx{Hom}_\mc{C}(LY,X) \leftrightarrow \tx{Hom}_\mc{D}(Y,RX) : \Psi_{Y,X} \]
that are natural in $X \in \mc{C},Y \in \mc{D}$. Equivalently, there exist natural transformations
\[ \eta : \tx{id}_\mc{D} \to RL, \qquad \epsilon  : LR \to \tx{id}_\mc{C}, \]
such that the compositions $(\epsilon L) \circ (L\eta) : L \to L$ and $(R \epsilon)\circ(\eta R) : R \to R$ are the identity natural transformations of $L$ and $R$, respectively.

The equivalence of the two formulations is realized by the following formulas: $\Phi_{Y,X}(f)=R(f) \circ \eta_Y$, $\Psi_{Y,X}(g) = \epsilon_X \circ L(g)$, $\epsilon_X = \Psi_{RX,X}(\tx{id}_{RX})$, $\eta_Y = \Phi_{Y,LY}(\tx{id}_{LY})$.

\subsection{Parabolic induction and restriction}

Let $P \subset G$ be a parabolic subgroup, $U \subset P$ its unipotent radical, and $M=P/U$ its reductive quotient. The modulus character $\delta_P : P(F) \to \R_{>0}$, defined by $\int_{P(F)}f(px)dp = \delta_P(x)\int_{P(F)}f(p)dp$ with respect to any left-invariant Haar measure $dp$ on $P(F)$, is trivial on $U(F)$ and descends to $P(F)/U(F)=M(F)$.

We have the unnormalized parabolic induction functor
\[ I_P^G : \mc{R}(M) \to \mc{R}(G),\qquad \sigma \mapsto \pi, \]
where
\[ V_\pi = \{f : G(F) \to V_\sigma| f(pg) = \sigma(p)f(g) \} \]
and $(\pi(x)f)(g) = f(gx)$ for $x,g \in G(F)$, as well as its normalized analogue
\[ i_P^G(\sigma) = I_P^G(\sigma \otimes \delta_P^{1/2}).\]
We further have the un-normalized parabolic restriction (also called the Jacquet) functor 
\[ R_P^G : \mc{R}(G) \to \mc{R}(M),\qquad \pi \mapsto \sigma, \]
where $V_\sigma = (V_\pi)_{U(F)} = V_\pi/\<\pi(u)v-v|u \in U(F),v \in V_\pi\>$ and $\sigma(m)v=\pi(p)v$ for $v \in V_\pi$ and $p \in P(F)$ with image $m \in M(F)$, and its normalized analogue
\[ r_P^G(\pi) = R_P^G(\pi) \otimes \delta_P^{-1/2}|_{M(F)}. \]
The pairs of functors $(r_P^G,i_P^G)$ and $(R_P^G,I_P^G)$ are adjoint; more precisely, the map
\[ \Psi_{\pi,\sigma} : \tx{Hom}_{G(F)}(\pi,i_P^G(\sigma)) \to \tx{Hom}_{M(F)}(r_P^G(\pi),\sigma),\qquad \alpha \mapsto \tx{ev}_1 \circ \alpha, \]
is an isomorphism functorial in $\pi$ and $\sigma$, where $\tx{ev}_1 : i_P^G(\sigma) \to \sigma$ is the morphism that evaluates elements of $i_P^G(\sigma)$ at $1 \in G(F)$. The inverse $\Phi_{\pi,\sigma}$ of $\Psi_{\pi,\sigma}$ sends $\beta : r_P^G(\pi) \to \sigma$ to $\alpha : \pi \to i_P^G(\sigma)$ given by $\alpha(v)(g)=\beta(gv)$ for $v \in V_\pi$ and $g \in G(F)$. The same holds for the pair $(R_P^G,I_P^G)$.

We have the obvious identity 
\begin{equation} \label{eq:xnorm}
    i_P^G \circ r_P^G = I_P^G \circ R_P^G.
\end{equation}
The two adjunction morphisms
\begin{equation} \label{eq:irpg_adj}
\epsilon_P^G : r_P^G \circ i_P^G \to \tx{id}_{\mc{R}(M)}\qquad \tx{and} \qquad \eta_P^G : \tx{id}_{\mc{R}(G)} \to i_P^G \circ r_P^G
\end{equation}
are given as follows. For $\sigma \in \mc{R}(M)$ the morphism $\epsilon_P^G(\sigma) : r_P^G(i_P^G(\sigma)) \to \sigma$ sends an element of its domain, which is an equivalence class of functions $f : G(F) \to V_\sigma$, to $f(1) \in V_\sigma$, while for $\pi \in \mc{R}(G)$ the morphism $\eta_P^G(\pi) : \pi \to i_P^G(r_P^G(\pi))$
sends $w \in V_\pi$ to the function $f : G(F) \to V_\pi/\<\pi(u)v-v|u \in U(F),v \in V_\pi\>$ defined as $f(g)=\pi(g)w$.

There are induction and restriction in stages isomorphisms given as follows. For parabolic subgroups $P \subset Q \subset G$ with unipotent radicals $U \subset P$ and $V \subset Q$ and Levi quotients $M=P/U$ and $L=Q/V$, we have $V \subset U$ and $P/V \subset Q/V=L$ is a parabolic subgroup of $L$ with unipotent radical $U/V$ and Levi quotient $(P/V)/(U/V)=P/U=M$. We have the mutually inverse, functorial isomorphisms
\begin{equation} \label{eq:ipg_stages}
i_P^G\sigma \leftrightarrow i_Q^Gi_{P/V}^L \sigma
\end{equation}
relating an element $f : G \to V_\sigma$ of $i_P^G\sigma$ and an element $f' : G \times L \to V_\sigma$ of $i_Q^Gi_{P/V}^L\sigma$ by $f(g)=f'(g,1)$ and $f'(g,l)=\delta_Q^{-1/2}(l)f(qg)$, where $g \in G(F)$ and $q \in Q(F)$ with image $l \in L(F)$. Moreover, we have the mutually inverse isomorphisms
\begin{equation} \label{eq:rpg_stages}
r_P^G\pi \leftrightarrow r_{P/V}^L r_Q^G\pi
\end{equation}
given by the natural identification $(V_\pi)_{U(F)}=((V_\pi)_{V(F)})_{(U/V)(F)}$.

The functors $i_P^G$ and $r_P^G$ respect automorphisms of $G$ as follows. If $a$ is an $F$-automorphism of $G$ and $\pi$ is a representation of $G$, write $a\pi$ for the representation with underlying vector space $V_\pi$ and action of $G(F)$ given by $a\pi = \pi \circ a^{-1}$. Then the vector spaces of $r_P^G\pi$ and $r_{aP}^G(a\pi)$ are naturally identified and this identification induces an isomorphism of representations
\begin{equation} \label{eq:rpg_auto}
a(r_P^G(\pi)) \to r_{aP}^G(a\pi),
\end{equation}
functorial in $\pi$. For a representation $\sigma$ of $M(F)$ write $a\sigma = \sigma \circ a^{-1}$ for the representation of $aM(F)$ on the same underlying vector space $V_\sigma$. Then
\begin{equation} \label{eq:ipg_auto}
a(i_P^G\sigma) \to i_{aP}^G(a\sigma), f \mapsto f \circ a^{-1}
\end{equation}
is an isomorphism of representations of $G(F)$, functorial in $\sigma$.

The following result is \cite[Theorem 2.9, Remark 2.10]{BZ77}, where it is stated for irreducible $\sigma$, but immediately generalizes to finite-length $\sigma$ by the exactness of $i_P^G$.
\begin{thm}[Bernstein-Zelevinsky] \label{thm:bz77.2.10}
    Let $P_1,P_2 \subset G$ be two parabolic subgroups of $G$ with a common Levi factor $M$. For any finite-length $\sigma \in \mc{R}(M)$, the set of irreducible subquotients of $i_{P_1}^G(\sigma)$ is the same as that of $i_{P_2}^G(\sigma)$.
\end{thm}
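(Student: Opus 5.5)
The statement is attributed to \cite[Theorem 2.9, Remark 2.10]{BZ77} for irreducible $\sigma$. The only thing to supply is the passage to finite-length $\sigma$. I also indicate how the irreducible case itself is proved, in case one wants it self-contained.

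\emph{Reduction to irreducible $\sigma$.} I would argue by induction on the length of $\sigma$. If $0 \to \sigma' \to \sigma \to \sigma'' \to 0$ is exact with $\sigma',\sigma''$ of smaller length, then exactness of $i_{P_j}^G$ gives exact sequences
\[ 0 \to i_{P_j}^G\sigma' \to i_{P_j}^G\sigma \to i_{P_j}^G\sigma'' \to 0, \qquad j=1,2. \]
The irreducible subquotients of $i_{P_j}^G\sigma$ are exactly those of $i_{P_j}^G\sigma'$ together with those of $i_{P_j}^G\sigma''$. This uses Jordan--Hölder, which applies because $i_{P_j}^G$ preserves finite length. By the induction hypothesis these two sets are independent of $j$, so the same holds for $\sigma$. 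In fact one even gets equality of semisimplifications, provided the irreducible case is known in that stronger form.

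\emph{The irreducible case.} Here I would follow Bernstein--Zelevinsky's approach via characters, or equivalently the Grothendieck group. The goal is to show $[i_{P_1}^G\sigma]=[i_{P_2}^G\sigma]$ in the Grothendieck group of $\mc{R}(G)$. Since irreducible representations have linearly independent distribution characters, it suffices to compare the characters $\Theta_{i_{P_j}^G\sigma}$. Van Dijk's formula expresses the character of a parabolically induced representation, on regular semisimple elements, as a sum over $G(F)$-conjugates into $M(F)$ of $\Theta_\sigma$, weighted by $|D_G/D_M|^{-1/2}$. This expression depends only on $M$ and not on the unipotent radical. Since regular semisimple elements are dense and the characters are locally integrable functions (Harish-Chandra), the two characters coincide.

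The main obstacle is therefore the character-theoretic input: van Dijk's formula together with Harish-Chandra's local integrability in arbitrary characteristic. Both are well established. Alternatively one could follow BZ's geometric lemma, which compares the Jacquet modules $r_{Q}^G i_{P_j}^G\sigma$ and shows their semisimplifications agree. Combining this with an induction over cuspidal supports, one would conclude via the fact that an irreducible representation is determined in the Grothendieck group by its Jacquet modules up to cuspidal data.
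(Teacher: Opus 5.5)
Your proposal takes the same approach as the paper: the paper cites \cite[Theorem 2.9, Remark 2.10]{BZ77} for irreducible $\sigma$ and extends to finite length by exactness of $i_P^G$, which is exactly your induction on length. Your optional character-theoretic sketch of the irreducible case is not needed, and its appeal to Harish-Chandra's local integrability is only known in full generality in characteristic zero, whereas the paper allows $F$ of positive characteristic and the Bernstein--Zelevinsky Jacquet-module argument is characteristic-free.
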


The above theorem provides an alternative proof of the following claim, which is \cite[Lemma 5.4(iii)]{BDK86}.
\begin{cor}
    Let $P_1,P_2 \subset G$ be parabolic subgroups with Levi factors $M_1,M_2$ and let $g \in G(F)$ be such that $gM_1g^{-1}=M_2$. For any finite-length $\sigma \in \mc{R}(M_1)$ the irreducible subquotients of $i_{P_1}^G(\sigma)$ and $i_{P_2}^G(\sigma \circ \tx{Ad}(g)^{-1})$ agree.
\end{cor}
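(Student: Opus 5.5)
The plan is to reduce to Theorem \ref{thm:bz77.2.10} by transporting the second induction along the inner automorphism $a=\tx{Ad}(g)$ of $G$.

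\emph{Step 1: transport by $\tx{Ad}(g)$.} Apply the isomorphism \eqref{eq:ipg_auto} with $a=\tx{Ad}(g)$ and $P=P_1$. It gives
\[ a(i_{P_1}^G\sigma) \cong i_{gP_1g^{-1}}^G(\sigma\circ\tx{Ad}(g)^{-1}). \]
Here $\sigma\circ\tx{Ad}(g)^{-1}$ is a representation of $gM_1g^{-1}=M_2$. The group $gP_1g^{-1}$ is a parabolic subgroup of $G$ with Levi factor $M_2$.

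\emph{Step 2: an inner twist does not change the representation.} The representation $a(i_{P_1}^G\sigma)=i_{P_1}^G\sigma\circ\tx{Ad}(g)^{-1}$ is isomorphic to $i_{P_1}^G\sigma$ itself, via the operator $f\mapsto \pi(g)f$. Indeed, $\pi(g)\pi(g^{-1}xg)=\pi(x)\pi(g)$. Hence $i_{P_1}^G(\sigma)$ and $i_{gP_1g^{-1}}^G(\sigma\circ\tx{Ad}(g)^{-1})$ are isomorphic. In particular they have the same irreducible subquotients.

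\emph{Step 3: compare parabolics with a common Levi.} The parabolic subgroups $gP_1g^{-1}$ and $P_2$ share the Levi factor $M_2$. Theorem \ref{thm:bz77.2.10}, applied to the finite-length representation $\sigma\circ\tx{Ad}(g)^{-1}$ of $M_2(F)$, shows that
\[ i_{gP_1g^{-1}}^G(\sigma\circ\tx{Ad}(g)^{-1}) \quad\tx{and}\quad i_{P_2}^G(\sigma\circ\tx{Ad}(g)^{-1}) \]
have the same set of irreducible subquotients. Combining this with Step 2 gives the claim.

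The only step needing care is the bookkeeping with Levi factors versus Levi quotients. The phrase ``$gM_1g^{-1}=M_2$'' should be read with $M_i$ a Levi subgroup of $P_i$. Then conjugation carries $P_1$ to a parabolic subgroup having $M_2$ as a Levi subgroup, and the modulus characters match under conjugation. This matching is already built into \eqref{eq:ipg_auto}, so nothing further is needed.
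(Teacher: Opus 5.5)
Your proof is correct and is the paper's argument: apply \eqref{eq:ipg_auto} with $a=\tx{Ad}(g)$, use that the inner twist $a(i_{P_1}^G\sigma)$ is isomorphic to $i_{P_1}^G\sigma$ (via the intertwiner $\pi(g)$ that you check explicitly), and then apply Theorem \ref{thm:bz77.2.10} to $gP_1g^{-1}$ and $P_2$, which share the Levi factor $M_2$.
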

\begin{proof}
    Apply \eqref{eq:ipg_auto} with $a=\tx{Ad}(g)$, use that $a(i_P^G\sigma) \cong i_P^G\sigma$, and apply Theorem \ref{thm:bz77.2.10}.
\end{proof}

\subsection{Definition of a complex} \label{sub:defcomp}

In this section we will recall the complex considered in \cite[\S3]{Aub96}. It is functorially associated to an object of $\mc{R}(G)$ and its terms are of the form
\[ X_P=i_P^Gr_P^G = I_P^GR_P^G, \]
where $P$ is a parabolic subgroup of $G$, see \eqref{eq:xnorm}.

The differentials of the complex are based on the following construction of a functorial homomorphism 
\begin{equation} \label{eq:phipq}
    \varphi_P^Q : X_Q \to X_P
\end{equation}
for a pair of parabolic subgroups $P \subset Q \subset G$. Using the induction and restriction in stages isomorphisms \eqref{eq:ipg_stages} and \eqref{eq:rpg_stages} we write $X_P=i_Q^Gr_P^G=i_Q^Gi_{P/V}^Lr_{P/V}^Lr_Q^G$, where $V$ is the unipotent radical of $Q$. Composing the adjunction map $\eta_{P/V}^L : \tx{id}_{\mc{R}(L)} \to i_{P/V}^Lr_{P/V}^L$ (cf. \eqref{eq:irpg_adj}) with $i_Q^G$ on the left and $r_Q^G$ on the right we obtain the desired functorial homomorphism $\varphi^Q_P : X_Q \to X_P$. It has the following simple explicit description: elements of $X_P$ are functions $G \to (V_\pi)_U$, while elements of $X_Q$ are functions $G \to (V_\pi)_V$, since $V \subset U$, there is a natural projection map $(V_\pi)_V \to (V_\pi)_U$, and $\varphi^Q_P$ acts by composing a function $G \to (V_\pi)_V$ with the projection map $(V_\pi)_V \to (V_\pi)_U$.

Fix a minimal parabolic subgroup $P_0 \subset G$ and a maximal split torus $A_0 \subset P_0$, so that $M_0=\tx{Cent}(A_0,G)$ is a Levi factor of $P_0$, and let $S \subset X^*(A_0)$ be the corresponding set of (relative) simple roots. Let $r=|S|=\tx{dim}(A_0/A_G)$. Each $G(F)$-conjugacy class of $F$-parabolic subgroups contains a unique element that contains $P_0$. There is a bijection $J \leftrightarrow P_J$ between the set of subsets of $S$ and the set of $F$-parabolic subgroups of $G$ containing $P_0$, where the Lie algebra of $P_J$ is the sum of $A_0$-weight spaces for all weights that are integral linear combinations of $S$ with non-negative contributions of $S \sm J$. For $I \subset J$ we have $P_I \subset P_J$. 

Write $X_J=X_{P_J}$ for short. For $I \subset J$ we have the functorial homomorphism $\varphi^J_I=\varphi^{P_J}_{P_I} : X_J \to X_I$ of \eqref{eq:phipq}. In order to organize these terms into a complex certain signs are needed, that we now recall.

For $J \subset S$ define the 1-dimensional complex vector space 
\[ \Lambda_J = \Wedge^{|S-J|}(\C^{S-J}). \]
For $I \subset J \subset S$ with $|J|=|I|+1$ let $J \sm I=\{i\}$ and define the isomorphism 
\[ \epsilon^J_I :  \Lambda_J \to \Lambda_I,\qquad \omega \mapsto \omega \wedge e_i. \]
Define $Y_J : \mc{R}(G) \to \mc{R}(G)$ as the composition of $X_J$ (applied first) and tensoring over $\C$ with $\Lambda_J$, considered as a trivial $1$-dimensional representation of $G(F)$ (applied second), thus, for $\pi \in \mc{R}(G)$, 
\[ Y_J(\pi) = X_J(\pi) \otimes_\C \Wedge^{|S-J|}(\C^{S-J}). \] 
Define the morphism of functors $\psi^J_I : Y_J \to Y_I$ as $\psi^J_I = \varphi^J_I \otimes \epsilon^J_I$.

\begin{rem} \label{rem:sign}
    If we introduce a total order on $S$, then this specifies an isomorphism $\Lambda_J \to \C$ of $\C$-vector spaces, and hence an isomorphism of functors $Y_J \to X_J$, under which $\psi^J_I$ is identified with $\varphi^J_I$ multiplied by a certain sign. This is the sign modification of $\varphi^J_I$ that is necessary to organize the functors $X_J$ into a complex. We will look into this in more detail in \S\ref{sub:bs}.
\end{rem}

\begin{rem}
    We have slightly changed the notation of \cite{Aub95} and are using $Y$ in place of $\tilde X$ and $\psi$ in place of $\tilde\varphi$, because we will use the tilde symbol systematically for disconnected groups and associated objects.
\end{rem}

For $0 \leq t \leq r$ define the functor $Y_t : \mc{R}(G) \to \mc{R}(G)$ by
\[ Y_t = \bigoplus_{\substack{J \subset S\\ |J|=t}} Y_J \]
and for $1 \leq t \leq r$ the morphism of functors $d_t : Y_t \to Y_{t-1}$ as the sum of the maps $\psi^J_I$ for all $I \subset J$ with $|J|=t$ and $|I|=t-1$. More precisely, if $y_t \in Y_t(\pi) = \bigoplus_{|J|=t} Y_J(\pi)$ and we write $y_t$ as $\sum_J y_t(J)$ with $y_t(J) \in Y_J(\pi)$, then for $I \subset S$ with $|I|=t-1$ we have
\[ d_t(y_t)(I) = \sum_{\substack{I \subset J \subset S\\ |J|=t}} \psi^J_I(y_t(J)). \]
We thus obtain a (not necessarily exact) sequence of functors $\mc{R}(G) \to \mc{R}(G)$
\[ 0 \to Y_r \to Y_{r-1} \to \dots \to Y_0 \to 0, \]
in particular for each $\pi \in \mc{R}(G)$ a (not necessarily exact) sequence of $G(F)$-representations
\[ 0 \to Y_r(\pi) \to Y_{r-1}(\pi) \to \dots \to Y_0(\pi) \to 0. \]
Note that we have the natural identifications $Y_r=X_r=\tx{id}_{\mc{R}(G)}$, i.e. $Y_r(\pi)=X_r(\pi)=\pi$. 

\subsection{The Aubert involution for connected groups} \label{sub:aubconn}

\begin{dfn}
For $0 \leq t \leq r$ consider the full subcategory $\mc{R}_t(G)$ of $\mc{R}(G)$ consisting of the smooth representations $\pi$ with the following property. For every irreducible subquotient $\sigma$ of $\pi$ there exists $I \subset S$ with $|I|=t$ and an irreducible supercuspidal representation $\tau$ of $M_I(F)$ so that $\sigma$ is a subquotient of $i_{P_I}^G(\tau)$. 
\end{dfn}

\begin{rem}
Bernstein's decomposition implies the orthogonal decomposition
\[ \mc{R}(G) = \prod_{t=0}^r \mc{R}_t(G). \]    
The factor $\mc{R}_0(G)$ is the product of principal series Bernstein components, including the Iwahori-spherical representations and other principal series representations. The factor $\mc{R}_r(G)$ consists of those representations all of whose subquotients are supercuspidal, and every supercuspidal representation lies in $\mc{R}_r(G)$.
\end{rem}

\begin{thm}[Theorem 3.6 of \cite{Aub95}, \cite{Aub96}] \label{thm:aubmain}
    The functors $Y_0,\dots,Y_{t-1}$ vanish on $\mc{R}_t(G)$, and the sequence
    \[ 0 \to Y_r \to Y_{r-1} \to \dots \to Y_t \]
    is exact when restricted to $\mc{R}_t(G)$.
\end{thm}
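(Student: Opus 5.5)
Our plan is to treat vanishing and exactness separately, using the geometric lemma and, for exactness, a reduction to a combinatorial Koszul/simplex complex built from the Jacquet module filtration.

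\emph{Vanishing.} Let $\pi \in \mc{R}_t(G)$ and $|J|<t$. Since $r_{P_J}^G$ is exact and $X_J=i_{P_J}^G r_{P_J}^G$, it suffices to prove $r_{P_J}^G(\sigma)=0$ for each irreducible subquotient $\sigma$. Such a $\sigma$ is a subquotient of $i_{P_I}^G(\tau)$ with $|I|=t$ and $\tau$ supercuspidal. By exactness, $r_{P_J}^G\sigma$ is then a subquotient of $r_{P_J}^G i_{P_I}^G\tau$. The Bernstein--Zelevinsky geometric lemma filters $r_{P_J}^G i_{P_I}^G\tau$ by terms $i^{M_J}_{\ast}\circ w\circ r^{M_I}_{\ast}(\tau)$, indexed by double cosets $W_J\backslash W/W_I$. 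Cuspidality of $\tau$ kills every term except those with $w(M_I)\subset M_J$. This inclusion is impossible when $\dim A_{M_I}=r-t > r-|J| = \dim A_{M_J}$, because $w M_I w^{-1}\subset M_J$ forces $A_{M_J}\subset wA_{M_I}w^{-1}$. Hence the Jacquet module vanishes, and so $Y_J(\pi)=0$.

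\emph{Exactness.} Exactness of a sequence of smooth representations can be checked after any exact faithful functor. We will use the following. Since all functors are exact in $\pi$ (compositions of exact $i$, $r$), a five-lemma/long-exact-sequence argument reduces us to irreducible $\pi$, or even to the standard modules $i_{P_I}^G\tau$ in the Grothendieck-group sense. We expect this to be careful, because exactness of complexes does not directly pass to Grothendieck groups. It does, however, pass along short exact sequences $0\to\pi'\to\pi\to\pi''\to 0$ via the long exact homology sequence, so induction on length reduces to irreducible $\pi$.

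For irreducible $\pi$ we follow Aubert's approach: apply $r_{P_K}^G$ for each $K\subset S$ and use the fact that a representation in $\mc{R}_t$ with all Jacquet modules $r_{P_K}$ ($|K|=t$) zero is itself zero. Indeed, a nonzero irreducible in $\mc{R}_t$ embeds in some $i_{P_K}\tau$ and thus has $r_{P_K}\neq 0$. The homology of the complex lies in $\mc{R}_t$, so it suffices to show the complex becomes exact after applying $r_{P_K}^G$ for $|K|=t$. The geometric lemma again decomposes $r_{P_K}^G X_J(\pi)$ through filtrations indexed by $W_K\backslash W/W_J$. Only cuspidal-support-compatible cosets survive, and these are indexed by elements $w$ with $w^{-1}(K)\subset J$ together with the cuspidal part $r_{P_{K'}}\pi$.

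After grouping by $w$, the complex should decompose (up to filtration) into pieces of the form $V_w\otimes C_\bullet(w)$. Here $V_w$ is a fixed cuspidal Jacquet-module piece, and $C_\bullet(w)$ is the augmented chain complex of the full simplex on the set of $J\supseteq w^{-1}K$ with the $\epsilon^J_I$ signs. This complex is exact except in the top/bottom degree, which corresponds to $J=S$ or $|J|=t$ as appropriate. It contributes no homology in the degrees between $r$ and $t+1$, and contributes only at the $Y_t$ end, which is exactly where the sequence stops.

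The main obstacle will be making the geometric lemma compatible with the differentials $\varphi^J_I$. We need the filtrations for $r_{P_K}X_J$ to be preserved by the maps $X_J\to X_I$, and on graded pieces the maps must be identity or zero according to the coset combinatorics, so that the $E_1$ page is the Koszul complex above. We would first verify this for the maps $\eta$ by the explicit formula (composition with projection $(V_\pi)_V\to(V_\pi)_U$), using the Bruhat-cell filtration by supports of functions. A spectral-sequence argument then gives exactness.
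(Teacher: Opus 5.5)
The paper gives no proof of this statement; it quotes it from \cite{Aub95}, \cite{Aub96}. So your argument has to stand on its own.

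The peripheral parts are fine. The vanishing argument is correct, although the displayed inequality is backwards: for $|J|<t$ one has $\dim A_{M_J}-\dim A_{M_I}=t-|J|>0$, and it is this that contradicts $A_{M_J}\subset wA_{M_I}w^{-1}$. The reduction for exactness is also sound. The homology of $Y_\bullet(\pi)$ lies in $\mc{R}_t(G)$, and a nonzero object of $\mc{R}_t(G)$ has $r^G_{P_K}\neq 0$ for some $|K|=t$. Hence it suffices to show that $r^G_{P_K}Y_\bullet(\pi)$ has no homology in degrees $>t$; the reduction to irreducible $\pi$ is unnecessary. But that last statement is the entire theorem. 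You only predict its outcome, and the prediction is wrong in two ways. This is a genuine gap.

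\emph{First, the graded maps are not ``identity or zero'' for an arbitrary ordering of the strata.} The map $\varphi^J_I$ sends the $w$-stratum of $X_J(\pi)$ into every stratum indexed by a $(W_K,W_I)$-double coset inside $W_KwW_J$, and the off-diagonal components are nonzero.

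Example: take $G=\mathrm{GL}_3$, $K=\{\alpha\}$, and the irreducible $\pi=i^G_{P_K}(\rho\otimes\chi)\in\mc{R}_1(G)$ with $\rho$ supercuspidal. Then $r^G_{P_K}X_S(\pi)=r^G_{P_K}\pi$ is a single ($w=1$) stratum. The $w=1$ stratum of $r^G_{P_K}X_{\{\beta\}}(\pi)$ vanishes because $r^G_{P_0}\pi=0$. Yet $\varphi^S_{\{\beta\}}$ is a nonzero map $\pi\to i^G_{P_{\{\beta\}}}r^G_{P_{\{\beta\}}}\pi\cong\pi$ between irreducibles. So its Jacquet module is an isomorphism from the $w=1$ stratum onto the other stratum.

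What does work is to filter by $\ell(w)$, where $w$ is the minimal representative; unions of strata with $\ell(w)\geq d$ are open. Because $w$ is the unique shortest element of $W_KwW_J$, each $\varphi^J_I$ preserves this filtration, and on the associated graded only the $w\to w$ components survive. You must then verify that these components are the identity of $w(r^G_{P_{w^{-1}K}}\pi)$. This holds because the $I$-stratum maps bijectively onto the $J$-stratum modulo the parabolics, since $w$ keeps the positive roots of $M_J$ positive.

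\emph{Second, your index set is wrong.} A given $w$ labels a stratum of $X_J$ only if it is minimal in $W_KwW_J$, i.e.\ $w(J)>0$. So for fixed $w$ with $w^{-1}K\subset S$, the contributing $J$ form the interval $w^{-1}K\subseteq J\subseteq T_w:=\{\alpha\in S: w\alpha>0\}$, not all $J\supseteq w^{-1}K$. The complex $\bigoplus_{L\subseteq J\subseteq T}\Lambda_J$ (it is wedging with $\sum_{i\in T\sm L}e_i$) is acyclic if $L\neq T$, and is a single $\Lambda_L$ in degree $|L|$ if $L=T$.

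With your indexing ($T=S$ for every $w$), every piece would be acyclic when $t<r$. That would give $r^G_{P_K}D^G_A(\pi)=0$ for all $|K|=t$, hence $D^G_A(\pi)=0$; for example $\mathrm{St}=D^G_A(\mathbf{1})$ would vanish. With the correct intervals, the $E_1$ page is concentrated in degree $t$, supported on those $w$ with $T_w=w^{-1}K$. That is what yields exactness in degrees $>t$.
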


\begin{dfn}
    Define the functor $D_A^t : \mc{R}_t(G) \to \mc{R}_t(G)$ as 
    \[ D_A^t := \tx{cok}(d_{t+1} : Y_{t+1} \to Y_t). \] Define the \emph{Aubert duality} functor $D_A^G : \mc{R}(G) \to \mc{R}(G)$ as 
    \[ D_A^G :=\prod_{t=0}^r D_A^t. \]
\end{dfn}

Since the functors $Y_t$ are covariant and exact, the same holds for $D_A^G$.

\begin{rem} \label{rem:dgwldf}
    If we replace the chosen pair by $(M_0',P_0')=g(M_0,P_0)g^{-1}$ for some $g\in G(F)$, then the Aubert duality produced by this pair is equivalent to $D_A^G$ under $\tx{Ad}(g)$. 
\end{rem}

\begin{pro} \label{pro:dgird}
    If $\pi$ is irreducible, then so is $D_A^G(\pi)$.
\end{pro}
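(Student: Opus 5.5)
The plan is to follow Aubert's original strategy \cite[Corollary 3.9(b)]{Aub95}. We reduce irreducibility of $D_A^G(\pi)$ to two facts: $D_A^G$ is an exact involution-like functor on the Grothendieck group, and it is compatible with Jacquet modules. Since $D_A^G$ is exact and additive, it suffices to work in $\mc{R}_t(G)$ and compare lengths.

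\textbf{Step 1: the Grothendieck group.} Theorem \ref{thm:aubmain} gives, for $\pi\in\mc{R}_t(G)$, the identity
\[ [D_A^G(\pi)] = \sum_{J\subset S}(-1)^{|S\sm J|}[i_{P_J}^G r_{P_J}^G \pi] \]
in the Grothendieck group, up to a global sign $(-1)^{r-t}$. The terms with $|J|<t$ vanish there. We then prove two things on the Grothendieck group $\mc{G}$, following the Curtis--Alvis style computation.

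\emph{(a) Commutation with Jacquet modules.} We show $r_{P_K}^G\circ D_A^G = \mathrm{Ad}(w)\circ D_A^{M_{K'}}\circ r_{P_{K'}}^G$ on $\mc{G}$, with $K'$ and $w$ determined by the longest element. This is done via the geometric lemma (Bernstein--Zelevinsky), by expanding $r_{P_K}^G i_{P_J}^G$ over double cosets $W_K\backslash W/W_J$ and regrouping the alternating sum.

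\emph{(b) Involutivity.} From (a) and the alternating-sum identity $\sum_{J\supset L}(-1)^{|J|}=0$ for $L\ne S$, we deduce $(D_A^G)^2=\mathrm{id}$ on $\mc{G}$.

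\textbf{Step 2: positivity.} Let $\pi$ be irreducible. Then $D_A^G(\pi)$ is an actual representation, since it is a cokernel. Choose a minimal $K$ with $r_{P_K}^G\pi\ne0$; this makes $r_{P_K}^G\pi$ cuspidal. By (a) together with the fact that $D_A^M$ is the identity on cuspidal representations (up to sign), the Jacquet module of $D_A^G(\pi)$ along the associated parabolic is nonzero. Hence $D_A^G(\pi)\ne0$.

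\textbf{Step 3: irreducibility.} Suppose $D_A^G(\pi)$ has a composition series with irreducible factors $\sigma_1,\dots,\sigma_n$. Each $D_A^G(\sigma_i)$ is nonzero with nonnegative class, by exactness and Step 2 applied to each $\sigma_i$. Applying $D_A^G$ to the composition series and using (b) gives
\[ [\pi]=\sum_i[D_A^G(\sigma_i)], \]
where each summand is a nonzero effective class. Since $[\pi]$ has length one, $n=1$, so $D_A^G(\pi)$ is irreducible.

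The main obstacle is Step 1(a), the combinatorial identity behind compatibility with Jacquet modules. That is where the geometric lemma and the sign bookkeeping with $\Lambda_J$ must be carried out carefully. I would try first to cite Aubert's Théorème 1.7 and 3.8 computations directly, since they are stated at the level of Grothendieck groups.
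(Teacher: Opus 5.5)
Your proposal is correct and is essentially the paper's proof: the paper simply cites \cite[Corollary 3.9(b)]{Aub95}, and your outline is exactly Aubert's argument for that corollary. That argument combines involutivity of the alternating sum on the Grothendieck group, positivity of $[D_A^G(\pi)]$ from the cokernel description in Theorem \ref{thm:aubmain}, and the length count in Step 3 (a minor simplification: Step 2's nonvanishing already follows from $[D_A^G]^2=\mathrm{id}$ on the Grothendieck group, so the Jacquet-module argument there is not needed).
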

\begin{proof}
    This is \cite[Corollary 3.9(b)]{Aub95}.
\end{proof}

\begin{pro} \label{pro:dgsc}
    There is an isomorphism of  functors $\tx{id}\to D_A^G$ on the factor $\mc{R}_r(G)$; in fact, they are equal.
\end{pro}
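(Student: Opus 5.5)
We need to show that on $\mc{R}_r(G)$ the functor $D_A^r$ is literally the identity functor. By definition, $D_A^r = \tx{cok}(d_{r+1} : Y_{r+1} \to Y_r)$. The index $t$ ranges only over $0 \le t \le r$, so $Y_{r+1}$ is the empty direct sum, i.e. the zero functor, and $d_{r+1}=0$. Hence $D_A^r = \tx{cok}(0 \to Y_r) = Y_r$.

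Next we identify $Y_r$. The only subset $J \subset S$ with $|J|=r=|S|$ is $J=S$, so $Y_r = Y_S = X_S \otimes_\C \Lambda_S$. Here $P_S = G$, so its unipotent radical is trivial and $M_S = G$. Thus $r_G^G$ and $i_G^G$ are the identity functors: the coinvariants by the trivial group are $V_\pi$ itself, and $\delta_G = 1$. This gives $X_S = \tx{id}_{\mc{R}(G)}$, as already noted at the end of \S\ref{sub:defcomp}. Moreover, $\Lambda_S = \Wedge^0(\C^{\emptyset}) = \C$ canonically, with $1$ as a distinguished generator. So $Y_r(\pi) = \pi \otimes_\C \C = \pi$ canonically, functorially in $\pi$, and on $\mc{R}_r(G)$ we get $D_A^G = D_A^r = \tx{id}$.

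The only point needing care is the sense in which ``equal'' holds. Strictly, $i_G^G(\sigma)$ consists of functions $f:G(F)\to V_\sigma$ with $f(g)=\sigma(g)f(1)$. The map $f\mapsto f(1)$, which is $\epsilon_G^G$ and inverse to $\eta_G^G$, identifies this with $\sigma$. Likewise $\pi\otimes_\C\C=\pi$ via $v\otimes 1\mapsto v$. These identifications are natural isomorphisms, and they are exactly the identifications already adopted in the text ($Y_r=X_r=\tx{id}$). Under them the isomorphism $\tx{id}\to D_A^G$ is the identity.

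We never need Theorem~\ref{thm:aubmain} or any supercuspidality here: the claim is purely formal once one notes that the complex is truncated at $Y_r$.
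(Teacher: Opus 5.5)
Your argument is correct and is essentially the paper's proof: both identify $D_A^G$ on $\mc{R}_r(G)$ with the top term $Y_r=Y_S=X_S\otimes_\C\Lambda_S=\tx{id}$. The paper also cites Theorem~\ref{thm:aubmain} for the vanishing of $Y_0,\dots,Y_{r-1}$ on $\mc{R}_r(G)$; as you observe, that vanishing is not needed for $\tx{cok}(0\to Y_r)=Y_r$, and it only explains why the whole complex collapses to this single term on that block.
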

\begin{proof}
    By Theorem \ref{thm:aubmain}, the functors $Y_0,\dots,Y_{r-1}$ vanish on the factor $\mc{R}_r(G)$. Since $Y_r= X_r=\tx{id}$, we conclude that $D_A^G=\tx{id}$ on $\mc{R}_r(G)$.
\end{proof}

To study more properties of $D_A^G$, we need to use another duality functor: the cohomological duality defined by Bernstein.

\subsection{Cohomological duality for connected groups} \label{sub:bercon}

In this subsection we review the definition of cohomological duality, which is defined and studied in \cite[\S5.1]{BernsteinPadicGroups}. Let $\mc{D}^b(G)$ be the bounded derived category of the category of smooth representations of $G(F)$. We have the $G(F)$-bimodule $\mc{H}=\mc{C}^\infty_c(G(F))$ and for any $M\in \mc{D}^b(G)$ we can consider the complex $\tx{RHom}_{G(F)}(M, \mc{H})$, where we have used the left $G(F)$-module structure on $\mc{H}$. The remaining right module structure on $\mc{H}$ endows this complex with a right $G(F)$-module structure, which can be converted to a left $G(F)$-module in the following standard way: $g\cdot x: = x \cdot g^{-1}$ for $g \in G(F)$. This provides the contravariant functor: $D_{\tx{coh}}^G: \mc{D}^b(G)\to \mc{D}^b(G)$. Note that $D_{\tx{coh}}^G$ lands in $\mc{D}^b(G)$ because the category of smooth representations of $G(F)$ has finite cohomological dimension, see \cite[Theorem 29]{BernsteinPadicGroups} and \cite[Lemma 2.2]{YS2026M}. This gives the embedding $\mc{R}(G) \subset\mc{D}^b(G)$ and we can consider $D_{\tx{coh}}^G(\pi) \in \mc{D}^b(G)$ for any smooth representation $\pi$ of $G(F)$.

\begin{thm}[{\cite[Theorem 31]{BernsteinPadicGroups}}] \label{thm:bermain}
    For $\pi\in \mc{R}_t(G)$, the complex $D_{\tx{coh}}^G(\pi)$ has non-zero cohomology at a unique degree $d(t)$ that depends only on $t$ and this cohomology lies in $\mc{R}_t(G)$.
\end{thm}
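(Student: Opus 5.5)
The statement is Bernstein's Theorem 31, and I would follow his approach. First reduce to a single Bernstein block, then to finitely generated projective generators of that block, and then transfer to finite-length objects using finite cohomological dimension.

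\emph{Step 1: decomposition.} The Bernstein decomposition splits $\mc{H}$ as a bimodule into pieces $\mc{H}_{\mathfrak s}$. So $D_{\tx{coh}}^G$ preserves each block, and the question reduces to a single block $\mathfrak s=[M,\tau]$ with $M$ the Levi of some $P_I$, $|I|=t$. Write $\Pi = i_P^G(\tx{ind}_{M^\circ}^{M}\tau^\circ)$ for Bernstein's progenerator. Its endomorphism algebra $A=\tx{End}(\Pi)$ is a finite module over its centre $Z$. This centre is a ring of invariants of the Laurent polynomial ring $\C[M/M^\circ]$, which has Krull dimension $\dim A_M - \dim A_G = r-t$.

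\emph{Step 2: computation on the generator.} I would compute $\tx{RHom}_G(\Pi,\mc{H}_{\mathfrak s})$ directly. Second adjointness and Frobenius reciprocity identify it with $i_{\bar P}^G$ applied to a similar compact induction for $M$, concentrated in degree $0$. More usefully, $\tx{Hom}_G(\Pi,\mc{H})\cong \tilde\Pi$, the analogous progenerator built from $\tau^\vee$ and the opposite parabolic. This exhibits $D_{\tx{coh}}$ as sending projectives to projectives in degree $0$. The block is thus equivalent to $A$-modules, and under this equivalence $D_{\tx{coh}}$ becomes essentially $\tx{RHom}_A(-,A)$ up to this twist.

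\emph{Step 3: finite-length modules.} For $\pi$ of finite length, the corresponding $A$-module is finite-dimensional, so it is supported on finitely many maximal ideals of $Z$. Since $A$ is finite over $Z$, $Z$ is Cohen–Macaulay, and $A$ is a maximal Cohen–Macaulay $Z$-module of finite global dimension (the category has finite cohomological dimension), local duality gives
\[ \tx{Ext}^i_A(N,A)=0 \text{ for } i\neq r-t \]
for finite-length $N$. This is the Gorenstein-type property: $A$ is locally free over a regular subring, namely the Laurent polynomials in $M/M^\circ$ modulo a finite group. I would prove it by reducing via the finite extension to the regular ring $\C[M/M^\circ]$, over which $\Pi$ restricted to $M$ is free. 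Ext vanishing outside the single degree $d(t)=r-t$ then follows from the Koszul resolution of points. The nonvanishing cohomology is finite-length and lies in the same type of block, hence in $\mc{R}_t(G)$.

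The main obstacle is Step 3. One must establish that $A$ is projective over a regular commutative subalgebra of dimension $r-t$ compatibly with the bimodule structure; the rest is formal homological algebra.
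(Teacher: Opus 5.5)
The paper gives no proof of this statement; it quotes Bernstein. Your route is genuinely different from Bernstein's. Bernstein's argument, as I recall it, first establishes the concentration on cuspidal blocks, where it is a Koszul-type computation for the lattice $M/M^\circ$. It then transports the result to an arbitrary block using three properties of $r_P^G$: it is exact, it satisfies $D^M_{\tx{coh}}\circ r_P^G\cong r_P^G\circ D^G_{\tx{coh}}$ at the derived level (cf. Proposition \ref{pro:dhprp}(3)), and it is faithful on representations with cuspidal support in $M$. Thus $r_P^G H^i(D^G_{\tx{coh}}\pi)=H^i(D^M_{\tx{coh}}r_P^G\pi)$ vanishes for $i\neq\dim A_M$, and hence so does $H^i(D^G_{\tx{coh}}\pi)$. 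Your approach treats all blocks at once by presenting each block as modules over a Gorenstein order. This explains the result as a form of Grothendieck duality, but it costs the structure theory of $\tx{End}_G(\Pi)$ for non-cuspidal blocks and some non-commutative Cohen--Macaulay theory that the Jacquet-functor route avoids.

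Two points need repair. First, your degree is wrong whenever $G$ has a non-trivial split central torus. $M/M^\circ$ is a lattice of rank $\dim A_M$, so $\C[M/M^\circ]$ has Krull dimension $\dim A_M=r-t+\dim A_G$, not $r-t$. For example, for $G=\tx{GL}_1$ and a character $\chi$ one has $\tx{Hom}_{F^\times}(\chi,\mc{C}^\infty_c(F^\times))=0$, and the cohomology sits in degree $1$. The theorem survives, since $\dim A_M$ still depends only on $t$, but you must take $d(t)=\dim A_M$.

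Second, Step 3 conflates two rings and does not yet deliver $\tx{Ext}^i_A(N,A)$.
\begin{itemize}
\item The centre $Z$, being invariants of a Laurent ring under a finite group, need not be regular. The regular ring you can actually use is $B=\C[\Lambda]$, where $\Lambda$ is the image of $Z(M)$ in $M/M^\circ$, acting on $\Pi$ through the centre of $M$.
\item Freeness of $A$ over $B$ needs an argument. For instance, write $A=\tx{Hom}_M(r_P^Gi_P^G\Pi_M,\Pi_M)$. The geometric-lemma filtration of $r_P^Gi_P^G\Pi_M$ splits because its subquotients $w\Pi_M$ are projective, and each $\tx{Hom}_M(w\Pi_M,\Pi_M)$ is $B$-free.
\item More importantly, a Koszul resolution computes $\tx{RHom}_B(N,B)$, not $\tx{RHom}_A(N,A)$. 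Adjunction only gives $\tx{RHom}_A(N,\tx{Hom}_B(A,B))\cong\tx{RHom}_B(N,B)$. Freeness alone does not tell you that ${}_AA$ lies in $\tx{add}\,\tx{Hom}_B(A,B)$.
\end{itemize}

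This last point is exactly where finite global dimension must enter. Freeness over $B$ makes $A$ maximal Cohen--Macaulay over $Z$, and $A$ has finite global dimension. The Auslander--Buchsbaum formula for module-finite $Z$-algebras then shows that every maximal Cohen--Macaulay $A$-module is projective; in particular $\omega_A=\tx{Hom}_Z(A,\omega_Z)$ is projective. The duality $\tx{Hom}_Z(-,\omega_Z)$ between right and left Cohen--Macaulay modules then shows that $\omega_A$ is also a generator.

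Local duality applied to $\tx{RHom}_A(N,\omega_A)\cong\tx{RHom}_Z(N,\omega_Z)$ now gives two things. It gives vanishing for $i\neq\dim A_M$. It also gives non-vanishing in degree $\dim A_M$ when $N\neq 0$, which the statement asserts and which you did not address. With these pieces supplied, your argument goes through.
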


\begin{dfn}
    Define the functor $D_B^t: \mc{R}_t(G) \to \mc{R}_t(G)$ as $\tx{H}^{d(t)}(D_{\tx{coh}}^G(-))$. Define the cohomological duality $D_B^G=\prod_tD_B^t: \mc{R}(G) \to \mc{R}(G)$. 
\end{dfn}

Note that $D_B^G$ is a contravariant functor. It enjoys the following list of nice properties. Let $P=MU$ be an $F$-parabolic subgroup of $G$ with $M$-opposite $P^-$.

\begin{pro} \label{pro:dhprp}
    The cohomological duality $D_B^G$ satisfies the following properties:
    \begin{enumerate}
        \item $D_B^G$ is an involution, that is, $D_B^G\circ D_B^G$ is equivalent to the identity functor.

        \item $D_B^G\circ i^G_P$ is equivalent to $i^G_{P^-}\circ D_B^M$.

        \item $D_B^M\circ r^G_P$ is equivalent to $r^G_P\circ D_B^G$.

        \item $D_B^G$ preserves irreducibility and is isomorphic to the contragredient functor $(-)^\vee$ on each supercuspidal Bernstein block.

        \item $D_B^G$ commutes with the contragredient functor.

        \item $D_B^G$ is equivalent to $D_A^G \circ (-)^\vee$. 
    \end{enumerate}
\end{pro}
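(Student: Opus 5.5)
The plan is to treat Proposition \ref{pro:dhprp} as a compilation of known results for connected $G$ and to indicate how each item follows from \cite[\S5.1]{BernsteinPadicGroups}, \cite{BBK18} and \cite{YS26}. I would prove the items in the order (1), (2), (3), (4), (5), (6), with (1)--(4) handled through Bernstein's homological algebra.

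\textbf{Item (1).} Work with $D_{\tx{coh}}^G$ on $\mc{D}^b(G)$. The key input is that for a finitely generated smooth representation $M$ of finite projective dimension, the natural biduality map $M \to \tx{RHom}_{G}(\tx{RHom}_G(M,\mc{H}),\mc{H})$ is an isomorphism. This follows by reducing to finitely generated projectives, namely direct summands of compactly induced representations $\tx{c-ind}_K^G 1$ tensored with finite-dimensional data, for which it is a direct check. By Theorem \ref{thm:bermain}, $D_{\tx{coh}}^G(\pi)\cong D_B^G(\pi)[-d(t)]$ for $\pi\in\mc{R}_t(G)$. Hence
\[ \pi \cong D_{\tx{coh}}^G D_{\tx{coh}}^G(\pi) \cong D_{\tx{coh}}^G(D_B^G(\pi))[d(t)], \]
and taking cohomology in degree $0$, again using Theorem \ref{thm:bermain} for $D_B^G(\pi)\in\mc{R}_t(G)$, gives $(D_B^G)^2\cong\tx{id}$.

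\textbf{Item (2).} Use that $i_P^G$ has the exact right adjoint $r_{P^-}^G$, by Bernstein's second adjointness, and the exact left adjoint $r_P^G$. Together with the identification $\tx{Hom}_G(i_P^G\sigma,\mc{H}_G)\cong i_{P^-}^G\tx{Hom}_M(\sigma,\mc{H}_M)$ as bimodules, this gives $D_{\tx{coh}}^G\circ i_P^G\cong i_{P^-}^G\circ D_{\tx{coh}}^M$. Comparing degrees shows that $d$ is compatible, since the relevant Bernstein blocks of $M$ and $G$ share the same $t$ up to the shift $\dim A_M/A_G$ absorbed into the indexing. Taking cohomology then gives (2).

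\textbf{Item (3).} Obtain this from (2) by adjunction. For $\sigma$ in $\mc{R}(M)$ we have
\[ \tx{Hom}(D_B^M r_P^G\pi,\sigma)\cong \tx{Hom}(D_B^M\sigma, r_P^G\pi)\cong\tx{Hom}(i_{P}^G D_B^M\sigma\text{-type terms},\pi), \]
where the first isomorphism uses (1) and contravariance, and the second uses second adjointness. Combining this with (2) and Yoneda gives (3).

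\textbf{Item (4).} On a supercuspidal block, $\pi$ is projective, so $D_{\tx{coh}}^G(\pi)=\tx{Hom}_G(\pi,\mc{H})$ sits in degree $d(r)=0$. The map $\lambda\otimes f\mapsto(v\mapsto \lambda(\pi(\cdot)v)f)$-type identification, built from matrix coefficients, which are compactly supported modulo center, identifies this with $\pi^\vee$. Preservation of irreducibility then follows from the equivalence $D_B^G:\mc{R}_t(G)^{op}\to\mc{R}_t(G)$ given by (1), since a contravariant self-equivalence maps simple objects to simple objects.

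\textbf{Items (5) and (6).} These are the deep inputs, and I would cite them rather than reprove them. Item (6) is the main theorem of \cite{BBK18}. Item (5) is the principal result of \cite{YS26}. If one wanted to derive (5) from (6), one would need $D_A^G$ to commute with contragredients, which is exactly the nontrivial content of \cite{YS26}.

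The main obstacle is (6) and, in the case where it is not simply cited, the compatibility of degrees and signs in (2). For this I would first try to verify the bimodule identification on compactly induced generators and then extend it by resolutions.
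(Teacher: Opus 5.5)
Your treatment of (5) and (6) matches the paper. Its whole proof is a citation: \cite[Theorem 31]{BernsteinPadicGroups} (see also \cite{YS2026M}) for (1)--(4), \cite{YS26} for (5), and \cite{BBK18} for (6).

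Your biduality argument for (1) is sound. Deriving (3) from (1), (2) and the two adjunctions by Yoneda is a valid alternative to a direct bimodule computation, which is the route the paper takes in the disconnected analogue, Proposition \ref{pro:dhiprp}. You should state that derivation correctly, though. Second adjointness gives $\tx{Hom}_M(D_B^M\sigma,r_P^G\pi)\cong\tx{Hom}_G(i_{P^-}^GD_B^M\sigma,\pi)$, with $i^G_{P^-}$ rather than $i^G_P$. Then (2) turns $i^G_{P^-}D^M_B\sigma$ into $D^G_Bi^G_P\sigma$, and (1) together with Frobenius reciprocity yields $\tx{Hom}_M(r^G_PD^G_B\pi,\sigma)$.

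The genuine gap is in (4). A supercuspidal representation is projective in the category of all smooth $G(F)$-representations only when $A_G$ is trivial. Its matrix coefficients are compactly supported only modulo the center, so they do not lie in $\mc{H}=\mc{C}^\infty_c(G(F))$. In fact $\tx{Hom}_G(\pi,\mc{H})=0$ as soon as $A_G\neq 1$: for irreducible $\pi$, $A_G(F)$ acts by a character, and no nonzero compactly supported function transforms by a character under a non-compact group. For $G=\mathbb{G}_m$ the only nonzero group is $\tx{Ext}^1(\chi,\mc{H})\cong\chi^{-1}$. In general $d(t)=\dim A_0-t$, which is $\dim A_L$ for $L$ the Levi subgroup of the cuspidal support, so $d(r)=\dim A_G$, not $0$.

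This is not a marginal case. The statement is for arbitrary connected reductive $G$ (e.g. $\tx{GL}_n$), and it is needed for proper Levi subgroups (e.g. in Proposition \ref{pro:dhuniq}), whose centers are never compact. To repair it, pass to $G^0=\bigcap_{\chi\in X^*_F(G)}\ker|\chi|$, on which $\pi$ becomes projective with compactly supported matrix coefficients. Then take derived invariants of the lattice $G(F)/G^0$, which has rank $\dim A_G$; a Koszul-type computation places $\pi^\vee$ in degree $\dim A_G$. Alternatively, simply cite Bernstein as the paper does.

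The same misconception underlies your degree remark in (2). There is no shift between $M$ and $G$, since both degrees equal $\dim A_L$ for the common cuspidal support $L$. Indeed, the isomorphism $D^G_{\tx{coh}}\circ i^G_P\cong i^G_{P^-}\circ D^M_{\tx{coh}}$ in $\mc{D}^b$ that you claim already forces the degrees to coincide, so a genuine shift would contradict it.
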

\begin{proof}
    The first four properties are proved in \cite[Theorem 31]{BernsteinPadicGroups}. See also \cite{YS2026M} for an exposition. Property (5) is the main result of \cite{YS26}. Property (6) is the main result of \cite{BBK18}.
\end{proof}

With Proposition \ref{pro:dhprp}(6), we may deduce similar properties for the Aubert duality $D_A^G$.

\begin{pro} \label{pro:dgprp}
    The Aubert duality functor $D_A^G$ satisfies the following properties.
    \begin{enumerate}
        \item $D_A^G$ is an involution, that is, $(D_A^G)^2$ is equivalent to the identity functor.
        \item $D_A^G\circ i^G_P$ is equivalent to $i^G_{P^-}\circ D_A^M$.
        \item $D_A^M\circ r^G_P$ is equivalent to $r^G_{P^-}\circ D_A^G$.
        \item $D_A^G$ commutes with the contragredient functor.
    \end{enumerate}
\end{pro}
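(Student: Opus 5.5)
The plan is to transport everything from Proposition \ref{pro:dhprp} along the identity $D_A^G \cong D_B^G \circ (-)^\vee$ of Proposition \ref{pro:dhprp}(6). Write $C=(-)^\vee$ for the contragredient functor on finite-length smooth representations. It is a contravariant involution, $C^2\cong\tx{id}$, because finite-length smooth representations are admissible. Property (6) can be rewritten as $D_A^G \cong D_B^G\circ C$. Property (5) says $D_B^G\circ C\cong C\circ D_B^G$, so we also have $D_A^G\cong C\circ D_B^G$. In what follows I will freely use whichever of the two expressions for $D_A^G$ is more convenient.

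\textbf{Involution and contragredient.} For (1), compute
\[ (D_A^G)^2 \cong (D_B^G C)(C D_B^G) \cong D_B^G D_B^G \cong \tx{id}, \]
using Proposition \ref{pro:dhprp}(1). For (4), we have $D_A^G C \cong D_B^G C C \cong D_B^G$ and $C D_A^G \cong C C D_B^G \cong D_B^G$. Hence $D_A^G C \cong C D_A^G$.

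\textbf{Induction and restriction.} For these I need two standard compatibilities of the contragredient with the parabolic functors.

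\begin{enumerate}
    \item[(a)] $C\circ i_P^G\cong i_P^G\circ C$. This is the usual pairing $\<f,f'\>=\int_{P\lmod G}\<f(g),f'(g)\>dg$; the normalization by $\delta_P^{1/2}$ is exactly what makes the integrand a density on $P\lmod G$.
    \item[(b)] $C\circ r_P^G\cong r_{P^-}^M$-type compatibility, namely $r_P^G(\pi)^\vee\cong r_{P^-}^G(\pi^\vee)$. This is Casselman's pairing theorem, in its normalized form.
\end{enumerate}

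Granting these, (2) follows from
\[ D_A^G i_P^G \cong D_B^G C i_P^G \cong D_B^G i_P^G C \cong i_{P^-}^G D_B^M C \cong i_{P^-}^G D_A^M, \]
which uses (a) and then Proposition \ref{pro:dhprp}(2). For (3), the first step uses the form $D_A^M\cong C D_B^M$:
\[ D_A^M r_P^G \cong C D_B^M r_P^G \cong C r_P^G D_B^G \cong r_{P^-}^G C D_B^G \cong r_{P^-}^G D_A^G. \]
Here the second isomorphism is Proposition \ref{pro:dhprp}(3) and the third is (b).

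\textbf{Main obstacle.} The only nonformal input is (b), Casselman's duality $r_P^G(\pi)^\vee\cong r_{P^-}^G(\pi^\vee)$. It must be natural in $\pi$ so that the isomorphisms above are equivalences of functors. I would cite it from the literature (Casselman; Bernstein's second adjointness gives an equivalent form) rather than reprove it. I would also check that the $\delta^{1/2}$-twists cancel, using $\delta_{P^-}=\delta_P^{-1}$ on $M(F)$. Finally, each step must be applied blockwise on $\mc{R}_t$, so I need that $C$ respects the decomposition $\prod_t\mc{R}_t(G)$. This holds because $C$ sends a subquotient of $i_{P_I}^G(\tau)$ to a subquotient of $i_{P_I}^G(\tau^\vee)$, by (a).
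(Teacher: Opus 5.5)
Your proposal is correct and is essentially the paper's argument: the paper also obtains these properties by transporting Proposition \ref{pro:dhprp}(1)--(3) along $D_B^G\cong D_A^G\circ(-)^\vee$ and the commutation of $D_B^G$ with $(-)^\vee$. It uses exactly your compatibilities (a) and (b) of the contragredient with $i_P^G$ and $r_P^G$; compare the disconnected analogue, Lemma \ref{lem:iprpcon}. (One minor slip: in (b) the label ``$r_{P^-}^M$-type'' should read $r_{P^-}^G$, as your displayed formula correctly states.)
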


\subsection{Uniqueness properties of $D_A^G$ and $D_B^G$} \label{sub:dguniq}

Proposition \ref{pro:dhprp} and \ref{pro:dgprp}  give a list of properties of $D_A^G$ and $D_B^G$. In fact, they are uniquely characterized by these properties. To be more precise, we have to introduce more notions. Recall that we have adjunction maps \eqref{eq:irpg_adj} for the Frobenius reciprocity:
\[\epsilon^G_P(\tau): r^G_Pi^G_P\tau \to \tau \qquad \tx{and} \qquad \eta^G_P(\pi): \pi\to i^G_Pr^G_P\pi \]
for any $\tau \in \mc{R}(M)$ and $\pi\in\mc{R}(G)$. Applying $D^M_A$ and $D^G_A$ respectively and combining with Proposition \ref{pro:dgprp}(2)(3), we have maps:
\[r^G_{P^-}i^G_{P^-}D^M_A(\tau)\xrightarrow{\cong} D^M_A(r^G_Pi^G_P\tau) \xrightarrow{D^M_A(\epsilon^G_P(\tau))} D^M_A(\tau)\]
and 
\[D_A^G(\pi)\xrightarrow{D_A^G(\eta^G_P(\pi))}D^G_A(i^G_Pr^G_P\pi) \xrightarrow{\cong} i^G_{P^-}r^G_{P^-}D^G_A(\pi).\]
It turns out that these two maps equal $\epsilon^G_{P^-}(D^M_A(\tau))$ and $\eta^G_{P^-}(D^G_A(\pi))$ respectively, see \cite{YS26}, and we refer to this property as \emph{Aubert duality preserves Frobenius reciprocity}.

\begin{pro}[\cite{YS26}] \label{pro:dguniq}
    The family of functors $D^M_A: \mc{R}(M)\to \mc{R}(M)$, with $M$ varying over all (standard) Levi subgroups of $G$, is uniquely determined by the following properties:
    \begin{enumerate}
        \item $D^M_A$ is an exact covariant functor for all $M$.
        \item $D_A^G\circ i^G_P$ is equivalent to $i^G_{P^-}\circ D_A^M$.
        \item $D_A^M\circ r^G_P$ is equivalent to $r^G_{P^-}\circ D_A^G$.
        \item $D_A^M$ preserves Frobenius reciprocity in the above sense.
        \item Each $D_A^M$ respects Bernstein decomposition, and it is isomorphic (in fact, equal to) the identity functor on supercuspidal blocks.
    \end{enumerate}
\end{pro}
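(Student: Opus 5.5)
We would prove uniqueness by induction on the semisimple rank of $M$, using the exactness of the Aubert complex (Theorem \ref{thm:aubmain}) to express $\pi$ as a kernel of terms built only out of proper Levi subgroups. Let $D'^M$, with $M$ ranging over the standard Levi subgroups, be a second family satisfying (1)--(5), together with the isomorphisms expressing (2) and (3). We will construct natural isomorphisms $D'^M \cong D_A^M$ compatible with these structure isomorphisms. For the induction we will need this compatibility as part of the hypothesis, so the precise statement carried along is uniqueness of the \emph{data} consisting of the functors together with the isomorphisms of (2) and (3).

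Fix $G$ and assume the claim for all proper standard Levi subgroups. By (5) both functors respect the Bernstein decomposition, so it suffices to work on each factor $\mc{R}_t(G)$. On $\mc{R}_r(G)$ both functors equal the identity by (5) and Proposition \ref{pro:dgsc}. So let $t<r$. By Theorem \ref{thm:aubmain}, for $\pi \in \mc{R}_t(G)$ the sequence
\[ 0 \to \pi = Y_r(\pi) \to Y_{r-1}(\pi) \to Y_{r-2}(\pi) \]
is exact; if $r-1=t$ the last term is replaced by $0$. This exhibits $\pi$ as $\ker(d_{r-1})$. Apply the exact covariant functors $D'^G$ and $D_A^G$. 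We obtain
\[ D'^G(\pi)=\ker\bigl(D'^G Y_{r-1}\pi \to D'^G Y_{r-2}\pi\bigr) \]
and the analogous expression for $D_A^G$. Each $Y_J$ with $J \neq S$ is, up to the sign line $\Lambda_J$, the functor $i_{P_J}^G r_{P_J}^G$ with $P_J$ proper. Properties (2) and (3) then give
\[ D'^G\, i_{P_J}^G r_{P_J}^G \cong i_{P_J^-}^G\, D'^{M_J}\, r_{P_J}^G \cong i_{P_J^-}^G\, r_{P_J^-}^G\, D'^G, \]
and likewise for $D_A^G$. The induction hypothesis $D'^{M_J}\cong D_A^{M_J}$ then yields natural isomorphisms $D'^G Y_J \cong D_A^G Y_J$ for all $J\neq S$.

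The key step, and the one I expect to be the main obstacle, is to check that these isomorphisms intertwine $D'^G(\psi^J_I)$ with $D_A^G(\psi^J_I)$. Once this holds, they induce an isomorphism on kernels $D'^G(\pi)\cong D_A^G(\pi)$, natural in $\pi$. The map $\varphi^Q_P$ is $i_Q^G(\eta^L_{P/V})r_Q^G$, composed with the induction- and restriction-in-stages isomorphisms \eqref{eq:ipg_stages} and \eqref{eq:rpg_stages}. By property (4) applied to the Levi $L$ of $Q$, both $D'$ and $D_A$ transport $\eta^L_{P/V}$ to $\eta^L_{P^-/V^-}$. Hence both $D'^G(\varphi^Q_P)$ and $D_A^G(\varphi^Q_P)$ are identified with the map $\varphi^{Q^-}_{P^-}$ for the opposite parabolics, and the signs $\epsilon^J_I$ are untouched. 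What must be verified carefully is that the isomorphisms of (2),(3) are compatible with the stages isomorphisms, i.e. that the isomorphism for $Q$ composed with that for $P/V\subset L$ agrees with the one for $P$. I would try first to build this transitivity into the uniqueness statement, as part of the data being compared, and to check that it holds for $D_A$ by the explicit description of $\varphi^Q_P$ as composition with the projection $(V_\pi)_V\to(V_\pi)_U$.

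Granting this, the resulting isomorphisms $D'^G\cong D_A^G$ are compatible with (2),(3) for $G$. This follows by construction from naturality and the induction hypothesis, and it closes the induction.
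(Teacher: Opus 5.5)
Your proposal has two genuine gaps.

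First, the presentation $\pi=\ker(d_{r-1})$ is false when $t=r-1$. Theorem \ref{thm:aubmain} gives exactness at $Y_r,\dots,Y_{t+1}$, but not at $Y_t$. For $t=r-1$ the functor $Y_{r-2}$ vanishes on $\mc{R}_{r-1}(G)$, while $Y_r(\pi)\to Y_{r-1}(\pi)$ has cokernel $D_A^G(\pi)\neq 0$. Hence $\ker(d_{r-1})=Y_{r-1}(\pi)\neq\pi$. Already for $G=\mathrm{SL}_2$ and $\pi=\mathbf{1}$, the term $Y_0(\mathbf 1)=i_Br_B\mathbf 1$ has length two.

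This case is the whole first nontrivial step of your induction (the principal series blocks of rank-one groups). More generally it covers every block with cuspidal support on a maximal Levi. There $\pi$ is only a subobject of $Y_{r-1}(\pi)$. To cut it out you need a map like $Y_{r-1}(\pi)\to Y_{r-1}(C)$, with $C=Y_{r-1}(\pi)/\pi$, and this map is not built from the $\varphi^Q_P$. So your comparison of differentials does not reach this case.

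Note also that in your construction for $t\le r-2$, hypothesis (4) plays no real role. Compatibility with $i_{P_J}(\eta^{M_J}_{P_I/U_J})$ already follows from naturality of the inductive isomorphism $D'^{M_J}\cong D_A^{M_J}$. Property (4) is exactly the ingredient the missing case needs.

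Second, compatibility of the isomorphisms in (2),(3) with induction and restriction in stages does not follow from (1)--(5). It can only be assumed, not checked. Write $\gamma_P : D_A^G i_P\to i_{P^-}D_A^M$ and $\rho_P : D_A^M r_P\to r_{P^-}D_A^G$ for these isomorphisms.

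Take a scalar $\lambda\neq 1$ and a single maximal $P_J$, and replace $(\gamma_{P_J},\rho_{P_J})$ by $(\lambda\gamma_{P_J},\lambda^{-1}\rho_{P_J})$. This preserves (4), since the two factors cancel in both composites. But it multiplies the stages composite for each $P_I\subset P_J$ by $\lambda$ while leaving $\gamma_{P_I}$ unchanged, so transitivity fails for one of the two choices.

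Now let $D'=D_A$ carry the modified data, with $r\ge 2$, $\pi=\mathbf 1$, and the identity as inductive isomorphisms. Your map $D'^GY_{r-1}(\pi)\to D_A^GY_{r-1}(\pi)$ is then $\lambda$ on the $J$-summand and the identity elsewhere. It does not carry $\ker D'^G(d_{r-1})$ onto $\ker D_A^G(d_{r-1})$: at $I\subset J$ with $|I|=r-2$ the defect is $(\lambda-1)D_A^G(\psi^J_I\circ\psi^S_J)\neq 0$.

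So building transitivity into the data proves uniqueness only among families with an extra property outside (1)--(5). Moreover, for $D_A$ itself the isomorphisms in (2),(3) are obtained from $D_B$ via \cite{BBK18} and \cite{YS26}, not from an explicit formula. Their transitivity therefore would not follow from the description of $\varphi^Q_P$ either.

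The argument of \cite{YS26}, whose structure the paper reproduces in the proofs of Propositions \ref{pro:dhctr} and \ref{pro:dhuniqdisc}, avoids both problems:
\begin{itemize}
\item For fixed $t$ it uses only the parabolics $P_I$ with $|I|=t$ and the category $\mc{B}=\prod_{|I|=t}\mc{R}_t(M_I)$, where both families are the identity by (5).
\item The counit $\bigoplus_{|I|=t} i_{P_I}r_{P_I^-}\pi\to\pi$ of the second adjunction is surjective for $\pi\in\mc{R}_t(G)$, and the unit is injective on $\mc{B}$.
\item The isomorphism on $\mc{B}$ is lifted to $\mc{R}_t(G)$ by the lifting isomorphism lemma, which uses (2), (3) and, crucially, (4).
\end{itemize}
No induction on rank and no compatibility between different parabolic levels is required.
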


A similar conclusion holds for $D^G_B$. To state it, we first recall the second adjointness theorem (\cite[VI 9.6 Theorem]{Ren10}), which says that $i^G_P$ is left adjoint to $r^G_{P^-}$; in other words, we have adjunction maps
\[\kappa^G_P(\pi): i^G_Pr^G_{P^-}\pi \to \pi  \qquad \tx{and} \qquad \zeta^G_P(\tau): \tau \to r^G_{P^-}i^G_P\tau.\]
Starting from Frobenius reciprocity, or first adjointness as in \cite{YS26}, applying $D^G_B$ and combining with Proposition \ref{pro:dhprp}(2)(3), we have maps:
\[D^M_B(\tau)\xrightarrow{D^M_B(\epsilon^G_P(\tau))} D^M_B(r^G_Pi^G_P\tau) \xrightarrow{\cong} r^G_Pi^G_{P^-}D^M_B(\tau)\]
and 
\[i^G_{P^-}r^G_PD^G_B(\pi)\xrightarrow{\cong} D^G_B(i^G_Pr^G_P\pi) \xrightarrow{D^G_B(\eta^G_P(\pi))} D^G_B(\pi).\]
It turns out these two maps equal $\zeta^G_{P^-}(D^M_B(\tau))$ and $\kappa^G_{P^-}(D^G_B(\pi))$ respectively. Conversely, starting from second adjointness, we get first adjointness back. This is proved in \cite[Proposition 2.3]{YS26}, and we refer to this property as \emph{cohomological duality permutes first and second adjointness}.

\begin{pro}[\cite{YS26}] \label{pro:dhuniq}
    The family of functors $D^M_B: \mc{R}(M)\to\mc{R}(M)$, with $M$ varying over all (standard) Levi subgroups of $G$, is uniquely determined by the following properties:
    \begin{enumerate}
        \item $D^M_B$ is an exact contravariant functor for all $M$.
        \item $D_B^G\circ i^G_P$ is equivalent to $i^G_{P^-}\circ D_B^M$.
        \item $D_B^M\circ r^G_P$ is equivalent to $r^G_P\circ D_B^G$.
        \item $D^G_B$ permutes first and second adjointness in the above sense.
        \item Each $D^M_B$ respects Bernstein decomposition, and it is isomorphic to the contragredient functor on supercuspidal blocks.
    \end{enumerate}
\end{pro}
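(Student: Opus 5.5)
One way to prove the proposition is to reduce it to the uniqueness statement for Aubert duality, Proposition \ref{pro:dguniq}, by composing with the contragredient. Let $\{D'^M_B\}$ be a second family satisfying (1)--(5), and set $D'^M_A := D'^M_B \circ (-)^\vee$. Each $D'^M_A$ is exact and covariant, since it is a composite of two exact contravariant functors. It respects the Bernstein decomposition. On supercuspidal blocks it is isomorphic to $(-)^{\vee\vee} \cong \tx{id}$, because objects of $\mc{R}(M)$ are admissible. So properties (1) and (5) of Proposition \ref{pro:dguniq} hold for $D'_A$.

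Next I would check the induction and restriction compatibilities. Two standard isomorphisms are needed.
\begin{itemize}
\item Contragredient commutes with normalized induction: $(i_P^G\sigma)^\vee \cong i_P^G(\sigma^\vee)$, via the pairing $\int_{P(F)\lmod G(F)}\<f,f'\>$.
\item Casselman's pairing gives $(r_P^G\pi)^\vee \cong r_{P^-}^G(\pi^\vee)$.
\end{itemize}
With these, property (2) follows from
\[ D'^G_A i_P^G \sigma = D'^G_B\bigl((i_P^G\sigma)^\vee\bigr) \cong D'^G_B\, i_P^G(\sigma^\vee) \cong i_{P^-}^G D'^M_B(\sigma^\vee) = i_{P^-}^G D'^M_A(\sigma). \]
Property (3) follows from
\[ D'^M_A r_P^G\pi \cong D'^M_B\, r_{P^-}^G(\pi^\vee) \cong r_{P^-}^G D'^G_B(\pi^\vee) = r_{P^-}^G D'^G_A(\pi). \]
Here property (3) of Proposition \ref{pro:dhuniq} has been applied to the parabolic $P^-$.

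The step I expect to be the main obstacle is showing that $D'_A$ preserves Frobenius reciprocity. One has to see that the contragredient isomorphisms above carry the first adjunction $(r_P^G, i_P^G)$ to the second adjunction $(i_{P^-}^G, r_P^G)$, i.e. they send the units and counits $\epsilon_P^G,\eta_P^G$ to $\zeta,\kappa$ for the appropriate parabolic. This is essentially the classical derivation of second adjointness from Casselman's pairing (Bernstein's argument, as in \cite[VI 9.6]{Ren10}). Once it is established, the hypothesis that $D'_B$ permutes first and second adjointness converts into the statement that $D'_A$ sends $\epsilon_P^G,\eta_P^G$ to $\epsilon_{P^-}^G,\eta_{P^-}^G$. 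That is the Frobenius-compatibility required in Proposition \ref{pro:dguniq}. To handle this I would first try to cite the bookkeeping carried out in \cite{YS26} for Proposition \ref{pro:dguniq}, checking only that the composite transformations match on evaluation maps. The explicit formulas for $\Phi,\Psi$ and $\eta_P^G,\epsilon_P^G$ recorded above make this a direct, if tedious, verification.

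Finally, Proposition \ref{pro:dguniq} yields $D'^M_A \cong D^M_A$ for all $M$. Hence
\[ D'^M_B \cong D'^M_B \circ (-)^{\vee\vee} = D'^M_A \circ (-)^\vee \cong D^M_A \circ (-)^\vee \cong D^M_B \]
by Proposition \ref{pro:dhprp}(6), which proves the uniqueness.
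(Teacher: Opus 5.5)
Your argument is correct, but it takes a different route from the paper. The paper cites \cite{YS26} for this statement, and its proof of the disconnected analogue, Proposition \ref{pro:dhuniqdisc}, shows the intended argument. One works on each $\mc{R}_t(G)$ and sets $\mc{B}=\prod_{|I|=t}\mc{R}_t(M_I)$. By (5), both families are isomorphic to $(-)^\vee$ on $\mc{B}$. This isomorphism is then lifted to $\mc{R}_t(G)$ by the contravariant lifting isomorphism lemma of \cite{YS26}, using the adjoint pairs $(i,r)$, $(\bar i,\bar r)$ and hypotheses (2)--(4). That route compares $D'_B$ with $D_B$ directly, so $(-)^\vee$ enters only through hypothesis (5).

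You instead use $D\mapsto D\circ(-)^\vee$ to turn the hypotheses of Proposition \ref{pro:dhuniq} into those of Proposition \ref{pro:dguniq}. This needs only the covariant uniqueness and shows the two uniqueness statements are formally equivalent. The price is exactly the step you flag. Under $(i_P^G\sigma)^\vee\cong i_P^G(\sigma^\vee)$ and $(r_P^G\pi)^\vee\cong r_{P^-}^G(\pi^\vee)$, the contragredient must send $\epsilon_P^G(\tau)$ and $\eta_P^G(\pi)$ to $\zeta_P^G(\tau^\vee)$ and $\kappa_P^G(\pi^\vee)$. Here $\zeta,\kappa$ are the unit and counit of the specific second adjunction used in (4). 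Note that this is the adjunction with $i_P^G$ left adjoint to $r_{P^-}^G$, not $i_{P^-}^G$ left adjoint to $r_P^G$ as you wrote. The second-to-first half of (4) for $D'_B$ then yields $\epsilon_{P^-}^G$ and $\eta_{P^-}^G$, as Proposition \ref{pro:dguniq} requires. This compatibility holds when second adjointness is obtained from first adjointness by duality, as in Corollary \ref{cor:secadjoint}. It is the same fact the paper treats as clear in the proof of Proposition \ref{pro:dhctr}.

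Finally, your last step does not need \cite{BBK18}. Apply Proposition \ref{pro:dguniq} to both families $D'_B\circ(-)^\vee$ and $D_B\circ(-)^\vee$. This gives $D'_B\circ(-)^\vee\cong D_A\cong D_B\circ(-)^\vee$, and composing with $(-)^\vee$ gives $D'_B\cong D_B$ directly.
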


\subsection{Equivariance properties of $D_A^G$ and $D_B^G$} \label{sub:dgequiv}

In this subsection we explore the equivariance properties of the functors $D_A^G$ and $D_B^G$ with respect to automorphisms of $G$. The former will be used in the next section for the construction of the functor $D_A^{\tilde G}$ for a disconnected group $\tilde G$.

Let $a$ be an $F$-automorphism of the connected reductive group $G$ that preserves the chosen minimal parabolic pair $(P_0,M_0)$. Then $a$ induces a bijection on the set $S$.

For a representation $\pi \in \mc{R}(G)$ define $a\pi \in \mc{R}(G)$ to be the representation of $G(F)$ twisted by $a^{-1}$, that is, it has underlying vector space $V_\pi$ and $(a\pi)(g)=\pi(a^{-1}(g))$. Assume given two representations $\pi_1,\pi_2 \in \mc{R}(G)$ and an isomorphism $\theta_a : a\pi_1 \to \pi_2$. We will define an isomorphism $aY_t(\pi_1) \to Y_t(\pi_2)$ for any $0 \leq t \leq r$. 

Consider a parabolic subgroup $P \subset G$. We have $X_P(\pi)=i_P^G(r_P^G(\pi))$. Composing \eqref{eq:ipg_auto} and \eqref{eq:rpg_auto} we obtain the isomorphism
\[ c_P(\pi_1): a(i_{P}^G(r_{P}^G(\pi_1))) \to i_{aP}^G(r_{aP}^G(a\pi_1)), \]
which we further compose with the isomorphism
\[ i_{aP}^G(r_{aP}^G(a\pi_1)) \to i_{aP}^G(r_{aP}^G(\pi_2))\]
obtained functorially from $\theta_a$ to arrive at an isomorphism 
\[ a(i_{P}^G(r_{P}^G(\pi_1))) \to i_{aP}^G(r_{aP}^G(\pi_2)), \]
that is, an isomorphism 
\begin{equation} \label{eq:xpat}
X_P(a,\theta_a) : aX_{P}(\pi_1)\xrightarrow{c_P(\pi_1)}X_{aP}(a\pi_1) \xrightarrow{X_{aP}(\theta_a)} X_{aP}(\pi_2). 
\end{equation}
We can make this explicit as follows. 
\begin{eqnarray*}
   X_P(\pi)&=&\{f : G(F) \to (V_\pi)_{U(F)}|f(pg)=\pi(p)f(g)\},\\
    X_P(a,\theta_a)(f)(g)&=&\theta_a(f(a^{-1}(g))).
\end{eqnarray*}
In addition, for $J \subset S$ we define the isomorphism
\[ \lambda_J(a) : \Lambda_J \to \Lambda_{aJ} \]
as the $|S-J|$-th exterior power of the isomorphism $\C^{S-J} \to \C^{S-aJ}$ induced by the bijection $a : (S-J) \to (S-aJ)$.
The tensor product $X_{P_J}(a,\theta_a) \otimes \lambda_J(a)$ provides an isomorphism
\[ Y_J(a,\theta_a) : aY_{J}(\pi_1) \to Y_{aJ}(\pi_2) \]
which is the composition of $c_J^a(\pi_1):=c_{P_J}(\pi_1)\otimes \lambda_J(a): aY_J(\pi_1)\to Y_{aJ}(a\pi_1)$ with $Y_{aJ}(a\pi_1)\to Y_{aJ}(\pi_2)$. Note that these two maps are $G(F)$-equivariant isomorphisms. Putting these together for all $J \subset S$ with $|J|=t$ we obtain the desired isomorphism
\begin{equation} \label{eq:xtaut}
Y_t(a,\theta_a) : aY_t(\pi_1) \xrightarrow{c_t^a(\pi_1)} Y_t(a\pi_1) \to Y_t(\pi_2).
\end{equation}

More precisely, for $y_t \in Y_t(\pi_1)$ with $y_t = \sum_J y_t(J)$, $y_t(J) \in Y_J(\pi_1)$, we have
\begin{equation} \label{eq:xtaut-prec}
Y_t(a,\theta_a)(y_t)({aJ}) = Y_J(a,\theta_a)(y_t(J)).
\end{equation}
Note that, for any $\pi \in \mc{R}(G)$, the vector spaces $aY_t(\pi)$ and $Y_t(\pi)$ are naturally identified, just the action of $G(F)$ on them differs by the twist by $a$. In particular, both $aY_\bullet(\pi)$ and $Y_\bullet(\pi)$ are complexes (with the same differential $d_t$) of representations of $G(F)$.

The following lemma will be used in the proof of Proposition \ref{pro:dgequiv}.

\begin{lem}\ \\[-15pt] \label{lem:equiv}
    \begin{enumerate}
        \item For two parabolic subgroups $P \subset Q \subset G$ one has $X_P(a,\theta_a)\circ\varphi^Q_P=\varphi^{aQ}_{aP}\circ X_Q(a,\theta_a)$.
        \item For two subsets $I \subset J \subset S$ with $|J|=|I|+1$ one has $\lambda_I(a)\circ\epsilon^J_I = \epsilon^{aJ}_{aI}\circ \lambda_J(a)$.
    \end{enumerate}
\end{lem}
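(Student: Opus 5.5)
Both identities are checked directly from the explicit descriptions already given, so the plan is to compute each side on an arbitrary element and compare.

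\textbf{Part (1).} I would take $f \in X_Q(\pi_1)$ and evaluate both sides at $g \in G(F)$. View $f$ as a function $G(F) \to (V_{\pi_1})_{V(F)}$, where $V$ is the unipotent radical of $Q$ and $U \supset V$ that of $P$. As recalled after \eqref{eq:phipq}, $\varphi^Q_P$ post-composes with the projection $p_{V,U} : (V_{\pi})_{V(F)} \to (V_{\pi})_{U(F)}$. The map $X_P(a,\theta_a)$ is given by $F \mapsto (g \mapsto \theta_a(F(a^{-1}(g))))$, where $\theta_a$ denotes the map induced on coinvariants. The unipotent radical of $aP$ is $aU$, and $\theta_a$ is a linear map $V_{\pi_1} \to V_{\pi_2}$ intertwining $\pi_1 \circ a^{-1}$ with $\pi_2$. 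Hence it sends the span of $\pi_1(u)v - v$, $u \in U(F)$, into the span of $\pi_2(au)w - w$, and so it descends to $(V_{\pi_1})_{U(F)} \to (V_{\pi_2})_{aU(F)}$, and likewise for $V$.

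The left side is then $g \mapsto \theta_a(p_{V,U}(f(a^{-1}g)))$. The right side is $g \mapsto p_{aV,aU}(\theta_a(f(a^{-1}g)))$. These agree because the induced maps on coinvariants commute with the projections: both are induced from the same linear map $\theta_a$ on $V_{\pi_1}$ by passing to quotients.

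The only care needed, and the main obstacle, is to make sure the abstract definition matches the explicit formulas. Concretely, $c_P$ is built from \eqref{eq:ipg_auto} and \eqref{eq:rpg_auto}, while $\varphi^Q_P$ is built from the stage isomorphisms \eqref{eq:ipg_stages}, \eqref{eq:rpg_stages} and the unit $\eta$. I would simply rely on the explicit formulas stated in the paper for $\varphi^Q_P$ and for $X_P(a,\theta_a)$. The normalizing characters cause no trouble: $\delta_P^{1/2}$ is transported to $\delta_{aP}^{1/2}$ under $a$, since $\delta_{aP} \circ a = \delta_P$, and $\varphi$ involves no modulus twist.

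\textbf{Part (2).} Write $J \sm I = \{i\}$, so that $aJ \sm aI = \{a(i)\}$. Take a generator $\omega = e_{s_1} \wedge \dots \wedge e_{s_k}$ of $\Lambda_J$, where $\{s_1,\dots,s_k\} = S - J$. Then
\[ \lambda_I(a)(\epsilon^J_I(\omega)) = \lambda_I(a)(\omega \wedge e_i) = e_{a s_1} \wedge \dots \wedge e_{a s_k} \wedge e_{a i}. \]
On the other hand,
\[ \epsilon^{aJ}_{aI}(\lambda_J(a)\omega) = e_{a s_1} \wedge \dots \wedge e_{a s_k} \wedge e_{a i}. \]
The two agree. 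This is simply the statement that the exterior-power functor is compatible with the bijection $\C^{S-I} \to \C^{S-aI}$, which restricts to the bijection on $S-J$ and sends $e_i$ to $e_{a(i)}$.
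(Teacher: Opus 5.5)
Your proposal is correct and follows the paper's own route. For part (1) you use the same observation: post-composition with the coinvariant projection commutes with pre-composition by $a^{-1}$ followed by post-composition with the map induced by $\theta_a$. For part (2) you do the same direct wedge computation, using $aJ \setminus aI = \{a(i)\}$. You also check that $\theta_a$ descends to a map $(V_{\pi_1})_{U(F)} \to (V_{\pi_2})_{aU(F)}$, which is a useful detail the paper leaves implicit in ``unwinding the definitions.''
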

\begin{proof}
Both claims follow by unwinding the definitions. More precisely, recalling that $\varphi^Q_P$ is given by postcomposition with the natural projection $(V_\pi)_V \to (V_\pi)_U$, where $Q=LV$ and $P=MU$, while $X_P(a,\theta_a)$ is given by precomposition with $a^{-1}$ and postcomposition with $\theta_a$, the first claim is immediate.
    
For the second claim we recall that $\lambda_J(a)$ is given by the top exterior power of the isomorphism $\C^{S-J} \to \C^{S-aJ}$ induced by the bijection $a : (S-J) \to (S-aJ)$. The analogous formula holds for $\lambda_I(a)$. On the other hand, $\epsilon^J_I(\omega)=\omega \wedge s$, where $\{s\} = J -I$. But then $\{a(s)\}= aJ-aI$ and $\epsilon^{aJ}_{aI}(\omega')=\omega' \wedge a(s)$, and the claim is obvious.
\end{proof}

\begin{pro} \label{pro:dgequiv}
    \begin{enumerate}
        \item The isomorphism $Y_t(a,\theta_a)$ is $G(F)$-equivariant, i.e. it is an isomorphism of $G(F)$-representations.
        \item The differentials $d_t : Y_t(\pi_i) \to Y_{t-1}(\pi_i)$ intertwine $Y_t(a,\theta_a)$ with $Y_{t-1}(a,\theta_a)$ and $c_t^a(\pi_1)$ with $c_{t-1}^a(\pi_1)$.
        \item The isomorphism $Y_t(a,\theta_a)$ is functorial in triples $(\pi_1,\pi_2,\theta_a)$: given a second triple $(\pi_1',\pi_2',\theta'_a)$ and homomorphisms $f_i : \pi_i \to \pi_i'$ that intertwine $\theta_a$ and $\theta'_a$, the homomorphisms $Y_t(f_i) : Y_t(\pi_i) \to Y_t(\pi_i')$ intertwine $Y_t(a,\theta_a)$ and $Y_t(a,\theta'_a)$.
        \item The isomorphisms $Y_t(a,\theta_a)$ compose: Given another $F$-automorphism $b$ of $G$ that preserves $(P_0,M_0)$ and a homomorphism $\theta_b : b\pi_2 \to \pi_3$, we have the equality $Y_t(ba,\theta_b\circ b(\theta_a))=Y_t(b,\theta_b)\circ b(Y_t(a,\theta_a))$ of homomorphisms $baY_t(\pi_1) \to Y_t(\pi_3)$.
    \end{enumerate}
\end{pro}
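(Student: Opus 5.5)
All four claims will be checked summand by summand, using the explicit formulas $X_P(a,\theta_a)(f)(g)=\theta_a(f(a^{-1}(g)))$ and \eqref{eq:xtaut-prec}, which reduce everything to statements about the individual isomorphisms $Y_J(a,\theta_a)=X_{P_J}(a,\theta_a)\otimes\lambda_J(a)$.

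For (1), it suffices to show that each $X_P(a,\theta_a)$ is $G(F)$-equivariant as a map $aX_P(\pi_1)\to X_{aP}(\pi_2)$; the factor $\lambda_J(a)$ acts on trivial representations and causes no issue. Note that $X_P(a,\theta_a)$ is built from $c_P(\pi_1)$, the composite of \eqref{eq:ipg_auto} and \eqref{eq:rpg_auto}, which are isomorphisms of representations, followed by $X_{aP}(\theta_a)$, which is a morphism since $X_{aP}$ is a functor. Concretely, for $x\in G(F)$ the action of $x$ on $aX_P(\pi_1)$ is right translation by $a^{-1}(x)$. Hence
\[ X_P(a,\theta_a)(R_{a^{-1}(x)}f)(g)=\theta_a(f(a^{-1}(g)a^{-1}(x)))=\theta_a(f(a^{-1}(gx))), \]
which is right translation by $x$ of $X_P(a,\theta_a)(f)$. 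I will also check that the result lies in $X_{aP}(\pi_2)$. This uses that $\theta_a$ intertwines $a\pi_1$ with $\pi_2$, so it maps $U(F)$-coinvariants of $\pi_1$ to $aU(F)$-coinvariants of $\pi_2$, and that the modulus characters satisfy $\delta_{aP}\circ a=\delta_P$.

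For (2), I will evaluate both sides on $y_t=\sum_J y_t(J)$ at a component $aI$ with $|I|=t-1$. On one side,
\[ d_{t-1+1}(Y_t(a,\theta_a)y_t)(aI)=\sum_{aI\subset J'}\psi^{J'}_{aI}\bigl(Y_{a^{-1}J'}(a,\theta_a)(y_t(a^{-1}J'))\bigr). \]
Reindexing by $J=a^{-1}J'$ ranges over $J\supset I$, since $a$ is a bijection on subsets of $S$ preserving inclusion and cardinality. Each term $\psi^{aJ}_{aI}\circ Y_J(a,\theta_a)$ equals $Y_I(a,\theta_a)\circ\psi^J_I$ by tensoring the two parts of Lemma \ref{lem:equiv}. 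Summing over $J$ gives $Y_{t-1}(a,\theta_a)(d_t y_t)(aI)$. The statement for $c^a_t(\pi_1)$ is the special case $\pi_2=a\pi_1$, $\theta_a=\tx{id}$.

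For (3), the map $Y_t(f_i)$ is postcomposition of functions with the map on coinvariants induced by $f_i$. The identity therefore reduces pointwise to $f_2\circ\theta_a=\theta'_a\circ f_1$, which holds because the $f_i$ intertwine $\theta_a$ and $\theta'_a$.

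For (4), I will compare $X_P$-components first:
\[ X_{aP}(b,\theta_b)\bigl(X_P(a,\theta_a)f\bigr)(g)=\theta_b\bigl(\theta_a(f(a^{-1}b^{-1}g))\bigr), \]
which is the formula for $X_P(ba,\theta_b\circ b(\theta_a))$, because $b(\theta_a)$ has the same underlying linear map as $\theta_a$. On the sign part, $\lambda_{aJ}(b)\circ\lambda_J(a)=\lambda_J(ba)$ by functoriality of exterior powers.

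The main obstacle is bookkeeping rather than substance. In (1) one must keep track of which twist acts where and confirm that the target space is correct. The latter means checking the left $aP(F)$-equivariance, with the $\delta^{1/2}$ normalization, of $g\mapsto\theta_a(f(a^{-1}g))$, which is the step most prone to error.
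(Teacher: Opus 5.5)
Your proposal is correct and follows the paper's proof. Both arguments reduce everything to the explicit formula for $X_P(a,\theta_a)$ and the identity $\lambda_J(ba)=\lambda_{aJ}(b)\lambda_J(a)$, and both prove (2) by the same reindexed computation using both parts of Lemma \ref{lem:equiv}. Your additional checks only make explicit what the paper leaves as ``by construction'' and ``similar'': that $g\mapsto\theta_a(f(a^{-1}g))$ lands in $X_{aP}(\pi_2)$, and that the $c^a_t$ claim is the special case $\pi_2=a\pi_1$, $\theta_a=\tx{id}$.
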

\begin{proof}
    (1) The isomorphism $X_J(a,\theta_a)$ is $G(F)$-equivariant by construction, and both source and target of $\lambda_J(a)$ carry the trivial $G(F)$-action.

    (2) We abbreviate $Y_t(a,\theta_a)$ by $\theta_t$ and $Y_J(a,\theta_a)$ by $\theta_J$. Let $y_t \in Y_t(\pi_1)$ and write again $y_t = \sum_J y_t(J)$, the sum running over $J \subset S$ with $|J|=t$. For $I \subset S$ with $|I|=t-1$, the following calculation completes the proof,
    \begin{eqnarray*}
        d(\theta_t(y_t))({aI})
        &=&\sum_{\substack{I \subset J \subset S\\ |J|=t}} \psi^{aJ}_{aI}(\theta_t(y_t)({aJ}))\\
        \eqref{eq:xtaut-prec}&=&\sum_{\substack{I \subset J \subset S\\ |J|=t}} \psi^{aJ}_{aI}(\theta_J(y_t(J)))\\
        (*)&=&\sum_{\substack{I \subset J \subset S\\ |J|=t}} \theta_I(\psi^{J}_{I}(y_t(J)))\\
        &=&\theta_I\Big(\sum_{\substack{I \subset J \subset S\\ |J|=t}} \psi^{J}_{I}(y_t(J))\Big)\\
        &=&\theta_I(d(y_t)(I))
        =\theta_{t-1}(d(y_t))({aI})
    \end{eqnarray*}
    where the equality marked $(*)$ rests on the claim $\theta_I \circ \psi^J_I = \psi^{aJ}_{aI} \circ \theta_J$, which in turn is implied by Lemma \ref{lem:equiv}. The computation for $c_t^a(\pi_1)$ is similar.

    (3) The proof reduces immediately to verifying that $X_P(a,\theta'_a)\circ X_P(f_1) = X_{aP}(f_2) \circ X_P(a,\theta_a)$ for a parabolic subgroup $P \subset G$. This follows from the functoriality of the isomorphisms \eqref{eq:ipg_auto} and \eqref{eq:rpg_auto} and the assumed identity $\theta'_a\circ f_1 = f_2 \circ \theta_a$, to which one applies the functor $X_P$.

    (4) This follows from the fact that each of the isomorphisms \eqref{eq:ipg_auto} and \eqref{eq:rpg_auto}, computed for the product $ba$, is equal to the composition of the corresponding isomorphism for $b$ and that for $a$, from the fact that the functor $X_P$ (like any functor) respects composition of morphisms, and from the fact that $\lambda_J(ba)=\lambda_{aJ}(b)\lambda_J(a)$.
\end{proof}

As a direct consequence, the isomorphisms $Y_t(a,\theta_a)$ induce a $G(F)$\-isomorphism
\[Y(a,\theta_a): aD_A^G(\pi_1)\xrightarrow{c^a(\pi_1)}D_A^G(a\pi_1)\xrightarrow{D_A^G(\theta_a)} D_A^G(\pi_2)\]

We now turn to the functor $D_B^G$. For this purpose, we need to make some preparations. Recall that we have a $G(F)$-bimodule $\mc{H}$. This amounts to saying we have a $G(F)\times G(F)$-action via the following rule: for any $g_1, g_2, x\in G(F)$ and $f\in \mc{H}$,
\[((g_1,g_2)\cdot f)(x):=f(g_1^{-1}xg_2)\]
We now twist the action by the given automorphism $a$ of $G$ as follows. Let $(1\times a)\mc{H}$ (resp.~$(a^{-1}\times 1)\mc{H}$) be the $G(F)\times G(F)$-representation, whose underlying vector space is $\mc{H}$, with action
\[((g_1,g_2)\cdot f)(x):=f(g_1^{-1}x a^{-1}(g_2)).\ (\tx{resp.} ((g_1, g_2)\cdot f)(x):= f(a(g_1^{-1})xg_2))\]

\begin{lem} \label{lem:equivedh}
    We have $G(F)\times G(F)$-isomorphism $\psi: (1\times a)\mc{H} \to (a^{-1}\times 1)\mc{H}$, by $\psi(f)=f\circ a^{-1}$. This induces the following functorial isomorphism for any smooth $G(F)$-representation $\pi$:
    \[a\tx{Hom}_G(\pi, \mc{H}) \to \tx{Hom}_G(a\pi, \mc{H})\]
\end{lem}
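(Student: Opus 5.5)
The proof splits into two checks: that $\psi$ is a well-defined $G(F)\times G(F)$-equivariant bijection, and that it induces the stated isomorphism on $\tx{Hom}$ spaces, where the main work is bookkeeping with twisted actions.

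\textbf{Step 1: $\psi$ is an isomorphism.} Since $a$ is an $F$-automorphism of $G$, it induces a homeomorphism of $G(F)$. Hence $f\mapsto f\circ a^{-1}$ preserves local constancy and compact support. It is a linear bijection $\mc{H}\to\mc{H}$ with inverse $f\mapsto f\circ a$.

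\textbf{Step 2: equivariance of $\psi$.} Take $f\in(1\times a)\mc{H}$ and compute both sides at $x\in G(F)$. On one side,
\[ \psi((g_1,g_2)\cdot f)(x) = ((g_1,g_2)\cdot f)(a^{-1}(x)) = f(g_1^{-1}a^{-1}(x)a^{-1}(g_2)). \]
On the other side, using the action on $(a^{-1}\times 1)\mc{H}$,
\[ ((g_1,g_2)\cdot \psi(f))(x) = \psi(f)(a(g_1^{-1})xg_2) = f(a^{-1}(a(g_1^{-1})xg_2)) = f(g_1^{-1}a^{-1}(x)a^{-1}(g_2)). \]
The two expressions agree, so $\psi$ is $G(F)\times G(F)$-equivariant.

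\textbf{Step 3: the isomorphism on $\tx{Hom}$.} Regard $\tx{Hom}_G(\pi,\mc{H})$ as computed with respect to the left (first-factor) action. The remaining right action gives a left action via $g\cdot x = x\cdot g^{-1}$, as in the definition of $D_{\tx{coh}}^G$.

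First, observe that $\tx{Hom}_G(\pi,(1\times a)\mc{H})$, with its residual second-factor action, is by definition $a\tx{Hom}_G(\pi,\mc{H})$. The first-factor action is unchanged, so the Hom space is the same vector space. The residual action is precomposed with $a^{-1}$, and this matches the convention $(a\rho)(g)=\rho(a^{-1}g)$.

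Next, there is a natural identification $\tx{Hom}_G(\pi,(a^{-1}\times 1)\mc{H}) = \tx{Hom}_G(a\pi,\mc{H})$. A map $\phi$ that is equivariant for the first-factor action $f\mapsto f(a(g^{-1})\,\cdot\,)$ satisfies $\phi(\pi(g)v)=\lambda(a(g))\phi(v)$, where $\lambda$ is the usual left action. Setting $h=a(g)$, this reads $\phi(\pi(a^{-1}h)v)=\lambda(h)\phi(v)$, i.e.\ $\phi$ intertwines $a\pi$ with $\lambda$. The second-factor action is untouched by this identification.

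Finally, postcomposition with $\psi$ gives a bijection between these Hom spaces compatible with the second-factor actions. The composite
\[ \phi\mapsto \psi\circ\phi, \qquad \text{i.e. } v\mapsto \phi(v)\circ a^{-1}, \]
is therefore the desired isomorphism $a\tx{Hom}_G(\pi,\mc{H})\to\tx{Hom}_G(a\pi,\mc{H})$. It is functorial in $\pi$ because it is given by postcomposition, which commutes with precomposition by morphisms $\pi'\to\pi$.

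The only delicate point is tracking the conventions for twisted actions, in particular that $a\pi=\pi\circ a^{-1}$ matches the $a^{-1}$ in $(a^{-1}\times1)\mc{H}$. The computations above confirm that they match.
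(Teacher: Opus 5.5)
Your proof is correct and follows the paper's argument: identify $a\,\mathrm{Hom}_G(\pi,\mathcal{H})$ with $\mathrm{Hom}_G(\pi,(1\times a)\mathcal{H})$, postcompose with $\psi$, and identify $\mathrm{Hom}_G(\pi,(a^{-1}\times 1)\mathcal{H})$ with $\mathrm{Hom}_G(a\pi,\mathcal{H})$. You also write out the equivariance computation for $\psi$ and check both identifications, which the paper leaves as clear.
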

\begin{proof}
    The first claim is clear. Note that we have identification of vector spaces 
    \[a\tx{Hom}_G(\pi, \mc{H}) = \tx{Hom}_G(\pi, (1 \times a)\mc{H}),\]
    compatible with the $G(F)$-action by definition. Composing with $\psi$, we have the isomorphism 
    \[\tx{Hom}_G(\pi, (1\times a)\mc{H})\to \tx{Hom}_G(\pi, (a^{-1}\times 1)\mc{H}),\]
    and the latter is equal to $\tx{Hom}_G(a\pi, \mc{H})$ as $G(F)$-representations.
\end{proof}

Let $\theta_a: a\pi_1\to \pi_2$ be a $G(F)$-isomorphism as above. Let $0\to Z_s\to \dots \to Z_0\to \pi_1\to 0$ be a finite projective resolution with finitely generated projectives. By Lemma \ref{lem:equivedh}, we have functorial isomorphism
\[a\tx{Hom}_G(Z_i,\mc{H}) \to \tx{Hom}_G(aZ_i, \mc{H})\]
and hence an isomorphism
\[c_h(\pi_1): aD_B^G(\pi_1)\to D_B^G(a\pi_1)\]
Composing with the inverse of the functorial isomorphism (recall that $D_B^G$ is contravariant)
\[D_B^G(\theta_a): D_B^G(\pi_2)\to D_B^G(a\pi_1)\]
we obtain the desired isomorphism:
\[H(\theta_a): aD_B^G(\pi_1)\to D_B^G(\pi_2)\]

It is plain that $(a\pi_1)^\vee =a \pi_1^\vee$ and that $(\theta_a^{-1})^\vee: a\pi_1^\vee = (a\pi_1)^\vee \to \pi_2^\vee$ is an isomorphism. Therefore, we can also speak of the equivariance map $Y(a,(\theta_a^{-1})^\vee): aD^G_A(\pi_1^\vee)\to D^G_A(\pi_2^\vee)$. Recall that $D_B^G$ is equivalent to $D_A^G \circ (-)^\vee$ by Proposition \ref{pro:dhprp}(6), the following proposition says that the two equivariance properties for $D_A^G((-)^\vee)$ and $D_B^G$ are compatible under this isomorphism.

\begin{pro} \label{pro:dhdgequ}
    Let $\beta_\pi: D_B^G(\pi)\to D_A^G(\pi^\vee)$ be the functorial isomorphism in Proposition \ref{pro:dhprp}(6). Then the following diagram commutes:
    \[ \xymatrix{aD_B^G(\pi_1) \ar[rr]^{c_h(\pi_1)} \ar[d]_{a\beta_\pi}&&D_B^G(a\pi_1)\ar[rr]^{D_B^G(\theta_a^{-1})}\ar[d]_{\beta_{a\pi_1}}&&D_B^G(\pi_2)\ar[d]_{\beta_{\pi_2}} \\
    aD_A^G(\pi_1^\vee)\ar[rr]^{c^a(\pi_1^\vee)}&&D_A^G((a\pi_1)^\vee) \ar[rr]^{D_A^G((\theta_a^{-1})^\vee)}&&D_A^G(\pi_2^\vee)   
    }\]
    where $c^a(\pi_1^\vee)$ is the composition $c^a(\pi_1^\vee): aD_A^G(\pi_1^\vee)\to D_A^G(a\pi_1^\vee)$ with the equality $a\pi_1^\vee = (a\pi_1)^\vee$. Note that the bottom map is $Y(a,(\theta_a^{-1})^\vee)$.
\end{pro}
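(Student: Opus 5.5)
The diagram splits into two squares, and I will check them separately.

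\textbf{The right square.} It commutes by naturality of $\beta$. The morphism $\theta_a^{-1} : \pi_2 \to a\pi_1$ gives $\beta_{a\pi_1}\circ D_B^G(\theta_a^{-1}) = D_A^G((\theta_a^{-1})^\vee)\circ \beta_{\pi_2}$. This holds because $\beta$ is a natural isomorphism of contravariant functors $D_B^G \Rightarrow D_A^G\circ(-)^\vee$. Here $(\theta_a^{-1})^\vee : (a\pi_1)^\vee \to \pi_2^\vee$ is the contragredient of $\theta_a^{-1}$, so the directions match. The only care needed is with the contravariance conventions.

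\textbf{The left square.} This is the real content, and I would reduce it to an intrinsic statement. Conjugation by $a$ defines an autoequivalence $\pi \mapsto a\pi$ of $\mc{R}(G)$. Both $c_h$ and $c^a$ are natural isomorphisms between the functors $\pi \mapsto aD_B^G(\pi)$ and $\pi \mapsto D_B^G(a\pi)$, respectively $\pi \mapsto aD_A^G(\pi^\vee)$ and $\pi\mapsto D_A^G((a\pi)^\vee)$. The claim is that
\[ \beta_{a\pi}\circ c_h(\pi)\circ (a\beta_\pi)^{-1} \quad\text{and}\quad c^a(\pi^\vee) \]
agree as natural isomorphisms $aD_A^G(\pi^\vee)\to D_A^G((a\pi)^\vee)$.

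To prove this I would use the uniqueness statements, Propositions \ref{pro:dguniq} and \ref{pro:dhuniq}. The conjugated family $\pi\mapsto a^{-1}D_B^G(a\pi)$, with $M$ replaced by $a^{-1}M$ throughout, again satisfies all the characterizing properties. Transporting $\beta$ along the family gives two identifications of this conjugated family with $D_A^G\circ(-)^\vee$. The isomorphism $\beta$ of \cite{BBK18} is constructed canonically from the data of $G$: the Bernstein projective resolutions via $X_P$-type terms, together with parabolic induction and restriction. That construction is transported by $a$ via \eqref{eq:ipg_auto}, \eqref{eq:rpg_auto} and Lemma \ref{lem:equivedh}. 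So the strategy is to trace the construction of $\beta$ and check that every ingredient is $a$-equivariant. Concretely:

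\begin{enumerate}
\item Write the resolution used in \cite{BBK18}, or in the proof of the uniqueness results, as a complex built from the functors $X_{P_J}$ together with the signs $\Lambda_J$.
\item Observe that $c_h$, computed on this resolution, is term by term the map $f\mapsto f\circ a^{-1}$ of Lemma \ref{lem:equivedh}.
\item Compare this with $c_J^a$, using Lemma \ref{lem:equiv} to handle both the differentials and the sign identity $\lambda_I(a)\epsilon^J_I=\epsilon^{aJ}_{aI}\lambda_J(a)$.
\end{enumerate}

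\textbf{Main obstacle.} The hard step is precisely this unwinding of $\beta$. It is not given explicitly in the paper, so it depends on the form of the \cite{BBK18} isomorphism. A shortcut would be a rigidity argument, but irreducibility alone only shows that the two maps differ by a scalar on each irreducible $\pi$, which is not enough. I would therefore first try to show that the discrepancy automorphism $\gamma$ of $D_A^G\circ(-)^\vee$ is compatible with $i_P^G$, $r_P^G$ and Frobenius reciprocity. On supercuspidal blocks, $\gamma$ is checked directly: there $D_A^G=\mathrm{id}$ and $c^a$ is the identity on underlying spaces, and the analogous statement for $c_h$ is the standard identification $\tx{Hom}_G(\pi,\mc{H})\cong \pi^\vee$, which is evidently $a$-equivariant. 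I would then invoke the uniqueness argument of \cite{YS26}, which propagates identities from supercuspidal blocks via first and second adjointness, to conclude that $\gamma=\mathrm{id}$.
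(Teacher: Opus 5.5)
You handle the right square as the paper does, but the naturality identity as you wrote it does not typecheck. It should read $\beta_{\pi_2}\circ D_B^G(\theta_a^{-1}) = D_A^G((\theta_a^{-1})^\vee)\circ\beta_{a\pi_1}$.

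\textbf{The left square: the rigidity fallback does not close.} Propositions \ref{pro:dguniq} and \ref{pro:dhuniq}, and the lifting lemma of \cite{YS26}, produce isomorphisms between families of functors. They do not say that a natural automorphism such as $\gamma$ is trivial, so you would first need to formulate and prove a rigidity statement. The more serious problem is what that rigidity argument would need as input. You would have to know that $\gamma$, hence $c^a$, is compatible with some isomorphism $D_A^G\circ i_P^G\cong i_{P^-}^G\circ D_A^M$ and with Frobenius reciprocity. The only such isomorphism available, Proposition \ref{pro:dgprp}, is obtained from Proposition \ref{pro:dhprp} by conjugating with $\beta$ itself. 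But $c^a$ is defined through the explicit complex $Y$. So showing that $c^a$ respects this isomorphism is an $a$-equivariance statement about $\beta$, of exactly the kind you are trying to prove. Likewise, your supercuspidal check assumes that $\beta$ on cuspidal blocks is the matrix-coefficient identification, which again is read off from the construction in \cite{BBK18}. The reduction is therefore circular unless you unwind $\beta$ anyway.

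\textbf{The first strategy: right route, missing device.} Your first strategy is the paper's route, but steps 1--3 put the structure in the wrong place, and the key device is missing.
\begin{itemize}
\item In \cite{BBK18} the complex is not a resolution of $\pi$ by $X_{P_J}$'s. It is the specialization complex $\mathfrak{C}$ of $G(F)\times G(F)$-modules $Z_I=i^{G\times G}_{P_I\times P_I^-}\mc{C}^\infty(M_I)$.
\item Its quotient $\bar{\mathfrak{C}}$ by the $\mc{C}^\infty_0(M_I)$-pieces resolves the bimodule $\mc{H}$.
\item The terms $X_{P_I^-}(\pi^\vee)$ appear only after applying $\tx{Hom}_G(\pi,-)$, via Claim \ref{clm:ihom} and matrix coefficients.
\end{itemize}
What you need is the transport isomorphism $T:(1\times a)\mathfrak{C}\to(a^{-1}\times 1)\mathfrak{C}$, given by $T(\Phi)(x,y)(m)=\Phi(a^{-1}x,a^{-1}y)(a^{-1}m)$.
\begin{itemize}
\item It is well defined and $G\times G$-equivariant because $a$ fixes $(M_0,P_0)$. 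Hence it carries $Z_I$ to $Z_{aI}$ and preserves the $\mc{C}^\infty_0$-subspaces.
\item It descends to $\bar T$ on $\bar{\mathfrak{C}}$, and on $\mc{H}\subset\bar Z_S$ it is precisely the map $\psi$ of Lemma \ref{lem:equivedh}.
\item Because $\bar T$ lifts $\psi$ to a resolution of $\mc{H}$, the map $\tx{Hom}_G(\pi,\bar T)$ computes $c_h$.
\item The map $\tx{Hom}_G(\pi,T)$ becomes the termwise maps $c_{P^-_I}$ on $DL(\pi^\vee)$.
\end{itemize}
This gives the commutative ladder from $aDL(\pi^\vee)\to a\tx{Hom}_G(\pi,\mathfrak{C})\to a\tx{RHom}_G(\pi,\mc{H})$ to the corresponding row for $a\pi$, which is the left square.

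\textbf{A caution for step 3.} $DL$ in \cite{BBK18} carries Borel--Serre signs for a chosen order on $S$. So the comparison with $c^a$, which is built from $\Lambda_J$ and $\lambda_J(a)$, must go through Fact \ref{fct:bs1} and Fact \ref{fct:bs2}(4),(5), not Lemma \ref{lem:equiv} alone. The factors $\epsilon(a^{-1},I)$ and $\tx{sgn}(a|S)$ appearing there are exactly what separates $D_A^{\tilde G}$ from ${}'D_A^{\tilde G}$, so this bookkeeping is not cosmetic.
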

\begin{proof}
    The right square commutes by the functoriality of $\beta$. It remains to check that the left square also commutes. Let us recall the construction of $\beta_\pi$ in \cite{BBK18}, and we refer to that paper for more details. We may assume that $G(F)$ has compact center, loc. cit. First, there is a so-called specialization complex $\mathfrak{C}$ consisting of $G(F)\times G(F)$-representations:
    \[0\to Z_S \to \bigoplus_{|I|=r-1} Z_I \to\dots\to Z_\emptyset \to 0,\]
    where $Z_I=i^{G\times G}_{P_I\times P_I^-}\mc{C}^\infty(M_I)$. We warn the reader that in \cite{BBK18}, their convention is $P_\emptyset=G$ while our convention is $P_\emptyset = P_0$. Then, for $\pi\in \mc{R}(G)$, we have a functorial isomorphism of complexes:
    \[DL(\pi^\vee)\to \tx{Hom}_{G(F)}(\pi, \mathfrak{C}),\]
    where $DL(\pi)$ is the complex $0\to X_r(\pi)\to\dots X_0(\pi)$ with a chosen total order in $S$ (see remark \ref{rem:sign}), and $\tx{Hom}_{G(F)}(-, Z_I)$ is the space of $G(F)$-equivariant homomorphisms with respect to the action of $G(F)$ for $i^G_P$ and viewed as a $G(F)$-representation via the action for $i^G_{P^-}$. More precisely, this isomorphism is given by the composition:
    \begin{align*}
    \tx{Hom}_{G(F)}(\pi, i^{G\times G}_{P\times P^-} \mc{C}^\infty(M)) &= \tx{Hom}_{M(F)}(r^G_P\pi, (1\times i^G_{P^-})\mc{C}^\infty (M))\\
    & = i^G_{P^-}\tx{Hom}_{M(F)}(r^G_P\pi,\mc{C}^\infty(M)) \\
    &= i^G_{P^-}(r^G_P\pi)^\vee=i^G_{P^-}r^G_{P^-}\pi^\vee 
    \end{align*}
    where the second equality is a direct verification, c.f. claim \ref{clm:ihom}, and the third equality is given by matrix coefficients. In particular, this isomorphism gives a complex with respect to the opposite simple roots $(M_0, P_0^-)$. As $a$ also preserves this pair, this does no harm by remark \ref{rem:dgwldf}. On the other hand, we may form a quotient $\bar{Z}_I=i^{G\times G}_{P\times P^-}(\mc{C}^\infty(M_I)/\mc{C}_0^\infty(M_I))$, where $\mc{C}_0^\infty(M_I)$ is a certain subspace; more precisely, it consists of those functions whose support is closed in the partial compactification of $M_I$, cf. \cite[Proposition 2.4]{BBK18}. In particular, $Z_S=\bar{Z}_S=\mc{C}^\infty(G)$. We then obtain a quotient map of complexes: $\mathfrak{C}\to \bar{\mathfrak{C}}$. This quotient gives an injective resolution $0\to \mc{H}\to \bar{\mathfrak{C}}$ with $\mc{H}\to \bar{Z}_S$ the inclusion map, and inducing an isomorphism of complexes in $\mc{D}^b(G)$:
    \[DL(\pi^\vee)\to \tx{RHom}_G(\pi, \mc{H}).\]

    Clearly, we have a commutative diagram:
    \begin{equation}\label{diag:dhdgeqvr}
    \xymatrix{
        a\tx{Hom}_G(\pi, \mathfrak{C})\ar[r]\ar[d]_{=} & a\tx{Hom}_G(\pi, \bar{\mathfrak{C}})\ar[d]_{=}\\
        \tx{Hom}_G(\pi, (1\times a)\mathfrak{C}) \ar[r] & \tx{Hom}_G(\pi, (1\times a)\bar{\mathfrak{C}}).
    }
    \end{equation}
    Here $(1\times a)i^{G\times G}_{P\times P^-}$ means that we twist on the action of $G(F)$ for $i^G_{P^-}$ by $a^{-1}$ and leave the other action unchanged. It is easy to verify that the following map
    \[T_P: (1\times a)i^{G\times G}_{P\times P^-}\mc{C}^\infty(M)\to (a^{-1}\times 1)i^{G\times G}_{aP\times aP^-}\mc{C}^\infty(aM),\]
    by 
    \[(T_P(F)(x,y))(m)=F(a^{-1}x, a^{-1}y)(a^{-1}m),\] 
    is well-defined and a $G(F)\times G(F)$-equivariant isomorphism. Since the automorphism $a$ preserves $(M_0,P_0)$, we may sum over all $I\subseteq S$ and obtain a $G(F)\times G(F)$-equivariant isomorphism 
    \[T: (1\times a)\mathfrak{C} \to (a^{-1}\times 1)\mathfrak{C},\]
    which induces an isomorphism on the quotient 
    \[\bar{T} : (1\times a)\bar{\mathfrak{C}} \to (a^{-1}\times 1)\bar{\mathfrak{C}}. \]
    Since $(1\times a)\bar{\mathfrak{C}}$ (resp.~$(a^{-1}\times 1)\bar{\mathfrak{C}}$) gives a resolution of $(1\times a)\mc{H}$ (resp.~$(a^{-1}\times 1)\mc{H}$), and the morphism $\bar{T}$ is compatible with the $\psi$ defined in Lemma \ref{lem:equivedh} (they both are given by precomposing with $a^{-1}$), we have following commutative diagram,
    \begin{equation} \label{diag:dgdheqvr'}
    \scalebox{0.95}{
    \xymatrix{
        \tx{Hom}_G(\pi, (1\times a)\mathfrak{C})\ar[d]_{T}\ar[r]& \tx{Hom}_G(\pi, (1\times a)\bar{\mathfrak{C}})\ar[d]_{\bar{T}}\ar[r]& \tx{RHom}_G(\pi, (1\times a)\mc{H})\ar[d]_{\psi} \\
        \tx{Hom}_G(\pi, (a^{-1}\times 1)\mathfrak{C})\ar[r]\ar[d]_{=}& \tx{Hom}_G(\pi, (a^{-1}\times 1)\bar{\mathfrak{C}})\ar[r]\ar[d]_{=}&\tx{RHom}_G(\pi,(a^{-1}\times 1)\mc{H} )\ar[d]_{=}\\
        \tx{Hom}_G(a\pi, \mathfrak{C}) \ar[r]&\tx{Hom}_G(a\pi, \bar{\mathfrak{C}})\ar[r]&\tx{RHom}_G(a\pi, \mc{H}).
    }}
    \end{equation}
    Combining diagrams \ref{diag:dhdgeqvr} and \ref{diag:dgdheqvr'}, we obtain the following commutative diagram
    \[\xymatrix{
        aDL(\pi^\vee)\ar[r]\ar[d]& a\tx{Hom}_G(\pi, \mathfrak{C})\ar[r]\ar[d]&a\tx{RHom}_G(\pi, \mc{H})\ar[d]\\
        DL(a\pi^\vee)\ar[r]&\tx{Hom}_G(a\pi, \mathfrak{C})\ar[r]&\tx{RHom}_G(a\pi, \mc{H}),
    }\] 
    It is plain that the vertical maps for $DL(\pi^\vee)$ and $\tx{RHom}_G(\pi, \mc{H})$ are exactly the equivariance homomorphisms we defined above, and we complete the proof.
\end{proof}

\subsection{The Borel--Serre resolution} \label{sub:bs}

In \cite{BoSe76}, Borel and Serre study the cohomology of the spherical Tits building of $G$. In \cite[\S3]{BoSe76} they employ a complex very similar to the complex of Theorem \ref{thm:aubmain}. In this subsection we will compare the two complexes, because they will eventually lead to two different versions of $D_A^{\tilde G}$.

Even though Borel and Serre use $\Z$-modules instead of $G(F)$\-representations, one can use the same construction in the setting of $G(F)$-representations and arrive at a complex whose terms are the functors $X_t$ of \S\ref{sub:defcomp}. These differ from the terms $Y_t$ of Theorem \ref{thm:aubmain} by the tensor factor $\Lambda_J$ of the direct factor $Y_J$ of $Y_t$, where $J \subset S$ satisfies $|J|=t$. The factor $\Lambda_J$ is responsible for adding signs into the differentials of the complex $Y$. Since it is omitted in the construction of Borel--Serre, the signs have to be added instead into the differentials manually. Borel and Serre do this as follows.

Write $r=|S|$ and fix a bijection $S \to \{1,\dots,r\}$. This has the effect of fixing a total order on $S$ and can also be understood as fixing an orientation on the chamber in the spherical building corresponding to the minimal parabolic subgroup $P_0$. Every facet of that chamber is then endowed with the induced orientation, which corresponds to endowing every subset of $S$ with the induced total order. Consider $I \subset J \subset S$ such that $|J|=|I|+1$. Thus $J=I \cup \{j\}$ for some $j \in S \sm I$. Define
\begin{equation} \label{eq:sign}
n(I,J)=\#\{i \in J|\, i \leq j\},\qquad \epsilon(I,J) = (-1)^{n(I,J)}.	
\end{equation}
Of course the quantities $n(I,J)$ and $\epsilon(I,J)$ depend on the choice of bijection $S \to \{1,\dots,r\}$. Define
\[ '\varphi_I^J : X_J \to X_I,\qquad '\varphi_I^J = \epsilon(I,J) \cdot \varphi_I^J\]
and 
\[ {}'d_t = \sum_{\substack{I \subset J\\ t=|J|=|I|+1}} {}'\varphi_I^J : X_t \to X_{t-1}. \]
A minor generalization of \cite[Corollary 3.3]{BoSe76}, which applies to the case $\pi=\mathbf{1}$, shows that
\[ 0 \to X_r(\pi) \to X_{r-1}(\pi) \to  \dots \to X_t
(\pi) \]
is exact for $\pi \in \mc{R}_t(G)$. 

\begin{fct} \label{fct:bs1}
    For $I=\{i_1<\dots<i_t\} \subset S$ define $e_I \in \Wedge^{|I|}\C^I$ as $e_I = e_{i_1} \wedge \dots \wedge e_{i_t}$. Define $\tilde e_{S-I} \in \Wedge^{|S-I|}\C^{S-I}$ by the condition $\tilde e_{S - I} \wedge e_I = (-1)^{|I|} e_S$. Let $\kappa_I : \C \to \Lambda_I$ be the isomorphism defined by $1 \mapsto \tilde e_{S-I}$. The isomorphisms $\tx{id}_{X_I}\otimes \kappa_I : X_I \to Y_I$ splice to isomorphisms $X_t \to Y_t$ that translate the differential $'d_t$ to the differential $d_t$.
\end{fct}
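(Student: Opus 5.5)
The plan is to reduce the statement to a single sign identity among the lines $\Lambda_I$ and then verify that identity by a short exterior-algebra computation. Since $Y_t=\bigoplus_{|J|=t}Y_J$ and $X_t=\bigoplus_{|J|=t}X_J$, the spliced map $\bigoplus_J(\tx{id}_{X_J}\otimes\kappa_J)$ is an isomorphism $X_t\to Y_t$ because each $\kappa_J$ is. The differentials $d_t$ and $'d_t$ are sums of components indexed by pairs $I\subset J$ with $|J|=|I|+1$. So it suffices to check, for each such pair with $J=I\cup\{j\}$, that
\[ (\tx{id}_{X_I}\otimes\kappa_I)\circ{}'\varphi^J_I=\psi^J_I\circ(\tx{id}_{X_J}\otimes\kappa_J). \]
Because $\psi^J_I=\varphi^J_I\otimes\epsilon^J_I$ and $'\varphi^J_I=\epsilon(I,J)\varphi^J_I$, the functorial factor $\varphi^J_I$ appears on both sides. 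Everything therefore reduces to the identity of vectors in $\Lambda_I$
\[ \epsilon^J_I(\tilde e_{S-J})=\tilde e_{S-J}\wedge e_j=\epsilon(I,J)\,\tilde e_{S-I}. \]

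To prove this identity, I will use that the wedge map $\Wedge^{|S-I|}\C^{S-I}\to\Wedge^r\C^S$, $\omega\mapsto\omega\wedge e_I$, is an isomorphism of lines. Hence it is enough to compare both sides after wedging with $e_I$. By definition, $\tilde e_{S-I}\wedge e_I=(-1)^{|I|}e_S$.

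For the left side, I first locate $j$ inside the ordered set $J$. It occupies position $n=n(I,J)$, so moving $e_j$ to the front gives $e_J=(-1)^{n-1}e_j\wedge e_I$. Then
\[ (\tilde e_{S-J}\wedge e_j)\wedge e_I=(-1)^{n-1}\tilde e_{S-J}\wedge e_J=(-1)^{n-1}(-1)^{|J|}e_S=(-1)^{n}(-1)^{|I|}e_S, \]
where the last step uses $|J|=|I|+1$. Comparing with $\tilde e_{S-I}\wedge e_I=(-1)^{|I|}e_S$ and using injectivity of $\wedge e_I$ yields $\tilde e_{S-J}\wedge e_j=(-1)^{n}\tilde e_{S-I}=\epsilon(I,J)\tilde e_{S-I}$, as required.

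The only delicate point is sign bookkeeping. One must make sure that $\epsilon^J_I$ wedges $e_j$ on the right, and that the normalization $(-1)^{|I|}$ in the definition of $\tilde e_{S-I}$ exactly absorbs the parity shift from $|J|=|I|+1$. Once the identity in $\Lambda_I$ holds for every pair, summing over $I\subset J$ shows that the isomorphisms $X_t\to Y_t$ carry $'d_t$ to $d_t$. As a consequence, the Borel--Serre sequence is a complex and is exact exactly where $Y$ is, by Theorem \ref{thm:aubmain}.
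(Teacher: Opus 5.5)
Your proof is correct. Reducing to the line identity $\tilde e_{S-J}\wedge e_j=\epsilon(I,J)\,\tilde e_{S-I}$ in $\Lambda_I$ and checking it after wedging with $e_I$ (using $e_j\wedge e_I=(-1)^{n(I,J)-1}e_J$ and $|J|=|I|+1$) gives exactly the right sign; the paper records this statement as a Fact without proof, and your computation is the routine exterior-algebra verification it leaves to the reader.
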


For applications to the definition of $D_A^{\tilde G}$ we now consider the setting of \S\ref{sub:dgequiv}, i.e. an $F$-automorphism $a$ of $G$ that preserves the chosen minimal parabolic pair $(M_0,P_0)$, and hence induces a permutation of $S$. Let $\pi_1,\pi_2 \in \mc{R}(G)$, and let $\theta_a : a\pi_1 \to \pi_2$ be an isomorphism. We then have the isomorphisms $X_P(a,\theta_a) : aX_P(\pi_1) \to X_{aP}(\pi_2)$ of \eqref{eq:xpat}, which splice to an isomorphism $X_t(a,\theta_a) : aX_t(\pi_1) \to X_t(\pi_2)$. They are $G(F)$-equivariant, functorial, and multiplicative in $a$, i.e. the analogs of Proposition \ref{pro:dgequiv}(1,3,4) hold. However, they do not intertwine the differentials $'d_t$, i.e. the analog of Proposition \ref{pro:dgequiv}(2) fails. This is because the sign $\epsilon(I,J)$ is not equivariant under automorphisms, i.e. $\epsilon(a(I),a(J))$ is generally not equal to $\epsilon(I,J)$. This can be repaired as follows. Denote by $\Sigma_t$ the symmetric group on $t$ letters.

\begin{dfn} \label{dfn:sign}
Let $I \subset S$ be a subset of cardinality $t$ and let $\sigma \in \Sigma_r$.
\begin{enumerate}
	\item Let $\sigma_I \in \Sigma_t$ be the unique element such that, if $I=\{i_1<i_2<\dots<i_t\}$, then $\sigma(I)=\{\sigma(i_{\sigma_I^{-1}(1)})<\dots<\sigma(i_{\sigma_I^{-1}(t)})\}$. 	
	\item Let $\epsilon(\sigma,I)=\tx{sgn}(\sigma_I)$.
\end{enumerate}
\end{dfn}

\begin{fct} \label{fct:bs2} 
    \begin{enumerate}
        \item Given $\sigma,\tau \in \Sigma_r$ we have $(\sigma\tau)_I=\sigma_{\tau(I)} \cdot \tau_I$. In particular, we have $\epsilon(\sigma\tau,I)=\epsilon(\sigma,\tau(I)) \cdot \epsilon(\tau,I)$.
        \item If $I$ is invariant under $\sigma$, then $\epsilon(\sigma,I)$ is the sign of the permutation $\sigma|_I$.
        \item For subsets $I \subset J \subset S$ with $|J|=|I|+1$, and $\sigma \in \Sigma_r$, we have
\[ \epsilon(\sigma(I),\sigma(J))/\epsilon(I,J) = \epsilon(\sigma,I)/\epsilon(\sigma,J). \]
        \item The isomorphisms $'X_I(a,\theta_a)=\epsilon(a^{-1},I)\cdot X_I(a,\theta_a) : aX_I(\pi_1) \to X_{aI}(\pi_2)$ splice to an isomorphism $'X_t(a,\theta_a) : aX_t(\pi_1) \to X_t(\pi_2)$ that, in addition to being $G(F)$-equivariant, functorial, and multiplicative, also commutes with the differential $'d_t$.
        \item The isomorphism $\tx{id}_{X_I}\otimes\kappa_I$ of Fact \ref{fct:bs1} translates $Y_t(a,\theta_a)$ to $\tx{sgn}(a|S) \cdot {}'X_t(a,\theta_a)$.
    \end{enumerate}
\end{fct}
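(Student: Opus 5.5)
The plan is to prove the five parts of Fact \ref{fct:bs2} in order, since (3) feeds into (4) and (5) combines (4) with Fact \ref{fct:bs1}. Parts (1) and (2) are pure combinatorics of $\sigma_I$. For (1), write $I=\{i_1<\dots<i_t\}$ and $\tau(I)=\{j_1<\dots<j_t\}$, so that $\tau(i_k)=j_{\tau_I(k)}$. Likewise $\sigma(j_l)$ occupies position $\sigma_{\tau(I)}(l)$ in $\sigma\tau(I)$. Composing these gives $(\sigma\tau)_I=\sigma_{\tau(I)}\tau_I$, and taking signs gives the multiplicativity of $\epsilon$. For (2), if $\sigma(I)=I$ then $\sigma_I$ is just the permutation $\sigma|_I$, written in the ordered labels of $I$.

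For (3), the cleanest route avoids counting inversions by using exterior algebra. Set $e_I=e_{i_1}\wedge\dots\wedge e_{i_t}$. The permutation $\sigma$ acts on $\C^S$ by $e_i\mapsto e_{\sigma(i)}$, and then
\[ \sigma(e_I)=e_{\sigma(i_1)}\wedge\dots\wedge e_{\sigma(i_t)}=\epsilon(\sigma,I)\,e_{\sigma(I)}. \]
Take $J=I\cup\{j\}$. Moving $e_j$ into its sorted position gives $e_I\wedge e_j=\pm e_J$, with the sign determined by the number of elements of $I$ larger than $j$. This number relates to $n(I,J)$ by
\[ \#\{i\in I: i>j\}=|J|-n(I,J), \]
so $e_I\wedge e_j=(-1)^{|J|}\epsilon(I,J)\,e_J$. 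Now apply $\sigma$ to both sides and compare with the same identity for the pair $(\sigma I,\sigma J)$. The factors $(-1)^{|J|}$ cancel, and this yields exactly (3).

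For (4), $G(F)$-equivariance, functoriality and multiplicativity follow from the corresponding properties of $X_I(a,\theta_a)$ in Proposition \ref{pro:dgequiv}. For multiplicativity we also need the sign to be a cocycle, $\epsilon((ba)^{-1},I)=\epsilon(b^{-1},aI)\,\epsilon(a^{-1},I)$, which is (1) applied with $\sigma=a^{-1}$ and $\tau=b^{-1}$ on the set $aI$. Compatibility with $'d_t$ amounts to the identity
\[ \epsilon(a^{-1},I)\,\epsilon(aI,aJ)=\epsilon(I,J)\,\epsilon(a^{-1},J), \]
combined with Lemma \ref{lem:equiv}(1). Apply (3) with $\sigma=a^{-1}$ to the pair $(aI,aJ)$:
\[ \epsilon(I,J)/\epsilon(aI,aJ)=\epsilon(a^{-1},aI)/\epsilon(a^{-1},aJ). \]
By (1), $\epsilon(a^{-1},aI)=\epsilon(a,I)^{-1}$, so this reads
\[ \epsilon(I,J)\,\epsilon(a,I)=\epsilon(aI,aJ)\,\epsilon(a,J). \]
That is the required identity once we know $\epsilon(a^{-1},I)=\epsilon(a,I)$; this holds because both equal $\pm1$ and, by (1), they are mutual inverses up to relabelling.

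For (5), we must compute $\lambda_I(a)(\tilde e_{S-I})$, where $\lambda_I(a)$ is the exterior power of $e_s\mapsto e_{as}$ on $\C^{S-I}$. Apply the full map $a$ on $\Wedge^r\C^S$ to the defining relation $\tilde e_{S-I}\wedge e_I=(-1)^{|I|}e_S$. The right side becomes $(-1)^{|I|}\,\mathrm{sgn}(a|S)\,e_S$. The left side becomes
\[ \lambda_I(a)(\tilde e_{S-I})\wedge\epsilon(a,I)\,e_{aI}. \]
Since $|aI|=|I|$, comparing with the defining relation for $\tilde e_{S-aI}$ gives
\[ \lambda_I(a)\,\kappa_I=\mathrm{sgn}(a|S)\,\epsilon(a,I)\,\kappa_{aI}. \]
Using $\epsilon(a,I)=\epsilon(a^{-1},I)$ again, this is exactly the claim of (5).

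The main obstacle is sign bookkeeping. The delicate points are the relation $\epsilon(a^{-1},I)=\epsilon(a,I)$ when the inverse moves the set $I$, and keeping the conventions $n(I,J)$ and $\tilde e$ straight. I would first check (3) and (5) on the transposition $r=2$ before trusting the general computation.
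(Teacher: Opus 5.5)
Your proofs of (1)--(3) are correct, and the exterior-algebra route to (3) is clean: it rests on $\sigma(e_I)=\epsilon(\sigma,I)\,e_{\sigma(I)}$ and $e_I\wedge e_j=(-1)^{|J|}\epsilon(I,J)\,e_J$. The gap is in (4) and (5), which both hinge on the identity $\epsilon(a^{-1},I)=\epsilon(a,I)$. \textbf{This identity is false.} Part (1) applied to $a^{-1}a=\tx{id}$ gives $\epsilon(a^{-1},aI)\,\epsilon(a,I)=1$, i.e.\ $\epsilon(a^{-1},aI)=\epsilon(a,I)$. Your ``relabelling'' replaces $I$ by $aI$, and $\epsilon(a^{-1},-)$ genuinely depends on the set.

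Concretely, take $S=\{1<2<3\}$ and $a\colon 1\mapsto 2\mapsto 3\mapsto 1$. This occurs, e.g., for $\mathrm{SL}_2^3\rtimes\Z/3\Z$ with cyclically permuted factors, or for triality on $D_4$, and the paper allows arbitrary finite component groups. For $I=\{1,2\}$, $a$ preserves the order of $I$, so $\epsilon(a,I)=1$. But $a^{-1}(1)=3>1=a^{-1}(2)$, so $\epsilon(a^{-1},I)=-1$. Your multiplicativity argument has the same defect. Part (1) with $\sigma=a^{-1}$, $\tau=b^{-1}$ on the set $aI$ computes $\epsilon((ba)^{-1},aI)$, not $\epsilon((ba)^{-1},I)$. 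In fact the claimed identity $\epsilon((ba)^{-1},I)=\epsilon(b^{-1},aI)\,\epsilon(a^{-1},I)$ fails for $b=a$, $I=\{1,2\}$ in this example: the left side is $\epsilon(a,I)=+1$, the right side is $(+1)(-1)$.

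This cannot be fixed by a better argument. With the sign read literally as $\epsilon(a^{-1},I)$, parts (4) and (5) are false in this example. Take $I=\{1\}\subset J=\{1,2\}$. Then $\epsilon(I,J)=\epsilon(aI,aJ)=1$ and $\epsilon(a^{-1},I)=1$, but $\epsilon(a^{-1},J)=-1$. So the components $X_J(\pi_1)\to X_{aI}(\pi_2)$ of ${}'d\circ{}'X_t(a,\theta_a)$ and ${}'X_{t-1}(a,\theta_a)\circ{}'d$ differ by a sign, and they are nonzero, e.g.\ for $\pi_1=\pi_2=\mathbf{1}$. For (5), $\lambda_J(a)(\tilde e_{\{3\}})=e_1=\tilde e_{\{1\}}$, whereas $\tx{sgn}(a|S)\,\epsilon(a^{-1},J)=-1$.

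For (4) and (5) to hold, the normalizing sign must be $\epsilon(a,I)$, equivalently $\epsilon(a^{-1},aI)$; presumably the latter is what is intended. With that reading your own computations already give a complete proof, and more directly:
\begin{enumerate}
\item (3) with $\sigma=a$ is exactly $\epsilon(a,I)\epsilon(I,J)=\epsilon(aI,aJ)\epsilon(a,J)$, i.e.\ compatibility with ${}'d_t$; no detour through $a^{-1}$ is needed.
\item (1) with $(\sigma,\tau)=(b,a)$ gives multiplicativity, $\epsilon(ba,I)=\epsilon(b,aI)\,\epsilon(a,I)$.
\item Your exterior-power computation $\lambda_I(a)\circ\kappa_I=\tx{sgn}(a|S)\,\epsilon(a,I)\,\kappa_{aI}$ is precisely (5).
\end{enumerate}
The later uses in the paper are unaffected: the identification ${}'D_A^{\tilde G}=D_A^{\tilde G}\otimes\epsilon_{\tilde G}$ follows from the corrected (5), and Lemma~\ref{lem:stein} only involves $I=\varnothing$.

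Finally, your planned sanity check with $r=2$ could not have caught this. Every element of $\Sigma_2$ is an involution, and whenever $a$ acts on $S$ with order at most $2$ the two signs coincide trivially. You need $r\geq 3$ and an element of order $3$.
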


\begin{rem}  If one prefers to think of a chosen total order on $S$, one can define $\sigma_I$ alternatively as the following permutation of the subset $I$ of $S$. For any two subsets $I_1,I_2$ of the same cardinality let $\beta_{I_2,I_1} : I_1 \to I_2$ be the unique bijection preserving the orders on $I_1$ and $I_2$ induced by the order on $S$. Define $\sigma_I : I \to I$ by $\beta_{I,\sigma(I)} \circ (\sigma|_I)$. The formula of Fact \ref{fct:bs2}(1) then becomes $(\sigma\tau)_I=\beta_{I,\tau(I)} \circ \sigma_{\tau(I)} \circ \beta_{\tau(I),I} \circ \tau_I$.
\end{rem}

\section{The Aubert involution for disconnected groups} \label{sec:aubdisc}

In this section we consider a non-archimedean local field $F$ and an affine algebraic group $\tilde G$ whose identity component $G$ is reductive. The topological group $\tilde G(F)$ is locally compact and totally disconnected and we consider again the category $\mc{R}(\tilde G)$ of smooth representations of $\tilde G(F)$ of finite length. The goal is to define a duality functor $D_A^{\tilde G} : \mc{R}(\tilde G) \to \mc{R}(\tilde G)$ in analogy with the case $\tilde G=G$. In fact, we will have $\tx{Res}\circ D_A^{\tilde G}=D_A^G\circ\tx{Res}$, where $\tx{Res} : \mc{R}(\tilde G) \to \mc{R}(G)$ denotes the restriction functor.

\subsection{The definition of $D_A^{\tilde G}$ and first properties} \label{sub:defdgdis}

Fix a minimal parabolic pair $(M_0,P_0)$ of $G$ and denote again by $A_0$ the maximal split central torus in $M_0$ and by $S \subset X^*(A_0)$ the set of simple (relative) roots of $A_0$ in $G$. Let $r=|S|$. For any $0 \leq t \leq r$ we define $\mc{R}_t(\tilde G)$ to be the preimage of $\mc{R}_t(G)$ under the restriction functor $\mc{R}(\tilde G) \to \mc{R}(G)$.

\begin{lem}
    The category $\mc{R}(\tilde G)$ has the orthogonal decomposition $\prod_t \mc{R}_t(\tilde G)$.
\end{lem}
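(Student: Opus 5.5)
The plan is to deduce the decomposition for $\tilde G$ from the decomposition $\mc{R}(G)=\prod_t\mc{R}_t(G)$ and the fact that each factor $\mc{R}_t(G)$ is stable under twisting by elements of $\tilde G(F)$. Concretely, the orthogonal decomposition amounts to two claims. First, every $\tilde\pi\in\mc{R}(\tilde G)$ splits canonically (and functorially) as $\bigoplus_t\tilde\pi_t$ with $\tilde\pi_t\in\mc{R}_t(\tilde G)$. Second, there are no nonzero morphisms between objects of $\mc{R}_t(\tilde G)$ and $\mc{R}_{t'}(\tilde G)$ for $t\neq t'$.

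The key step is the invariance claim: for $x\in\tilde G(F)$, with $a=\tx{Ad}(x)$ an $F$-automorphism of $G$, the twist $\pi\mapsto a\pi$ preserves $\mc{R}_t(G)$. To see this, let $\sigma$ be an irreducible subquotient of $i_{P_I}^G(\tau)$ with $|I|=t$ and $\tau$ supercuspidal on $M_I(F)$. Then $a\sigma$ is a subquotient of $a(i_{P_I}^G\tau)\cong i_{aP_I}^G(a\tau)$ by \eqref{eq:ipg_auto}, and $a\tau$ is supercuspidal on $aM_I$. The pair $(aM_I,aP_I)$ is $G(F)$-conjugate, say by $g$, to a standard pair $(M_{I'},P_{I'})$. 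Since $a$ preserves the split rank of the center of the Levi, we have $\dim A_{M_{I'}}=\dim A_{M_I}$, and hence $|I'|=|I|=t$. Applying \eqref{eq:ipg_auto} once more with $\tx{Ad}(g)$ then shows that $a\sigma$ is a subquotient of some $i_{P_{I'}}^G(\tau')$ with $\tau'$ supercuspidal and $|I'|=t$. Here $t$ is characterized intrinsically as $r$ minus the relative rank of the supercuspidal support, so it is well defined; that is, the factors are orthogonal, which the earlier Remark already records.

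Now take $\tilde\pi\in\mc{R}(\tilde G)$ with restriction $\pi$, and let $\pi=\bigoplus_t\pi_t$ be its decomposition from the Remark. Let $V_t\subset V_\pi$ be the sum of all $G(F)$-subrepresentations lying in $\mc{R}_t(G)$; by orthogonality, $V_t$ is exactly the $\pi_t$-summand. For $x\in\tilde G(F)$, the operator $\tilde\pi(x)$ is an isomorphism $a\pi\to\pi$ with $a=\tx{Ad}(x)$, so it carries $V_t$, viewed as a subrepresentation of $a\pi$ and hence lying in $\mc{R}_t(G)$ by the key step, onto a subrepresentation of $\pi$ lying in $\mc{R}_t(G)$, that is, into $V_t$. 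Thus each $V_t$ is $\tilde G(F)$-stable, and $\tilde\pi=\bigoplus_t\tilde\pi_t$ with $\tilde\pi_t\in\mc{R}_t(\tilde G)$. Each $\tilde\pi_t$ has finite length because $[\tilde G(F):G(F)]$ is finite, so restriction preserves finite length and a $\tilde G(F)$-subrepresentation chain is in particular a $G(F)$-chain. Functoriality of the splitting follows because any $\tilde G(F)$-morphism is a $G(F)$-morphism and so respects the $V_t$.

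For orthogonality, a $\tilde G(F)$-morphism between objects of $\mc{R}_t(\tilde G)$ and $\mc{R}_{t'}(\tilde G)$ is in particular a $G(F)$-morphism between objects of $\mc{R}_t(G)$ and $\mc{R}_{t'}(G)$, hence zero. The same argument gives the vanishing of extensions, so the decomposition is a genuine product of categories. The main obstacle is the invariance step, specifically checking that conjugating a standard parabolic by an outer automorphism lands, up to $G(F)$-conjugacy, on a standard parabolic with the same cardinality $t$. I would handle this via the split-rank invariant described above.
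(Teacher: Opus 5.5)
Your proof is correct and follows the same route as the paper. Orthogonality comes from the faithfulness of restriction to $G(F)$, and the decomposition of $\tilde\pi$ comes from the $\tilde G(F)$-stability of the canonical summands $\pi_t$ of $\tx{Res}(\tilde\pi)$, which rests on conjugation by $\tilde G(F)$ preserving each $\mc{R}_t(G)$. The paper only asserts that last invariance, whereas you justify it through the invariance of $\dim A_M$; alternatively, one could write $\tilde G(F)=G(F)\tilde N(F)$ and use that $\tilde N(F)$ permutes $S$.
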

\begin{proof}
    This is a well-known result of Bernstein when $\tilde G=G$. In the general case, the orthogonality of the factors follows from the fact that the restriction functor $\mc{R}_t(\tilde G) \to \mc{R}_t(G)$ is faithful. Consider now $\tilde\pi \in \mc{R}(\tilde G)$ and write $\tx{Res}(\tilde \pi)=\bigoplus_t \pi_t$. This decomposition is canonical. The action of $\tilde G(F)$ on $G(F)$ by conjugation preserves each factor $\mc{R}_t(G)$ of $\mc{R}(G)$, hence each summand $\pi_t$. Therefore $\pi_t$ is a $\tilde G(F)$-subrepresentation of $\tilde\pi$, and we conclude that the decomposition $\tilde \pi = \bigoplus_t \pi_t$ holds and $\pi_t \in \mc{R}_t(\tilde G)$.
\end{proof}

\begin{rem}
    We may define a representation $\tilde\pi \in \mc{R}(\tilde G)$ to be supercuspidal, if $\tx{Res}^{\tilde G}_G(\tilde\pi)$ is supercuspidal. Then $\mc{R}_r(\tilde G)$ contains all supercuspidal representations, and consists of those representations all of whose subquotients are supercuspidal.
\end{rem}

We will now construct the functor $D_A^{\tilde G} : \mc{R}_t(\tilde G) \to \mc{R}_t(\tilde G)$ for a fixed $t$. For every $g \in \tilde G(F)$ the pair $g(M_0,P_0)g^{-1}$ is also a minimal parabolic pair of $G$, hence conjugate to $(M_0,P_0)$ under $G(F)$. Therefore $\tilde G(F)=G(F) \cdot \tilde N(F)$, where $\tilde N \subset \tilde G$ is the normalizer of $(M_0,P_0)$ in $\tilde G$. We have $\tilde N \cap G = M_0$ and hence $\tilde N(F)/M_0(F) = \tilde G(F)/G(F)$. The group $\tilde N(F)$ acts naturally on $A_0$ and this action descends to $\tilde N(F)/M_0(F) = \tilde G(F)/G(F)$. This produces an action of $\tilde G(F)$ on $A_0$, which preserves the set of simple relative roots $S \subset X^*(A_0)$, with $G(F)$ acting trivially.

Given $n \in \tilde N(F)$ consider the automorphism $a=\tx{Ad}(n)$ of $G$. From a representation $\tilde\pi \in \mc{R}_t(\tilde G)$ we obtain the representation $\pi=\tilde\pi|_{G(F)}$ and the isomorphism $\tilde\pi(n) : a\pi \to \pi$. We apply the discussion of \S\ref{sub:dgequiv} to $\pi=\pi_1=\pi_2$ and $\theta_a=\tilde\pi(n)$ and obtain an isomorphism $Y_t(\tilde\pi(n)):=Y_t(a,\theta_a) : aY_t(\pi) \to Y_t(\pi)$ of $G(F)$-representations for every $0 \leq t \leq r$. For any $g \in G(F)$ the action of $G(F)$ on $Y_t(\pi)$ provides an automorphism $Y_t(\pi(g)) : Y_t(\pi) \to Y_t(\pi)$ of complex vector spaces. 

\begin{lem} \label{lem:dg1}
    The composition $Y_t(\pi(g))\circ Y_t(\tilde\pi(n))$ is an automorphism of the $\C$-vector space $Y_t(\pi)$ that depends only on the product $gn \in \tilde G(F)$.
\end{lem}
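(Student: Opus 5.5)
The plan is to reduce the claim to the case where two decompositions differ by an element of $M_0(F)$. Suppose $gn = g'n'$ with $g,g' \in G(F)$ and $n,n' \in \tilde N(F)$. Then $m := n'n^{-1} = g'^{-1}g$ lies in $\tilde N(F)\cap G(F) = M_0(F)$. Write $n' = mn$ and $g = g'm$. The claim then becomes the identity
\[ Y_t(\pi(g'))\circ Y_t(\pi(m))\circ Y_t(\tilde\pi(n)) = Y_t(\pi(g'))\circ Y_t(\tilde\pi(mn)) \]
of linear endomorphisms of $Y_t(\pi)$. Cancelling the invertible $Y_t(\pi(g'))$, it suffices to show
\[ Y_t(\tilde\pi(mn)) = Y_t(\pi(m))\circ Y_t(\tilde\pi(n)) \qquad (m \in M_0(F),\ n \in \tilde N(F)). \]
Here $Y_t(\pi(m))$ denotes the action of $m\in G(F)$ on the representation $Y_t(\pi)$.

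Next I would use multiplicativity, Proposition \ref{pro:dgequiv}(4). Set $a=\tx{Ad}(n)$ and $b=\tx{Ad}(m)$; both preserve $(M_0,P_0)$, and $ba=\tx{Ad}(mn)$. Since $\tilde\pi(mn)=\tilde\pi(m)\circ\tilde\pi(n)$, we have $\theta_{ba}=\theta_b\circ b(\theta_a)$ with $\theta_a=\tilde\pi(n)$ and $\theta_b=\pi(m): b\pi\to\pi$. Note that $b(\theta_a)$ is the same linear map as $\theta_a$, viewed between the twisted representations. Proposition \ref{pro:dgequiv}(4) then gives, as linear maps,
\[ Y_t(\tilde\pi(mn)) = Y_t(b,\pi(m))\circ Y_t(a,\tilde\pi(n)). \]
So everything reduces to the following inner-automorphism statement.

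\emph{Key step.} For $m\in M_0(F)$ and $b=\tx{Ad}(m)$, the map $Y_t(b,\pi(m)): bY_t(\pi)\to Y_t(\pi)$ equals the action $Y_t(\pi)(m)$ on the underlying vector space. I would verify this directly from the explicit formulas.
\begin{itemize}
\item Since $\tx{Ad}(m)$ acts trivially on $A_0$, it fixes each element of $S$. Hence $bJ=J$ and each $\lambda_J(b)$ is the identity.
\item On $X_J(\pi)$, the formula \eqref{eq:xpat} gives $f\mapsto \big(x\mapsto \pi(m)f(m^{-1}xm)\big)$, where $\pi(m)$ acts on $(V_\pi)_{U_J(F)}$ through the Jacquet module. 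Since $m\in P_J(F)$, the equivariance $f(pg)=\pi(p)f(g)$ for $p\in P_J(F)$ gives $\pi(m)f(m^{-1}xm)=f(xm)$.
\item The function $x\mapsto f(xm)$ is exactly $(X_J(\pi)(m)f)(x)$, the right-translation action of $m$ on the induced representation.
\end{itemize}

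The point I expect to require the most care is the normalization in the second bullet. The action of $M_0(F)\subset P_J(F)$ on $r_{P_J}^G\pi$ includes $\delta_{P_J}^{-1/2}$, and the induction $i_{P_J}^G$ adds $\delta_{P_J}^{1/2}$. I would check that these cancel, which is \eqref{eq:xnorm}: $X_J=I_{P_J}^GR_{P_J}^G$. Working with unnormalized $I,R$ throughout, the identity $\pi(m)f(m^{-1}xm)=f(xm)$ holds on the nose, with no modulus factors. It remains to compare with the explicit formula in \eqref{eq:xpat} and confirm it agrees with the unnormalized description. Once this is done, the lemma follows.
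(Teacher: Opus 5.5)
Your proof is correct and rests on the same two facts as the paper's direct computation. The discrepancy between the two factorizations lies in $M_0(F)$, which fixes $S$ pointwise, so the factors $\lambda_J$ are unchanged, and which is absorbed by the left $P_J(F)$-equivariance of elements of $X_{P_J}(\pi)$. There are no modulus factors, since the explicit formulas in \S\ref{sub:dgequiv} are already the unnormalized ones, so the normalization issue you flag does not arise. The only difference is packaging: you write $n'=mn$ and invoke Proposition \ref{pro:dgequiv}(4) to isolate the identity $Y_t(\tx{Ad}(m),\pi(m))=Y_t(\pi)(m)$, whereas the paper writes $n=n'm_0$ and checks the equality pointwise in a single chain of equalities.
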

\begin{proof}
    Let $g,g' \in G(F)$ and $n,n' \in \tilde N(F)$ be such that $gn=g'n'$. Let $y_t \in Y_t(\pi)$ be given by $y_t = \sum_J y_t(J)$ with $y_t(J) = x_t(J)\otimes z_J \in X_{P_J}(\pi)\otimes \Lambda_J$. Then $n=n'm_0$ where $m_0=(n')^{-1}g^{-1}g'n' \in M_0$. In particular, $n$ and $n'$ induce the same bijection $a$ of $S$. For $h \in G(F)$ we have
    \begin{eqnarray*}
        [(Y_t(\pi(g))Y_t(\tilde\pi(n)))(y_t)](aJ)(h) &=& [Y_t(\tilde\pi(n))y_t](aJ)(hg)\\
        &=&\tilde\pi(n)x_t(J)(n^{-1}hgn) \otimes \lambda_J(a)(z_J)\\
        &=&\tilde\pi(n'm_0)x_t(J)((n'm_0)^{-1}hg'n')\otimes\lambda_J(a)(z_J)\\
        &=&\tilde\pi(n')x_t(J)((n')^{-1}hg'n')\otimes\lambda_J(a)(z_J)\\
        &=&[Y_t(\tilde\pi(n'))y_t](aJ)(hg')\\
        &=&[(Y_t(\pi(g'))Y_t(\tilde\pi(n')))(y_t)](aJ)(h)\qedhere
    \end{eqnarray*}
    
\end{proof}

The above lemma allows us to define the automorphism $Y_t(\tilde\pi(\tilde g))$ of the $\C$-vector space $Y_t(\pi)$ for any $\tilde g \in \tilde G(F)$.

\begin{lem}
    For $\tilde g,\tilde g' \in \tilde G(F)$ we have $Y_t(\tilde\pi(\tilde g \tilde g')) = Y_t(\tilde\pi(\tilde g)) \circ Y_t(\tilde\pi(\tilde g'))$.
\end{lem}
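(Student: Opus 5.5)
Write $\tilde g = gn$ and $\tilde g' = g'n'$ with $g,g' \in G(F)$ and $n,n' \in \tilde N(F)$. Then
\[ \tilde g\tilde g' = g\,(n g' n^{-1})\, n n', \qquad ng'n^{-1} \in G(F),\quad nn' \in \tilde N(F). \]
By Lemma \ref{lem:dg1} we may therefore compute $Y_t(\tilde\pi(\tilde g\tilde g'))$ as $Y_t(\pi(g\cdot ng'n^{-1}))\circ Y_t(\tilde\pi(nn'))$. The claim then reduces to three identities.

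\textbf{(i)} $Y_t(\pi(h_1h_2))=Y_t(\pi(h_1))\circ Y_t(\pi(h_2))$ for $h_i\in G(F)$. This holds because $Y_t(\pi)$ is a $G(F)$-representation.

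\textbf{(ii)} $Y_t(\tilde\pi(nn'))=Y_t(\tilde\pi(n))\circ Y_t(\tilde\pi(n'))$. This is an instance of Proposition \ref{pro:dgequiv}(4) with $a=\tx{Ad}(n)$, $b=\tx{Ad}(n')$ (in the appropriate order), $\theta_a=\tilde\pi(n)$, $\theta_b=\tilde\pi(n')$. We use that $\tilde\pi(nn')=\tilde\pi(n)\circ\tilde\pi(n')$. We also use that, on underlying vector spaces, $b(Y_t(a,\theta_a))$ is just $Y_t(a,\theta_a)$, so the composition formula there becomes plain composition of linear maps.

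\textbf{(iii)} The commutation relation
\[ Y_t(\tilde\pi(n))\circ Y_t(\pi(g')) = Y_t(\pi(ng'n^{-1}))\circ Y_t(\tilde\pi(n)). \]
This is exactly the statement of Proposition \ref{pro:dgequiv}(1) that $Y_t(a,\theta_a): aY_t(\pi)\to Y_t(\pi)$ is $G(F)$-equivariant. The element $g'$ acts on $aY_t(\pi)$ through $ng'n^{-1}$ or its inverse, depending on the convention $a\pi=\pi\circ a^{-1}$.

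Granting (i)–(iii), the computation is a chain of equalities:
\begin{align*}
Y_t(\tilde\pi(\tilde g))\circ Y_t(\tilde\pi(\tilde g')) &= Y_t(\pi(g))\, Y_t(\tilde\pi(n))\, Y_t(\pi(g'))\, Y_t(\tilde\pi(n'))\\
&= Y_t(\pi(g))\, Y_t(\pi(ng'n^{-1}))\, Y_t(\tilde\pi(n))\, Y_t(\tilde\pi(n'))\\
&= Y_t(\pi(g\,ng'n^{-1}))\, Y_t(\tilde\pi(nn')),
\end{align*}
and the last expression equals $Y_t(\tilde\pi(\tilde g\tilde g'))$ by Lemma \ref{lem:dg1}.

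The only delicate point is bookkeeping of the twist conventions in (ii) and (iii). One must get the order $ba$ versus $ab$ right and confirm that $a$ sends $g'$ to $ng'n^{-1}$ rather than $n^{-1}g'n$. If this is in doubt, I would verify both (ii) and (iii) directly from the explicit formula
\[ Y_t(\tilde\pi(n))(y_t)(aJ)(h)=\tilde\pi(n)\,x_t(J)(n^{-1}hn)\otimes\lambda_J(a)(z_J). \]
From this formula both identities are immediate, using $\lambda_J(aa')=\lambda_{a'J}(a)\lambda_J(a')$ for (ii) and right translation by $g'$ for (iii).
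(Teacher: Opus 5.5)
Your proof is correct and follows the paper's argument essentially verbatim. It uses the same decomposition $\tilde g\tilde g'=g(ng'n^{-1})nn'$, handles the $\tilde N(F)$-part by Proposition \ref{pro:dgequiv}(4), and gets the commutation $Y_t(\tilde\pi(n))\circ Y_t(\pi(g'))=Y_t(\pi(ng'n^{-1}))\circ Y_t(\tilde\pi(n))$ from the $G(F)$-equivariance in Proposition \ref{pro:dgequiv}(1). For the record, the assignment that makes (ii) work is $b=\tx{Ad}(n)$, $a=\tx{Ad}(n')$, $\theta_b=\tilde\pi(n)$, $\theta_a=\tilde\pi(n')$, so that $ba=\tx{Ad}(nn')$ and $\theta_b\circ b(\theta_a)=\tilde\pi(nn')$.
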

\begin{proof}
    In the special case $\tilde g=n \in \tilde N(F)$ and $\tilde g'= n' \in \tilde N(F)$ the claim follows from Proposition \ref{pro:dgequiv}(4) applied to $b=\tx{Ad}(n)$, $a=\tx{Ad}(n')$, $\psi=\tilde\pi(n)$, $\theta = \tilde\pi(n')$.

    For the general case write $\tilde g=gn$ and $\tilde g'=g'n'$. Then $\tilde g\tilde g'=g(ng'n^{-1})nn'$. 
    \begin{eqnarray*}
        Y_t(\tilde\pi(\tilde g\tilde g'))&=&Y_t(\pi(g(ng'n^{-1})))\circ Y_t(\tilde\pi(nn'))\\
        &=&Y_t(\pi(g)) \circ Y_t(\pi(ng'n^{-1}))\circ Y_t(\tilde\pi(n))\circ Y_t(\tilde\pi(n'))\\
        &=&Y_t(\pi(g)) \circ Y_t(\tilde\pi(n))\circ Y_t(\pi(g'))\circ Y_t(\tilde\pi(n'))\\
        &=&Y_t(\tilde\pi(\tilde g))\circ Y_t(\tilde\pi(\tilde g')),
    \end{eqnarray*}
    where the first equation is by definition, the second uses the special case just proved, the third uses the fact that $Y_t(\tilde\pi(n))$ is a $G(F)$-equivariant isomorphism $aY_t(\pi) \to Y_t(\pi)$ according to Proposition \ref{pro:dgequiv}(1), and the fourth equation is again per definition.
\end{proof}

The above lemma endows the $\C$-vector space $Y_t(\pi)$ with the structure of a representation of the group $\tilde G(F)$. We denote this representation by $Y_t(\tilde\pi)$, so that $Y_t(\tilde\pi)(\tilde g) = Y_t(\tilde\pi(\tilde g))$.

\begin{lem}
    For any $\tilde g \in \tilde G(F)$ the automorphism $Y_t(\tilde\pi(\tilde g))$ of $Y_t(\tilde \pi)$ is functorial in $\tilde\pi$.
\end{lem}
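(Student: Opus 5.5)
The claim is that for every morphism $f : \tilde\pi \to \tilde\pi'$ in $\mc{R}_t(\tilde G)$ and every $\tilde g \in \tilde G(F)$ we have
\[ Y_t(f)\circ Y_t(\tilde\pi(\tilde g)) = Y_t(\tilde\pi'(\tilde g))\circ Y_t(f). \]
Here $Y_t(f)$ means the map $Y_t(\pi)\to Y_t(\pi')$ obtained by applying the functor $Y_t$ of \S\ref{sub:defcomp} to the $G(F)$-homomorphism $f|_{G(F)}$. The plan is to write $\tilde g = gn$ with $g\in G(F)$ and $n\in\tilde N(F)$. By the definition justified in Lemma \ref{lem:dg1}, $Y_t(\tilde\pi(\tilde g)) = Y_t(\pi(g))\circ Y_t(\tilde\pi(n))$, and likewise for $\tilde\pi'$, using the same decomposition. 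This reduces the claim to two separate commutation statements, one for each factor.

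\textbf{The factor $g \in G(F)$.} This is immediate. $Y_t(f)$ is the image of a $G(F)$-equivariant map under the functor $Y_t : \mc{R}(G)\to\mc{R}(G)$, so it is $G(F)$-equivariant. Hence it intertwines $Y_t(\pi(g))$ and $Y_t(\pi'(g))$, which are the $G(F)$-actions on $Y_t(\pi)$ and $Y_t(\pi')$.

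\textbf{The factor $n \in \tilde N(F)$.} Here I will invoke Proposition \ref{pro:dgequiv}(3) with $a=\tx{Ad}(n)$ and the triples
\[ (\pi_1,\pi_2,\theta_a)=(\pi,\pi,\tilde\pi(n)), \qquad (\pi_1',\pi_2',\theta_a')=(\pi',\pi',\tilde\pi'(n)), \qquad f_1=f_2=f. \]
The only hypothesis to check is $\theta'_a\circ f_1 = f_2\circ\theta_a$, that is, $\tilde\pi'(n)\circ f = f\circ\tilde\pi(n)$. This holds because $f$ is $\tilde G(F)$-equivariant. Here $f$ is viewed as a $G(F)$-map $a\pi\to a\pi'$, which is legitimate since twisting by $a$ does not change the underlying linear map. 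Proposition \ref{pro:dgequiv}(3) then gives exactly $Y_t(f)\circ Y_t(a,\tilde\pi(n)) = Y_t(a,\tilde\pi'(n))\circ Y_t(f)$.

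Composing the two commutation relations proves the displayed identity. There is no real obstacle: the only point needing care is that the same decomposition $\tilde g=gn$ is used for both $\tilde\pi$ and $\tilde\pi'$, and Lemma \ref{lem:dg1} guarantees the result is independent of that choice. As a consequence, $Y_t(\tilde\pi)\mapsto Y_t(\tilde\pi')$ given by $Y_t(f)$ is a $\tilde G(F)$-homomorphism, so $Y_t$ lifts to an endofunctor of $\mc{R}_t(\tilde G)$.
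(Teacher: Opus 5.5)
Your proof is correct and follows the same route as the paper: you write $\tilde g = gn$, handle the $G(F)$-factor by noting that $Y_t$ is a functor of $G(F)$-representations, and handle the $\tilde N(F)$-factor with Proposition \ref{pro:dgequiv}(3), applied with $f_1=f_2=f$, $\theta_a=\tilde\pi(n)$ and $\theta'_a=\tilde\pi'(n)$. You also check the intertwining hypothesis explicitly and note that Lemma \ref{lem:dg1} lets you use the same decomposition for both representations; the paper leaves both points implicit.
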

\begin{proof}
    This follows from the functoriality in $\tilde\pi$ of $Y_t(\tilde\pi(n))$ and $Y_t(\pi(g))$ for all $n \in \tilde N(F)$ and $g \in G(F)$: for $n$ this is Proposition \ref{pro:dgequiv}(3), while for $g$ it stems from the fact that $Y_t(\pi)$ is built from functors of $G(F)$-representations.
\end{proof}

In other words, we now have a functor
\[ Y_t : \mc{R}(\tilde G) \to \mc{R}(\tilde G). \]
If we want to distinguish in notation this functor from the one for the group $G$, we will write $Y_t^{\tilde G}$ and $Y_t^G$, respectively. The following statement is immediate from the construction.

\begin{fct} \label{fct:xtext}
    $\tx{Res}\circ Y_t^{\tilde G} =  Y_t^G \circ \tx{Res}$.
\end{fct}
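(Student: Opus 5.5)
The plan is to unwind the construction of $Y_t^{\tilde G}$ and check that nothing changes once we forget to $G(F)$. The claim has two parts: on objects, $\tx{Res}(Y_t^{\tilde G}(\tilde\pi))$ and $Y_t^G(\tx{Res}\,\tilde\pi)$ must be the same $G(F)$-representation; on morphisms, the two functors must agree.

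For objects, note that the underlying vector space of $Y_t^{\tilde G}(\tilde\pi)$ is by definition $Y_t(\pi)$, where $\pi=\tilde\pi|_{G(F)}$. It remains to see that for $g\in G(F)$ the operator $Y_t(\tilde\pi(g))$ equals the original $G(F)$-action $Y_t(\pi(g))$ on $Y_t^G(\pi)$. Write $g=g\cdot 1$ with $1\in\tilde N(F)$. By Lemma \ref{lem:dg1} the definition is independent of the decomposition, so $Y_t(\tilde\pi(g))=Y_t(\pi(g))\circ Y_t(\tilde\pi(1))$.

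The only point needing a check is that $Y_t(\tilde\pi(1))$ is the identity. Here $a=\tx{Ad}(1)=\tx{id}$ and $\theta_a=\tilde\pi(1)=\tx{id}_\pi$. By the explicit formula following \eqref{eq:xpat}, $X_P(\tx{id},\tx{id})(f)(g)=f(g)$, so this map is the identity. Also $\lambda_J(\tx{id})$ is the top exterior power of the identity map on $\C^{S-J}$, hence the identity. By \eqref{eq:xtaut-prec}, $Y_t(\tx{id},\tx{id})$ is therefore the identity. This gives equality of the $G(F)$-actions, not merely an isomorphism.

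For morphisms, a map $f:\tilde\pi\to\tilde\pi'$ is sent by $Y_t^{\tilde G}$ to the linear map $Y_t^G(f)$ on the underlying spaces. This map is $\tilde G(F)$-equivariant by the functoriality lemma, so both composites send $f$ to the same map. There is no real obstacle here; the only care needed is the triviality check for $n=1$ in the previous paragraph.
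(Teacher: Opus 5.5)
Your proposal is correct and follows the paper's approach. The paper only says the fact is ``immediate from the construction'', and your argument spells that construction out. Writing $g=g\cdot 1$ and invoking Lemma \ref{lem:dg1} together with the triviality of $Y_t(\tx{id},\tx{id})$ shows that the restricted action is literally the original $G(F)$-action. On morphisms, both composites send $f$ to $Y_t^G(f)$.
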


We now organize the representations $Y_t(\tilde\pi)$ for $0 \leq t \leq r$ into a complex.

\begin{lem} \label{lem:dtext}
    The differential $d_t$ translates $Y_t(\tilde\pi(\tilde g))$ to $Y_{t-1}(\tilde\pi(\tilde g))$.
\end{lem}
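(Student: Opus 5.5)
Since every $\tilde g \in \tilde G(F)$ can be written as $\tilde g = gn$ with $g \in G(F)$ and $n \in \tilde N(F)$, and by construction (Lemma \ref{lem:dg1}) $Y_t(\tilde\pi(\tilde g)) = Y_t(\pi(g))\circ Y_t(\tilde\pi(n))$, it suffices to prove the claim separately for the two factors. That is, we show
\[ d_t \circ Y_t(\pi(g)) = Y_{t-1}(\pi(g)) \circ d_t \qquad \tx{and} \qquad d_t \circ Y_t(\tilde\pi(n)) = Y_{t-1}(\tilde\pi(n)) \circ d_t, \]
and then compose the two identities.

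For $g \in G(F)$ the first identity is the statement that $d_t : Y_t(\pi) \to Y_{t-1}(\pi)$ is a morphism of $G(F)$-representations. This holds because $d_t$ is built from the maps $\psi^J_I = \varphi^J_I \otimes \epsilon^J_I$. Here $\varphi^J_I$ is a morphism of functors $\mc{R}(G)\to\mc{R}(G)$, and is explicitly postcomposition with the projection $(V_\pi)_{V} \to (V_\pi)_U$, which commutes with right translation. The factor $\epsilon^J_I$ is a linear map between trivial representations.

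For $n \in \tilde N(F)$, put $a = \tx{Ad}(n)$ and $\theta_a = \tilde\pi(n)$, with $\pi_1=\pi_2=\pi$. Then $Y_t(\tilde\pi(n)) = Y_t(a,\theta_a)$, and the second identity is exactly Proposition \ref{pro:dgequiv}(2). Recall that $aY_t(\pi)$ and $Y_t(\pi)$ have the same underlying vector space and the same differential. Composing the two identities then gives the lemma for arbitrary $\tilde g$.

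No step is a genuine obstacle. The only point needing care is that the differential on $aY_\bullet(\pi)$ is literally the same linear map $d_t$ as on $Y_\bullet(\pi)$. This is noted after \eqref{eq:xtaut-prec}, and it makes Proposition \ref{pro:dgequiv}(2) directly applicable at the level of underlying vector spaces.
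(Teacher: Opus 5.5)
Your proof is correct and is the paper's argument. The paper reduces to $\tilde g = g \in G(F)$, where $d_t$ is $G(F)$-equivariant because it is built from morphisms of functors of $G(F)$-representations, and to $\tilde g = n \in \tilde N(F)$, where the claim is Proposition \ref{pro:dgequiv}(2) with $\pi_1=\pi_2=\pi$ and $\theta_a=\tilde\pi(n)$; your remarks on the explicit form of $\varphi^J_I$ and on $aY_\bullet(\pi)$ and $Y_\bullet(\pi)$ sharing the differential just make explicit what the paper leaves implicit.
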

\begin{proof}
    It is enough to treat the special cases $\tilde g = g \in G(F)$ and $\tilde g = n \in \tilde N(F)$. The first case is simply the fact that the differential $d_t$ is equivariant for the $G(F)$-action on the complex $Y_\bullet$, because all constructions are performed using functors of $G(F)$-representations. The second case is Proposition \ref{pro:dgequiv}(2).
\end{proof}

\begin{pro} \label{pro:aubdisc1}
    The functors $Y_0^{\tilde G},\dots,Y_{t-1}^{\tilde G}$ vanish on $\mc{R}_t(\tilde G)$, and the sequence
    \[ 0 \to Y_r^{\tilde G} \to Y_{r-1}^{\tilde G} \to \dots \to Y_t^{\tilde G} \]
    is exact when restricted to $\mc{R}_t(\tilde G)$.
\end{pro}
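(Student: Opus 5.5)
The plan is to reduce everything to the connected case via the restriction functor $\tx{Res}=\tx{Res}^{\tilde G}_G$. Restriction is faithful on morphisms. It is also exact and reflects exactness, because it does not change underlying vector spaces or linear maps: a sequence of $\tilde G(F)$-representations is exact if and only if the underlying sequence of vector spaces is exact.

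First, I will check that the $\tilde G(F)$-equivariant maps $d_t : Y^{\tilde G}_t(\tilde\pi) \to Y^{\tilde G}_{t-1}(\tilde\pi)$ make sense as a complex of objects of $\mc{R}(\tilde G)$. By Lemma \ref{lem:dtext}, the differential $d_t$, which is a priori only a map of $G(F)$-representations $Y_t^G(\pi)\to Y_{t-1}^G(\pi)$, intertwines the $\tilde G(F)$-actions $Y_t(\tilde\pi(\tilde g))$ and $Y_{t-1}(\tilde\pi(\tilde g))$. So it is a morphism in $\mc{R}(\tilde G)$. It is natural in $\tilde\pi$, because it is already natural on underlying $G(F)$-representations. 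Hence $0\to Y_r^{\tilde G}\to\dots\to Y_0^{\tilde G}\to 0$ is a sequence of endofunctors of $\mc{R}(\tilde G)$. Moreover, each $Y^{\tilde G}_t(\tilde\pi)$ has finite length as a $\tilde G(F)$-representation, since its restriction $Y^G_t(\pi)$ has finite length and $\tilde G(F)/G(F)$ is finite.

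Next, let $\tilde\pi\in\mc{R}_t(\tilde G)$. By definition of $\mc{R}_t(\tilde G)$ as a preimage, $\pi=\tx{Res}(\tilde\pi)\in\mc{R}_t(G)$. Fact \ref{fct:xtext} gives $\tx{Res}(Y_i^{\tilde G}(\tilde\pi))=Y_i^G(\pi)$ for every $i$, with the differentials $d_i$ literally the same linear maps. Theorem \ref{thm:aubmain} then yields two conclusions. First, $Y_i^G(\pi)=0$ for $i<t$, so the underlying vector space of $Y_i^{\tilde G}(\tilde\pi)$ is zero, and $Y_i^{\tilde G}$ vanishes on $\mc{R}_t(\tilde G)$. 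Second, the sequence $0\to Y_r^G(\pi)\to\dots\to Y_t^G(\pi)$ is exact as a sequence of vector spaces. Since these are the same vector spaces and maps as for $\tilde\pi$, the $\tilde G$-sequence is exact.

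I do not expect a genuine obstacle here. The only point requiring care is already settled by the preceding lemmas: the $\tilde G(F)$-action on $Y_t(\pi)$ is well defined (Lemma \ref{lem:dg1} and the multiplicativity lemma), and the differentials are $\tilde G(F)$-equivariant (Lemma \ref{lem:dtext}). Given these, the proposition is a formal consequence of Theorem \ref{thm:aubmain}.
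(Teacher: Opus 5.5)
Your proposal is correct and takes essentially the same route as the paper, which proves the proposition in one line by citing Theorem \ref{thm:aubmain}, Fact \ref{fct:xtext}, and Lemma \ref{lem:dtext}. The underlying reason is the one you give: restriction to $G(F)$ changes neither the underlying vector spaces nor the linear maps, so vanishing and exactness can be read off from the connected case.
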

\begin{proof}
This follows from Theorem \ref{thm:aubmain}, Fact \ref{fct:xtext}, and Lemma \ref{lem:dtext}.
\end{proof}

\begin{dfn} \label{dfn:aubdisc1}
    Define the functor $D_A^{\tilde G} : \mc{R}_t(\tilde G) \to \mc{R}_t(\tilde G)$ as 
    \[ D_A^{\tilde G} := \tx{cok}(d_{t+1} : Y_{t+1}^{\tilde G} \to Y_t^{\tilde G}). \]
\end{dfn}

\begin{lem} \label{lem:dgres}
    $\tx{Res}\circ D_A^{\tilde G} = D_A^G \circ \tx{Res}$.
\end{lem}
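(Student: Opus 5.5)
The plan is to compare the two cokernels directly on underlying vector spaces with their $G(F)$-actions. Fix $t$ and $\tilde\pi \in \mc{R}_t(\tilde G)$, and put $\pi = \tx{Res}(\tilde\pi) \in \mc{R}_t(G)$; this is allowed because $\mc{R}_t(\tilde G)$ is by definition the preimage of $\mc{R}_t(G)$. The first step is to invoke Fact \ref{fct:xtext}. By construction, the underlying vector space of $Y_s^{\tilde G}(\tilde\pi)$ is literally $Y_s^G(\pi)$. Moreover, $Y_s(\tilde\pi)(g) = Y_s(\pi(g))$ for $g \in G(F)$: take $n = 1$ in Lemma \ref{lem:dg1}, so that $Y_s(\tilde\pi(1)) = Y_s(\tx{id},\tx{id}) = \tx{id}$. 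Hence $\tx{Res}\,Y_s^{\tilde G}(\tilde\pi) = Y_s^G(\pi)$ as $G(F)$-representations, for every $s$.

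The second step checks that the differential $d_{t+1} : Y_{t+1}^{\tilde G}(\tilde\pi) \to Y_t^{\tilde G}(\tilde\pi)$ is, as a linear map, the same map $d_{t+1}$ used for $G$. This is immediate: the complex for $\tilde G$ was defined using the same differentials, and Lemma \ref{lem:dtext} only adds the information that they are $\tilde G(F)$-equivariant. So the cokernel $D_A^{\tilde G}(\tilde\pi)$ has underlying space $Y_t^G(\pi)/\tx{im}(d_{t+1})$. The $G(F)$-action on it is induced from $Y_t^G(\pi)$, so it coincides with $D_A^G(\pi) = \tx{cok}(d_{t+1})$. This gives an equality of objects $\tx{Res}(D_A^{\tilde G}(\tilde\pi)) = D_A^G(\tx{Res}\,\tilde\pi)$.

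The third step handles morphisms. For $f : \tilde\pi \to \tilde\pi'$, the map $Y_s^{\tilde G}(f)$ is by construction the linear map $Y_s^G(\tx{Res} f)$; this was used in proving functoriality of $Y_t(\tilde\pi(\tilde g))$. Passing to cokernels gives $\tx{Res}(D_A^{\tilde G}(f)) = D_A^G(\tx{Res} f)$. Finally, both functors are defined block by block over the decompositions $\prod_t \mc{R}_t$, and $\tx{Res}$ respects these decompositions, so the equality holds on all of $\mc{R}(\tilde G)$.

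I expect no real obstacle. The only point needing care is the identification of the restricted $G(F)$-action on $Y_t^{\tilde G}(\tilde\pi)$ with the original action, which is exactly the $n = 1$ case of the definition via Lemma \ref{lem:dg1}. Everything else is the observation that cokernels in representation categories are computed on the underlying vector spaces.
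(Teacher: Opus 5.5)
Your proposal is correct and is essentially the paper's argument. The paper just says the lemma follows at once from Fact \ref{fct:xtext}, and you have unpacked exactly that: the same underlying spaces and $G(F)$-actions (via the case $n=1$ of Lemma \ref{lem:dg1}), the same differentials, and hence the same cokernels and the same induced maps on morphisms, block by block.
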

\begin{proof}
    This follows at once from Fact \ref{fct:xtext}.
\end{proof}

\begin{lem}
    If $\tilde \pi$ is irreducible, so is $D_A^{\tilde G}(\tilde \pi)$.
\end{lem}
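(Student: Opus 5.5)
The plan is to reduce to the connected case through restriction, using Lemma \ref{lem:dgres} together with a Clifford-theoretic description of endomorphism algebras. Let $\tilde\pi\in\mc{R}_t(\tilde G)$ be irreducible, write $\pi=\tx{Res}(\tilde\pi)$ and $\Gamma=\tilde G(F)/G(F)$, which is a finite group. I will show two things: that $D_A^{\tilde G}(\tilde\pi)$ is semisimple, and that $\tx{End}_{\tilde G(F)}(D_A^{\tilde G}(\tilde\pi))=\C$. Together these give irreducibility.

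\textbf{Step 1: semisimplicity.} Since $G(F)$ has finite index in $\tilde G(F)$, standard Clifford theory shows that $\pi$ is a finite direct sum of irreducibles, namely $\pi\cong\bigoplus_i m\,\pi_i$ with the $\pi_i$ pairwise non-isomorphic and forming one $\Gamma$-orbit. By Lemma \ref{lem:dgres} we have $\tx{Res}(D_A^{\tilde G}(\tilde\pi))=D_A^G(\pi)\cong\bigoplus_i m\,D_A^G(\pi_i)$, because $D_A^G$ is additive. Each $D_A^G(\pi_i)$ is irreducible by Proposition \ref{pro:dgird}. Hence $D_A^{\tilde G}(\tilde\pi)$ is semisimple as a $G(F)$-representation. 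Averaging a $G(F)$-equivariant projection over the finite group $\Gamma$ (Maschke's argument) then shows that it is semisimple as a $\tilde G(F)$-representation.

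\textbf{Step 2: comparison of endomorphism algebras.} Consider the map $D_A^G:\tx{End}_{G(F)}(\pi)\to\tx{End}_{G(F)}(D_A^G(\pi))$. It is an algebra homomorphism because $D_A^G$ is a covariant additive functor. I claim it is bijective. Decomposing both sides into matrix blocks $\tx{Hom}(\pi_i,\pi_j)$ and $\tx{Hom}(D_A^G\pi_i,D_A^G\pi_j)$, it suffices to check two facts. First, $D_A^G$ sends scalars on $\pi_i$ to the same scalars on $D_A^G(\pi_i)$, which is clear. Second, $D_A^G(\pi_i)\not\cong D_A^G(\pi_j)$ for $i\ne j$. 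This second fact follows from $(D_A^G)^2\cong\tx{id}$ (Proposition \ref{pro:dgprp}(1)), or more cheaply from Aubert's result that $D_A^G$ is an involution on the Grothendieck group \cite{Aub95}.

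\textbf{Step 3: passing to $\Gamma$-invariants.} For any $\tilde\rho$, $\Gamma$ acts on $\tx{End}_{G(F)}(\tx{Res}\,\tilde\rho)$ by conjugation with $\tilde\rho(n)$, and $\tx{End}_{\tilde G(F)}(\tilde\rho)$ is exactly the invariant subspace. The $\tilde G(F)$-action on $D_A^{\tilde G}(\tilde\pi)$ was built so that $n\in\tilde N(F)$ acts by $Y(\tx{Ad}(n),\tilde\pi(n))$. By the functoriality in Proposition \ref{pro:dgequiv}(3), applied to $f_1=f_2=f$ and passed to cokernels, we get
\[ D_A^G(\tilde\pi(n)f\tilde\pi(n)^{-1}) = D_A^{\tilde G}(\tilde\pi)(n)\circ D_A^G(f)\circ D_A^{\tilde G}(\tilde\pi)(n)^{-1} \]
for every $f\in\tx{End}_{G(F)}(\pi)$. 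So the bijection of Step 2 is $\Gamma$-equivariant. It therefore restricts to an isomorphism $\tx{End}_{\tilde G(F)}(\tilde\pi)\cong\tx{End}_{\tilde G(F)}(D_A^{\tilde G}(\tilde\pi))$. The left side is $\C$ by Schur's lemma, since $\tilde\pi$ is irreducible and admissible. Combined with Step 1, this shows $D_A^{\tilde G}(\tilde\pi)$ is irreducible.

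The main obstacle is the equivariance identity in Step 3. One must check carefully that the twisting isomorphisms $c^a$ are compatible with the identification $aD_A^G(\pi)=D_A^G(a\pi)$ on underlying vector spaces. The intended route is to read this off from the explicit formula $X_P(a,\theta_a)(f)(g)=\theta_a(f(a^{-1}g))$, which is functorial in $\theta_a$.
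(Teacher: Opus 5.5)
Your proof is correct and follows the paper's argument. Both go through semisimplicity via restriction and finite index, then make the isomorphism $\tx{End}_{G(F)}(\pi)\to\tx{End}_{G(F)}(D_A^G(\pi))$ equivariant for $\tilde G(F)/G(F)$ using Proposition \ref{pro:dgequiv}(3), and then pass to invariants. Your block-by-block check in Step 2 replaces the paper's appeal to $D_A^G$ being an equivalence of categories; as you note, it needs only Aubert's involutivity on the Grothendieck group rather than $(D_A^G)^2\cong\tx{id}$. One small correction: with $\theta_a=\theta_a'=\tilde\pi(n)$, the hypothesis $\theta_a'\circ f_1=f_2\circ\theta_a$ of Proposition \ref{pro:dgequiv}(3) forces $f_1=f$ and $f_2=\tilde\pi(n)f\tilde\pi(n)^{-1}$ rather than $f_1=f_2=f$, and this is exactly the choice that yields your displayed identity.
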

\begin{proof}
    Write $\tilde\tau=D_A^{\tilde G}(\tilde\pi)$ and let $\tau=\tx{Res}(\tilde\tau)$, which by Lemma \ref{lem:dgres} equals $D_A^G(\pi)$, with $\pi=\tx{Res}(\tilde\pi)$. Since $G(F)$ is of finite index in $\tilde G(F)$, $\pi$ is semi-simple. Since $D_A^G$ respects direct sums and maps irreducibles to irreducibles, $\tau$ is also semi-simple. Again, the finite index of $G(F)$ in $\tilde G(F)$ implies that $\tilde \tau$ is semi-simple. It is therefore enough to show that $\tx{End}_{\tilde G(F)}(\tilde\tau)=\C$.
    
    Since $D_A^G$ is an equivalence of categories, it induces an isomorphism of complex vector spaces
    \[ D_A^G : \tx{End}_{G(F)}(\pi) \to \tx{End}_{G(F)}(\tau). \]
    Let $n \in \tilde N(F)$. Applying Proposition \ref{pro:dgequiv}(3) to $f_1=f$, $f_2=\tilde\pi(n)\circ f \circ \tilde\pi(n)^{-1}$, $\pi_1=\pi_2=\pi$, and $\theta=\theta'=\tilde\pi(n)$, we see that 
this isomorphism respects the action of $\tilde N(F)$ on both sides, hence the action of $\tilde G(F)/G(F)$. The irreducibility of $\tilde\pi$ implies that the set of vectors in $\tx{End}_{G(F)}(\pi)$ fixed by $\tilde G(F)/G(F)$ are precisely the scalar endomorphisms. This is transferred by the above isomorphism to $\tx{End}_{G(F)}(\tau)$ and implies that $\tx{End}_{G(F)}(\tau)^{\tilde G(F)/G(F)} = \C$. The latter equals $\tx{End}_{\tilde G(F)}(\tilde\tau)$, implying the irreducibility of $\tilde\tau$.
\end{proof}

\begin{pro} \label{pro:dgsc-disc}
    There is an isomorphism of functors $\tx{id} \to D_A^{\tilde G}$ on the factor $\mc{R}_r(\tilde G)$. In particular, $D_A^{\tilde G}(\tilde \pi) = \tilde\pi$ functorially for any supercuspidal representation $\tilde \pi \in \mc{R}(\tilde G)$.
\end{pro}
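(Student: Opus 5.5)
The argument mirrors the proof of Proposition \ref{pro:dgsc}, with the one extra point that the identifications must respect the $\tilde G(F)$-action. By Proposition \ref{pro:aubdisc1}, the functors $Y_0^{\tilde G},\dots,Y_{r-1}^{\tilde G}$ vanish on $\mc{R}_r(\tilde G)$. In particular $Y_{r+1}^{\tilde G}=0$ (there is no such term), so on $\mc{R}_r(\tilde G)$ Definition \ref{dfn:aubdisc1} gives $D_A^{\tilde G}=\tx{cok}(0\to Y_r^{\tilde G})=Y_r^{\tilde G}$. It therefore suffices to produce a natural isomorphism $\tx{id}\to Y_r^{\tilde G}$ of endofunctors of $\mc{R}(\tilde G)$, or at least of $\mc{R}_r(\tilde G)$.

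For $J=S$ we have $P_S=G$ with trivial unipotent radical, so $X_S(\pi)=i_G^G r_G^G(\pi)$. This is naturally identified with $\pi$ through $f\mapsto f(1)$, with inverse $v\mapsto(g\mapsto \pi(g)v)$. The line $\Lambda_S=\Wedge^0(\C^{\emptyset})=\C$ is canonically trivial, and this is the identification $Y_r^G=\tx{id}$ already used in Proposition \ref{pro:dgsc}. Its underlying vector space map is $Y_r(\pi)\to V_\pi$, $f\mapsto f(1)$, and it is $G(F)$-equivariant and natural in $\pi$.

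The main step is to check that this identification intertwines $Y_r(\tilde\pi(n))$ with $\tilde\pi(n)$ for $n\in\tilde N(F)$. By construction the $\tilde G(F)$-action on $Y_r(\pi)$ is generated by the $G(F)$-action together with these operators. Every $n\in\tilde N(F)$ satisfies $a=\tx{Ad}(n)$ with $aS=S$, and $\lambda_S(a)$ is the identity on $\Wedge^0$. By the explicit formula after \eqref{eq:xpat}, $Y_r(\tilde\pi(n))(f)(g)=\tilde\pi(n)\bigl(f(n^{-1}gn)\bigr)$. Evaluating at $g=1$ gives $\tilde\pi(n)f(1)$, which is the required compatibility.

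Combining the two cases via Lemma \ref{lem:dg1}, the map $f\mapsto f(1)$ is $\tilde G(F)$-equivariant. Its naturality in $\tilde\pi$ is inherited from naturality on $G(F)$-representations, since morphisms in $\mc{R}(\tilde G)$ are in particular morphisms in $\mc{R}(G)$. The inverse map is the desired isomorphism $\tx{id}\to D_A^{\tilde G}$ on $\mc{R}_r(\tilde G)$, and the final sentence of the proposition follows because supercuspidal representations lie in $\mc{R}_r(\tilde G)$. There is no real obstacle: the only point requiring care is the triviality of the sign $\lambda_S(a)$, which holds because $\Lambda_S$ is a zeroth exterior power.
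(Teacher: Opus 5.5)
Your proof is correct and follows the paper's argument: on $\mc{R}_r(\tilde G)$ the defining cokernel reduces to $Y_r^{\tilde G}$, which is identified with the identity functor via $f\mapsto f(1)$. The paper's proof is a one-liner that leaves the $\tilde G(F)$-equivariance of this identification implicit; you verify it explicitly, using the formula $Y_r(\tilde\pi(n))(f)(g)=\tilde\pi(n)\bigl(f(n^{-1}gn)\bigr)$ and the fact that $\lambda_S(a)$ is the identity on $\Wedge^0$.
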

\begin{proof}
    On $\mc{R}_r(\tilde G)$ all $Y_t$ vanish except for $t=r$, and we have the isomorphism $\tx{id} \to Y_r$.
\end{proof}

\subsection{A character formula} \label{sub:cf}

Let $0 \leq t \leq r$ and $\tilde\pi \in \mc{R}_t(\tilde G)$. Let $\tilde g \in \tilde G(F)$ be a regular semi-simple element. Write $\tilde g=gn$ with $g \in G(F)$ and $n \in \tilde N(F)$ and let $a$ be the automorphism on the set $S$ of relative simple roots, hence also on the set of standard parabolic subgroups of $G$, induced by $\tx{Ad}(n)$. It depends only on $\tilde g$.

For an $a$-stable standard parabolic subgroup $P$ of $G$ the discussion of \S\ref{sub:dgequiv} applied to $\pi=\tilde\pi|_{G(F)}$ and the isomorphism $\theta_a=\tilde \pi(n) : a\pi \to \pi$ produces an automorphism $X_P(\tilde\pi(n)) := X_P(a,\theta_a) : aX_P(\pi) \to X_P(\pi)$. Let $X_P(\pi(g))$ be the action of $g$ on the representation $X_P(\pi)$. The automorphism $X_P(\pi(g))\circ X_P(\tilde\pi(n))$ of $X_P(\pi)$ depends only on $\tilde g$ (this follows from the proof of Lemma \ref{lem:dg1}, but will also be derived formally from its statement in the proof of Lemma \ref{lem:char1} below). This extends $X_P(\pi)$ to a representation of the subgroup of $\tilde G(F)$ generated by $G(F)$ and $\tilde g$.

\begin{pro} \label{pro:char}
    The character of $D_A^{\tilde G}(\tilde \pi)$ at $\tilde g$ is given by 
    \[ (-1)^{r-t}\sum_P (-1)^{\dim(A_P^a/A_G^a)} \cdot \tx{tr}(\tilde g|X_P(\pi)), \]
    where the sum runs over the set of those standard parabolic subgroups $P$ of $G$ that are stable under $a$, $A_P$ is the maximal split central torus of the standard Levi subgroup of $P$ and $\tx{tr}(\tilde g|X_P(\pi))$ is the value at $\tilde g$ of the Harish-Chandra character of the representation $X_P(\pi)$.
\end{pro}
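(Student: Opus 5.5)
By Proposition \ref{pro:aubdisc1}, restricted to $\mc{R}_t(\tilde G)$ we have an exact sequence of $\tilde G(F)$-representations
\[ 0 \to Y_r^{\tilde G}(\tilde\pi) \to \dots \to Y_t^{\tilde G}(\tilde\pi) \to D_A^{\tilde G}(\tilde\pi) \to 0 .\]
All terms are admissible of finite length. Harish-Chandra characters (as distributions, or as functions on the regular semisimple locus) are additive in short exact sequences of admissible $\tilde G(F)$-representations. Hence
\[ \Theta_{D_A^{\tilde G}(\tilde\pi)}(\tilde g) = \sum_{s=t}^r (-1)^{s-t}\, \Theta_{Y_s(\tilde\pi)}(\tilde g). \]
It remains to compute the character of each $Y_s(\tilde\pi)$ at $\tilde g = gn$.

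The element $\tilde g$ acts on $Y_s(\pi)=\bigoplus_{|J|=s} X_{P_J}(\pi)\otimes\Lambda_J$ by $Y_s(\pi(g))\circ Y_s(\tilde\pi(n))$. By \eqref{eq:xtaut-prec} this operator maps the summand indexed by $J$ to the summand indexed by $aJ$. So $Y_s(\tilde\pi)$ restricted to $\<G(F),\tilde g\>$ decomposes as a sum over $\<a\>$-orbits of subsets $J$. For an orbit of length $>1$, the corresponding summand is induced from the finite-index subgroup $\<G(F),\tilde g^k\>$. Its character therefore vanishes at $\tilde g$: rigorously, the twisted trace of a block-cyclic operator on test functions supported near $\tilde g$ is zero, because $\tilde g$ lies in the nontrivial coset $\tilde g^{j}\<G(F),\tilde g^k\>$ for $0<j<k$.

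For an $a$-stable $J$, the summand $Y_J(\pi)$ is preserved and $\tilde g$ acts by the operator $X_{P_J}(\pi(g))\circ X_{P_J}(\tilde\pi(n))\otimes\lambda_J(a)$, where $\lambda_J(a)$ is a scalar on the line $\Lambda_J$. By definition, $\lambda_J(a)=\det(a|\C^{S-J})=\tx{sgn}(a|_{S-J})$. Thus
\[ \Theta_{Y_s(\tilde\pi)}(\tilde g)=\sum_{|J|=s,\ aJ=J}\tx{sgn}(a|_{S-J})\,\tx{tr}(\tilde g|X_{P_J}(\pi)). \]
This step also needs to check that the action of $\tilde g$ on $X_{P_J}(\pi)$ defined here agrees with the one in the statement of the proposition; that is immediate from the definitions.

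Combining, the coefficient of $\tx{tr}(\tilde g|X_{P_J}(\pi))$ is $(-1)^{|J|-t}\tx{sgn}(a|_{S-J})$. Write $S-J$ as a union of $k$ $a$-orbits of sizes $\ell_1,\dots,\ell_k$. Then
\[ \tx{sgn}(a|_{S-J})=\prod_i (-1)^{\ell_i-1}=(-1)^{|S-J|-k}. \]
The orbit count $k$ equals $\dim(A_{P_J}^a/A_G^a)$. This holds because $X^*(A_{P_J}/A_G)\otimes\Q$ is spanned by the images of $S-J$ permuted by $a$, so $a$-invariants have dimension equal to the number of orbits. A small care point is that $(A_P^a)^\circ$ is what is meant; I would compute on rational cocharacter spaces. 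Hence
\[ (-1)^{|J|-t}(-1)^{r-|J|-k}=(-1)^{r-t}(-1)^{\dim(A_P^a/A_G^a)}, \]
which is the stated formula.

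The main obstacle is the vanishing of the contribution of non-stable orbits at $\tilde g$. This is a standard fact for characters of representations induced from finite-index normal subgroups (for example, via Clifford theory on the twisted trace). I would prove it directly, by computing $\tx{tr}\,\tilde\pi'(f)$ for $f$ supported in the coset $\tilde g\<G(F),\tilde g^k\>$ as a trace of an operator permuting blocks cyclically with no diagonal blocks.
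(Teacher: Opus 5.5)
Your proposal is correct and follows the paper's proof essentially step for step. The shared steps are: take the alternating sum over the exact complex $Y_\bullet^{\tilde G}(\tilde\pi)$, discard the summands with $aJ\neq J$, and identify $\lambda_J(a)$ with $\mathrm{sgn}(a|S-J)$ (Lemma \ref{lem:char1}); then convert this sign by counting $a$-orbits on $S-J$ (Lemma \ref{lem:char2}). Your test-function argument for why the summands with $aJ\neq J$ contribute nothing is a more careful version of the one-line remark the paper makes at that point.
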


\begin{lem} \label{lem:char1}
    The character of $D_A^{\tilde G}(\tilde \pi)$ at $\tilde g$ is given by 
    \[ (-1)^t\sum_J (-1)^{|J|}\cdot \tx{sgn}(a|S-J) \cdot \tx{tr}(\tilde g|X_{P_J}(\pi)), \]
    where the sum runs over the set of those subsets of $S$ that are stable under $a$, $\tx{sgn}(a|S-J) \in \{\pm 1\}$ is the sign of the permutation of $S-J$ induced by $a$, and $\tx{tr}(\tilde g|X_{P_J}(\pi))$ is the value at $\tilde g$ of the character of the representation $X_{P_J}(\pi)$.
\end{lem}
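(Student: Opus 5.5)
The plan is to combine the exact resolution of Proposition \ref{pro:aubdisc1} with additivity of characters, and then evaluate the trace of $\tilde g$ on each term $Y_t(\tilde\pi)$. First, on $\mc{R}_t(\tilde G)$ the sequence
\[ 0 \to Y_r^{\tilde G}(\tilde\pi) \to \dots \to Y_t^{\tilde G}(\tilde\pi) \to D_A^{\tilde G}(\tilde\pi) \to 0 \]
is exact. It consists of finite-length representations of $\tilde G(F)$, because restriction to $G(F)$ gives finite length by Fact \ref{fct:xtext}. Harish-Chandra characters (distribution characters, or their values at regular semisimple $\tilde g$) are additive in short exact sequences. Hence
\[ \Theta_{D_A^{\tilde G}(\tilde\pi)}(\tilde g) = \sum_{s=t}^r (-1)^{s-t}\, \tx{tr}(\tilde g \mid Y_s(\tilde\pi)). \]
It then suffices to show that $\tx{tr}(\tilde g\mid Y_s(\tilde\pi))=\sum_{|J|=s,\ aJ=J} \tx{sgn}(a|S-J)\,\tx{tr}(\tilde g\mid X_{P_J}(\pi))$. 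Since $(-1)^{s-t}=(-1)^t(-1)^{|J|}$, this gives the formula in the lemma.

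For the term computation, write $\tilde g=gn$. By \eqref{eq:xtaut-prec} and the construction of the $\tilde G(F)$-action, $Y_s(\tilde\pi)(\tilde g)$ maps the summand $Y_J(\pi)$ to $Y_{aJ}(\pi)$. So $Y_s(\tilde\pi)=\bigoplus_{|J|=s}Y_J$ decomposes, as a representation of the subgroup $H=\<G(F),\tilde g\>$, into pieces indexed by $a$-orbits of such $J$. For a non-singleton orbit $\{J,aJ,\dots,a^{k-1}J\}$, the corresponding summand is induced from the finite-index subgroup $\<G(F),\tilde g^k\>$ of $H$. A standard argument then shows the character vanishes at $\tilde g$, because $\tilde g$ is not conjugate into that subgroup inside $H$: the quotient $H/\<G(F),\tilde g^k\>$ is cyclic of order $k$ and generated by the image of $\tilde g$. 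Equivalently, one can work with the trace of $f*\tilde g$ against test functions supported near $\tilde g$, where the operator permutes summands with no diagonal blocks. This leaves the $a$-stable $J$. On $Y_J(\pi)=X_{P_J}(\pi)\otimes\Lambda_J$, the operator is $(X_{P_J}(\pi(g))\circ X_{P_J}(\tilde\pi(n)))\otimes\lambda_J(a)$. Since $\lambda_J(a)$ is the top exterior power of the permutation of $S-J$ induced by $a$, it acts by the scalar $\tx{sgn}(a|S-J)$. This yields the claimed contribution.

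I also need the $H$-representation on $X_{P_J}(\pi)$ to be well defined, i.e.\ to depend only on $\tilde g$. I will derive this formally from Lemma \ref{lem:dg1}: $Y_J(\pi)$ is an $H$-stable direct summand of $Y_s(\tilde\pi)|_H$, and $\Lambda_J$ is a line on which $\lambda_J(a)$ acts by a scalar depending only on $a$, hence only on $\tilde g$. Dividing out that scalar shows that $X_{P_J}(\pi(g))X_{P_J}(\tilde\pi(n))$ depends only on $\tilde g$.

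I expect the main obstacle to be making the vanishing of the off-diagonal (non-stable $J$) contributions rigorous at the level of Harish-Chandra characters of infinite-dimensional admissible representations. The key step is to pass to the distribution character $f\mapsto\tx{tr}\,\pi(f)$ for $f$ supported in the coset $G(F)\tilde g$ (or near $\tilde g$ in $H$). For such $f$ the operator $\pi(f)$ maps $Y_J$ to $Y_{aJ}$, so its trace only sees the $a$-fixed $J$. Local constancy of characters near regular semisimple elements then converts the statement to values at $\tilde g$.
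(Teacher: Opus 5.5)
Your proposal is correct and follows the paper's proof. The paper also takes the alternating sum of traces over the exact complex $Y^{\tilde G}_\bullet(\tilde\pi)$, discards the non-$a$-stable $J$ because $\tilde g$ permutes the summands $Y_J \to Y_{aJ}$, and reads off the scalar $\lambda_J(a)=\mathrm{sgn}(a|S-J)$ on $\Lambda_J$, using Lemma \ref{lem:dg1} for well-definedness of the action on $X_{P_J}(\pi)$. Your extra justification of the vanishing step, viewing each non-singleton orbit as induced from the normal finite-index subgroup $\langle G(F),\tilde g^k\rangle$ of $H$, only makes explicit what the paper leaves implicit.
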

\begin{proof}
    By construction we have the exact sequence
    \[ 0 \to Y^{\tilde G}_r(\tilde\pi) \to \dots \to Y^{\tilde G}_t(\tilde\pi) \to D_A^{\tilde G}(\tilde\pi) \to 0, \]
    from which it follows that the value we are looking for is $(-1)^t\sum_{i=t}^r (-1)^i \chi_i(\tilde g)$, where $\chi_i$ is the character of $Y_i^{\tilde G}(\tilde\pi)$. Since the representations $Y^{\tilde G}_i(\tilde\pi)$ for $0 \leq i < t$ are zero, we can extend the sum over all $0 \leq i \leq r$.

    To compute $\chi_i(\tilde g)$ we write out 
    \[ Y^{\tilde G}_i(\tilde\pi) = \bigoplus_J X_{P_J}(\pi) \otimes \Lambda_J, \]
    where $J$ runs over the set of subsets of $S$ of cardinality $i$. Write $\tilde g = gn$ with $g \in G(F)$ and $n \in \tilde N(F)$ and let $a$ be the automorphism of $S$ induced by conjugation by $n$. We recall that the action of $\tilde g$ on $Y_i^{\tilde G}(\tilde\pi)$ sends the summand $X_{P_J}\otimes \Lambda_J$ to the summand $X_{P_{aJ}} \otimes \Lambda_{aJ}$. Therefore, if  $aJ \neq J$, then the summand indexed by $J$ does not contribute to the trace. We conclude that $\chi_i(\tilde g)$ is the value at $\tilde g$ of the character of 
    \[ \bigoplus_J X_{P_J}(\pi) \otimes \Lambda_J, \]
    where now $J$ runs over those subsets of $S$ that have cardinality $i$ and are stabilized by $a$. Each summand is now preserved by the action of $\tilde g$, so we can evaluate its character at $\tilde g$. Consider $J$ contributing to the sum and let $P=P_J$. By construction, the action of $\tilde g$ on $Y_i(\tilde\pi)$, restricted to $X_P(\pi)\otimes\Lambda_J$, is given by the tensor product of the automorphism $X_P(\pi(g))\circ X_P(\tilde\pi(n))$ of $X_P(\pi)$ and the automorphism $\lambda_J(a)$ of $\Lambda_J$. The latter is an automorphism of a 1-dimensional $\C$-vector space, hence a complex scalar, which by definition equals $\tx{sgn}(a|S-J)$. From lemma \ref{lem:dg1} we conclude that the former depends only on $\tilde g$, as was asserted above.
\end{proof}

\begin{lem} \label{lem:char2}
    Let $a$ be an automorphism of $S$ that preserves the subset $J \subset S$ and let $P$ be the standard parabolic associated to $J$. Then
    \[ \tx{sgn}(a|S-J) = (-1)^{\dim(A_P)-\dim(A_G)} \cdot (-1)^{\dim(A_P^a)-\dim(A_G^a)}.\]
\end{lem}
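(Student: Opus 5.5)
The plan is to reduce the statement to linear algebra on the cocharacter space of $A_P/A_G$ and then use a general fact about the determinant of a finite-order permutation. Let $P=P_J$ with standard Levi $M_J$. The rational vector space $X_*(A_P/A_G)\otimes\Q$ (or dually $X^*(A_P/A_G)\otimes\Q$) has dimension $|S-J|$. Under the identification of $X^*(A_P/A_G)\otimes\Q$ with the restrictions to $A_P$ of the characters in $X^*(A_0)\otimes\Q$ spanned by $S$, the elements $\alpha|_{A_P}$ for $\alpha\in S-J$ form a basis. The automorphism $a$ (induced by some $n\in\tilde N(F)$) preserves $A_0$, $S$, $J$, and hence $A_P$ and $A_G$. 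On the character space, $a$ acts by permuting this basis via the permutation $a|_{S-J}$.

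Next compare the invariant dimensions. For a permutation representation $V=\Q^{S-J}$ of the finite cyclic group generated by $a$, the space of invariants $V^a$ has dimension equal to the number $c$ of cycles of $a$ on $S-J$. On the torus side, $\dim(A_P^a)-\dim(A_G^a)$ should equal the dimension of the $a$-invariants in $X_*(A_P/A_G)\otimes\Q$. To justify this, I would use that $(A_P^a)^\circ$ is the split torus with cocharacter space $(X_*(A_P)\otimes\Q)^a$, and similarly for $A_G$. I would also use that $X_*(A_P)\otimes\Q=X_*(A_G)\otimes\Q\oplus X_*(A_P/A_G)\otimes\Q$ $a$-equivariantly, which holds because $a$ has finite order, so Maschke's theorem applies. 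Here "dim" of a possibly disconnected diagonalizable group means the dimension of its identity component. Invariants of the character space and of the dual cocharacter space have equal dimension, so $\dim(A_P^a)-\dim(A_G^a)=c$.

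Finally, $\dim(A_P)-\dim(A_G)=|S-J|$, and the sign of a permutation of a set of size $m$ with $c$ cycles is $(-1)^{m-c}$. Hence
\[\tx{sgn}(a|S-J)=(-1)^{|S-J|-c}=(-1)^{\dim(A_P)-\dim(A_G)}\cdot(-1)^{\dim(A_P^a)-\dim(A_G^a)},\]
which is the claim.

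The only step needing care is the identification of $\dim A_P^a$ with the invariant dimension of the cocharacter space, including making sure that $a$ genuinely acts with finite order on $A_P$. The action of $a$ on $A_0$ factors through the finite group $\tilde N(F)/M_0(F)$, so this holds. One should also check that the chosen meaning of $A_P^a$ (fixed points of $a$ acting on $A_P$) matches the one used in Proposition \ref{pro:char}.
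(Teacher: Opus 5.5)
Your proof is correct and follows essentially the same route as the paper's. Both arguments identify $S-J$ with a basis of the character group of $A_P/A_G$ (rationally, or up to isogeny) that $a$ permutes; the number of $a$-orbits on $S-J$ is then $\dim(A_P^a)-\dim(A_G^a)$, and an orbit of size $n$ contributes $(-1)^{n-1}$ to the sign. The only difference is technical: the paper removes $A_G$ by passing to the adjoint quotient and uses the integral identity $X^*(A_P)_a=X^*(A_P^a)$, whereas you keep $A_G$ and work with rational (co)character spaces, using exactness of invariants for a finite-order action.
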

\begin{proof}
    We note that since $\dim(A_P)-\dim(A_G)=\dim(A_P/A_G)$ and $\dim(A_P^a)-\dim(A_G^a)=\dim((A_P/A_G)^a)$, both sides of this identity do not change if we replace $G$ by its adjoint quotient. Having done so, the set $S$ is a basis of $X^*(A_0)$, where $A_0$ is the maximal split central torus in the minimal Levi subgroup $M_0$, and the image of $S-J$ under the projection $X^*(A_0) \to X^*(A_P)$ is a basis of $X^*(A_P)$, whose elements are permuted by $a$. The set of $a$-orbits in $S-J$ forms a basis for the lattice of co-invariants $X^*(A_P)_a$, which also equals $X^*(A_P^a)$. An $a$-orbit of size $n$ contributes a factor of $(-1)^n$ to $(-1)^{\dim(A_P)}$, the factor $(-1)^1$ to $(-1)^{\dim(A_P^a)}$, and the factor $(-1)^{n-1}$ to $\tx{sgn}(a|S-J)$.
\end{proof}

\begin{proof}[Proof of Proposition \ref{pro:char}]
    We use Lemma \ref{lem:char1} and are left with showing that for an $a$-stable subset $J \subset S$ with associated standard parabolic subgroup $P$ we have $(-1)^{r-|J|} \cdot \tx{sgn}(a|S-J)=(-1)^{\dim(A_P^a/A_G^a)}$. But $r-|J|=|S-J|=\dim(A_P/A_G)$ and we can apply Lemma \ref{lem:char2}.
\end{proof}

We can now compare the effect of the functor $D_A^{\tilde G}$ on the Grothendieck group of $\mc{R}(\tilde G)$ with an automorphism of this Grothendieck group considered in \cite[Appendix A]{Xu17Canad}, where $\tilde G$ is denoted by $G^+$ and is assumed to equal $G \rtimes \<\theta\>$ with $G$ quasi-split over $F$ and $\theta$ an $F$-automorphism of $G$ preserving an $F$-pinning. In that case, Xu defines the automorphism
\[ \tx{inv}^{\theta}(\pi^+) = \sum_{P \in \mc{P}^\theta}(-1)^{\dim(A_P)_\theta}\tx{Ind}_P^G(\tx{Jac}_P\pi^+),\]
for $\pi^+$ any element of the Grothendieck group for $G^+(F)$. Here $\mc{P}^\theta$ is the set of $\theta$-stable standard parabolic subgroups of $G$ and $\tx{Ind}_P^G$ and $\tx{Jac}_P$ are the normalized parabolic induction and restriction functors defined by applying the usual definition to $\pi^+|_{G(F)}$ and equipping the result with the natural action of $G^+(F)$, as we have also done with our functor $X_P(\pi^+)$.

\begin{cor} \label{cor:xu}
    In the special case where $\tilde G=G \rtimes \<\theta\>$ with $G$ quasi-split over $F$ and $\theta$ an $F$-automorphism of $G$ preserving an $F$-pinning, the characters of $D_A^{\tilde G}(\pi^+)$ and $\tx{inv}^\theta(\pi^+)$ agree on the coset $G(F) \rtimes \theta$ up to multiplication by the sign
    \[ (-1)^{(r-t)+\dim(A_G^\theta)}, \]
    for any irreducible $\pi^+ \in \mc{R}_t(G^+)$.
\end{cor}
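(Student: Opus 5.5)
The plan is to compare the formula of Proposition \ref{pro:char} termwise with the character of $\tx{inv}^\theta(\pi^+)$. Take $\tilde g = g\theta$ regular semisimple with $g \in G(F)$. Since $\theta$ preserves an $F$-pinning, it preserves a Borel pair, so it preserves a minimal parabolic pair $(M_0,P_0)$. We fix this pair, so that $\theta \in \tilde N(F)$ and we may take $n=\theta$. The induced automorphism $a$ of $S$ is then the action of $\theta$, and the $a$-stable standard parabolics are exactly the set $\mc{P}^\theta$.

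First step: identify the summands. For $P \in \mc{P}^\theta$, the representation $\tx{Ind}_P^G(\tx{Jac}_P\pi^+)$ of $G^+(F)$ in Xu's sense is $X_P(\pi^+)$ with its natural extended action. The action of $\theta$ is $f\mapsto \tilde\pi(\theta)\circ f\circ \theta^{-1}$, which is precisely $X_P(a,\theta_a)$ with $\theta_a=\pi^+(\theta)$. Hence the traces at $\tilde g$ agree. One should check that Xu's normalizations (modulus characters) are the same; this holds because $\theta$ preserves $\delta_P$. So $\tx{inv}^\theta(\pi^+)(\tilde g)=\sum_{P\in\mc{P}^\theta}(-1)^{\dim(A_P)_\theta}\tx{tr}(\tilde g|X_P(\pi))$. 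Here I read $(A_P)_\theta$ as the $\theta$-coinvariant (equivalently, up to isogeny, invariant) torus, so that $\dim(A_P)_\theta=\dim A_P^\theta$.

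Second step: compare signs. Proposition \ref{pro:char} gives the summand sign $(-1)^{r-t}(-1)^{\dim A_P^a-\dim A_G^a}$. With $a=\theta$, the ratio of this sign to Xu's sign $(-1)^{\dim A_P^\theta}$ is $(-1)^{(r-t)+\dim A_G^\theta}$. This ratio is independent of $P$, so the two characters agree on $G(F)\rtimes\theta$ up to this global sign, as claimed. Irreducibility of $\pi^+$ is used only to guarantee that $\pi^+$ lies in a single $\mc{R}_t$, so that $t$ is well defined. Agreement at regular semisimple elements determines the twisted characters as locally integrable functions (Harish-Chandra / Clozel), which suffices.

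The main obstacle is pinning down the precise meaning of Xu's $\dim(A_P)_\theta$ and his normalization of the $\theta$-action on $\tx{Ind}_P^G\tx{Jac}_P$. If $(A_P)_\theta$ denotes coinvariants, its dimension still equals $\dim A_P^\theta$, since $\theta$ has finite order on $X^*(A_P)\otimes\Q$. I would verify this rank equality first, via the orbit-basis argument of Lemma \ref{lem:char2}.
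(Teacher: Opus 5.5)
Your proposal is correct and follows the paper's route: the paper deduces the corollary directly from Proposition \ref{pro:char} together with the definition of $\tx{inv}^\theta$. It does so exactly by your termwise matching with $n=\theta\in\tilde N(F)$, and by your observation that the sign ratio $(-1)^{r-t}(-1)^{\dim A_P^\theta-\dim A_G^\theta}\cdot(-1)^{\dim A_P^\theta}=(-1)^{(r-t)+\dim A_G^\theta}$ does not depend on $P$. The details you add, namely taking $(M_0,P_0)=(T,B)$ so that $\theta$ normalizes it and checking $\dim(A_P)_\theta=\dim A_P^\theta$, are the checks left implicit in the paper's one-line proof.
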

\begin{proof}
    This follows from Proposition \ref{pro:char} and the definition of $\tx{inv}^\theta(\pi^+)$.
\end{proof}

\subsection{A sign character and a variation of $D_A^{\tilde G}$} \label{sub:sign}

Using the group homomorphism $\tilde G(F)/G(F) \to \tx{Aut}(S)$ we obtain the character 
\begin{equation} \label{eq:eps}
    \epsilon = \epsilon_{\tilde G} : \tilde G(F)/G(F) \to \{\pm 1\},\qquad \epsilon(\tilde g) = \tx{sgn}(a|S)
\end{equation}
where again $a \in \tx{Aut}(S)$ is the automorphism induced by $\tilde g$. From Lemma \ref{lem:char2} we obtain
\begin{cor} \label{cor:eps} 
    $\epsilon(\tilde g) = (-1)^{\dim(A_0/A_G)}(-1)^{\dim(A_0^a/A_G^a)}$. 
\end{cor}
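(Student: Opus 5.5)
This is Lemma \ref{lem:char2} applied with $J=\emptyset$. The subset $\emptyset \subset S$ is stable under every automorphism $a$ of $S$, and the standard parabolic subgroup attached to $J=\emptyset$ is $P_\emptyset=P_0$. Its standard Levi subgroup is $M_0$, whose maximal split central torus is $A_{P_0}=A_0$. Moreover $S-\emptyset=S$, so $\tx{sgn}(a|S-J)=\tx{sgn}(a|S)=\epsilon(\tilde g)$ by the definition \eqref{eq:eps}.

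Substituting into Lemma \ref{lem:char2} gives
\[ \epsilon(\tilde g) = (-1)^{\dim(A_0)-\dim(A_G)}\cdot(-1)^{\dim(A_0^a)-\dim(A_G^a)}. \]
To reach the stated form, I rewrite the exponents as $\dim(A_0/A_G)$ and $\dim(A_0^a/A_G^a)$, as in the first line of the proof of Lemma \ref{lem:char2}. The first rewriting is immediate because $A_G \subset A_0$ is a subtorus.

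The only point needing care is the meaning of $A_0^a$ and $A_G^a$. Here $a$ acts on $A_0$ through the action of $\tilde N(F)/M_0(F)=\tilde G(F)/G(F)$ described in \S\ref{sub:defdgdis}. This action preserves $A_G$, since $A_G$ is central in $G$ and hence normalized by $\tilde G$. So $A_0^a$ and $A_G^a$ are the (identity components of the) fixed-point subgroups, and $\dim A_0^a - \dim A_G^a = \dim (A_0/A_G)^a$. This identity holds because taking $a$-invariants on the rational character groups tensored with $\Q$ is exact, $a$ having finite order. This is exactly the interpretation already used in Lemma \ref{lem:char2}, so no new argument is required and I see no obstacle.
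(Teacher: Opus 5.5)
Your proposal is correct and matches the paper, which obtains the corollary directly from Lemma \ref{lem:char2}. Your specialization to $J=\varnothing$ is exactly what is intended, with $P_\varnothing=P_0$, $A_{P_0}=A_0$ and $\tx{sgn}(a|S)=\epsilon(\tilde g)$, and your remarks on rewriting the exponents as $\dim(A_0/A_G)$ and $\dim(A_0^a/A_G^a)$ agree with the first step of that lemma's proof.
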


\begin{cor} \label{cor:dga'}
    If we use the Borel--Serre complex of \S\ref{sub:bs} to define a functor $'D_A^{\tilde G}$, then we have 
    \[ 'D_A^{\tilde G} = D_A^{\tilde G} \otimes \epsilon_{\tilde G}.\]
\end{cor}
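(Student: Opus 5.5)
The plan is to define $'D_A^{\tilde G}$ exactly as in \S\ref{sub:defdgdis}, but with the Borel--Serre complex $(X_\bullet,{}'d_\bullet)$ in place of $(Y_\bullet,d_\bullet)$ and with the twisted equivariance isomorphisms $'X_t(a,\theta_a)$ of Fact \ref{fct:bs2}(4) in place of $Y_t(a,\theta_a)$. Then compare the two constructions through the splicing isomorphism $\kappa:=\bigoplus_I \tx{id}_{X_I}\otimes\kappa_I : X_t\to Y_t$ of Fact \ref{fct:bs1}.

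First I will make the construction precise. For $\tilde\pi\in\mc{R}_t(\tilde G)$, $n\in\tilde N(F)$ and $a=\tx{Ad}(n)$, put $'X_t(\tilde\pi(n)) := {}'X_t(a,\tilde\pi(n))$. The proofs of Lemma \ref{lem:dg1}, the multiplicativity lemma, functoriality, and Lemma \ref{lem:dtext} carry over verbatim, because Fact \ref{fct:bs2}(4) supplies the analogs of Proposition \ref{pro:dgequiv}(1)--(4). The only point needing a remark is in Lemma \ref{lem:dg1}: $n$ and $n'=nm_0^{-1}$ induce the same permutation of $S$, so the extra scalar $\epsilon(a^{-1},I)$ is the same for both. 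This yields a complex $'X_\bullet^{\tilde G}(\tilde\pi)$ of $\tilde G(F)$-representations. Its restriction to $G(F)$ is the Borel--Serre complex, which is exact in degrees $>t$ on $\mc{R}_t$. We then set $'D_A^{\tilde G}:=\tx{cok}({}'d_{t+1})$.

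Next I will compare the two complexes. By Fact \ref{fct:bs1}, $\kappa$ is a $G(F)$-equivariant isomorphism of complexes $(X_\bullet(\pi),{}'d)\to(Y_\bullet(\pi),d)$. By Fact \ref{fct:bs2}(5), for $n\in\tilde N(F)$ with induced $a$ we have $\kappa\circ\tx{sgn}(a|S)\,{}'X_t(\tilde\pi(n)) = Y_t(\tilde\pi(n))\circ\kappa$. For $g\in G(F)$ we have $\kappa\circ{}'X_t(\tilde\pi(g))=Y_t(\tilde\pi(g))\circ\kappa$. Since $\tx{sgn}(a|S)=\epsilon_{\tilde G}(gn)$ and $\epsilon_{\tilde G}$ is trivial on $G(F)$, it follows that $\kappa$ is a $\tilde G(F)$-equivariant isomorphism ${}'X^{\tilde G}_\bullet(\tilde\pi)\otimes\epsilon_{\tilde G}\to Y^{\tilde G}_\bullet(\tilde\pi)$. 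This isomorphism is functorial in $\tilde\pi$ and commutes with the differentials, because tensoring with a character does not change the differentials. Passing to cokernels in degree $t$ gives $'D_A^{\tilde G}(\tilde\pi)\otimes\epsilon_{\tilde G}\cong D_A^{\tilde G}(\tilde\pi)$. Since $\epsilon_{\tilde G}^2=1$, this is equivalent to the claim.

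The main obstacle is verifying Fact \ref{fct:bs2}(5), that is, the sign identity relating $\lambda_I(a)$ to $\epsilon(a^{-1},I)$ through $\kappa_I$. The paper takes this as given, so the proof can simply invoke it. If I had to check it, I would do so as follows. Compute $\lambda_I(a)(\tilde e_{S-I})$ as $\pm\tilde e_{S-aI}$, using that $\bigwedge^r$ of the permutation $a$ multiplies $e_S$ by $\tx{sgn}(a|S)$. Decompose $e_S=\pm\tilde e_{S-I}\wedge e_I$, and note that $a$ sends $e_I$ to $\epsilon(a,I)\,e_{aI}$ by Definition \ref{dfn:sign}. Finally, use Fact \ref{fct:bs2}(1) with $\tau=a^{-1}$ to convert $\epsilon(a,I)$ into $\epsilon(a^{-1},aI)$, which gives the required sign.
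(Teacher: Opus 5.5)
Your proof is correct and follows the paper's own argument. The paper likewise uses Fact \ref{fct:bs1} to identify the two complexes functorially and $G(F)$-equivariantly, then uses Fact \ref{fct:bs2}(5) to see that the two actions of $n \in \tilde N(F)$ differ by $\tx{sgn}(a|S)=\epsilon_{\tilde G}(n)$, and this difference passes to the cokernels.

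Your sketch of Fact \ref{fct:bs2}(5) is also sound, and it yields the sign $\epsilon(a^{-1},aI)=\epsilon(a,I)$, indexed by the target $aI$; this is the sign that actually makes ${}'X_t(a,\theta_a)$ commute with ${}'d_t$, and Fact \ref{fct:bs2}(4) should be read this way, since the source-indexed $\epsilon(a^{-1},I)$ written there fails already for a $3$-cycle on $S=\{1,2,3\}$ with $I=\{1\}\subset J=\{1,2\}$.
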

\begin{proof}
    According to Fact \ref{fct:bs1} the complex $('X_t,{}'d_t)$ is functorially in $\pi$ isomorphic to the complex $(Y_t,d_t)$. Therefore, the two complexes produce the same functor $\mc{R}(G) \to \mc{R}(G)$. However, they are endowed with different actions of $\tilde G(F)$. More precisely, an element $n \in \tilde N(F)$ acts on the complex $Y$ by the automorphism $Y_t(\tilde\pi(n))=Y_t(\tx{Ad}(n),\tilde\pi(n))$, while it acts on the complex $'X$ by $'X_t(\tx{Ad}(n),\tilde\pi(n))$. But according to Fact \ref{fct:bs2}(5), the two actions differ by multiplication by $\tx{sgn}(a|S)=\epsilon(n)$.
\end{proof}

From Proposition \ref{pro:char} we obtain
\begin{cor} \label{cor:chareps}
    Let $0 \leq t \leq r$ and $\tilde\pi \in \mc{R}_t(\tilde G)$.
    The character of 
    \[ 'D_A^{\tilde G}(\tilde\pi) = D_A^{\tilde G}(\tilde\pi) \otimes \epsilon \] 
    at the regular semi-simple element $\tilde g \in \tilde G(F)$ is given by
    \[ (-1)^{t}\sum_P (-1)^{\dim(A_0^a/A_P^a)} \cdot \tx{tr}(\tilde g|X_P(\pi)), \]
    where the sum runs over the set of those standard parabolic subgroups $P$ of $G$ that are stable under $a$ and $\tx{tr}(\tilde g|X_P(\pi))$ is the value at $\tilde g$ of the character of the representation $X_P(\pi)$.
\end{cor}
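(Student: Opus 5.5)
We derive Corollary \ref{cor:chareps} directly from Proposition \ref{pro:char}, using Corollary \ref{cor:dga'} for the identity $'D_A^{\tilde G}(\tilde\pi) = D_A^{\tilde G}(\tilde\pi)\otimes\epsilon$ and Corollary \ref{cor:eps} for the value of $\epsilon$.

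First, the character of a twist by a character of $\tilde G(F)/G(F)$ is multiplied pointwise by that character. Hence the character of $'D_A^{\tilde G}(\tilde\pi)$ at $\tilde g$ equals $\epsilon(\tilde g)$ times the expression in Proposition \ref{pro:char}. Here $a$ is the automorphism of $S$ induced by $\tilde g$, so $\epsilon(\tilde g)=\tx{sgn}(a|S)$.

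Next, Corollary \ref{cor:eps} gives $\epsilon(\tilde g)=(-1)^{\dim(A_0/A_G)}(-1)^{\dim(A_0^a/A_G^a)}$, and $\dim(A_0/A_G)=r$. Multiplying term by term, the summand indexed by an $a$-stable standard $P$ acquires the sign
\[ (-1)^{r-t}\cdot(-1)^{r}\cdot(-1)^{\dim(A_0^a/A_G^a)}\cdot(-1)^{\dim(A_P^a/A_G^a)} = (-1)^{t}\cdot(-1)^{\dim(A_0^a/A_G^a)+\dim(A_P^a/A_G^a)}. \]

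Finally, the inclusions $A_G\subset A_P\subset A_0$ are $a$-stable, so $\dim(A_0^a/A_G^a)=\dim(A_0^a/A_P^a)+\dim(A_P^a/A_G^a)$. To justify this, note that $\dim A_H^a$ is the rank of the $a$-invariants in $X_*(A_H)\otimes\Q$, and taking invariants under the finite group generated by $a$ is exact over $\Q$. The two copies of $(-1)^{\dim(A_P^a/A_G^a)}$ therefore cancel mod $2$, leaving $(-1)^t(-1)^{\dim(A_0^a/A_P^a)}$ on each summand, which is the claimed formula.

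The only point needing care is this additivity of fixed-point dimensions; everything else is sign bookkeeping.
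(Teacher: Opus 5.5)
Your proposal is correct and is exactly the argument the paper has in mind: the corollary is stated as an immediate consequence of Proposition \ref{pro:char}, obtained by multiplying by $\epsilon(\tilde g)$ from Corollary \ref{cor:eps} (via Corollary \ref{cor:dga'}) and simplifying the signs. The additivity step you single out is simpler than you make it, since $\dim(A_0^a/A_P^a)=\dim A_0^a-\dim A_P^a$ because $A_P^a\subset A_0^a$, so that step is pure arithmetic.
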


\subsection{Parabolic subgroups of disconnected groups} \label{sub:pardisc}

Our next goal is to give a reinterpretation of $D_A^{\tilde G}$ in terms of generalizations to $\tilde G$ of the functors of parabolic induction and restriction. For this we need a notion of a ``parabolic subgroup'' of $\tilde G$.

Recall that a subgroup $P \subset G$ is called parabolic if it satisfies the following equivalent conditions:
\begin{enumerate}
    \item $P$ contains a Borel subgroup of $G$.
    \item $G/P$ is a complete variety.
\end{enumerate}
In this case $P$ is automatically connected and is equal to its normalizer in $G$.

Therefore, for a subgroup $\tilde P \subset \tilde G$ the following statements are equivalent.
\begin{enumerate}
    \item $\tilde P$ contains a Borel subgroup of $G$.
    \item $\tilde P \cap G$ is a parabolic subgroup of $G$
    \item $\tilde G/\tilde P$ is a complete variety.
\end{enumerate}

The following additional condition, which is not implied by the above conditions, can also be useful:

\begin{enumerate}[resume]
    \item $\tilde P = N_{\tilde G}(P)$, where $P=\tilde P \cap G$.
\end{enumerate}

\begin{dfn}
    A \emph{parabolic subgroup} of $\tilde G$ is a subgroup $\tilde P \subset \tilde G$ that satisfies the equivalent conditions 1.--3. above. This group is called \emph{wide} if it also satisfies condition 4. above, and \emph{narrow}, if $\tilde P \subset G$.
\end{dfn}

The following two statements follow from the fact that a parabolic subgroup of $G$ is equal to its own normalizer.

\begin{fct}
    A wide parabolic subgroup $\tilde P \subset \tilde G$ is equal to its normalizer in $\tilde G$.
\end{fct}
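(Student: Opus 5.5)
We must show that a wide parabolic subgroup $\tilde P \subset \tilde G$ is its own normalizer in $\tilde G$. Set $P=\tilde P\cap G$, which is a parabolic subgroup of $G$ by condition 2. Wideness means $\tilde P = N_{\tilde G}(P)$. The plan is to show that any element normalizing $\tilde P$ must also normalize $P$, so that the normalizer of $\tilde P$ is squeezed between $\tilde P$ and $N_{\tilde G}(P)=\tilde P$.

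First, let $x \in N_{\tilde G}(\tilde P)$, so $x\tilde P x^{-1}=\tilde P$. Since $G$ is normal in $\tilde G$, we have $xGx^{-1}=G$. Hence
\[ xPx^{-1} = x(\tilde P\cap G)x^{-1} = x\tilde Px^{-1}\cap xGx^{-1} = \tilde P \cap G = P, \]
so $x \in N_{\tilde G}(P)=\tilde P$. This gives $N_{\tilde G}(\tilde P)\subset \tilde P$. The reverse inclusion $\tilde P \subset N_{\tilde G}(\tilde P)$ holds for any subgroup, which proves the claim.

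No step is a real obstacle. The only point needing care is the intersection identity, which uses only the normality of $G=\tilde G^\circ$ in $\tilde G$. If one also wants the statement scheme-theoretically, and not only on points, the same argument applies functorially on $R$-points for every $F$-algebra $R$, since normalizers and intersections are defined pointwise. The fact that a parabolic subgroup of $G$ is self-normalizing in $G$, which the paper cites as the source of this statement, does not even enter directly. It is instead the reason wide parabolic subgroups exist: by that fact $N_{\tilde G}(P)\cap G=P$, so $N_{\tilde G}(P)$ satisfies condition 2 and is itself wide.
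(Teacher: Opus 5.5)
Your argument is correct and is the routine verification the paper leaves implicit. Since $G$ is normal in $\tilde G$, any $x\in N_{\tilde G}(\tilde P)$ normalizes $\tilde P\cap G=P$ and therefore lies in $N_{\tilde G}(P)=\tilde P$. You also observe, correctly, that the property $N_G(P)=P$ (which the paper cites jointly for this fact and the next) is only really needed for the companion bijection, namely to get $N_{\tilde G}(P)\cap G=P$.
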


\begin{fct}
    The maps $P \mapsto N_{\tilde G}(P)$ and $\tilde P \mapsto G \cap \tilde P$ are mutually inverse $\tilde G$-equivariant bijections between the set of parabolic subgroups of $G$ and the set of wide parabolic subgroups of $\tilde G$.
\end{fct}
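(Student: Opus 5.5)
The plan is to verify directly that the two maps are well defined, land in the right sets, and are inverse to each other. Equivariance for the conjugation action of $\tilde G$ will then be immediate. Throughout, the only input is the classical fact recalled above: a parabolic subgroup $P$ of $G$ is connected and satisfies $N_G(P)=P$.

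\emph{First map.} Let $P \subset G$ be parabolic and put $\tilde P = N_{\tilde G}(P)$, a closed subgroup of $\tilde G$.
\begin{enumerate}
    \item Since $P \subset \tilde P$, the group $\tilde P$ contains a Borel subgroup of $G$, so condition 1 holds and $\tilde P$ is parabolic.
    \item Next, $\tilde P \cap G = N_G(P) = P$. Hence $N_{\tilde G}(\tilde P\cap G) = N_{\tilde G}(P) = \tilde P$, which is condition 4, so $\tilde P$ is wide.
    \item The composite $P \mapsto N_{\tilde G}(P) \mapsto N_{\tilde G}(P)\cap G$ returns $P$, again by $N_G(P)=P$.
\end{enumerate}

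\emph{Second map.} Let $\tilde P$ be a wide parabolic subgroup. By condition 2, $P := \tilde P \cap G$ is a parabolic subgroup of $G$, so this map lands in the right set. Condition 4 says exactly that $N_{\tilde G}(P) = \tilde P$, so the composite in this order is also the identity. The two maps are therefore mutually inverse bijections.

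\emph{Equivariance.} For $g \in \tilde G$ we have $N_{\tilde G}(gPg^{-1}) = gN_{\tilde G}(P)g^{-1}$, and $g\tilde Pg^{-1}\cap G = g(\tilde P \cap G)g^{-1}$ because $G$ is normal in $\tilde G$. Conjugation preserves both the parabolic condition and wideness, so both sets are stable under $\tilde G$ and both maps commute with the action.

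If one wants the statement on $F$-points or for $F$-subgroups, the same argument works verbatim. The normalizer of an $F$-subgroup is defined over $F$, and intersecting with $G$ preserves rationality.

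There is no genuine obstacle. The only point needing care is the identity $\tilde P\cap G = N_G(P)=P$, which rests on the self-normalizing property of parabolic subgroups of the connected group $G$.
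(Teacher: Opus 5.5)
Your proposal is correct and takes essentially the paper's approach. The paper simply states that the Fact follows from the self-normalizing property $N_G(P)=P$ of parabolic subgroups of $G$, which is exactly the single input your direct verification uses (via $N_{\tilde G}(P)\cap G = N_G(P) = P$ and condition 4 for wide parabolics).
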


To classify more general parabolic subgroups of $\tilde G$ we fix a parabolic subgroup $P \subset G$ and consider those parabolic subgroups $\tilde P \subset \tilde G$ such that $\tilde P \cap G=P$. Note that $\pi_0(\tilde G)$ acts on the set of $G$-conjugacy classes of parabolic subgroups of $G$. We will write $\pi_0(\tilde G)_{[P]}$ for the stabilizer of the conjugacy class of $P$ under this action. We have the exact sequence
\begin{equation}
1 \to P \to N_{\tilde G}(P) \to \pi_0(\tilde G)_{[P]} \to 1,
\end{equation}
which in particular identifies the component group of the wide parabolic subgroup of $\tilde G$ corresponding to $P$ with $\pi_0(\tilde G)_{[P]}$. 

\begin{fct}
Every parabolic subgroup of $\tilde G$ that contains $P$ is contained in $N_{\tilde G}(P)$ and in this way the set of such subgroups is in bijection with the set of subgroups of $\pi_0(\tilde G)_{[P]}$. 
\end{fct}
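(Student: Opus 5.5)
The plan is to establish the two halves of the claim separately: first that every parabolic subgroup $\tilde P$ of $\tilde G$ with $\tilde P \supset P$ lies in $N_{\tilde G}(P)$, and then the bijection with subgroups of $\pi_0(\tilde G)_{[P]}$. Here I read ``contains $P$'' as meaning $\tilde P \cap G = P$, the setting fixed just before the statement. Indeed, a parabolic subgroup of $\tilde G$ containing $P$ could a priori have $\tilde P\cap G$ a larger parabolic of $G$, and then the statement would fail.

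\emph{Containment.} Given $\tilde P$ with $\tilde P \cap G = P$, the group $P$ is normal in $\tilde P$, because $G$ is normal in $\tilde G$ and hence $\tilde P \cap G$ is normal in $\tilde P$. Therefore $\tilde P \subset N_{\tilde G}(P)$.

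\emph{Bijection.} Using the exact sequence $1 \to P \to N_{\tilde G}(P) \to \pi_0(\tilde G)_{[P]} \to 1$ recorded just above the statement, I would send $\tilde P$ to its image $\tilde P/P$ in $\pi_0(\tilde G)_{[P]}$. By the correspondence theorem for the quotient $N_{\tilde G}(P) \to N_{\tilde G}(P)/P$, subgroups of $N_{\tilde G}(P)$ containing $P$ are in bijection with subgroups of $\pi_0(\tilde G)_{[P]}$. It then remains to check two things:
\begin{enumerate}
    \item every subgroup $\tilde P$ of $N_{\tilde G}(P)$ containing $P$ satisfies $\tilde P \cap G = P$;
    \item every such $\tilde P$ is parabolic.
\end{enumerate}
For the first, $\tilde P \cap G \subset N_{\tilde G}(P)\cap G = N_G(P) = P$, using that parabolics of $G$ are self-normalizing. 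The second is then condition 2 of the definition. All of these subgroups are algebraic, being finite unions of $P$-cosets, so no closedness issue arises.

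\emph{Surjectivity of $N_{\tilde G}(P)\to\pi_0(\tilde G)_{[P]}$.} This is built into the exact sequence, but if it needs justification I would argue as follows. Given $\tilde g$ whose component stabilizes the class $[P]$, we have $\tilde g P \tilde g^{-1} = gPg^{-1}$ for some $g \in G$, so $g^{-1}\tilde g$ normalizes $P$ and lies in the same component as $\tilde g$. Since it is taken as given in the paper, the real content reduces to the self-normalizing property, and I expect no real obstacle. The only subtlety is the interpretation of ``contains $P$'' noted at the outset.
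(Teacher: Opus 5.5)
Your proposal is correct and is essentially the argument the paper has in mind: the paper states this as a Fact without proof, resting on the exact sequence $1 \to P \to N_{\tilde G}(P) \to \pi_0(\tilde G)_{[P]} \to 1$ and the self-normalizing property $N_G(P)=P$. Your argument supplies exactly these steps, namely normality of $\tilde P \cap G$ in $\tilde P$, the correspondence theorem for the quotient by $P$, and $N_G(P)=P$. Your reading of ``contains $P$'' as $\tilde P \cap G = P$ is the intended one, as the sentence preceding the exact sequence in the paper shows.
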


Let us now consider a minimal parabolic pair $(M_0,P_0)$ of $G$, and let $A_0$ be the maximal split torus in the center of $M_0$. A parabolic subgroup of $G$ that contains $P_0$ is called standard. The set of standard parabolic subgroups of $G$ is in bijection with the set of subsets of the set $S \subset X^*(A_0)$ of simple relative roots. We write $P_I$ for the standard parabolic subgroup associated to $I \subset S$ under this bijection.

\begin{cor}
    The map $I \mapsto N_{\tilde G}(P_I)$ sets up a bijection between the set of $\tilde G(F)$-orbits of subsets of $S$ and the set of $\tilde G(F)$-conjugacy classes of wide parabolic subgroups of $\tilde G$.
\end{cor}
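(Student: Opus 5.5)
The plan is to combine the bijection between parabolic subgroups of $G$ and wide parabolic subgroups of $\tilde G$, namely $P \mapsto N_{\tilde G}(P)$, which is $\tilde G$-equivariant, with the classical fact for connected groups: every $G(F)$-conjugacy class of $F$-parabolic subgroups of $G$ contains exactly one standard parabolic $P_I$, $I \subset S$. Here and throughout, parabolic subgroups are understood to be defined over $F$. By equivariance, the $\tilde G(F)$-conjugacy classes of wide parabolic subgroups of $\tilde G$ correspond bijectively to the $\tilde G(F)$-conjugacy classes of $F$-parabolic subgroups of $G$. So it suffices to show that $I \mapsto P_I$ induces a bijection between $\tilde G(F)$-orbits on subsets of $S$ and $\tilde G(F)$-conjugacy classes of $F$-parabolic subgroups of $G$.

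\emph{Well-definedness.} Recall the decomposition $\tilde G(F) = G(F)\cdot \tilde N(F)$ from \S\ref{sub:defdgdis}, where the action of $\tilde G(F)$ on $S$ factors through $\tilde N(F)/M_0(F)$. For $n \in \tilde N(F)$ with induced permutation $a$ of $S$, I will check that $nP_In^{-1} = P_{aI}$. This holds because $\tx{Ad}(n)$ preserves $P_0$ and $A_0$. Consequently it maps the Lie algebra of $P_I$, which is the sum of the $A_0$-weight spaces for weights with nonnegative coefficients on $S \sm I$, onto the corresponding sum for $aI$. Moreover $nP_In^{-1}$ contains $P_0$, so it is standard and equals $P_{aI}$. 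Hence $\tilde G(F)$-equivalent subsets give $\tilde G(F)$-conjugate parabolics.

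\emph{Surjectivity.} Every $F$-parabolic subgroup of $G$ is $G(F)$-conjugate to some $P_I$, and hence a fortiori $\tilde G(F)$-conjugate to it.

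\emph{Injectivity.} Suppose $\tilde g P_I \tilde g^{-1} = P_J$ with $\tilde g = gn$, $g \in G(F)$, $n \in \tilde N(F)$. Then $g P_{aI} g^{-1} = P_J$. By uniqueness of the standard representative in a $G(F)$-conjugacy class, $aI = J$, so $I$ and $J$ lie in the same $\tilde G(F)$-orbit. Finally, composing with the equivariant bijection $P \mapsto N_{\tilde G}(P)$ shows that $I \mapsto N_{\tilde G}(P_I)$ is the claimed bijection.

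The only step needing any care is the identification $nP_In^{-1}=P_{aI}$, and specifically that the action of $\tilde N(F)$ on $S$ matches conjugation of the weight-space description. I expect this to be routine, since $\tx{Ad}(n)$ transports $A_0$-weight spaces via the induced automorphism of $X^*(A_0)$, which preserves $S$.
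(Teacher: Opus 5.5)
Your proof is correct and is essentially the argument the paper leaves implicit. You transport the question through the $\tilde G$-equivariant bijection $P \mapsto N_{\tilde G}(P)$, then combine $\tilde G(F)=G(F)\cdot\tilde N(F)$, the identity $nP_In^{-1}=P_{aI}$, and the uniqueness of the standard representative in each $G(F)$-conjugacy class of $F$-parabolic subgroups, correctly reading all parabolic subgroups (wide ones included) as defined over $F$.
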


\begin{dfn}
    A parabolic subgroup $\tilde P \subset \tilde G$ is called \emph{standard} if it contains $P_0$.
\end{dfn}

\begin{rem}
    Let $\tilde P_0=N_{\tilde G}(P_0)$. The conjugacy under $G(F)$ of the minimal parabolic subgroups of $G$ implies that $\tilde P_0$ meets every component of $\tilde G$ that contains an $F$-point. Therefore, a parabolic subgroup of $\tilde G$ that contains $\tilde P_0$ must also necessarily meet all such components of $\tilde G$. However, there may well exist wide parabolic subgroups of $\tilde G$ that do not meet all such components.
\end{rem}

Recall the notation $\tilde N=N_{\tilde G}(M_0,P_0)$ for the normalizer in $\tilde G$ of the pair $(M_0,P_0)$.

\begin{lem}\label{lem:npg}
    Let $\tilde P \subset \tilde G$ be a standard parabolic subgroup of $\tilde G$ and let $P = \tilde P \cap G$ and $\tilde N_P = \tilde N \cap \tilde P$. The inclusion $\tilde N \to \tilde G$ induces an isomorphism $\tilde N(F)/\tilde N_P(F) \to \tilde G(F)/G(F)\tilde P(F)$.
\end{lem}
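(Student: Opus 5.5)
We will deduce the lemma from the decomposition $\tilde G(F)=G(F)\cdot\tilde N(F)$ together with $\tilde N\cap G=M_0$, both established at the beginning of \S\ref{sub:defdgdis}. First we check that $G(F)\tilde P(F)$ is a subgroup of $\tilde G(F)$. Indeed, $\tilde P(F)$ normalizes $G(F)$ because $G$ is normal in $\tilde G$, so $G(F)\tilde P(F)$ is a subgroup. Moreover, $\tilde G(F)/G(F)\tilde P(F)$ denotes the coset space, and it is a quotient of the group $\tilde G(F)/G(F)$. The map in question is the composition $\tilde N(F)\to\tilde G(F)\to\tilde G(F)/G(F)\tilde P(F)$. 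It is constant on right cosets of $\tilde N_P(F)$, since $\tilde N_P(F)\subset\tilde P(F)$, and so it descends to $\tilde N(F)/\tilde N_P(F)$.

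\emph{Surjectivity.} Let $\tilde g\in\tilde G(F)$ and write $\tilde g=gn$ with $g\in G(F)$ and $n\in\tilde N(F)$. Using normality of $G(F)$, we get $\tilde g\,G(F)\tilde P(F)=n\,(n^{-1}gn)\,G(F)\tilde P(F)=n\,G(F)\tilde P(F)$. Hence the coset of $\tilde g$ is the image of $n$.

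\emph{Injectivity.} Suppose $n,n'\in\tilde N(F)$ have the same image. Then $m:=n^{-1}n'\in\tilde N(F)\cap G(F)\tilde P(F)$, so we may write $m=gp$ with $g\in G(F)$ and $p\in\tilde P(F)$. This is the main step, and we use that $\tilde P$ is standard. Since $p\in\tilde P$ and $\tilde P\cap G=P\supset P_0$, the conjugate $pP_0p^{-1}$ is a minimal parabolic subgroup of $G$ contained in $P$. Minimal parabolic subgroups of $P$ (equivalently of its Levi quotient) are $P(F)$-conjugate, so there is $q\in P(F)$ with $qpP_0p^{-1}q^{-1}=P_0$. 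Similarly, the pair $qp(M_0,P_0)(qp)^{-1}$ has the form $(M_0',P_0)$ with $M_0'$ a Levi factor of $P_0$. Levi factors of $P_0$ are conjugate under $U_0(F)\subset P_0(F)\subset P(F)$, so after modifying $q$ we get $p':=qp\in\tilde N(F)\cap\tilde P(F)=\tilde N_P(F)$.

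We then have $m=gq^{-1}p'$, with $gq^{-1}\in G(F)$ and $m,p'\in\tilde N(F)$. Therefore $gq^{-1}=m p'^{-1}\in\tilde N(F)\cap G(F)=M_0(F)$. Since $M_0\subset P_0\subset\tilde P$, this gives $m\in M_0(F)\tilde N_P(F)=\tilde N_P(F)$, and injectivity follows. The only delicate point is the double use of rational conjugacy, first of minimal parabolic subgroups inside $P$ and then of Levi factors inside $P_0$. Both are standard results of Borel--Tits, applied to the connected group $P$ over $F$, so we do not expect a genuine obstacle. We only need care that all conjugating elements lie in $P(F)\subset\tilde P(F)$.
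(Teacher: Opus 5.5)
Your proposal is correct and follows essentially the same route as the paper. Both reduce, via $\tilde G(F)=G(F)\tilde N(F)$ and $\tilde N\cap G=M_0$, to showing $\tilde N(F)\cap G(F)\tilde P(F)=\tilde N_P(F)$ using $P(F)$-conjugacy of minimal parabolic pairs inside $P$; the paper cites \cite[Theorem C.2.5]{CGP15} for this in one step, whereas you split it into conjugacy of minimal parabolic subgroups followed by $U_0(F)$-conjugacy of Levi factors of $P_0$, and you also spell out surjectivity and well-definedness.
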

\begin{proof}
Using $\tilde G(F)=G(F)\tilde N(F)$ we are reduced to computing the intersection $\tilde N(F) \cap G(F)\tilde P(F)$, i.e. the normalizer of $(M_0,P_0)$ in $G(F)\tilde P(F)$. If $g\tilde p$ normalizes $(M_0,P_0)$ then $(M_0,P_0)^g$ is a minimal parabolic pair in $P=\tilde P \cap G$, hence equal to $^p(M_0,P_0)$ for some $p \in P(F)$ by \cite[Theorem C.2.5]{CGP15} (note that this theorem is stated for pseudo-parabolic subgroups, but in the setting of a parabolic subgroup of a reductive group, the concepts of parabolic and pseudo-parabolic subgroups agree). Thus $gp \in \tilde N(F) \cap G(F)=M_0(F)$. Now $g\tilde p=gp(p^{-1})\tilde p \in \tilde P(F)$ and we conclude that the normalizer of $(M_0,P_0)$ in $G(F)\tilde P(F)$ lies in $\tilde P(F)$, hence equals $\tilde N_P(F)$.
\end{proof}

\begin{rem}
    There is an alternative approach to classifying the parabolic subgroups of $\tilde G$, or, more precisely, the subgroups of $\tilde G(F)$ that are of the form $\tilde P(F)$ for parabolic subgroups $\tilde P \subset \tilde G$. The tuple $(\tilde G(F),P_0(F),\tilde N(F),R)$, where $R$ is the set of reflections along the simple relative roots $S$, is a generalized Tits system in the sense of Iwahori, cf. \cite{Iwahori65}. Basic properties of generalized Tits systems provide a classification of the subgroups of $\tilde G(F)$ of the form $\tilde P(F)$. The reader may consult the end of Section 1.4 in the Erratum to \cite{BTBOOK}, in particular Lemma 1.4.20 there.
\end{rem}

\begin{dfn} Let $\tilde P \subset \tilde G$ be a parabolic subgroup. A \emph{Levi factor} of $\tilde P$ is a subgroup $\tilde M \subset \tilde P$ such that 
    \begin{enumerate}
        \item The group $M = P \cap \tilde M$ is a Levi factor of $P$.
        \item $\tilde M = N_{\tilde P}(M)$.
    \end{enumerate}
\end{dfn}

\begin{lem}
    \begin{enumerate}
        \item Given a parabolic subgroup $\tilde P$ of $\tilde G$ with $P=G \cap \tilde P$, the maps $M \mapsto N_{\tilde P}(M)$ and $\tilde M \mapsto P \cap \tilde M$ are mutually inverse, $\tilde P$-equivariant bijections between the set of Levi factors of $P$ and the set of Levi factors of $\tilde P$.
        \item The unipotent radical $U$ of $P$ is a closed normal subgroup of $\tilde P$ and is a complement to any Levi factor $\tilde M$ of $\tilde P$. In particular, the map $\tilde M \to \tilde P \to \tilde P/U$ is an isomorphism.
    \end{enumerate}
\end{lem}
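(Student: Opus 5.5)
The plan is to reduce everything to facts about the connected group $G$ acting under conjugation by $\tilde P$, using that parabolic subgroups of $G$ are self-normalizing and that Levi factors of $P$ form a single $U$-orbit.

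\textbf{Part (1).} Fix a parabolic $\tilde P$ with $P=\tilde P\cap G$. The group $\tilde P$ normalizes $P$, since $P$ is the intersection of $\tilde P$ with the normal subgroup $G$. Hence $\tilde P$ permutes the Levi factors of $P$ by conjugation. Given a Levi factor $M$ of $P$, set $\tilde M=N_{\tilde P}(M)$. Then $\tilde M\cap P=N_P(M)$, and $N_P(M)=M$ because $N_P(M)\cap U$ normalizes $M$ and lies in $U$, so it centralizes $M$ modulo commutators in $M\cap U=1$; the standard fact $N_P(M)=M$ can simply be quoted. Thus $\tilde M$ satisfies condition 1 with $P\cap\tilde M=M$, and condition 2 holds by construction.

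Conversely, if $\tilde M$ is a Levi factor of $\tilde P$, then condition 2 says $\tilde M=N_{\tilde P}(P\cap\tilde M)$. So the two maps are mutually inverse. Equivariance under $\tilde P$-conjugation is clear from the formulas.

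\textbf{Part (2).} The unipotent radical $U$ of $P$ is characteristic in $P$, and $P$ is normal in $\tilde P$, so $U$ is normal in $\tilde P$ and closed.

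For the complement statement, let $\tilde M$ be a Levi factor with $M=\tilde M\cap P$.
\begin{itemize}
\item \emph{Trivial intersection.} $\tilde M\cap U\subset \tilde M\cap P=M$, so $\tilde M\cap U=M\cap U=1$.
\item \emph{Generation.} Let $x\in\tilde P$. Then $xMx^{-1}$ is again a Levi factor of $P$. All Levi factors of $P$ are conjugate under $U$, with the relevant element unique because $N_U(M)=1$. So there is $u\in U$ with $uxMx^{-1}u^{-1}=M$, i.e.\ $ux\in N_{\tilde P}(M)=\tilde M$. Hence $x\in U\tilde M$, and $\tilde P=U\rtimes\tilde M$.
\end{itemize}

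It follows that $\tilde M\to\tilde P/U$ is a bijective homomorphism of algebraic groups. To conclude that it is an isomorphism of varieties, use the fact that the multiplication map $U\times\tilde M\to\tilde P$ is an isomorphism. On identity components it is the known isomorphism $U\times M\to P$. It is then translated componentwise: each component of $\tilde M$ is $mM$, and each component of $\tilde P$ is $UmM$.

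\textbf{Main obstacle.} The delicate point is rationality and scheme-theoretic care. One needs the $U$-conjugacy of Levi factors at the level of points over $\bar F$ or as schemes, and the nontrivial fact $N_P(M)=M$. Working over an algebraic closure handles the group-theoretic statements, since all objects are defined over $F$ and the statements concern algebraic subgroups. In characteristic zero, smoothness is automatic. Over a local field of positive characteristic, the Levi conjugacy and normalizer facts for reductive groups (e.g.\ from CGP or Borel) are valid and can be cited.
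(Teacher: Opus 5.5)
Your proof is correct and takes the same route as the paper, which says only that (1) follows from $N_P(M)=M$ and (2) from the $U$-conjugacy of Levi factors of $P$; your step $x\in\tilde P\Rightarrow ux\in N_{\tilde P}(M)=\tilde M$ is exactly the intended unpacking. One loose spot is your parenthetical justification of $N_P(M)=M$: the commutator argument only shows that $N_U(M)$ centralizes $M$, and you would still need $Z_U(M)=1$ (e.g.\ because the central torus $A_M$ has no zero weight on $\mathrm{Lie}(U)$), but quoting the standard fact, as you propose, is fine.
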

\begin{proof}
    (1) follows from $N_P(M)=M$ and (2) follows from the conjugacy of Levi factors of $P$ under $U$.
\end{proof}

\begin{dfn} \label{dfn:paropdisc}
    Let $\tilde P$ be a parabolic subgroup of $\tilde G$ and let $\tilde M$ be a Levi factor of $\tilde P$. Define the $\tilde M$-\emph{opposite} parabolic subgroup $\tilde P^-$ as $\tilde P^-=\tilde M \cdot U^-$, where  $P^-=MU^-$ is the parabolic subgroup of $G$ that is $M$-opposite to $P$, $P=\tilde P \cap G$, $M=\tilde M \cap G$, $P=MU$.
\end{dfn}

Thus $\tilde P^- = \tilde M U^-$ is a parabolic subgroup of $\tilde G$ with Levi factor $\tilde M$ and we have $\tilde P \cap \tilde P^- = \tilde M$.

\subsection{Parabolic induction and restriction in the disconnected setting} \label{sub:parindresdisc}

The functors $i_P^G$ and $r_P^G$ can be generalized to the disconnected case to functors
\[ r_{\tilde P}^{\tilde G} : \mc{R}(\tilde G) \leftrightarrow \mc{R}(\tilde M) : i_{\tilde P}^{\tilde G},\]
where $\tilde P \subset \tilde G$ is a parabolic subgroup and $\tilde M$ stands for the quotient $\tilde P/U$. For this, we let $\delta_{\tilde P}$ denote the modulus character of the locally compact group $\tilde P(F)$; it descends to $\tilde M(F)$. Given a representation $\tilde\pi \in \mc{R}(\tilde G)$ we form as before the vector space of coinvariants $(V_{\tilde\pi})_{U(F)}$ and endow it with the action $\delta_{\tilde P}^{-1/2}\otimes \tilde\pi$ of $\tilde M(F)$. Given a representation $\tilde\sigma \in \mc{R}(\tilde M)$ we inflate it to $\tilde P(F)$ and form the smooth (equivalently compact) induction $\tx{Ind}_{\tilde P(F)}^{\tilde G(F)}(\delta_{\tilde P}^{1/2} \otimes \tilde\sigma)$.

The following statement is proved in the same way as in the connected case.
\begin{fct} \label{fct:adjoint}
    The functors $r_{\tilde P}^{\tilde G}$ and $i_{\tilde P}^{\tilde G}$ form an adjoint pair. More precisely, the maps
    \[ \Psi_{\tilde\pi,\tilde\sigma} : \tx{Hom}_{\tilde G(F)}(\tilde \pi,i_{\tilde P}^{\tilde G}(\tilde \sigma)) \leftrightarrow \tx{Hom}_{\tilde M(F)}(r_{\tilde P}^{\tilde G}(\tilde \pi),\tilde \sigma) : \Phi_{\tilde\pi,\tilde\sigma} \]
    given by $\Psi_{\tilde\pi,\tilde\sigma}(\alpha) = \tx{ev}_1 \circ \alpha$ and $\Phi_{\tilde\pi,\tilde\sigma}(\beta)(v)=(\tilde g \mapsto \beta(\tilde gv))$ are mutually inverse isomorphisms of abelian groups, functorial in $\tilde\sigma$ and $\tilde\pi$.
\end{fct}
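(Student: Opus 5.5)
We verify directly that $\Psi$ and $\Phi$ are well defined, natural, and mutually inverse, copying the connected-case argument but working with $\tilde G(F)$, $\tilde P(F)$, $\tilde M(F)$ throughout. Write $V_{\tilde\pi}$ for the space of $\tilde\pi$ and $p : V_{\tilde\pi} \to (V_{\tilde\pi})_{U(F)}$ for the projection. The representation $i_{\tilde P}^{\tilde G}(\tilde\sigma)$ consists of smooth functions $f : \tilde G(F) \to V_{\tilde\sigma}$ with $f(\tilde p \tilde g) = \delta_{\tilde P}^{1/2}(\tilde p)\tilde\sigma(\tilde p)f(\tilde g)$. By the finiteness of $\tilde G(F)/\tilde P(F)$ modulo $G(F)$ and the compactness of $G(F)/P(F)$, the quotient $\tilde P(F)\backslash \tilde G(F)$ is compact, so smooth and compact induction agree.

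First we check that $\Psi$ is well defined. The map $\tx{ev}_1 : i_{\tilde P}^{\tilde G}(\tilde\sigma) \to \tilde\sigma$ is $\tilde P(F)$-equivariant for the twisted action $\delta_{\tilde P}^{1/2}\otimes\tilde\sigma$, and it kills all vectors of the form $u f - f$ with $u \in U(F)$, since $U(F)$ acts trivially on the inflation and $\delta_{\tilde P}$ is trivial on $U(F)$. Hence for any $\alpha$, the composite $\tx{ev}_1\circ\alpha$ factors through $(V_{\tilde\pi})_{U(F)}$, giving an $\tilde M(F)$-homomorphism $r_{\tilde P}^{\tilde G}(\tilde\pi) \to \tilde\sigma$. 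Here the $\delta^{-1/2}$ twist on the Jacquet module exactly cancels the $\delta^{1/2}$ twist on $\tx{ev}_1$; this is the only normalization bookkeeping required, and we regard $\beta$ as defined on $V_{\tilde\pi}$ through $p$.

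Next we check that $\Phi$ is well defined. For $\beta$ and $v\in V_{\tilde\pi}$, put $f_v(\tilde g) = \beta(\tilde\pi(\tilde g)v)$. Then for $\tilde p \in \tilde P(F)$ we have
\[ f_v(\tilde p\tilde g) = \beta(\tilde\pi(\tilde p)\tilde\pi(\tilde g) v) = \delta_{\tilde P}^{1/2}(\tilde p)\tilde\sigma(\tilde p)f_v(\tilde g), \]
using the $\tilde M(F)$-equivariance of $\beta$ for the normalized Jacquet action. The function $f_v$ is right-invariant under the stabilizer of $v$, an open subgroup, so it is smooth. The assignment $v \mapsto f_v$ is $\tilde G(F)$-equivariant by construction, and is visibly linear.

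Mutual inverseness and naturality are then formal. We have $\Psi\Phi(\beta)(v) = f_v(1) = \beta(v)$. In the other direction, $\Phi\Psi(\alpha)(v)(\tilde g) = \alpha(\tilde\pi(\tilde g)v)(1) = \alpha(v)(\tilde g)$ by the equivariance of $\alpha$. Naturality in $\tilde\pi$ and $\tilde\sigma$ holds because both formulas are built from pre- and post-composition.

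No step is a genuine obstacle. The only point needing care is that the modulus character is now that of the possibly disconnected group $\tilde P(F)$. Since $\tilde P(F)$ contains $P(F)$ with finite index, $\delta_{\tilde P}$ restricts to $\delta_P$ on $P(F)$ and is trivial on $U(F)$. Nothing in the argument uses connectedness beyond these facts.
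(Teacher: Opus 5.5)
Your proposal is correct and takes essentially the same approach as the paper. The paper says only that the statement is proved exactly as in the connected case, which amounts to directly checking that $\Psi$ and $\Phi$ are well defined, natural, and mutually inverse; your write-up supplies those routine checks, with the only new input being that $\delta_{\tilde P}$ is trivial on $U(F)$ and descends to $\tilde M(F)$.
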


\begin{lem} \label{lem:iprpres}
    \begin{enumerate}
        \item $\tx{Res}^{\tilde M}_M \circ r_{\tilde P}^{\tilde G} = r_P^G \circ \tx{Res}^{\tilde G}_G$.
        \item $\tx{Res}^{\tilde G}_G \circ i_{\tilde P}^{\tilde G} = \bigoplus\limits_{c \in \tilde G(F)/G(F)\tilde P(F)} c \cdot (i_P^G \circ \tx{Res}^{\tilde M}_M)$.
    \end{enumerate}
\end{lem}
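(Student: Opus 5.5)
Part (1) is essentially tautological, and I would verify it by comparing definitions. Here $P=\tilde P\cap G$ and $M=\tilde M\cap G$. The unipotent radical $U$ of $\tilde P$ is the unipotent radical of $P$, so the underlying space of $r_{\tilde P}^{\tilde G}(\tilde\pi)$ is $(V_{\tilde\pi})_{U(F)}$, which is also the underlying space of $r_P^G(\pi)$ for $\pi=\tx{Res}^{\tilde G}_G\tilde\pi$. On $M(F)$ the two actions are both induced from $\tilde\pi$. It therefore remains to check that the twisting characters agree, i.e. $\delta_{\tilde P}|_{P(F)}=\delta_P$. This holds because $P(F)$ is an open subgroup of finite index in $\tilde P(F)$, so a left Haar measure on $\tilde P(F)$ restricts to one on $P(F)$. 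More concretely, the modulus is $|\det \tx{Ad}|_{\tx{Lie}(U)}|$, and this formula is the same on both groups.

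For part (2) I would use Mackey theory for the open normal finite-index subgroup $G(F)\subset\tilde G(F)$. The double cosets $G(F)\lmod\tilde G(F)/\tilde P(F)$ coincide with $\tilde G(F)/G(F)\tilde P(F)$, since $G(F)$ is normal. Writing $\tilde G(F)=\bigsqcup_c c\,G(F)\tilde P(F)$ and using that each piece is open, the space of smooth functions $f$ in $i_{\tilde P}^{\tilde G}\tilde\sigma$ splits as a direct sum over $c$ of the functions supported on the piece $G(F)\tilde P(F)c^{-1}$. Each summand is $G(F)$-stable, because right translation by $G(F)$ preserves these pieces by normality. By Lemma \ref{lem:npg} we may take the representatives $c$ in $\tilde N(F)$.

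For the summand attached to $c=1$, I would restrict functions from $G(F)\tilde P(F)$ to $G(F)$. Since $G(F)\cap\tilde P(F)=P(F)$, this restriction is injective, and it identifies the summand with $\tx{Ind}_{P(F)}^{G(F)}(\delta_{\tilde P}^{1/2}\tilde\sigma|_{P(F)})=i_P^G(\tx{Res}^{\tilde M}_M\tilde\sigma)$, using $\delta_{\tilde P}|_P=\delta_P$ from part (1). For a general $c$, the map $f\mapsto(g\mapsto f(gc^{-1}))$, or its conjugated variant, should identify the summand with the $c$-twist of the $c=1$ summand, i.e. with $c\cdot(i_P^G\circ\tx{Res}^{\tilde M}_M)(\tilde\sigma)$. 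Here the twist is understood as in \eqref{eq:ipg_auto}: the representation $\tilde\pi\circ\tx{Ad}(c)^{-1}$ on the same space, which is the same as $i_{cPc^{-1}}^G$ of the twisted $\tilde\sigma$.

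I expect the main obstacle to be making precise the meaning of ``$c\cdot$'' and getting the conventions right, in particular whether $c$ or $c^{-1}$ appears and whether the result depends on the choice of representative. I would resolve the dependence issue by noting that changing the representative within its coset changes the twist only by the action of an element of $\tilde P(F)$. Such an element acts on $\tilde\sigma$ through $\tilde\pi$ itself, so the resulting objects are canonically isomorphic via $\tilde\sigma(\cdot)$, in the same spirit as Lemma \ref{lem:dg1}.
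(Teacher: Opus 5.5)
Your proposal is correct and follows the paper's route. Part (1) is proved by unwinding the definitions, and your check that $\delta_{\tilde P}|_{P(F)}=\delta_P$ is exactly the point the paper leaves implicit; part (2) uses the Mackey decomposition over $\tilde G(F)/G(F)\tilde P(F)$ to reduce to the case $\tilde G=G\cdot\tilde P$, which you then settle by restricting functions to $G(F)$ to get $i_P^G\circ\tx{Res}^{\tilde M}_M$, with your twist bookkeeping ($f\mapsto f(\cdot\,c^{-1})$ sending the summand supported on $G(F)\tilde P(F)c^{-1}$ to the $c=1$ summand twisted by $\tx{Ad}(c)^{-1}$) consistent with the convention of \eqref{eq:ipg_auto}.
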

\begin{proof}
    (1) follows from the definition. (2) reduces by the Mackey formula to the case when $\tilde G=G \cdot \tilde P$, in which case it again follows from the definition.
\end{proof}

\begin{cor} \label{cor:iprpdcmp}
    For any $0\leq t\leq r_M\leq r$, where $r_M$ is the rank of the derived subgroup of $M$, the functors $i^{\tilde G}_{\tilde P}$ and $r^{\tilde G}_{\tilde P}$ restrict to the following two functors:
    \[i^{\tilde G}_{\tilde P}: \mc{R}_t(\tilde M)\to \mc{R}_t(\tilde G), \qquad r^{\tilde G}_{\tilde P}: \mc{R}_t(\tilde G)\to \mc{R}_t(\tilde M), \]
    and they still form an adjoint pair.
\end{cor}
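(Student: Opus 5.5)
The plan is to reduce everything to the connected case by restricting to $G(F)$, using Lemma \ref{lem:iprpres} together with the fact that $\mc{R}_t(\tilde G)$ and $\mc{R}_t(\tilde M)$ are, by definition, the preimages of $\mc{R}_t(G)$ and $\mc{R}_t(M)$ under restriction. Here $\mc{R}_t(\tilde M)$ is defined through the minimal parabolic pair $(M_0,P_0\cap M)$ of $M$, with $t\le r_M$ measured as for $M$. For the claim about $r_{\tilde P}^{\tilde G}$, take $\tilde\pi\in\mc{R}_t(\tilde G)$. By Lemma \ref{lem:iprpres}(1), $\tx{Res}^{\tilde M}_M r_{\tilde P}^{\tilde G}\tilde\pi = r_P^G\tx{Res}^{\tilde G}_G\tilde\pi$. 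It therefore suffices to know that $r_P^G$ maps $\mc{R}_t(G)$ into $\mc{R}_t(M)$.

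This last fact is the connected-case compatibility of Jacquet modules with cuspidal support, and it follows from the geometric lemma of Bernstein--Zelevinsky. Any irreducible subquotient of $r_P^G(\sigma)$, where $\sigma$ is a subquotient of $i_{P_I}^G(\tau)$ with $\tau$ supercuspidal on $M_I$ and $|I|=t$, has cuspidal support in $M$ of the form $(w M_I w^{-1}, w\tau)$ with $wM_Iw^{-1}\subset M$. The semisimple rank of this Levi is the same, so conjugating into $M$-standard position gives a standard Levi of $M$ indexed by a subset of cardinality $t$.

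For $i_{\tilde P}^{\tilde G}$, take $\tilde\sigma\in\mc{R}_t(\tilde M)$. By Lemma \ref{lem:iprpres}(2), $\tx{Res}^{\tilde G}_G i_{\tilde P}^{\tilde G}\tilde\sigma$ is a finite direct sum of conjugates $c\cdot i_P^G(\sigma)$ with $\sigma=\tx{Res}^{\tilde M}_M\tilde\sigma\in\mc{R}_t(M)$. By transitivity of induction \eqref{eq:ipg_stages}, $i_P^G$ maps $\mc{R}_t(M)$ into $\mc{R}_t(G)$, since a standard Levi of $M$ with $t$ simple roots is also a standard Levi of $G$ with $t$ simple roots. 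Each conjugate $c\cdot(-)$ preserves $\mc{R}_t(G)$. As noted in the proof of the decomposition lemma for $\mc{R}(\tilde G)$, conjugation by $\tilde G(F)$ preserves each factor, because it permutes standard parabolics via the action on $S$ and preserves cardinalities, and $G(F)$-conjugation does not change the class. Finite length of the outputs is clear from finiteness of the sums and the connected case.

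The adjointness then follows formally. Fact \ref{fct:adjoint} gives an adjunction on all of $\mc{R}(\tilde G)$ and $\mc{R}(\tilde M)$. The categories $\mc{R}_t(\tilde G)$ and $\mc{R}_t(\tilde M)$ are full subcategories, and both functors map them into each other. Hence the same natural bijections $\Psi,\Phi$ restrict to an adjunction between the restricted functors.

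The only step that requires real care is the compatibility of $r_P^G$ with the index $t$, that is, the geometric lemma argument. The rest is bookkeeping with Lemma \ref{lem:iprpres}.
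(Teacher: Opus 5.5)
Your proposal is correct and follows the paper's proof. You use Lemma \ref{lem:iprpres} and the definition of $\mc{R}_t(\tilde G)$, $\mc{R}_t(\tilde M)$ as preimages under restriction to reduce to the connected-case compatibility of $i_P^G$ and $r_P^G$ with the index $t$, and then restrict the adjunction of Fact \ref{fct:adjoint} to the full subcategories. The only difference is that the paper cites \cite[VI 7.3 Proposition]{Ren10} for the connected statement, whereas you sketch it yourself via the geometric lemma, transitivity of induction, and Theorem \ref{thm:bz77.2.10}.
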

\begin{proof}
    The second assertion follows from the first one, fact \ref{fct:adjoint} and the Bernstein decomposition for $\tilde G$ and $\tilde M$. The first assertion follows from Lemma \ref{lem:iprpres}, the definition of $\mc{R}_t(\tilde G)$ and $\mc{R}_t(\tilde M)$ and \cite[VI 7.3 Proposition]{Ren10}.
\end{proof}

The induction and restriction in stages isomorphisms hold in this context as well. Let $\tilde P \subset \tilde Q \subset \tilde G$ be parabolic subgroups, with unipotent radicals $U \subset \tilde P$ and $V \subset \tilde Q$ and Levi quotients $\tilde M=\tilde P/U$ and $\tilde L=\tilde Q/V$. Then $V \subset U$ and $\tilde P/V \subset \tilde L$ is a parabolic subgroup of $\tilde L$ with unipotent radical $U/V$ and Levi quotient $(\tilde P/V)/(U/V)=\tilde M$. We have the mutually inverse, functorial isomorphisms
\begin{equation} \label{eq:tipg_stages}
i_{\tilde P}^{\tilde G}\tilde\sigma \leftrightarrow i_{\tilde Q}^{\tilde G}i_{\tilde P/V}^{\tilde L} \tilde\sigma
\end{equation}
relating an element $\tilde f : \tilde G \to V_{\tilde\sigma}$ of $i_{\tilde P}^{\tilde G}\tilde\sigma$ and an element $\tilde f' : \tilde G \times \tilde L \to V_{\tilde \sigma}$ of $i_{\tilde Q}^{\tilde G}i_{\tilde P/V}^{\tilde L}\tilde \sigma$ by $\tilde f(g)=\tilde f'(g,1)$ and $\tilde f'(g,l)=\delta_{\tilde Q}^{-1/2}(l)\tilde f(qg)$, where $g \in \tilde G(F)$ and $q \in \tilde Q(F)$ with image $l \in \tilde L(F)$. Moreover, we have the mutually inverse isomorphisms
\begin{equation} \label{eq:trpg_stages}
r_{\tilde P}^{\tilde G}\tilde \pi \leftrightarrow r_{\tilde P/V}^{\tilde L}r_{\tilde Q}^{\tilde G}\tilde \pi
\end{equation}
given by the natural identification $(V_{\tilde \pi})_{U(F)}=((V_{\tilde \pi})_{V(F)})_{(U/V)(F)}$.

There are also other types of stages isomorphisms. Let $\tilde G_1$ be an algebraic subgroup of $\tilde G$ containing $G$. Let $\tilde P=\tilde M U$ be a parabolic subgroup of $\tilde G_1$ and hence of $\tilde G$. We have a functorial isomorphism for $\tilde \pi\in \mc{R}(\tilde M)$
\begin{equation} \label{eq:Indip}
    \tx{Ind}^{\tilde G}_{\tilde G_1}i^{\tilde G_1}_{\tilde P} \tilde \pi \to i^{\tilde G}_{\tilde P}\tilde \pi
\end{equation}
sending an element $f: \tilde G(F)\to V_{i^{\tilde G_1}_{\tilde P}\tilde \pi}$ of $\tx{Ind}^{\tilde G}_{\tilde G_1}i^{\tilde G_1}_{\tilde P} \tilde \pi$ to $f': \tilde G(F)\to V_{\tilde \pi}, g\mapsto f(g)(1)$.
Also, suppose $P\subset \tilde P_1 \subset \tilde P_2$ are parabolic subgroups of $\tilde G$, with Levi factors $M\subset \tilde M_1\subset \tilde M_2$ respectively. Suppose $\tilde P_1\cap G=\tilde P_2\cap G=P$, and hence $\tilde M_1$ has finite index in $\tilde M_2$. We have functorial isomorphism for $\tilde \sigma\in \mc{R}(\tilde M_1)$
\begin{equation} \label{eq:ipInd}
    i^{\tilde G}_{\tilde P_2}\tx{Ind}^{\tilde M_2}_{\tilde M_1}\tilde \sigma \to i^{\tilde G}_{\tilde P_1}\tilde \sigma
\end{equation}
sending an element $f: \tilde G(F) \to V_{\tx{Ind}\tilde \sigma}$ of $i^{\tilde G}_{\tilde P_2}\tx{Ind}^{\tilde M_2}_{\tilde M_1}\tilde \sigma $ to $f': \tilde G(F)\to V_{\tilde \sigma}, g\mapsto f(g)(1)$. Note that $\delta_{\tilde P_2}(m) = \delta_{\tilde P_1}(m)$ for any $m\in \tilde M_1(F)$.

Similarly, we may define the contragredient functor $(-)^\vee: \mathcal{R}(\tilde G) \to \mathcal{R}(\tilde G)$, which sends a representation $\tilde\pi$ to its smooth dual $\tilde\pi^\vee$. 
\begin{fct} \label{fct:conprp}
    \begin{enumerate}
        \item $\tx{Res}^{\tilde G}_G \circ (-)^\vee = (-)^\vee \circ \tx{Res}^{\tilde G}_G.$
        \item $\tx{Ind}^{\tilde G}_{\tilde G_1}\circ (-)^\vee = (-)^\vee \circ \tx{Ind}^{\tilde G}_{\tilde G_1}$ for any subgroup $\tilde G_1$ of $\tilde G$ containing $G$.
        \item $((-)^\vee)^\vee$ is equivalent to the identity functor on $\mc{R}(\tilde G)$.
        \item $(-)^\vee$ respects the Bernstein decomposition in the sense that $\tilde \pi^\vee\in \mc{R}_t(\tilde G)$ if $\tilde \pi\in\mc{R}_t(\tilde G)$.
    \end{enumerate}
\end{fct}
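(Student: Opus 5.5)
All four assertions follow from the same observation: $G(F)$ is an open normal subgroup of finite index in $\tilde G(F)$, so smoothness, admissibility and finite length can all be checked after restriction to $G(F)$. The plan is to prove (1) first and then reduce (3) and (4) to the connected case through (1). Assertion (2) is a separate, elementary computation with finite sums.

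For (1), I would note that every compact open subgroup of $G(F)$ is compact open in $\tilde G(F)$. Conversely, every compact open subgroup $K$ of $\tilde G(F)$ contains the compact open subgroup $K\cap G(F)$ of $G(F)$. Hence a linear functional on $V_{\tilde\pi}$ is smooth for $\tilde G(F)$ if and only if it is smooth for $G(F)$. The two smooth duals therefore have the same underlying space, and the restricted action is the same by definition. This identification is functorial, so (1) holds on the nose.

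For (2), I would first observe that $\tilde G_1(F)$ has finite index in $\tilde G(F)$ and that both groups contain the open subgroup $G(F)$. Consequently $\tx{Ind}=\tx{c\text{-}Ind}$, and the modulus characters of $\tilde G(F)$ and $\tilde G_1(F)$ are trivial on $\tilde G_1(F)$, since both restrict to that of $G(F)$ on the finite-index open subgroup. Then, for $f\in\tx{Ind}(\sigma)$ and $f'\in\tx{Ind}(\sigma^\vee)$, I would define the pairing
\[ \langle f,f'\rangle=\sum_{x\in \tilde G_1(F)\backslash\tilde G(F)}\langle f(x),f'(x)\rangle. \]
I would check that it is well defined, $\tilde G(F)$-invariant and non-degenerate. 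Since $\tx{Ind}(\sigma)$ is a finite direct sum of copies of $V_\sigma$ as a vector space, non-degeneracy shows that the induced map $\tx{Ind}(\sigma^\vee)\to\tx{Ind}(\sigma)^\vee$ is an isomorphism onto the smooth dual. Functoriality in $\sigma$ is immediate from the formula.

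For (3), I would take the canonical evaluation map $\tilde\pi\to\tilde\pi^{\vee\vee}$, which is $\tilde G(F)$-equivariant and natural. By (1), its restriction to $G(F)$ is the canonical map $\pi\to\pi^{\vee\vee}$ for $\pi=\tx{Res}(\tilde\pi)$. Two inputs from the connected theory then finish the argument:
\begin{enumerate}
    \item $\pi$ has finite length, since $G(F)$ has finite index in $\tilde G(F)$.
    \item Finite-length smooth representations of $G(F)$ are admissible, and admissible representations are reflexive.
\end{enumerate}
Hence the evaluation map is bijective, and the natural isomorphism is established.

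For (4), I would use (1) and the definition of $\mc{R}_t(\tilde G)$ as a preimage to reduce to showing that $\pi\in\mc{R}_t(G)$ implies $\pi^\vee\in\mc{R}_t(G)$. First, the contragredient is exact on admissible representations, so $\pi^\vee$ has finite length and its irreducible subquotients are the duals of those of $\pi$. Next, if $\sigma$ is a subquotient of $i_{P_I}^G(\tau)$, then $\sigma^\vee$ is a subquotient of $i_{P_I}^G(\tau)^\vee\cong i_{P_I}^G(\tau^\vee)$. This uses the standard duality for normalized parabolic induction, which is exactly the reason for the $\delta^{1/2}$ normalization. Finally, $\tau^\vee$ is again an irreducible supercuspidal representation of $M_I(F)$, since its matrix coefficients are those of $\tau$ composed with inversion, so they are still compactly supported modulo the center.

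I expect (4) to be the only step needing any care, specifically the duality $i_P^G(\tau)^\vee\cong i_P^G(\tau^\vee)$ together with exactness of the contragredient on finite-length representations; both are standard results from the connected case that I would cite. The remaining steps are routine.
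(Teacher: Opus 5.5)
Your proposal is correct. The paper records these four assertions as a Fact without proof, and your argument is exactly the routine justification it implicitly relies on: reduce (1), (3), (4) to $G(F)$ using that $G(F)$ is open of finite index, and handle (2) with the finite-sum pairing. The one step you compress is surjectivity onto the smooth dual in (2): non-degeneracy alone does not give it, and you also need that a smooth functional on $\bigoplus_x V_\sigma$ has smooth components, which follows from (1) because each summand (functions supported on a single coset $\tilde G_1(F)x$) is $G(F)$-stable.
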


Moreover, we may recover the second adjointness via the contragredient functor, which is based on the following lemma.

\begin{lem}\label{lem:iprpcon}
    \begin{enumerate}
        \item $r^{\tilde G}_{\tilde P}\circ (-)^{\vee}$ is equivalent to $(-)^{\vee}\circ r^{\tilde G}_{\tilde P^-}$.
        \item $i^{\tilde G}_{\tilde P}\circ (-)^{\vee}$ is equivalent to $(-)^{\vee}\circ i^{\tilde G}_{\tilde P}$.
    \end{enumerate}
\end{lem}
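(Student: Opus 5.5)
For (2) I would write down an explicit, functorial $\tilde G(F)$-equivariant pairing and check it is perfect. Let $\tilde\sigma\in\mc{R}(\tilde M)$, $f\in i_{\tilde P}^{\tilde G}(\tilde\sigma^\vee)$ and $h\in i_{\tilde P}^{\tilde G}(\tilde\sigma)$. The function $g\mapsto\langle f(g),h(g)\rangle$ on $\tilde G(F)$ transforms under left translation by $\tilde p\in\tilde P(F)$ by $\delta_{\tilde P}(\tilde p)$. Here the $\delta^{1/2}$ factors combine and $\tilde\sigma$ and $\tilde\sigma^\vee$ cancel. So it is a section of the line of densities on $\tilde P(F)\lmod\tilde G(F)$, which is compact because $\tilde G/\tilde P$ is complete. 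We integrate it against a fixed right $\tilde G(F)$-invariant positive functional, as in the connected case, and obtain a $\tilde G(F)$-invariant pairing
\[ i_{\tilde P}^{\tilde G}(\tilde\sigma^\vee)\times i_{\tilde P}^{\tilde G}(\tilde\sigma)\to\C. \]
This pairing is natural in $\tilde\sigma$, so it gives a natural transformation $i_{\tilde P}^{\tilde G}\circ(-)^\vee\to(-)^\vee\circ i_{\tilde P}^{\tilde G}$. To show it is an isomorphism it suffices to restrict to $G(F)$. We use Lemma \ref{lem:iprpres}(2) and Fact \ref{fct:conprp}(1): both sides decompose over $c\in\tilde G(F)/G(F)\tilde P(F)$ into translates of $i_P^G$ applied to $\tx{Res}\,\tilde\sigma^\vee$. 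On $\tilde P(F)\lmod\tilde G(F)=\coprod_c P(F)\lmod G(F)c$ the integral splits as a sum of the connected-case pairings. Those pairings are perfect by the classical result (\cite{BZ77}, \cite{Ren10}), and distinct cosets pair to zero, so the pairing is perfect.

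For (1) I would reduce to restriction in a similar way. Both sides are functors $\mc{R}(\tilde G)\to\mc{R}(\tilde M)$, and after applying $\tx{Res}^{\tilde M}_M$ they become $r_P^G\circ(-)^\vee$ and $(-)^\vee\circ r_{P^-}^G$, by Lemma \ref{lem:iprpres}(1) and Fact \ref{fct:conprp}(1). In the connected case the isomorphism $r_P^G(\pi^\vee)\cong(r_{P^-}^G\pi)^\vee$ comes from Casselman's canonical pairing, or equivalently from second adjointness via \cite[VI 9.6]{Ren10}. Casselman's pairing $\langle\bar v^\vee,\bar v\rangle$ is defined as $\langle v^\vee,\pi(a)v\rangle$ for $a$ sufficiently deep in the negative direction of $A_M$ (suitably normalized). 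It is canonical: it depends only on $P$, $P^-$ and $M$, and not on further choices. I would check that it is $\tilde M(F)$-invariant. For $\tilde m\in\tilde M(F)$, conjugation by $\tilde m$ preserves $P$, $P^-$, $M$, $A_M$ and the cone of dominant elements, so
\[ \langle\tilde\pi^\vee(\tilde m)v^\vee,\tilde\pi(\tilde m)\pi(a)v\rangle=\langle v^\vee,\pi(\tilde m^{-1}a\tilde m)v\rangle, \]
where $\tilde m^{-1}a\tilde m$ is still deep in the cone. The modulus characters $\delta_{\tilde P}^{\mp1/2}$ restrict to $\delta_P^{\mp1/2}$ on $M(F)$, and on all of $\tilde M(F)$ they cancel against each other, since $\delta_{\tilde P}\delta_{\tilde P^-}=1$ on $\tilde M(F)$ by unimodularity considerations. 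The pairing therefore defines an $\tilde M(F)$-equivariant map $r_{\tilde P}^{\tilde G}(\tilde\pi^\vee)\to(r_{\tilde P^-}^{\tilde G}\tilde\pi)^\vee$. It is natural in $\tilde\pi$ and an isomorphism after restriction to $M(F)$, hence an isomorphism.

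The main obstacle is (1): the $\tilde M(F)$-invariance of Casselman's pairing, including the modulus character bookkeeping, needs to be verified carefully. As a fallback I would avoid Casselman's pairing altogether. The plan there is to deduce (1) from (2) and Fact \ref{fct:adjoint} by a Yoneda argument, proving second adjointness for $\tilde G$ as the restriction-to-$G(F)$ argument with $\tilde G(F)/G(F)$-invariants. That route is longer, since it needs second adjointness with equivariant unit maps, so I would try the direct pairing first.
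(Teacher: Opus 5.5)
Your proposal is correct and follows essentially the paper's route. For (1) you restrict to $M(F)$, use the classical isomorphism $r_P^G(\pi^\vee)\cong(r_{P^-}^G\pi)^\vee$, and check equivariance under the disconnected part. The paper does this for $n\in\tilde N_M(F)$ via functoriality, tacitly using the compatibility with $\tx{Ad}(n)$ that your appeal to the canonicity of Casselman's pairing makes explicit. For (2) you use the same integration pairing, but prove perfectness through Lemma \ref{lem:iprpres}(2) rather than reducing to $\tilde G=\tilde P\cdot G$ by induction in stages and citing Getz--Hahn.

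One slip to fix in (1): in your display the argument should be $\pi(a)\tilde\pi(\tilde m)v$, not $\tilde\pi(\tilde m)\pi(a)v$. Also, since $\langle v^\vee,\pi(a)v\rangle$ varies with $a$, the $\tilde M(F)$-invariance should be drawn from the uniqueness in Casselman's theorem. Apply it to the $M(F)$-invariant pairing $(x^\vee,x)\mapsto\langle\tilde m x^\vee,\tilde m x\rangle$, which has the same asymptotics because $\tilde m$ preserves the cone.
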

\begin{proof}
    For (1), we know that $\tx{Res}^{\tilde G}_G\circ r^{\tilde G}_{\tilde P} = r^G_P\circ \tx{Res}^{\tilde G}_G$ and similar for the contragredient functor. Therefore, by the classical result that $r^G_P\circ (-)^{\vee}$ is naturally isomorphic to $(-)^{\vee}\circ r^G_{P^-}$, we obtain a functorial isomorphism of $\mathbb{C}$-vector spaces
    \[F_{\tilde \pi}: r^{\tilde G}_{\tilde P}(\tilde \pi^\vee) \to (r^{\tilde G}_{\tilde P^-}\tilde \pi)^\vee,\]
    which respects the action of $M(F)$, where $M=\tilde M \cap G$. It remains to check that it respects the action of $\tilde M(F)$. Let $\tilde N_M$ be the normalizer of $(M_0, P_0\cap M)$ in $\tilde M$. (We may assume $M_0\subset M$) Then, as in \S\ref{sub:defdgdis}, we have $\tilde M(F)=M(F)\tilde N_M(F)$ and we are reduced to checking the equivariance under $\tilde N_M(F)$. Let $a=\tx{Ad}(n)$ for a fixed $n\in \tilde N_M(F)$, then we have an isomorphism of $G(F)$-representation $\tilde{\pi}(n): a\pi\to \pi$, where $\pi=\tilde \pi|_G$. Note that $aP=nPn^{-1}=n(\tilde P\cap G)n^{-1}=\tilde P\cap G=P$, and similarly $aP^-=P^-$. Hence we have an isomorphism of $M(F)$-representations
    \[ar^G_P(\pi)=r^G_{aP}(a\pi)= r^G_P(a\pi)\xrightarrow{r^G_P(\tilde{\pi}(n))} r^G_P(\pi),\]
    which sends $v+V_\pi(U)$ to $\tilde{\pi}(n)(v)+V_\pi(U)$. Since it is clear that $(a\pi)^\vee= a(\pi^\vee)$, the $G(F)$-equivariant isomorphism $\tilde \pi^\vee(n): a\pi^\vee \to \pi^\vee$ equals the following composition
    \[ a\pi^\vee =(a\pi)^\vee \xrightarrow{(\tilde{\pi}(n)^{-1})^\vee} \pi^\vee.\]
    We then have the following diagram of $M(F)$-equivariant isomorphisms
    \[ \xymatrix{
        ar^G_P(\pi^\vee)=r^G_P(a\pi^\vee) \ar[d]_{F_{\tilde \pi}} \ar[rr]^-{r^G_P(\tilde{\pi}^\vee(n))} && r^G_P(\pi^\vee) \ar[d]^{F_{\tilde \pi}} \\
        a((r^G_{P^-}\pi)^\vee)= (r^G_{P^-}a\pi)^\vee \ar[rr]^-{(r^G_{P^-}(\tilde{\pi}(n)^{-1}))^\vee} && (r^G_{P^-}\pi)^\vee
    } \]
    The top horizontal arrow $r^G_P(\tilde \pi^\vee(n))$ equals $r^G_P((\tilde \pi(n)^{-1})^\vee)$. Hence the diagram commutes by the functoriality of $F_{\tilde \pi}$. As both horizontal arrows are the action of $n$, we conclude that $F_{\tilde \pi}$ is $\tilde M(F)$-equivariant.
     
    For (2), we use the proof as in the classical case. Let $K\subset G(F)$ be a special maximal compact subgroup in good position with respect to $P$. Then we have $G(F)=P(F)K$ by Iwasawa decomposition, see \cite{BTBOOK}. Consider the subgroup $\tilde G_1=\tilde P\cdot G$ of $\tilde G$. It is an algebraic subgroup with finite index. Suppose we could show $i^{\tilde G_1}_{\tilde P}\circ (-)^{\vee}$ is naturally isomorphic to $(-)^{\vee}\circ i^{\tilde G_1}_{\tilde P}$. Then we can use \eqref{eq:Indip} and fact \ref{fct:conprp} to get the desired result. Hence we may assume $\tilde G=\tilde P\cdot G$; in particular, $\tilde G(F)=\tilde P(F)G(F)$. Since $G(F)$ is unimodular, so is $\tilde G(F)$. Choose Haar measures $dg$ on $\tilde G(F)$, $dk$ on $K$ and a left Haar measure $d_lp$ on $\tilde P(F)$ such that the following condition holds
    \[\int_{\tilde G(F)}f(g) dg = \int_{\tilde P(F)\times K}f(pk)d_lpdk\]
    for any $f\in \mc{C}^\infty_c(\tilde G(F))$. Define the following pairing:
    \[i^{\tilde G}_{\tilde P}(\tilde \pi) \times i^{\tilde G}_{\tilde P}( \tilde \pi^\vee) \xrightarrow{(-,-)} \mathbb{C}\]
    with 
    \[(\varphi, \varphi^\vee):=\int_K\langle \varphi(k), \varphi^\vee(k) \rangle dk \]
    where $\langle, \rangle$ is the pairing between $\tilde \pi$ and $\tilde \pi^\vee$. This gives a perfect $\tilde G(F)$-pairing, see e.g. \cite[Proposition 8.2.3]{GetzHahn2024}.
\end{proof}

\begin{cor}[Second adjointness] \label{cor:secadjoint}
    We have the following isomorphism functorial in $\tilde\pi \in \mc{R}(\tilde G)$ and $\tilde\sigma \in \mc{R}(\tilde M)$
    \[\tx{Hom}_{\tilde M(F)}(\tilde \sigma, r^{\tilde G}_{\tilde P}(\tilde\pi))\leftrightarrow \tx{Hom}_{\tilde G(F)}(i^{\tilde G}_{\tilde P^-}(\tilde \sigma), \tilde\pi).\]
\end{cor}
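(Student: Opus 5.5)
We derive second adjointness formally from first adjointness (Fact \ref{fct:adjoint}) by conjugating with the contragredient functor and using Lemma \ref{lem:iprpcon}. The contragredient is a contravariant equivalence on admissible representations, hence on $\mc{R}(\tilde G)$ and $\mc{R}(\tilde M)$: finite-length representations of $\tilde G(F)$ restrict to finite-length, hence admissible, representations of $G(F)$, so Fact \ref{fct:conprp}(3) gives a natural isomorphism $\tilde\pi \to \tilde\pi^{\vee\vee}$. Consequently $\tx{Hom}(A,B) \cong \tx{Hom}(B^\vee,A^\vee)$, naturally in both variables.

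The chain of isomorphisms is
\begin{align*}
\tx{Hom}_{\tilde G(F)}(i^{\tilde G}_{\tilde P^-}\tilde\sigma,\tilde\pi)
&\cong \tx{Hom}_{\tilde G(F)}(\tilde\pi^\vee,(i^{\tilde G}_{\tilde P^-}\tilde\sigma)^\vee)
\cong \tx{Hom}_{\tilde G(F)}(\tilde\pi^\vee,i^{\tilde G}_{\tilde P^-}(\tilde\sigma^\vee))\\
&\cong \tx{Hom}_{\tilde M(F)}(r^{\tilde G}_{\tilde P^-}(\tilde\pi^\vee),\tilde\sigma^\vee)
\cong \tx{Hom}_{\tilde M(F)}((r^{\tilde G}_{\tilde P}\tilde\pi)^\vee,\tilde\sigma^\vee)
\cong \tx{Hom}_{\tilde M(F)}(\tilde\sigma,r^{\tilde G}_{\tilde P}\tilde\pi).
\end{align*}
The second step uses Lemma \ref{lem:iprpcon}(2) for the parabolic $\tilde P^-$. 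The third is Fact \ref{fct:adjoint} for $\tilde P^-$, which has Levi factor $\tilde M$. The fourth uses Lemma \ref{lem:iprpcon}(1) applied to $\tilde P^-$, whose $\tilde M$-opposite is $\tilde P$, since $(\tilde P^-)^-=\tilde M U=\tilde P$ by Definition \ref{dfn:paropdisc}. The last step uses double duality again, now on $\mc{R}(\tilde M)$.

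Each step is natural in $\tilde\pi$ and $\tilde\sigma$ because every isomorphism invoked is functorial. Hence the composite is functorial, as the corollary asserts.

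Most of the argument is bookkeeping; the one point needing care is that every object stays in the categories where the cited statements hold. Finite length is preserved by $i$, $r$ and $(-)^\vee$, by Lemma \ref{lem:iprpres}, the corresponding facts for $G$, and finite index. Lemma \ref{lem:iprpcon}, stated for $\tilde P$, must also be applicable to $\tilde P^-$. This is fine because $\tilde P^-$ is again a parabolic subgroup of $\tilde G$ with Levi factor $\tilde M$, and that lemma was proved for an arbitrary parabolic subgroup of $\tilde G$ with a chosen Levi factor.
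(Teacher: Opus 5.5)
Your proposal is correct and follows the paper's own argument. The paper states the corollary as a direct consequence of Lemma \ref{lem:iprpcon}, first adjointness (Fact \ref{fct:adjoint}) and double duality (Fact \ref{fct:conprp}(3)); your chain of isomorphisms writes this out step by step, and your checks that $(\tilde P^-)^-=\tilde P$ and that everything stays of finite length make explicit what the paper leaves implicit.
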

\begin{proof}
    This is a direct consequence of Lemma \ref{lem:iprpcon}, fact \ref{fct:adjoint} and \ref{fct:conprp}(3).
\end{proof}

\subsection{Reinterpretation of $D_A^{\tilde G}$ in terms of induction and restriction} \label{sub:rp}

We will now provide an alternative description of $D_A^{\tilde G}$ using a different complex $'Y_\bullet^{\tilde G}$, whose definition is analogous to that of $Y_\bullet^G$ for the connected group $G$, and is based on the functors introduced in the preceding subsection. We will then show that the two complexes $'Y_\bullet^{\tilde G}$ and $Y_\bullet^{\tilde G}$ are functorially isomorphic. To save notation, we will write $'\tilde Y_\bullet$ and $\tilde Y_\bullet$ instead, where the $'\tilde Y_\bullet$ will be defined below, while $\tilde Y_\bullet$ is the complex defined in \S\ref{sub:defdgdis}.

Let $\tilde P$ be a parabolic subgroup of $\tilde G$ with reductive quotient $\tilde M$. We define the functor 
\[ X_{\tilde P} : \mc{R}(\tilde G) \to \mc{R}(\tilde G) \]
as the composition $i_{\tilde P}^{\tilde G}\circ r_{\tilde P}^{\tilde G}$, equivalently $I_{\tilde P}^{\tilde G} \circ R_{\tilde P}^{\tilde G}$. In analogy with $X_P$, we have
\[ X_{\tilde P}(\tilde\pi) = \{\tilde f : \tilde G(F) \to (V_{\tilde\pi})_{U(F)}\,|\, \tilde f(\tilde p\tilde g) = \tilde\pi(\tilde p)\tilde f(\tilde g)\}. \]
We now want to define the analog $Y_{\tilde P_J}(\tilde\pi)$ of $Y_{P_J}(\pi)$. While in the connected setting we defined $Y_{P_J}(\pi)=X_{P_J}(\pi)\otimes_\C \Lambda_J$, where $\Lambda_J=\Wedge^{|S-J|}\C^{S-J}$, here it is more convenient to define
\[ '\tilde Y_J(\tilde\pi) := {}'Y_{\tilde P_J}(\tilde \pi) :=  i_{\tilde P_J}^{\tilde G}( r_{\tilde P_J}^{\tilde G}(\tilde\pi) \otimes \Lambda_J), \]
where $\Lambda_J$ is the 1-dimensional representation of $\tilde M_J(F)$ given via the isomorphism $\tilde M_J(F)/M_J(F) \cong \tilde N(F)_J/M_0(F)$ and the action of $\tilde N(F)_J/M_0(F)$ on $\Lambda_J$ coming from the natural permutation action of $\tilde N(F)_J/M_0(F)$ on $S$ which preserves $J$; we have used the notation $\tilde N(F)_J$ for the stabilizer of $J$ in $\tilde N(F)$.

\begin{lem} \label{lem:translate}
    Let $J \subset S$. Let $P_J$ be the corresponding parabolic subgroup of $G$ and let $\tilde P_J = N_{\tilde G}(P_J)$ be the corresponding wide parabolic subgroup of $\tilde G$. 
    \begin{enumerate}
        \item The map 
        \[ '\tilde Y_J(\tilde \pi) \to \bigoplus_{c \in  \tilde N(F)/\tilde N(F)_J} Y_{cJ}(\pi),\qquad \tilde f \mapsto (f_c) \]
        is a $\tilde G(F)$-equivariant isomorphism of $\C$-vector spaces. Here we define 
        \[ f_c(g) = (\tilde\pi(c) \otimes \lambda_J(c))\tilde f(c^{-1}g) \]
        and the action of $\tilde G(F)$ on the right hand side is defined by 
        \[ (gf)_c(h) = f_c(hg),\quad (nf)_c(h) = (\tilde\pi(n) \otimes \lambda_{n^{-1}cJ}(n))f_{n^{-1}c}(n^{-1}hn) \]
        for $g,h \in G(F)$, $n \in \tilde N(F)$.
        \item For $n \in \tilde N(F)$ the map
        \[ '\tilde Y_{J}(\tilde\pi) \to '\tilde Y_{{nJ}}(\tilde\pi),\qquad \tilde f \mapsto {^n}\tilde f, \qquad {^n}\tilde f(\tilde g)=(\tilde\pi(n)\otimes\lambda_{J}(n))\tilde f(n^{-1}\tilde g)\]
        is a $\tilde G(F)$-equivariant isomorphism that, under the isomorphisms of 1. is translated to the identity $\bigoplus_{c \in \tilde N(F)/\tilde N(F)_J} Y_{{cJ}}(\pi)=\bigoplus_{c' \in \tilde N(F)/\tilde N(F)_{nJ} } Y_{{c'nJ}}(\pi)$.
    \end{enumerate}
\end{lem}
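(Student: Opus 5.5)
The proof is a Mackey-type decomposition of the induced representation ${}'\tilde Y_J(\tilde\pi)=i_{\tilde P_J}^{\tilde G}(r_{\tilde P_J}^{\tilde G}(\tilde\pi)\otimes\Lambda_J)$ over the double cosets $G(F)\backslash\tilde G(F)/\tilde P_J(F)$, followed by a direct check of formulas.

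\textbf{Step 1: the coset decomposition.} I will apply Lemma \ref{lem:npg} to $\tilde P_J$, which is standard since it contains $P_0$. Since $G$ is normal, the double cosets $G(F)\backslash\tilde G(F)/\tilde P_J(F)$ coincide with $\tilde G(F)/G(F)\tilde P_J(F)$. By Lemma \ref{lem:npg} this set is $\tilde N(F)/(\tilde N\cap\tilde P_J)(F)$.

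Next I check that $(\tilde N\cap\tilde P_J)(F)=\tilde N(F)_J$. An element $n\in\tilde N(F)$ normalizes $P_J$ iff $nP_Jn^{-1}=P_{nJ}$ equals $P_J$, i.e. iff $nJ=J$.

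Hence $\tilde G(F)=\bigsqcup_{c\in\tilde N(F)/\tilde N(F)_J}G(F)\,c^{-1}\ldots$; more conveniently, $\tilde G(F)=\bigsqcup_c \tilde P_J(F)\,c^{-1}G(F)$ after inverting. An element $\tilde f$ is therefore determined by its restrictions $g\mapsto\tilde f(c^{-1}g)$ for $g\in G(F)$.

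\textbf{Step 2: each restriction lies in $Y_{cJ}(\pi)$.} For $p\in P_{cJ}(F)=cP_J(F)c^{-1}$ we have $c^{-1}p=(c^{-1}pc)c^{-1}$ with $c^{-1}pc\in P_J(F)$. So $\tilde f(c^{-1}pg)=\tilde\pi(c^{-1}pc)\tilde f(c^{-1}g)$, up to the modulus character.

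Applying $\tilde\pi(c)\otimes\lambda_J(c)$ converts this into the transformation law $f_c(pg)=\pi(p)f_c(g)$. It also moves the coinvariants $(V)_{U_J}$ to $(V)_{U_{cJ}}$ and $\Lambda_J$ to $\Lambda_{cJ}$. The modulus characters match because $\delta_{\tilde P_J}|_{P_J(F)}=\delta_{P_J}$ (finite index) and $\delta$ is conjugation-compatible.

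\textbf{Step 3: well-definedness and bijectivity.} The map must not depend on the representative $c$. Replacing $c$ by $cm$ with $m\in\tilde N(F)_J$ changes $\tilde f(m^{-1}c^{-1}g)$ by the factor $\tilde\pi(m^{-1})\otimes\lambda_J(m)^{-1}$. This factor comes from the action of $m^{-1}\in\tilde M_J(F)$ on $r(\tilde\pi)\otimes\Lambda_J$ (its modulus factor is trivial, since $m$ normalizes $M_0$ and hence is in a compact-mod-centre position; I would verify $\delta$ is trivial on $\tilde N(F)_J/M_0$ as a finite group image). It cancels against $\tilde\pi(cm)\otimes\lambda_J(cm)=(\tilde\pi(c)\otimes\lambda_{J}(c))(\tilde\pi(m)\otimes\lambda_J(m))$, using multiplicativity $\lambda_J(cm)=\lambda_{mJ}(c)\lambda_J(m)$ as in Proposition \ref{pro:dgequiv}(4).

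Bijectivity follows from the coset decomposition: each component can be prescribed independently, and smoothness is preserved because there are finitely many cosets.

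\textbf{Step 4: equivariance and part (2).} For $g\in G(F)$, equivariance is immediate from right translation. For $n\in\tilde N(F)$, I compute $(n\tilde f)_c(h)=(\tilde\pi(c)\otimes\lambda_J(c))\tilde f(c^{-1}hn)$ and write $c^{-1}hn=(c^{-1}n)(n^{-1}hn)$. Then I factor $\tilde\pi(c)\otimes\lambda_J(c)=(\tilde\pi(n)\otimes\lambda_{n^{-1}cJ}(n))(\tilde\pi(n^{-1}c)\otimes\lambda_J(n^{-1}c))$. This yields exactly the stated formula with $f_{n^{-1}c}(n^{-1}hn)$.

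Part (2) is the same computation. The map $\tilde f\mapsto{}^n\tilde f$ commutes with right translation, so it is $\tilde G(F)$-equivariant. Its $c'$-component for $nJ$ is $(\tilde\pi(c')\otimes\lambda_{nJ}(c'))(\tilde\pi(n)\otimes\lambda_J(n))\tilde f(n^{-1}c'^{-1}g)=f_{c'n}(g)$, again by multiplicativity. This is the reindexing $c=c'n$, which identifies the two direct sums.

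\textbf{Main obstacle.} I expect the main difficulty to be the bookkeeping of modulus characters together with the cocycle identity for $\lambda$ in Step 3, i.e. checking that the component maps are independent of the choice of representatives. The remaining steps are formal.
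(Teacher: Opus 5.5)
Your proposal is correct and is the same ``direct calculation'' the paper intends. The decomposition $\tilde G(F)=\bigsqcup_{c\in\tilde N(F)/\tilde N(F)_J}\tilde P_J(F)\,c^{-1}G(F)$ coming from Lemma \ref{lem:npg}, the identification $(\tilde N\cap\tilde P_J)(F)=\tilde N(F)_J$, and the cocycle identity $\lambda_J(ba)=\lambda_{aJ}(b)\lambda_J(a)$ are all that is needed. Your computations for independence of the representative $c$, for $\tilde N(F)$-equivariance, and for the reindexing $c=c'n$ in part (2) are right.

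One correction: there is no modulus character to track. The factor $\delta_{\tilde P_J}^{1/2}$ in $i_{\tilde P_J}^{\tilde G}$ cancels the factor $\delta_{\tilde P_J}^{-1/2}$ in $r_{\tilde P_J}^{\tilde G}$ (the analogue of \eqref{eq:xnorm}), so $\tilde f(\tilde p\tilde g)=(\tilde\pi\otimes\Lambda_J)(\tilde p)\tilde f(\tilde g)$ holds exactly for $\tilde p\in\tilde P_J(F)$. In particular, the reason you give in Step 3 is false. $\delta_{\tilde P_J}$ restricts to $\delta_{P_J}$ on $M_0(F)\subset\tilde N(F)_J$, and this is non-trivial on $A_0(F)$ whenever $J\neq S$ (already for $\tilde G=G$). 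So $\delta_{\tilde P_J}$ is neither trivial on $\tilde N(F)_J$ nor a function on $\tilde N(F)_J/M_0(F)$. The claim is also not needed, so your ``main obstacle'' is not an obstacle.

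Finally, in part (2) you should also check that ${}^n\tilde f$ satisfies the $\tilde P_{nJ}(F)$-transformation law. This is one more use of the cocycle identity, namely $\lambda_J(n)\lambda_J(n^{-1}mn)=\lambda_{nJ}(m)\lambda_J(n)$ for $m\in\tilde N(F)_{nJ}$.
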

\begin{proof}
Direct calculation.
\end{proof}

We continue with $\tilde\pi \in \mc{R}(\tilde G)$ and $\pi=\tx{Res}^{\tilde G}_G(\tilde\pi)$. Consider $I,J \subset S$ with $|J|=|I|+1$. We are not requiring $I \subset J$. We have the standard parabolic subgroups $P_I$ and $P_J$ and the standard wide parabolic subgroups $\tilde P_I$ and $\tilde P_J$. Note that $\tilde P_I$ need not be contained in $\tilde P_J$ even if $P_I \subset P_J$. Recall the map $\varphi^J_I : X_J(\pi) \to X_I(\pi)$ is given by composing $f \in  X_J(\pi)$ with the natural projection map $[-]_{U_I}: (V_\pi)_{U_J(F)} \to (V_\pi)_{U_I(F)}$, and the map $\psi^J_I : Y_J(\pi) \to Y_I(\pi)$ is defined as $\varphi^J_I \otimes \epsilon^J_I$.

We define $'\tilde\psi^J_I : {}'\tilde Y_{J}(\tilde\pi) \to {}'\tilde Y_{I}(\tilde \pi)$ by
\[ '\tilde\psi^J_I(\tilde f)(\tilde g) = \sum_{\substack{n \in \tilde N(F)/\tilde N_J(F) \\  I \subset nJ}} \big([-]_{U_I(F)}\circ \tilde\pi(n)\big) \otimes \big( \epsilon^{nJ}_I\circ \lambda_J(n)\big)\tilde f(n^{-1}\tilde g). \]
Define
\[ '\tilde Y_t(\tilde\pi) = \bigoplus_{\substack{I \in \mc{P}(S)/\tilde N(F)\\ |I|=t}} {}'\tilde Y_{I}(\tilde\pi) \]
and 
\[ 'd_t : {}'\tilde Y_t(\tilde\pi) \to {}'\tilde Y_{t-1}(\tilde \pi) \]
as the sum of the maps $'\tilde\psi^J_I$ where $I \subset S$ has cardinality $t-1$, $J \subset S$ has cardinality $t$, and both $I$ and $J$ are independently taken as representatives for the $\tilde N(F)$-orbits in the power set of $S$.

\begin{lem} \label{lem:newcomplex}
    The isomorphisms of Lemma \ref{lem:translate} splice together to isomorphisms $\xi_t : {}'\tilde Y_t \to \tilde Y_t$ that are functorial in $\tilde\pi$ and intertwine the differentials $'d_t$ and $d_t$.
\end{lem}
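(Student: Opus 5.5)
The plan is to define $\xi_t$ as the direct sum over the chosen orbit representatives $J$ (with $|J|=t$) of the isomorphisms of Lemma \ref{lem:translate}(1),
\[ {}'\tilde Y_J(\tilde\pi) \to \bigoplus_{c \in \tilde N(F)/\tilde N(F)_J} Y_{cJ}(\pi). \]
As $J$ runs over representatives of the $\tilde N(F)$-orbits of $t$-element subsets of $S$, the sets $cJ$ exhaust each $t$-element subset of $S$ exactly once. Hence the right-hand sides assemble to $\bigoplus_{|K|=t} Y_K(\pi) = \tilde Y_t(\tilde\pi)$ as vector spaces. $\tilde G(F)$-equivariance is part of Lemma \ref{lem:translate}(1), once we check that the action written there on $\bigoplus_c Y_{cJ}(\pi)$ agrees with the action of \S\ref{sub:defdgdis}:
\begin{itemize}
\item for $g \in G(F)$ the action is right translation in each summand;
\item for $n \in \tilde N(F)$ it moves the summand $Y_K$ to $Y_{nK}$ via $Y_K(\mathrm{Ad}(n),\tilde\pi(n))$, i.e. $f \mapsto (\tilde\pi(n)\otimes\lambda_K(n))f(n^{-1}\cdot n)$.
\end{itemize}
This is exactly the formula in Lemma \ref{lem:translate}(1), with $K = n^{-1}cJ$. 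Functoriality in $\tilde\pi$ is clear, since every map involved is built from $\tilde\pi(n)$ and evaluation. By Lemma \ref{lem:translate}(2), changing the orbit representative $J$ to $nJ$ does not change $\xi_t$, so $\xi_t$ is independent of choices.

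The substantive step is the compatibility $\xi_{t-1}\circ {}'d_t = d_t\circ\xi_t$. Fix representatives $J$ (of size $t$) and $I$ (of size $t-1$), and $\tilde f \in {}'\tilde Y_J(\tilde\pi)$. Write $\xi_t(\tilde f) = (f_c)_c$ with
\[ f_c(g)=(\tilde\pi(c)\otimes\lambda_J(c))\tilde f(c^{-1}g). \]
It suffices to compare the $Y_{c'I}$-components of both sides for each $c' \in \tilde N(F)/\tilde N(F)_I$. By $\tilde N(F)$-equivariance of both maps (Proposition \ref{pro:dgequiv}(2) on the $d$ side, and the equivariance of $\xi$ just established), we may reduce to $c'=1$, i.e. to the component in $Y_I(\pi)$.

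On the $d_t$ side, this component is $\sum_{K \supset I,\, K \in \tilde N J} \psi^K_I(f_{c_K})$, where $c_K$ is any element with $c_K J = K$. On the $'d_t$ side, it is $\xi_{t-1}({}'\tilde\psi^J_I\tilde f)$ evaluated at $c=1$, which is just the restriction of ${}'\tilde\psi^J_I\tilde f$ to $G(F)$. By the defining formula this restriction is
\[ \sum_{n \in \tilde N(F)/\tilde N_J(F),\ I\subset nJ} \big([-]_{U_I}\circ\tilde\pi(n)\big)\otimes\big(\epsilon^{nJ}_I\circ\lambda_J(n)\big)\,\tilde f(n^{-1}g). \]
Set $K=nJ$. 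The cosets $n$ with $I \subset nJ$ are in bijection with the sets $K\supset I$ in the orbit of $J$. Moreover $\psi^K_I = [-]_{U_I}\otimes\epsilon^K_I$ applied to $f_n(g)=(\tilde\pi(n)\otimes\lambda_J(n))\tilde f(n^{-1}g)$ gives precisely the $n$-th term. So the two components agree term by term.

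I expect the main obstacle to be bookkeeping rather than any conceptual point:
\begin{itemize}
\item checking that $f_c$ really lands in $X_{cJ}(\pi)$, i.e. has the correct $P_{cJ}(F)$-equivariance after conjugating by $c$ and projecting to $U_{cJ}(F)$-coinvariants;
\item checking that the formula for $'\tilde\psi^J_I$ is independent of the coset representatives $n$ modulo $\tilde N_J(F)$;
\item verifying that $\lambda$ and $\epsilon$ are compatible under the identifications, which is Lemma \ref{lem:equiv}(2);
\item checking that each term is well defined on the induced space, via the character $\Lambda_J$ of $\tilde M_J(F)$.
\end{itemize}
I would carry out these verifications once on representatives and then invoke Lemma \ref{lem:translate}(2) to dispose of the dependence on choices.
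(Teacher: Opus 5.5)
Your proposal is correct and is the same direct computation the paper has in mind; the paper's proof says only that this is an immediate computation. Your write-up supplies the details: splicing over $\tilde N(F)$-orbit representatives, reducing to the $Y_I(\pi)$-component by equivariance, and matching each term of ${}'\tilde\psi^J_I$ with $\psi^{nJ}_I(f_n)$. The one point you leave implicit is that this reduction also needs ${}'d_t$ itself to be $\tilde G(F)$-equivariant. That holds because ${}'\tilde\psi^J_I$ only modifies the left argument of $\tilde f$, so it commutes with right translation.
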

\begin{proof}
    This is an immediate computation.

\end{proof}

\begin{cor} \label{cor:dgdisc-ri}
    The functors $'\tilde Y_0,\dots,{'\tilde Y_{t-1}}$ vanish on $\mc{R}_t(\tilde G)$, and the sequence
    \[ 0 \to {'\tilde Y_r} \to {'\tilde Y_{r-1}} \to \dots \to {'\tilde Y_t} \to D_A^{\tilde G} \to 0 \]
    is exact when restricted to $\mc{R}_t(\tilde G)$. 
\end{cor}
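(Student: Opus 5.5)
The plan is to transport everything through the isomorphisms of Lemma \ref{lem:newcomplex}. That lemma gives, for each $t$, an isomorphism $\xi_t : {}'\tilde Y_t \to \tilde Y_t$. These isomorphisms are functorial in $\tilde\pi$ and intertwine $'d_t$ with $d_t$, so the $\xi_t$ assemble into an isomorphism of complexes of functors $\mc{R}(\tilde G)\to\mc{R}(\tilde G)$ between ${}'\tilde Y_\bullet$ and $\tilde Y_\bullet = Y^{\tilde G}_\bullet$. Vanishing and exactness are both invariant under an isomorphism of complexes. Hence the vanishing of $'\tilde Y_0,\dots,{}'\tilde Y_{t-1}$ on $\mc{R}_t(\tilde G)$ and the exactness of $0\to{}'\tilde Y_r\to\dots\to{}'\tilde Y_t$ follow immediately from Proposition \ref{pro:aubdisc1}.

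For the final term, I will identify the cokernel. Since $\xi_{t+1}$ and $\xi_t$ intertwine the differentials, $\xi_t$ maps $\mathrm{im}({}'d_{t+1})$ onto $\mathrm{im}(d_{t+1})$. It therefore induces a functorial $\tilde G(F)$-isomorphism $\tx{cok}({}'d_{t+1}) \to \tx{cok}(d_{t+1})$, and the target is $D_A^{\tilde G}$ by Definition \ref{dfn:aubdisc1}. I will take the map ${}'\tilde Y_t\to D_A^{\tilde G}$ in the statement to be the composite of $\xi_t$ with the projection $\tilde Y_t\to D_A^{\tilde G}$. This composite is surjective with kernel $\xi_t^{-1}(\mathrm{im}\, d_{t+1})=\mathrm{im}({}'d_{t+1})$, which gives exactness at ${}'\tilde Y_t$ and at $D_A^{\tilde G}$.

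The only substantive input is Lemma \ref{lem:newcomplex} itself, which the paper states with an "immediate computation." If I were to verify it, the main obstacle would be checking that $\xi_{t-1}\circ{}'\tilde\psi^J_I=\sum_{I'} \psi^{J'}_{I'}\circ\xi_t$ componentwise. For this I would use the explicit description of Lemma \ref{lem:translate}(1): the component indexed by $c$ of $\xi(\tilde f)$ is $(\tilde\pi(c)\otimes\lambda_J(c))\tilde f(c^{-1}\,\cdot)$. The sum over $n\in\tilde N(F)/\tilde N_J(F)$ with $I\subset nJ$ in the definition of $'\tilde\psi^J_I$ then matches, after relabeling $n\mapsto c^{-1}n$ and using Lemma \ref{lem:equiv} together with the cocycle relation $\lambda_J(ba)=\lambda_{aJ}(b)\lambda_J(a)$, the sum over $J'\supset I'$ in $d_t$.

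A second point needs care: the result does not depend on the choice of orbit representatives in the definition of ${}'\tilde Y_t$. This is handled by Lemma \ref{lem:translate}(2), which identifies the different choices compatibly with the isomorphisms of Lemma \ref{lem:translate}(1).
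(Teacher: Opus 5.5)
Your proposal is correct and matches the paper's proof, which likewise deduces the corollary directly from Proposition \ref{pro:aubdisc1}, the isomorphism of complexes $\xi_\bullet$ of Lemma \ref{lem:newcomplex}, and Definition \ref{dfn:aubdisc1}. Your sketch of the computation behind Lemma \ref{lem:newcomplex} (relabeling the sum via Lemma \ref{lem:equiv} and the cocycle relation for $\lambda$) is consistent with the definitions, but the corollary does not need it.
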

\begin{proof}
This follows from Proposition \ref{pro:aubdisc1}, Lemma \ref{lem:newcomplex}, and Definition \ref{dfn:aubdisc1}.
\end{proof}

\subsection{Cohomological duality for disconnected groups}

\begin{lem}
    The category of smooth representations of $\tilde G(F)$ has finite homological dimension. In addition, if $\pi$ is a finitely generated $\tilde G(F)$ representation, it has a finite finitely generated projective resolution.
\end{lem}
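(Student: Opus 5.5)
The plan is to reduce both assertions to the connected case via induction and restriction between $G(F)$ and $\tilde G(F)$, exploiting that $G(F)$ is open, normal and of finite index in $\tilde G(F)$. The connected case is known: the category of smooth $G(F)$-representations has finite homological dimension, and finitely generated objects admit finite resolutions by finitely generated projectives \cite[Theorem 29]{BernsteinPadicGroups}, \cite[Lemma 2.2]{YS2026M}.

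\textbf{Step 1: the functors $\tx{Res}=\tx{Res}^{\tilde G}_G$ and $\tx{Ind}=\tx{Ind}^{\tilde G}_G$.} Since $G(F)$ is open of finite index, compact and smooth induction agree, and $\tx{Ind}$ is both left and right adjoint to $\tx{Res}$. Both functors are exact. As a consequence, both preserve projectives: $\tx{Res}$ has the exact right adjoint $\tx{Ind}$, and $\tx{Ind}$ has the exact right adjoint $\tx{Res}$.

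\textbf{Step 2: $\tilde\pi$ is a direct summand of $\tx{Ind}\,\tx{Res}\,\tilde\pi$.} There is a natural map $\tilde\pi \to \tx{Ind}\,\tx{Res}\,\tilde\pi$, given by $v \mapsto (\tilde g \mapsto \tilde\pi(\tilde g)v)$. There is also a natural map back, $f \mapsto \frac{1}{[\tilde G(F):G(F)]}\sum_{c \in G(F)\lmod\tilde G(F)} \tilde\pi(c)^{-1}f(c)$. Their composite is the identity, and this is where characteristic zero of the coefficients is used. The key consequence is that $\tilde\pi$ is projective in the $\tilde G(F)$-category if and only if $\tx{Res}\,\tilde\pi$ is projective in the $G(F)$-category. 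The forward direction is Step 1. For the converse, $\tx{Ind}$ preserves projectives, so $\tx{Ind}\,\tx{Res}\,\tilde\pi$ is projective, and $\tilde\pi$ is a direct summand of it.

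\textbf{Step 3: finite homological dimension.} Let $d$ be the homological dimension for $G(F)$. Take any projective resolution of $\tilde\pi$ and let $K$ be its $d$-th syzygy. Because $\tx{Res}$ is exact and preserves projectives, $\tx{Res}\,K$ is the $d$-th syzygy of a projective resolution of $\tx{Res}\,\tilde\pi$, hence is projective. By Step 2, $K$ itself is projective, so the homological dimension for $\tilde G(F)$ is at most $d$. Alternatively, one can note directly that $\tx{Ext}^i_{\tilde G}(\tilde\pi,\tilde\sigma)$ is a direct summand of $\tx{Ext}^i_{\tilde G}(\tx{Ind}\,\tx{Res}\,\tilde\pi,\tilde\sigma)=\tx{Ext}^i_G(\tx{Res}\,\tilde\pi,\tx{Res}\,\tilde\sigma)$.

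\textbf{Step 4: finite resolutions by finitely generated projectives.} If $\tilde\pi$ is finitely generated, then $\tx{Res}\,\tilde\pi$ is finitely generated, since $\tilde G(F)/G(F)$ is finite. Finitely generated projectives exist in abundance: $\tx{Ind}_{\tilde K}^{\tilde G(F)}\mathbf 1$, for $\tilde K$ compact open, is projective and finitely generated. So I would build a resolution $\dots \to P_1 \to P_0 \to \tilde\pi \to 0$ by finitely generated projectives, which requires each kernel to again be finitely generated. This is the main obstacle: it amounts to the Noetherian property of the category for $\tilde G(F)$.

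I would deduce it from Bernstein's Noetherianity for $G(F)$. A $\tilde G(F)$-subrepresentation of a finitely generated $\tilde\pi$ is in particular a $G(F)$-subrepresentation of the finitely generated $\tx{Res}\,\tilde\pi$, so it is finitely generated over $G(F)$, and hence over $\tilde G(F)$. Kernels are therefore finitely generated. Truncating at stage $d$ gives a resolution
\[0 \to K_d \to P_{d-1} \to \dots \to P_0 \to \tilde\pi \to 0,\]
in which $K_d$ is projective by Step 3 and finitely generated as just shown. This is the desired finite resolution.
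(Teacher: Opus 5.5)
Your proof is correct and takes the same route as the paper. The paper cites the connected case and simply notes that, because $G(F)$ has finite index in $\tilde G(F)$, both assertions carry over to $\tilde G(F)$. Your steps supply the details of that one-line reduction: splitting $\tilde\pi$ off $\tx{Ind}^{\tilde G}_G\tx{Res}^{\tilde G}_G\tilde\pi$ by averaging, the syzygy argument bounding projective dimension, and Bernstein's Noetherianity for $G(F)$ to keep the kernels finitely generated.
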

\begin{proof}
    In the connected case, this is proved in \cite[Theorem 29]{BernsteinPadicGroups} and \cite[Lemma 2.2]{YS2026M}. Since $G(F)$ has finite index in $\tilde G(F)$, it is also true for $\tilde G$.
\end{proof}

Let $\mc{D}^b(\tilde G)$ be the bounded derived category of smooth representations of $\tilde G$. We have the $\tilde G$-bimodule $\mc{\tilde H} := \mc{C}^\infty_c(\tilde G(F))$ and for any $Z \in \mc{D}^b(\tilde G)$ we can consider $\tx{RHom}_{\mc{\tilde H}}(Z,\mc{\tilde H})$. This is a right $\tilde G$-module that can be converted to a left $\tilde G$-module in the standard way: $gx:=xg^{-1}$. Write $D^{\tilde G}_\tx{coh} : \mc{D}^b(\tilde G) \to \mc{D}^b(\tilde G)$. This makes sense by the above lemma.

\begin{lem} \label{lem:homhGres}
    We have a functorial isomorphism of $G(F)$-representations (not necessarily of finite length):
    \[\tx{Res}^{\tilde G}_G(\tx{Hom}_{\tilde G (F)}(\tilde \pi, \mc{\tilde H})) \xrightarrow{\simeq} \tx{Hom}_{G(F)}(\tx{Res}^{\tilde G}_G\tilde \pi, \mc{H})\]
    where $\mc{H}$ is the Hecke algebra of $G(F)$.
\end{lem}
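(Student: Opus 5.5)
The plan is to write down an explicit map and check that it is a bijection. On the left side, $\tilde\pi$ is a $\tilde G(F)$-representation. An element of $\tx{Hom}_{\tilde G(F)}(\tilde\pi,\mc{\tilde H})$ is a $\C$-linear map $\Phi: V_{\tilde\pi}\to \mc{C}^\infty_c(\tilde G(F))$ intertwining $\tilde\pi$ with the left-translation action $(\tilde g\cdot f)(x)=f(\tilde g^{-1}x)$. The $G(F)$-action on this space comes from right translation $(\Phi\cdot g)(v)(x)=\Phi(v)(xg)$, converted to a left action via $g^{-1}$. We define
\[ \rho(\Phi)(v) := \Phi(v)|_{G(F)} \in \mc{C}^\infty_c(G(F)) = \mc{H}. \]
Restriction to the open and closed subgroup $G(F)$ preserves smoothness and compact support. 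It is clearly left $G(F)$-equivariant, so $\rho(\Phi)\in\tx{Hom}_{G(F)}(\tx{Res}\,\tilde\pi,\mc{H})$. Restriction also commutes with right translation by $g\in G(F)$, so $\rho$ is $G(F)$-equivariant. Functoriality in $\tilde\pi$ is immediate, since $\rho$ is post-composition with a fixed map $\mc{\tilde H}\to\mc{H}$.

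For the inverse, given $\phi\in\tx{Hom}_{G(F)}(\pi,\mc{H})$ with $\pi=\tx{Res}\,\tilde\pi$, we set
\[ \sigma(\phi)(v)(x) := \phi(\tilde\pi(x^{-1})v)(1)\quad\text{for } x\in \tilde G(F). \]
More intrinsically, we choose coset representatives $c$ of $\tilde G(F)/G(F)$ and define $\sigma(\phi)(v)(c h) = \phi(\tilde\pi(c^{-1})v)(h)$ for $h\in G(F)$. The two descriptions agree: for $x=ch$ we have $\phi(\tilde\pi(h^{-1}c^{-1})v)(1)=\phi(\tilde\pi(c^{-1})v)(h)$, using $G(F)$-equivariance of $\phi$ and $(h^{-1}\cdot f)(1)=f(h)$. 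This shows the definition is independent of the choice of representatives. The second description makes clear that $\sigma(\phi)(v)$ is smooth and compactly supported, because $\tilde G(F)/G(F)$ is finite and each piece is a translate of an element of $\mc{H}$. $\tilde G(F)$-equivariance follows from the formula $\sigma(\phi)(\tilde\pi(y)v)(x)=\phi(\tilde\pi(x^{-1}y)v)(1)=\sigma(\phi)(v)(y^{-1}x)$.

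It remains to check that $\rho$ and $\sigma$ are mutually inverse. The identity $\rho\sigma=\tx{id}$ is the computation above with $c=1$. For $\sigma\rho=\tx{id}$, we use that any $\tilde G(F)$-equivariant $\Phi$ satisfies $\Phi(v)(x)=(x^{-1}\cdot\Phi(v))(1)=\Phi(\tilde\pi(x^{-1})v)(1)$, so $\Phi$ is determined by its values at $1$.

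There is no serious obstacle here. The only point needing care is the bookkeeping of left versus right actions, so that the $G(F)$-action on the left side (coming from the right $\tilde G(F)$-module structure of $\mc{\tilde H}$, restricted to $G(F)$) matches the right module structure of $\mc{H}$. This holds because restriction of functions to $G(F)$ commutes with right translation by elements of $G(F)$. Smoothness of the resulting $G(F)$-representations is not needed, because the statement allows representations that are not of finite length, and $\rho$ is an isomorphism of abstract $G(F)$-modules.
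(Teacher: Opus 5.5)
Your proposal is correct and follows essentially the same route as the paper. The paper also restricts functions to $G(F)$ and defines the inverse coset by coset via $\bar\psi_v(g_ih)=\psi_{g_i^{-1}v}(h)$, which is exactly your $\sigma$, leaving the remaining verifications as easy checks. Your intrinsic formula $\sigma(\phi)(v)(x)=\phi(\tilde\pi(x^{-1})v)(1)$ is a clean way to make explicit the independence of the coset representatives and the $\tilde G(F)$-equivariance, which the paper only asserts.
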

\begin{proof}
    Given a smooth representation $\tilde \pi$ of $\tilde G(F)$, write $\pi= \tx{Res}^{\tilde G}_G\tilde \pi$ as usual. Define the following maps
    \[R: \tx{Hom}_{\tilde G(F)}(\tilde \pi, \mc{\tilde H}) \to \tx{Hom}_{G(F)}(\pi, \mc{H}) \]
    sending $\varphi: v\mapsto \varphi_v$ to $R(\varphi): v\mapsto \varphi_v|_{G(F)}$, and 
    \[L: \tx{Hom}_{G(F)}(\pi, \mc{H})\to \tx{Hom}_{\tilde G(F)}(\tilde \pi, \mc{\tilde H}) \] 
    sending $\psi: v\mapsto \psi_v$ to $L(\psi): v\mapsto \bar{\psi}_v$, where $\bar{\psi}_v\in \mc{\tilde H}$ is defined as follows. Choose a coset decomposition $\tilde G(F)= \coprod_{i=1}^ng_iG(F)$. Then, for $x\in \tilde G(F)$,
    \[\bar{\psi}_v(x) = \sum_{i=1}^n \chi_{g_iG(F)}(x) \psi_{g_i^{-1}v}(g_i^{-1}x) \]
    where $\chi_{g_iG(F)}$ is the characteristic function on $g_iG(F)$ and hence the sum makes sense. It is easy to check that $\chi_{g_iG(F)}(-)\psi_{g_i^{-1}v}(g_i^{-1}-)$, as a function from $g_iG(F)$ to $\mathbb{C}$, is independent of the choice of the representative $g_i$, and that $\bar{\psi}_v$ is locally constant and compactly supported. It is easy to check that both $L$ and $R$ are $G(F)$-equivariant and inverse to each other.
\end{proof}

As a direct consequence of Lemma \ref{lem:homhGres} and the fact that $\tx{Res}^{\tilde G}_G$ preserves projective (because it has exact right adjoint), we have the following lemma.

\begin{lem} \label{lem:cohres}
    $\tx{Res}^{\tilde G}_G \circ D^{\tilde G}_\tx{coh}  = D^G_\tx{coh} \circ \tx{Res}^{\tilde G}_G$. 
\end{lem}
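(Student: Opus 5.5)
The plan is to compute $D^{\tilde G}_\tx{coh}(Z)$ for $Z\in\mc{D}^b(\tilde G)$ using a projective resolution, and then apply Lemma \ref{lem:homhGres} term by term.

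First, by the lemma on finite homological dimension, we may represent $Z$ by a bounded complex $Z_\bullet$ of projective smooth $\tilde G(F)$-representations; for finitely generated objects we may take them finitely generated as well. Then $D^{\tilde G}_\tx{coh}(Z)$ is represented by the complex $\tx{Hom}_{\tilde G(F)}(Z_\bullet,\mc{\tilde H})$, with $\tilde G(F)$ acting through the right module structure on $\mc{\tilde H}$ and $g\mapsto g^{-1}$.

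Second, we check that $\tx{Res}^{\tilde G}_G Z_\bullet$ is a complex of projective $G(F)$-representations. The functor $\tx{Res}^{\tilde G}_G$ has the exact right adjoint $\tx{Ind}^{\tilde G}_G$ (equivalently $\tx{c\text{-}Ind}$, since the index is finite), so it preserves projectives. It is also exact, so it sends quasi-isomorphisms to quasi-isomorphisms. Hence $D^G_\tx{coh}(\tx{Res}^{\tilde G}_G Z)$ is represented by $\tx{Hom}_{G(F)}(\tx{Res}^{\tilde G}_G Z_\bullet,\mc{H})$.

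Third, Lemma \ref{lem:homhGres} gives termwise isomorphisms
\[ \tx{Res}^{\tilde G}_G\tx{Hom}_{\tilde G(F)}(Z_i,\mc{\tilde H})\to \tx{Hom}_{G(F)}(\tx{Res}^{\tilde G}_G Z_i,\mc{H}), \]
given by restricting functions to $G(F)$. These are functorial in $Z_i$, so they commute with the differentials and assemble into an isomorphism of complexes.

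The point I expect to need care is that the lemma is stated as an isomorphism of $G(F)$-representations, and this must match the $G(F)$-structure used in $D^{\tilde G}_\tx{coh}$ and $D^G_\tx{coh}$. Both structures come from right translation on $\mc{\tilde H}$, respectively $\mc{H}$. Restriction of functions from $\tilde G(F)$ to $G(F)$ intertwines right translation by elements of $G(F)$, so the two structures agree.

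Finally, independence of the chosen resolution and functoriality in $\mc{D}^b$ follow from functoriality of the lemma's isomorphism in $\tilde\pi$. Any morphism in $\mc{D}^b(\tilde G)$ is represented by chain maps between projective resolutions up to homotopy, and the termwise isomorphisms are natural with respect to such chain maps and homotopies. This yields a natural isomorphism $\tx{Res}^{\tilde G}_G\circ D^{\tilde G}_\tx{coh}\cong D^G_\tx{coh}\circ\tx{Res}^{\tilde G}_G$.
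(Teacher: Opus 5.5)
Your proposal is correct and follows the paper's argument. The paper deduces the lemma directly from Lemma \ref{lem:homhGres} together with the fact that $\tx{Res}^{\tilde G}_G$ preserves projectives because its right adjoint is exact, which is exactly your termwise comparison on a projective resolution; your extra checks on the $G(F)$-actions and on naturality only make explicit what the paper leaves implicit.
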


\begin{cor} \label{cor:dcoh}
    Let $0\leq t\leq r$ and $\tilde \pi \in \mc{R}_t(\tilde G)$. Then the complex $\tx{RHom}(\tilde \pi, \mc{\tilde H})$ has non-zero cohomology in a single degree $d(t)$ depending only on $t$. In addition, this non-zero cohomology lies in $\mc{R}_t(\tilde G)$.
\end{cor}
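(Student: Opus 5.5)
The plan is to reduce everything to the connected case via Lemma \ref{lem:cohres}, using that $\tx{Res}^{\tilde G}_G$ is exact and faithful on objects.

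First, I would note that $\tx{Res}^{\tilde G}_G$ is exact. It therefore commutes with taking cohomology of complexes, so for any complex $Z$ of $\tilde G(F)$-representations we have $\tx{H}^i(\tx{Res}^{\tilde G}_G Z)=\tx{Res}^{\tilde G}_G \tx{H}^i(Z)$. Applying this to $Z=D^{\tilde G}_{\tx{coh}}(\tilde\pi)$ and using Lemma \ref{lem:cohres} gives
\[ \tx{Res}^{\tilde G}_G\, \tx{H}^i(D^{\tilde G}_{\tx{coh}}(\tilde\pi)) \cong \tx{H}^i(D^G_{\tx{coh}}(\pi)), \qquad \pi=\tx{Res}^{\tilde G}_G\tilde\pi. \]
To make this concrete, I would take a finite resolution of $\tilde\pi$ by finitely generated projectives, which exists by the preceding lemma. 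Its restriction is a resolution of $\pi$ by finitely generated projectives, since restriction preserves projectives and preserves finite generation because $G(F)$ has finite index. Lemma \ref{lem:homhGres}, applied termwise and functorially, then identifies the two Hom complexes as complexes of $G(F)$-representations.

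Next, $\tilde\pi\in\mc{R}_t(\tilde G)$ means by definition that $\pi\in\mc{R}_t(G)$. Theorem \ref{thm:bermain} then says $\tx{H}^i(D^G_{\tx{coh}}(\pi))$ vanishes for $i\neq d(t)$, and that $\tx{H}^{d(t)}$ lies in $\mc{R}_t(G)$. A $\tilde G(F)$-representation whose restriction to $G(F)$ is zero is itself zero. Hence $\tx{H}^i(D^{\tilde G}_{\tx{coh}}(\tilde\pi))=0$ for $i\neq d(t)$, with the same $d(t)$ as for $G$.

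For the remaining degree, the restriction of $\tx{H}^{d(t)}(D^{\tilde G}_{\tx{coh}}(\tilde\pi))$ lies in $\mc{R}_t(G)$. In particular it has finite length as a $G(F)$-representation, so it has finite length as a $\tilde G(F)$-representation, since any chain of $\tilde G(F)$-subrepresentations is a chain of $G(F)$-subrepresentations. By the definition of $\mc{R}_t(\tilde G)$ as the preimage of $\mc{R}_t(G)$, it lies in $\mc{R}_t(\tilde G)$.

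The main point to check carefully is the first step: that the identification of Lemma \ref{lem:homhGres} commutes with the differentials induced by maps between the projectives, so that it really is an isomorphism of complexes. This holds by functoriality of that lemma in $\tilde\pi$. One must also note that $\tx{RHom}_{\mc{\tilde H}}(-,\mc{\tilde H})$ may be computed by these resolutions, with the right-action conversion $gx:=xg^{-1}$ compatible on both sides. Once that is checked, the rest is formal.
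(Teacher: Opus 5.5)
Your proposal is correct and follows the paper's route: the paper's proof simply cites Lemma \ref{lem:cohres}, Theorem \ref{thm:bermain} and the definition of $\mc{R}_t(\tilde G)$ as the preimage of $\mc{R}_t(G)$. Your extra checks make explicit the details the paper leaves implicit, namely exactness of restriction (so it commutes with cohomology) and finite length over $G(F)$ implying finite length over $\tilde G(F)$ (so the surviving cohomology lies in $\mc{R}(\tilde G)$).
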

\begin{proof}
    Both assertions follow from Lemma \ref{lem:cohres}, Theorem \ref{thm:bermain} and the definition of $\mc{R}_t(\tilde G)$.
\end{proof}

\begin{dfn}
    Define the functor $D_B^t: \mc{R}_t(\tilde G) \to \mc{R}_t(\tilde G)$ as $\tx{H}^{d(t)}(D^{\tilde G}_{\tx{coh}}(-))$. Define the cohomological duality
    \[D_B^{\tilde G}:=\prod_tD_B^t: \mc{R}(\tilde G) \to \mc{R}(\tilde G)\]
\end{dfn}

Note that $D_B^{\tilde G}$ is a contravariant functor. In this subsection, we prove that the properties in Proposition \ref{pro:dhprp} for the connected case are also true for the disconnected case.

\begin{lem} \label{lem:dhres}
    $\tx{Res}^{\tilde G}_G\circ D_B^{\tilde G} = D_B^G\circ \tx{Res}^{\tilde G}_G$. In particular, $D_B^{\tilde G}$ is exact.
\end{lem}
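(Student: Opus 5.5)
The identity follows from Lemma \ref{lem:cohres} by passing to cohomology in the relevant degree. Exactness then comes from the connected case.

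Let $\tilde\pi \in \mc{R}_t(\tilde G)$. Note that $\tx{Res}^{\tilde G}_G \tilde\pi \in \mc{R}_t(G)$ by the definition of $\mc{R}_t(\tilde G)$.

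First, the restriction functor $\tx{Res}^{\tilde G}_G$ from smooth $\tilde G(F)$-representations to smooth $G(F)$-representations is exact. It therefore commutes with taking cohomology of complexes, and it induces a functor on bounded derived categories.

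Second, choose a finite resolution $Z_\bullet \to \tilde\pi$ by finitely generated projectives; it exists by the lemma on finite homological dimension. Restriction preserves projectivity, since it has an exact right adjoint (induction). Hence $\tx{Res}(Z_\bullet)$ is a projective resolution of $\pi = \tx{Res}\tilde\pi$. Lemma \ref{lem:homhGres}, applied termwise, then gives an isomorphism of complexes of $G(F)$-representations
\[ \tx{Res}\,\tx{Hom}_{\tilde G(F)}(Z_\bullet,\mc{\tilde H}) \cong \tx{Hom}_{G(F)}(\tx{Res} Z_\bullet,\mc{H}). \]
This is exactly Lemma \ref{lem:cohres}. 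Taking $\tx{H}^{d(t)}$ on both sides and using exactness of $\tx{Res}$ yields
\[ \tx{Res}\, D_B^t(\tilde\pi) \cong D_B^{t}(\pi), \]
where the right-hand side is the connected-group functor. The degree $d(t)$ is the same on both sides by Corollary \ref{cor:dcoh} and Theorem \ref{thm:bermain}.

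Third, the isomorphism is natural in $\tilde\pi$. A morphism $\tilde\pi \to \tilde\pi'$ lifts to a map of projective resolutions, unique up to homotopy. The maps $L$ and $R$ of Lemma \ref{lem:homhGres} are natural in the source, so the induced maps on $\tx{H}^{d(t)}$ are compatible. Summing over $t$ gives $\tx{Res}\circ D_B^{\tilde G} = D_B^G \circ \tx{Res}$.

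Exactness follows formally. Take a short exact sequence in $\mc{R}(\tilde G)$ and apply $D_B^{\tilde G}$. After restriction to $G(F)$, the result is $D_B^G$ applied to the restricted sequence, which is exact because $D_B^G$ is exact in the connected case (it is an equivalence onto $\mc{R}(G)$, e.g. by Proposition \ref{pro:dhprp}(1)). Since $\tx{Res}$ is faithful and reflects exactness, the original sequence under $D_B^{\tilde G}$ is exact.

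The only point needing care is naturality and independence of the chosen resolution. I would handle it by noting that $\tx{Res}$ sends a chosen $\tilde G$-resolution to a valid $G$-resolution. Since $D_B^G$ is independent of the choice of resolution up to canonical isomorphism, no further compatibility needs to be checked.
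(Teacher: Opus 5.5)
Your proof is correct and follows the paper's argument. Like the paper, you obtain the identity from Lemma \ref{lem:cohres} by taking cohomology in degree $d(t)$ (using exactness of restriction), and you deduce exactness of $D_B^{\tilde G}$ from the exactness of $D_B^G$ and of $\tx{Res}^{\tilde G}_G$; your remarks on naturality and on $\tx{Res}^{\tilde G}_G$ reflecting exactness spell out details the paper leaves implicit.
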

\begin{proof}
    The first equality follows from Lemma \ref{lem:cohres} and the definitions of cohomological dualities. The second statement follows from the exactness of $D_B^G$ and $\tx{Res}^{\tilde G}_G$.
\end{proof}

\begin{pro} \label{pro:dhinvlo}
    $D_B^{\tilde G}$ is an involution. In particular, it is an equivalence of categories.
\end{pro}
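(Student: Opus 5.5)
We will construct a natural isomorphism $\tx{id} \to D_B^{\tilde G}\circ D_B^{\tilde G}$ by lifting the one for $G$. We check that it is $\tilde G(F)$-equivariant, using Lemma \ref{lem:cohres} and Lemma \ref{lem:homhGres} to pass between $\tilde G$ and $G$.

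The natural candidate is the biduality map of derived categories. For $\tilde\pi\in\mc{R}_t(\tilde G)$, take a finite resolution $Z_\bullet\to\tilde\pi$ by finitely generated projective $\tilde G(F)$-modules; it exists by the preceding lemma on homological dimension. Then $\tx{Hom}_{\tilde G(F)}(Z_\bullet,\mc{\tilde H})$ computes $D^{\tilde G}_{\tx{coh}}(\tilde\pi)$. Its terms are again finitely generated projective, since $\tx{Hom}(\mc{\tilde H} e,\mc{\tilde H})\cong e\mc{\tilde H}$ for an idempotent $e$. So applying $\tx{Hom}(-,\mc{\tilde H})$ once more computes $D^{\tilde G}_{\tx{coh}}D^{\tilde G}_{\tx{coh}}(\tilde\pi)$. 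The evaluation map $Z\to\tx{Hom}(\tx{Hom}(Z,\mc{\tilde H}),\mc{\tilde H})$, with the two conversions of right to left actions, is a $\tilde G(F)$-equivariant map for any finitely generated projective $Z$. It is an isomorphism for $Z=\mc{\tilde H}e$, hence for direct summands of finite sums of these, and so for all terms $Z_i$. This gives a natural isomorphism $\tx{id}\to D^{\tilde G}_{\tx{coh}}\circ D^{\tilde G}_{\tx{coh}}$ on the subcategory of $\mc{D}^b(\tilde G)$ of complexes with finitely generated cohomology. In particular it holds on $\mc{R}(\tilde G)$.

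Next we pass from $D_{\tx{coh}}$ to $D_B$. By Corollary \ref{cor:dcoh}, $D^{\tilde G}_{\tx{coh}}(\tilde\pi)\cong D_B^{\tilde G}(\tilde\pi)[-d(t)]$, and $D_B^{\tilde G}(\tilde\pi)$ lies in $\mc{R}_t(\tilde G)$. Applying $D^{\tilde G}_{\tx{coh}}$ again gives $D_B^{\tilde G}D_B^{\tilde G}(\tilde\pi)[-2d(t)]$, up to the sign conventions for shifts under a contravariant functor. Concentration of the biduality in degree $0$ then forces $d(t)$ to be compatible with this, namely the total shift vanishes. This is exactly Bernstein's argument in the connected case, and it can be transported verbatim. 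Alternatively, apply $\tx{Res}^{\tilde G}_G$ via Lemma \ref{lem:cohres} and use the known degree bookkeeping for $G$ from Proposition \ref{pro:dhprp}(1). Taking $\tx{H}^0$, we obtain a natural $\tilde G(F)$-isomorphism $\tilde\pi\to D_B^{\tilde G}D_B^{\tilde G}(\tilde\pi)$. The equivalence of categories follows formally, since a functor that is its own quasi-inverse is an equivalence.

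The main obstacle is the bookkeeping in the middle step: we must ensure that the biduality morphism is honestly $\tilde G(F)$-equivariant, both for the two right-to-left conversions $gx=xg^{-1}$ and for the identification of the twice-dualized bimodule with $\mc{\tilde H}$. We also need to handle the degree shift correctly. For the first point, we would check evaluation on the generators $\mc{\tilde H}e$ directly, where it reduces to the identity of $\mc{\tilde H}e$. For the second point, we would use that $\tx{Res}^{\tilde G}_G$ is faithful and exact. Then it suffices to know that the restricted map is the biduality isomorphism for $G$, which holds by Lemma \ref{lem:homhGres}, since $R$ and $L$ are compatible with evaluation. Being an isomorphism is detected after restriction, so it follows from the connected case.
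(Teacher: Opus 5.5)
Your proposal is correct and is essentially the paper's argument. Both rest on two facts: the reflexivity $\tilde Z^{**}\cong\tilde Z$ of finitely generated projective $\mc{\tilde H}$-modules (checked on $\mc{\tilde H}e$), and the concentration of $D^{\tilde G}_{\tx{coh}}$ in degree $d(t)$ with output in $\mc{R}_t(\tilde G)$ (Corollary \ref{cor:dcoh}). The only difference is that the paper does the degree bookkeeping by hand: it truncates the dual complex to a length-$d(t)$ projective resolution of $D_B^{\tilde G}(\tilde\pi)$ and then dualizes again.

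Two corrections. First, since $D^{\tilde G}_{\tx{coh}}$ is contravariant, $D^{\tilde G}_{\tx{coh}}(X[-d])=D^{\tilde G}_{\tx{coh}}(X)[d]$, so the shifts cancel identically. There is no $[-2d(t)]$, and nothing is forced on $d(t)$. Second, the detours through $\tx{Res}^{\tilde G}_G$ are unnecessary, because the evaluation map is visibly $\tilde G(F)$-equivariant.
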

\begin{proof}
    The proof for the connected case also works for the disconnected case. Given $\tilde \pi \in \mc{R}_t(\tilde G)$ for some $0\leq t \leq r$, we first choose a finite projective resolution $0\to \tilde Z_s\to \dots\to \tilde Z_0\to \tilde \pi \to 0$ with finitely generated projectives. Let us write $V^*$ for $\tx{Hom}_{\tilde G}(V, \mc{\tilde H})$ for any representation $V$ of $\tilde G(F)$. Then we have a complex 
    \[0\to \tilde Z_0^*\xrightarrow{\partial_0}\dots \xrightarrow{\partial_{s-1}} \tilde Z_s^*\xrightarrow{\partial_s} 0\]
    which has non-zero cohomology in a single degree $d(t)$, by Corollary \ref{cor:dcoh}. Suppose for the moment that each $\tilde Z^*_i$ is a projective representation. We may use this property to truncate this complex and obtain a resolution of $D_B^{\tilde G}(\tilde \pi)$ as follows. If $d(t)=s$, then composing with the canonical projection 
    \[\tilde Z_s^*\to D_B^{\tilde G}(\tilde \pi)=\tilde  Z_s^*/\tx{Im}(\partial_{s-1})\]
    we obtain a projective resolution. If $d(t)\leq s-1$, we have the following short exact sequence:
    \[0\to K_{s-1} \to \tilde Z^*_{s-1}\to \tilde Z_s^*\to 0,\]
    where $K_i:=\tx{Ker}(\partial_i)$ for $ 0\leq i\leq s$. This sequence splits, as both $\tilde Z^*_{s-1}$ and $\tilde Z^*_s$ are projective. In particular, $K_{s-1}$, being a direct summand of $\tilde Z^*_{s-1}$, is also projective. If $d(t)=s-1$, then the complex
    \[ 0\to \tilde Z^*_0\to \dots \to \tilde Z^*_{s-2} \to K_{s-1} \to D_B^{\tilde G}(\tilde \pi)\to 0, \]
    is a (finitely generated) projective resolution of $D_B^{\tilde G}(\tilde \pi)$. If $d(t)\leq s-2$, then $\tx{Im}(\partial_{s-1})=K_s$, and we may do the above argument again. Eventually, we will obtain a projective resolution of the form:
    \[ 0\to \tilde Z^*_0\to \dots \to \tilde Z^*_{d(t)-1} \to K_{d(t)} \to D_B^{\tilde G}(\tilde \pi)\to 0. \]
    Applying the functor $(-)^*$ again and assuming that $\tilde Z^{**}$ is functorially isomorphic to $\tilde Z$ for finitely generated projective representations, we obtain the desired isomorphism $(D_B^{\tilde G})^2(\tilde \pi)\cong \tilde \pi$.

    It remains to show that for a finitely generated projective representation $\tilde Z$ of $\tilde G(F)$, the representation $\tilde Z^*$ is also projective and $\tilde Z^{**}$ is functorially isomorphic to $\tilde Z$. These facts have been proved in the connected case, see \cite{BernsteinPadicGroups} or \cite[Proposition 2.9]{YS2026M}, and the argument also works in the disconnected case. For the reader's convenience, let us sketch the proof here. It is clear that we may identify the category of smooth $\tilde G(F)$ representations with the category of non-degenerate $\mc{\tilde H}$-left modules. Then the functor $\tx{Hom}_{\tilde G(F)}(-, \mc{\tilde H})$ is identified with $\tx{Hom}_{\mc{\tilde H}}(-, \mc{\tilde H})$, and finitely generated projective $\tilde G(F)$ representations correspond to finitely generated projective non-degenerate left $\mc{\tilde H}$-module. Every such module is a direct summand of a finite direct sum $\oplus \mc{\tilde H}e_i$ with $e_i$ idempotents. Now two statements are clear.
\end{proof}

\begin{pro} \label{pro:dhiprp}
    Given a parabolic subgroup $\tilde P\subset \tilde G$ with Levi decomposition $\tilde P= \tilde M U$,
    \begin{enumerate}
        \item $i^{\tilde G}_{\tilde P}\circ D_B^{\tilde M}$ is equivalent to $D_B^{\tilde G}\circ i^{\tilde G}_{\tilde P^-}$.
        \item $r^{\tilde G}_{\tilde P}\circ D_B^{\tilde G}$ is equivalent to $D_B^{\tilde M}\circ r^{\tilde G}_{\tilde P}$.
    \end{enumerate}
\end{pro}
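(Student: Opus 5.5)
We follow Bernstein's argument for connected groups: compute $D_B$ as $\tx{RHom}(-,\mc{\tilde H})$ and let the two Frobenius reciprocities interact. We prove (2) first and deduce (1).

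\textbf{Step 1: restriction commutes with $D^{\tilde G}_{\tx{coh}}$ up to a swap.} For a finitely generated projective $\tilde Z$ of $\tilde G(F)$ we want a natural isomorphism
\[ r_{\tilde P}^{\tilde G}\big(\tx{Hom}_{\tilde G}(\tilde Z,\mc{\tilde H})\big) \cong \tx{Hom}_{\tilde M}\big(r_{\tilde P^-}^{\tilde G}\tilde Z,\mc{C}^\infty_c(\tilde M(F))\big). \]
This is the bimodule identity
\[ r_{\tilde P}(\mc{\tilde H}) \cong i_{\tilde P^-}^{\tilde G}\mc{C}^\infty_c(\tilde M(F)), \]
taken with respect to the two commuting actions. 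Combined with Frobenius reciprocity (Fact \ref{fct:adjoint}) and second adjointness (Corollary \ref{cor:secadjoint}), it gives the displayed isomorphism.

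Both $r^{\tilde G}_{\tilde P}$ and $r^{\tilde G}_{\tilde P^-}$ are exact, preserve finitely generated projectives (they have exact right adjoints), and preserve the blocks $\mc{R}_t$ (Corollary \ref{cor:iprpdcmp}). Hence applying them termwise to a finite projective resolution yields an isomorphism in $\mc{D}^b(\tilde M)$:
\[ r_{\tilde P}^{\tilde G}\circ D^{\tilde G}_{\tx{coh}} \cong D^{\tilde M}_{\tx{coh}}\circ r_{\tilde P^-}^{\tilde G}. \]
The cohomological degree $d(t)$ is the same on both sides, since it depends only on $t$ (Corollary \ref{cor:dcoh}). Taking $\tx{H}^{d(t)}$ therefore gives
\[ r_{\tilde P}\circ D_B^{\tilde G} \cong D_B^{\tilde M}\circ r_{\tilde P^-}. \]

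\textbf{Step 2: turning this into the statement.} The result of Step 1 has $r_{\tilde P^-}$ rather than $r_{\tilde P}$ on the right. To obtain (2) exactly as stated, we use that $D_B^{\tilde G}$ and $D_B^{\tilde M}$ are involutive equivalences (Proposition \ref{pro:dhinvlo}). We then pass to adjoints:
\begin{itemize}
\item the right adjoint of $r_{\tilde P}$ is $i_{\tilde P}$;
\item the left adjoint of $r_{\tilde P}$ is $i_{\tilde P^-}$, by Corollary \ref{cor:secadjoint};
\item conjugating by the contravariant equivalence $D_B$ interchanges left and right adjoints.
\end{itemize}
From Step 1 this yields one of the two statements of the proposition, for instance (1). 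Feeding (1) back through the same first and second adjointness then produces (2) in the form $r_{\tilde P}\circ D_B^{\tilde G}\cong D_B^{\tilde M}\circ r_{\tilde P}$.

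\textbf{Alternative to Step 1.} A more transparent route may be to compute directly that $D^{\tilde G}_{\tx{coh}}\circ i_{\tilde P^-}\cong i_{\tilde P}\circ D^{\tilde M}_{\tx{coh}}$. Since $r_{\tilde P}$ is exact and preserves projectives, we have
\[ \tx{RHom}_{\tilde G}\big(i_{\tilde P^-}\tilde\sigma,\mc{\tilde H}\big) \cong \tx{RHom}_{\tilde M}\big(\tilde\sigma, r_{\tilde P}\mc{\tilde H}\big), \]
and the bimodule identification above then directly gives (1).

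\textbf{Main obstacle.} The hard step is the $\tilde G(F)\times\tilde M(F)$-equivariant bimodule identification $r_{\tilde P}(\mc{\tilde H})\cong i_{\tilde P^-}\mc{C}^\infty_c(\tilde M)$, together with its modulus characters. For connected $G$ this is Bernstein's computation. For $\tilde G$ we plan to reduce to that case as follows.
\begin{itemize}
\item Restrict to $G(F)\times M(F)$, using Lemma \ref{lem:homhGres} and Lemma \ref{lem:iprpres}, to get the connected isomorphism.
\item Check compatibility with the action of $\tilde N_M(F)$. As in the proof of Lemma \ref{lem:iprpcon}(1), this amounts to verifying that the connected isomorphism is equivariant for automorphisms preserving $(P,M)$. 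It should follow from the naturality of all maps under transport of structure, as in Proposition \ref{pro:dhdgequ}.
\item Handle the coset decomposition $\tilde G(F)/G(F)\tilde P(F)$ using \eqref{eq:Indip}, so that we may first reduce to the case $\tilde G=G\tilde P$.
\end{itemize}
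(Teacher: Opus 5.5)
\textbf{There is a genuine gap: your bimodule identity uses the wrong parabolic.} The correct identities, which are the paper's first Claim, pair the Jacquet functor and induction for the \emph{same} $\tilde P$:
\[
(r^{\tilde G}_{\tilde P}\times 1)\mc{\tilde H}\cong(1\times i^{\tilde G}_{\tilde P})\mc{C}^\infty_c(\tilde M(F)),\qquad
(1\times r^{\tilde G}_{\tilde P})\mc{\tilde H}\cong(i^{\tilde G}_{\tilde P}\times 1)\mc{C}^\infty_c(\tilde M(F)).
\]
Both sides are realized on $\mc{C}^\infty_c(U(F)\lmod\tilde G(F))$, resp.\ $\mc{C}^\infty_c(\tilde G(F)/U(F))$, via $f\mapsto\int_U f(u\,\cdot)\,du$, resp.\ $f\mapsto\int_U f(\cdot\,u)\,du$. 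The group $U^-$ never enters.

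As a result, your Step 1 conclusion $r_{\tilde P}\circ D_B^{\tilde G}\cong D_B^{\tilde M}\circ r_{\tilde P^-}$ is false. Take $G=\mathrm{GL}_2$, $P=B$, $\pi=\mathrm{St}$, and use $D_B^G\cong D_A^G\circ(-)^\vee$ and $D_B^T=(-)^\vee$. Then $r_B D_B^G(\mathrm{St})=r_B(\mathbf{1})=\delta_B^{-1/2}$. On the other hand $r_{B^-}(\mathrm{St})=\delta_{B^-}^{1/2}=\delta_B^{-1/2}$, so $D_B^T(r_{B^-}\mathrm{St})=\delta_B^{1/2}$. Likewise, your alternative route fed with your identity would give $D_B^{\tilde G}\circ i_{\tilde P^-}\cong i_{\tilde P^-}\circ D_B^{\tilde M}$, not (1).

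\textbf{Step 2 cannot repair this.} Since $D_B^{\tilde M}$ is an equivalence, Step 1 together with (2) would force $r_{\tilde P}\cong r_{\tilde P^-}$, which the example refutes. The adjunction manipulation you sketch is a genuine equivalence of statements: it turns (2) into (1) and back. Applied to your Step 1, however, it yields the false statement $i_{\tilde P}\circ D_B^{\tilde M}\cong D_B^{\tilde G}\circ i_{\tilde P}$. The $\tilde P\leftrightarrow\tilde P^-$ swap in (1) comes solely from second adjointness, $\tx{Hom}_{\tilde G}(i_{\tilde P^-}\tilde Z,\mc{\tilde H})\cong\tx{Hom}_{\tilde M}(\tilde Z,(r_{\tilde P}\times 1)\mc{\tilde H})$, and not from the bimodule identity.

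With the corrected identities your outline becomes the paper's proof, and Step 2 is unnecessary.
\begin{itemize}
\item Statement (2) follows from first adjointness, the second identity, and the interchange $r_{\tilde P}\tx{Hom}_{\tilde G}(\tilde Z,\mc{\tilde H})\cong\tx{Hom}_{\tilde G}(\tilde Z,(1\times r_{\tilde P})\mc{\tilde H})$ for finitely generated projective $\tilde Z$. This interchange is not an adjunction and needs its own argument.
\item Statement (1) follows from your alternative route, plus the interchange of $i_{\tilde P}$ with $\tx{Hom}_{\tilde M}(\tilde Z,-)$ for finitely generated $\tilde Z$ (Claim \ref{clm:ihom}).
\end{itemize}
No reduction to the connected case is needed, since the explicit model works verbatim for $\tilde G$.
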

\begin{proof}
    The proof for the connected case also works for the disconnected case, which is based on the second adjointness (Corollary \ref{cor:secadjoint}) and the following claims.
    \begin{clm}
        We have an isomorphism of $\tilde M\times \tilde G$-representations (resp.~$\tilde G\times \tilde M$-representations):
        \[(1\times i^{\tilde G}_{\tilde P})\mc{H}_{\tilde M} = (r^{\tilde G}_{\tilde P}\times 1)\mc{H}_{\tilde G} \qquad\tx{resp.} \qquad (i^{\tilde G}_{\tilde P}\times 1)\mc{H}_{\tilde M} = (1\times r^{\tilde G}_{\tilde P})\mc{H}_{\tilde G}.\]
    \end{clm}
    We again give a sketch of proof here, which is the same as the arguments in \cite[Proposition 2.7]{YS2026M}. Let $\mc{C}_c(U\backslash \tilde G)$ be the set of compactly supported complex-valued functions on $U(F) \backslash \tilde G(F)$. It is a $\tilde M(F)\times \tilde G(F)$-representation by the following rule: for $(m,g)\in\tilde M(F)\times \tilde G(F)$ and $f\in \mc{C}_c(U\backslash \tilde G)$:
    \[((m,g)\cdot f)(Ux): = \delta_{\tilde P}(m)^{\frac{1}{2}}f(Um^{-1}xg)\] 
    Let $\mc{C}_c^{\infty}(U\backslash \tilde G)$ be the subset of smooth vectors with respect to this action. Then we have isomorphisms:
    \[(1\times i^{\tilde G}_{\tilde P})\mc{H}_{\tilde M} \to \mc{C}_c^\infty(U\backslash \tilde G)\]
    by sending $f: \tilde G(F) \to \mc{H}_{\tilde M}$ to the function $Ug\mapsto f(g)(1)$, and 
    \[(r^{\tilde G}_{\tilde P}\times 1)\mc{H}_{\tilde G} \to \mc{C}_c^\infty(U\backslash \tilde G) \]
    by sending a representative function $f\in \mc{\tilde H}$ to the following function
    \[Ug \mapsto \int_Uf(ug) du.\] 
    Similarly, we have isomorphisms:
    \[(i^{\tilde G}_{\tilde P}\times 1)\mc{H}_{\tilde M} \xrightarrow{} \mc{C}_c^\infty(\tilde G/U) \to (1\times r^{\tilde G}_{\tilde P})\mc{H}_{\tilde G}.\]
    \begin{clm} \label{clm:ihom}
        For any finitely generated $\tilde M(F)$ representation $\tilde Z$, we have functorial isomorphism of $\tilde G(F)$ representations:
        \[i^{\tilde G}_{\tilde P}(\tx{Hom}_{\tilde M(F)}(\tilde Z, \mc{H}_{\tilde M})) = \tx{Hom}_{\tilde M(F)}(\tilde Z, (1\times i^{\tilde G}_{\tilde P})\mc{H}_{\tilde M}) \].
    \end{clm}
    By definition, both sides are given by functions \(F: \tilde G(F)\times \tilde Z \rightarrow \mc{H}_{\tilde M}\) which are linear maps on $\tilde Z$ and satisfy the following conditions for any fixed $g\in \tilde G(F)$:
        \begin{enumerate}
            \item \(F(pg,v)(x) = \delta_{\tilde P}(m)^{\frac{1}{2}}F(g,v)(xm)\) for all $p\in \tilde P(F)=\tilde M(F)\cdot U(F)$, $v\in \tilde Z$ and $x\in \tilde M(F)$, where $p=mu$ with $m\in \tilde M$ and $u\in U$.
            \item \(F(g,mv)(x)=F(g,v)(m^{-1}x)\) for all \(m\in \tilde M(F)\) and $v\in \tilde Z$.
        \end{enumerate}
    Furthermore, the function $F$ on both sides has to satisfy some local constant property: $F$ is in the left hand side if and only if the corresponding map 
    \[\tilde G(F)\rightarrow \tx{Hom}_{\tilde M(F)}(\tilde Z, \mc{H}_{\tilde M})\] 
    is locally constant, and the right hand side if and only if 
    \[F(-,v): \tilde G(F) \rightarrow \mc{H}_{\tilde M}\]
    is locally constant for each $v\in \tilde Z$. As $\tilde Z$ is finitely generated, the same argument in \cite{YS2026M} shows that these two properties are equivalent.

    With these two claims, we may prove the first statement. Given $\tilde \tau\in \mc{R}_t(\tilde M)$, we choose a finite projective resolution $0\to \tilde Z_s\to\dots \to\tilde Z_0\to \tilde \tau \to 0$ with $\tilde Z_i$ fintiely generated. Then, we have
    \[\tx{Hom}_{\tilde G(F)}(i^{\tilde G}_{\tilde P^-}\tilde Z_i, \mc{H}_{\tilde G}) =\tx{Hom}_{\tilde M(F)}(\tilde Z_i, (r^{\tilde G}_{\tilde P}\times 1)\mc{H}_{\tilde G}) = i^{\tilde G}_{\tilde P}\tx{Hom}_{\tilde M(F)}(\tilde Z_i, \mc{H}_{\tilde M})\]
    where the first isomorphism is given by second adjointness \ref{cor:secadjoint} and the second one given by the two claims above. Noting that both parabolic induction and restriction preserve finitely generated projective representations, we finish the proof of the first statement. The proof for the second statement is similar, with another claim.
    \begin{clm}
        For any finitely generated projective representation $\tilde Z$ of $\tilde G(F)$, we have functorial isomorphism:
        \[r^{\tilde G}_{\tilde P}\tx{Hom}_{\tilde G(F)}(\tilde Z, \mc{H}_{\tilde G}) \rightarrow \tx{Hom}_{\tilde G(F)}(\tilde Z, (1\times r^{\tilde G}_{\tilde P})\mc{H}_{\tilde G}).\]
    \end{clm}
    Note that the natural map 
    \[\tx{Hom}_{\tilde G(F)}(\tilde Z, \mc{H}_{\tilde G}) \to \tx{Hom}_{\tilde G(F)}(\tilde Z, (1\times r^{\tilde G}_{\tilde P})\mc{H}_{\tilde G})\]
    is surjective (as vector spaces), as $\tilde Z$ is projective. The same argument in \cite{YS2026M} yields that it descends to the desired isomorphism.

    Given $\tilde \pi\in \mc{R}_t(\tilde G)$, we choose a finite projective resolution $0\to \tilde Z_l\to\dots\to \tilde Z_0\to \tilde \pi\to0$ with $\tilde Z_i$ finitely generated. Then we have
    \[\tx{Hom}_{\tilde M(F)}(r^{\tilde G}_{\tilde P}\tilde Z_j, \mc{H}_{\tilde M}) = \tx{Hom}_{\tilde G(F)}(\tilde Z_j, (i^{\tilde G}_{\tilde P}\times 1)\mc{H}_{\tilde M})=r^{\tilde G}_{\tilde P}\tx{Hom}_{\tilde G(F)}(\tilde Z_j, \mc{H}_{\tilde G}),\]
    and we complete the proof.
\end{proof}

In \S\ref{sub:dgequiv}, we study the equivariance properties of $D_A^G$ and $D_B^G$. Using that property for $D_A^G$, we extend the action of $G(F)$ on $D_A^G(\pi)$ to $\tilde G(F)$ and define it to be the Aubert duality for $\tilde G$, denoted by $D_A^{\tilde G}$, as discussed in \S\ref{sub:defdgdis}. For $D_B^G$, instead of extending the action, we define the functor $D_B^{\tilde G}$ directly, whose underlying vector space is isomorphic to $D_B^G$ (see Lemma \ref{lem:dhres}). Since we can also use the equivariance property for $D_B^G$ to extend the action, the following lemma suggests that these two approaches coincide.

\begin{lem} \label{lem:dhdiscon}
    Let $\tilde \pi\in\mc{R}(\tilde G)$, $n\in \tilde N(F)$ and $a=\tx{Ad}(n)$. Write $\pi=\tx{Res}^{\tilde G}_G\tilde \pi$, then we have $G(F)$-isomorphism $\theta_a=\tilde \pi(n): a\pi\to \pi$, as discussed in \S\ref{sub:defdgdis}. We have the following commutative diagram in $\mc{R}(G)$:
    \[\xymatrix{
        aD_B^G(\pi)\ar[r]^{H(\theta_a)}\ar[d]& D_B^G(\pi)\ar[d]\\
        a\tx{Res}^{\tilde G}_GD_B^{\tilde G}(\tilde \pi) \ar[r]^{\tilde \theta(n)} &\tx{Res}^{\tilde G}_GD_B^{\tilde G}(\tilde \pi)
    }\]
    where $H(\theta_a)$ is defined in \S\ref{sub:dgequiv}, $\tilde \theta_a=D_B^{\tilde G}(\tilde \pi)(n)$ and the two vertical maps are the isomorphisms in Lemma \ref{lem:dhres}.
\end{lem}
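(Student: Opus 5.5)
We compare both horizontal arrows at the level of the complexes computing $\tx{RHom}$. Choose a finite resolution $0\to \tilde Z_s\to\dots\to\tilde Z_0\to\tilde\pi\to 0$ by finitely generated projective $\tilde G(F)$-representations. Since $\tx{Res}^{\tilde G}_G$ has an exact right adjoint, the restrictions $Z_i=\tx{Res}^{\tilde G}_G\tilde Z_i$ form a projective resolution of $\pi$. We use this resolution both to compute $D_B^G(\pi)$ and to define $H(\theta_a)$. For the latter, the isomorphism $\theta_a=\tilde\pi(n):a\pi\to\pi$ lifts canonically to the isomorphism of resolutions $\tilde Z_i(n): aZ_i\to Z_i$. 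Hence $D_B^G(\theta_a)$ is computed termwise by precomposition with $\tilde Z_i(n)^{-1}$, and no homotopy ambiguity enters. The vertical maps of the diagram are induced termwise by the isomorphisms $R$ of Lemma \ref{lem:homhGres}. It therefore suffices to prove that, for a single smooth $\tilde G(F)$-representation $\tilde Z$ (namely each $\tilde Z_i$), the square
\[\xymatrix{ a\tx{Hom}_{G(F)}(Z,\mc{H}) \ar[r] & \tx{Hom}_{G(F)}(Z,\mc{H}) \\ a\tx{Hom}_{\tilde G(F)}(\tilde Z,\mc{\tilde H}) \ar[u]^{R}\ar[r]^{\cdot n} & \tx{Hom}_{\tilde G(F)}(\tilde Z,\mc{\tilde H})\ar[u]_{R} }\]
commutes. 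Here the top arrow is the composite of $c_h$ (Lemma \ref{lem:equivedh}) with precomposition by $\tilde Z(n)^{-1}$, and the bottom arrow is the action of $n$ coming from the right $\tilde G(F)$-module structure of $\mc{\tilde H}$. Once this holds, the square commutes on the complexes, and passing to $\tx{H}^{d(t)}$ gives the lemma, using Corollary \ref{cor:dcoh}.

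The verification for a single $\tilde Z$ is an explicit unwinding. Take $\varphi\in\tx{Hom}_{\tilde G(F)}(\tilde Z,\mc{\tilde H})$, written $v\mapsto\varphi_v$. The action of $n$ is $(n\varphi)_v(x)=\varphi_v(xn)$; this is the left action $gx:=xg^{-1}$, so the exact convention must be matched. Restriction to $G(F)$ then gives $x\mapsto\varphi_v(xn)$ for $x\in G(F)$. On the other side, start from $R(\varphi)_v=\varphi_v|_{G(F)}$ and apply $c_h$, which is precomposition by $a^{-1}$ on $\mc{H}$, via $\psi(f)=f\circ a^{-1}$. Then apply precomposition in $v$ by $\tilde Z(n)^{-1}$. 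The result is $x\mapsto\varphi_{n^{-1}v}(n^{-1}xn)$. By $\tilde G(F)$-equivariance of $\varphi$, namely $\varphi_{n^{-1}v}(y)=\varphi_v(ny)$, this equals $\varphi_v(xn)$, which matches the first computation.

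\textbf{Main obstacle.} The main difficulty is bookkeeping: inverses and the left/right twist conventions in $(1\times a)\mc{H}$ versus $(a^{-1}\times1)\mc{H}$, and the direction of $D_B^G(\theta_a)$ given that $D_B^G$ is contravariant. It may be that the correct match uses $\tilde Z(n)$ rather than $\tilde Z(n)^{-1}$, or $x n^{-1}$ rather than $xn$. I would settle this by first checking that $R$ intertwines the $G(F)$-actions with these conventions, and then fixing the convention for the $n$-action so that it is the one restricting correctly. A secondary point is independence of the chosen resolution, which follows from the functoriality of all maps involved, as in Proposition \ref{pro:dgequiv}(3).
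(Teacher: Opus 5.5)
Your proposal is correct and follows the paper's proof. Both arguments restrict a finitely generated projective resolution of $\tilde\pi$, lift $\theta_a^{-1}$ by $\tilde Z_i(n)^{-1}$, and compare the two maps termwise on the Hom complexes, so that everything reduces to $\varphi_{n^{-1}v}(n^{-1}xn)=\varphi_v(xn)$; your convention $(n\varphi)_v(x)=\varphi_v(xn)$ is the paper's $R_n$. The only difference is cosmetic: you compare through the restriction map $R$ using $\tilde G(F)$-equivariance of $\varphi$, while the paper uses its inverse $L$ and evaluates $\bar\psi_v(gn)$ with a coset decomposition containing $n$.
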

\begin{proof}
    We may assume $\tilde \pi\in \mc{R}_t(\tilde G)$ for some $t$. Let $0\to\tilde Z_s\to\dots\to\tilde Z_0\to \tilde \pi\to 0$ be a finite finitely generated projective resolution, then $0\to Z_s\to \dots \to Z_0\to \pi$ is also a projective resolution for $\pi$, where $Z_i=\tx{Res}^{\tilde G}_G\tilde Z_i$. Then the vertical isomorphisms are induced by the following map
    \[\tx{Hom}_{G(F)}(Z_i,\mc{H})\to \tx{Res}^{\tilde G}_G\tx{Hom}_{\tilde G(F)}(\tilde Z_i, \mc{\tilde H})\]
    by sending $\psi: v\mapsto \psi_v$ to $L(\psi): v\mapsto \bar{\psi}_v$, see the proof of Lemma \ref{lem:homhGres}. Recall that $H(\theta_a)$ is the composition
    \[aD_B^G(\pi)\xrightarrow{c_h(\pi)} D_B^G(a\pi) \xrightarrow{D_B^G(\theta^{-1}_a)}D_B^G(\pi).\]
    Thus the composition of $H(\theta_a)$ with the right vertical isomorphism is induced by the following map
    \[\tx{Hom}_{G(F)}(Z_i, \mc{H})\to \tx{Res}^{\tilde G}_G\tx{Hom}_{\tilde G(F)}(\tilde Z_i, \mc{\tilde H})\]
    by sending $\psi: v\mapsto \psi_v$ to $LH(\psi): v\mapsto \overline{\psi_{n^{-1}v}\circ a^{-1}}$. On the other hand, the composition of the left vertical isomorphism with $\tilde \theta(n)$ is induced by the map
    \[\tx{Hom}_{G(F)}(Z_i,\mc{H})\to \tx{Res}^{\tilde G}_G\tx{Hom}_{\tilde G(F)}(\tilde Z_i,\mc{\tilde H})\]
    by sending $\psi:v\mapsto \psi_v$ to $\tilde \theta L(\psi): v\mapsto R_n\bar{\psi}_v$, where $(R_nf)(g)=f(gn)$, the right regular action. Choose a coset decomposition $\tilde G(F)=\coprod_{i=1}^ng_iG(F)$ such that $g_i=n$ for a unique $i$. This does no harm because the definition of $\bar{\psi}$ is independent of the choice of representatives. For arbitrary $g\in G(F)$, we have $gn\in nG(F)$ as $G(F)$ is normal in $\tilde G(F)$. Therefore,
    \begin{align*}
    (R_n\bar{\psi}_v)(g)=\bar{\psi}_v(gn)&=\sum_{i=1}^n\chi_{g_iG(F)}(gn)\psi_{g_i^{-1}v}(g_i^{-1}gn)  \\
    &= \psi_{n^{-1}v}(n^{-1}gn) \\
    &=(\psi_{n^{-1}v}\circ a^{-1})(g)
    \end{align*}
    Therefore, the function $R_n\bar{\psi}_v$ restricting on $G(F)$ equals $\psi_{n^{-1}v}\circ a^{-1}$. As restricting on $G(F)$ is the inverse of $L$, see Lemma \ref{lem:homhGres}, we deduce that 
    \[ \overline{\psi_{n^{-1}v}\circ a^{-1}} = R_n\bar{\psi}_v, \] 
    as required.
\end{proof}

This lemma will be used in the next subsection to establish the connection between $D_B^{\tilde G}$ and $D_A^{\tilde G}((-)^\vee)$.

\subsection{More properties}

In this subsection we aim to establish some additional properties of the functor $D_A^{\tilde G}$, which are analogous to those of $D_A^G$. 

We first generalize the relation $D_A^G((-)^\vee)\cong D_B^G$ to the disconnected case. 

\begin{pro} \label{pro:dhequdgcon}
    The functor $D_A^{\tilde G}((-)^\vee)$ is equivalent to $D_B^{\tilde G}$.
\end{pro}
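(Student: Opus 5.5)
The plan is the one announced in the introduction: first produce a functorial isomorphism after restriction to $G(F)$, then check that it is $\tilde G(F)$-equivariant.

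\emph{Step 1: the isomorphism over $G(F)$.} Fix $\tilde\pi\in\mc{R}(\tilde G)$ and write $\pi=\tx{Res}^{\tilde G}_G\tilde\pi$. By Lemma \ref{lem:dgres} and Fact \ref{fct:conprp}(1), we have $\tx{Res}^{\tilde G}_G D_A^{\tilde G}(\tilde\pi^\vee)=D_A^G(\pi^\vee)$. By Lemma \ref{lem:dhres}, $\tx{Res}^{\tilde G}_G D_B^{\tilde G}(\tilde\pi)$ is identified with $D_B^G(\pi)$. Proposition \ref{pro:dhprp}(6) supplies the functorial isomorphism $\beta_\pi : D_B^G(\pi)\to D_A^G(\pi^\vee)$. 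Composing these gives a $G(F)$-isomorphism
\[ \gamma_{\tilde\pi} : \tx{Res}^{\tilde G}_G D_B^{\tilde G}(\tilde\pi)\to \tx{Res}^{\tilde G}_G D_A^{\tilde G}(\tilde\pi^\vee), \]
which is functorial in $\tilde\pi$ because each ingredient is. It then suffices to show that $\gamma_{\tilde\pi}$ commutes with the action of $\tilde G(F)$. Since $\tilde G(F)=G(F)\tilde N(F)$, we only need to check commutation with $n\in\tilde N(F)$.

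\emph{Step 2: equivariance under $\tilde N(F)$.} Fix $n\in\tilde N(F)$, put $a=\tx{Ad}(n)$ and $\theta_a=\tilde\pi(n):a\pi\to\pi$. By Lemma \ref{lem:dhdiscon}, the action of $n$ on $\tx{Res}\,D_B^{\tilde G}(\tilde\pi)$ corresponds to $H(\theta_a)=D_B^G(\theta_a^{-1})\circ c_h(\pi)$.

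On the other side, by the construction of \S\ref{sub:defdgdis}, the action of $n$ on $D_A^{\tilde G}(\tilde\pi^\vee)$ is $Y(a,\tilde\pi^\vee(n))$. As computed in the proof of Lemma \ref{lem:iprpcon}(1), we have $\tilde\pi^\vee(n)=(\tilde\pi(n)^{-1})^\vee=(\theta_a^{-1})^\vee$. So the action of $n$ is $Y(a,(\theta_a^{-1})^\vee)=D_A^G((\theta_a^{-1})^\vee)\circ c^a(\pi^\vee)$.

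Proposition \ref{pro:dhdgequ}, applied with $\pi_1=\pi_2=\pi$, states exactly that $\beta$ intertwines these two maps. Hence $\gamma_{\tilde\pi}$ commutes with $n$. Together with Step 1, this makes $\gamma_{\tilde\pi}$ a $\tilde G(F)$-isomorphism $D_B^{\tilde G}(\tilde\pi)\to D_A^{\tilde G}(\tilde\pi^\vee)$, natural in $\tilde\pi$. Naturality holds because restriction is faithful and $\gamma$ is natural at the $G(F)$ level.

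\emph{Main obstacle.} The delicate point is matching conventions. One has to confirm that the identification in Lemma \ref{lem:dhres} is the same one used in Lemma \ref{lem:dhdiscon}. One also has to confirm that the isomorphism $c^a(\pi^\vee)$ in Proposition \ref{pro:dhdgequ} agrees with the map $c_t^a$ used to define the $\tilde N(F)$-action in \S\ref{sub:defdgdis}.

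A further subtlety is that $\beta$ was built using the opposite pair $(M_0,P_0^-)$. This needs Remark \ref{rem:dgwldf}, together with the observation that $a$ preserves $(M_0,P_0^-)$, so that the resulting equivariance structure is the same. Proposition \ref{pro:dhdgequ} was stated with these compatibilities built in, so I expect this step to reduce to bookkeeping. I would check it by unwinding the formulas $X_P(a,\theta_a)(f)(g)=\theta_a(f(a^{-1}g))$ against $T_P$ in the proof of Proposition \ref{pro:dhdgequ}.
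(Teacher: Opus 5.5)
Your proposal is correct and follows essentially the same route as the paper. You build the $G(F)$-isomorphism from Lemmas \ref{lem:dgres}, \ref{lem:dhres} and $\beta_\pi$, then reduce $\tilde N(F)$-equivariance, via Lemma \ref{lem:dhdiscon} and the identification $D_A^{\tilde G}(\tilde\pi^\vee)(n)=Y(a,(\theta_a^{-1})^\vee)$, to the commutative diagram of Proposition \ref{pro:dhdgequ} with $\pi_1=\pi_2=\pi$.
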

\begin{proof}
    For $\tilde \pi\in \mc{R}(\tilde G)$, let us write $\pi = \tx{Res}^{\tilde G}_G\tilde \pi$ as usual. By Lemma \ref{lem:dgres}, \ref{lem:dhres} and Proposition \ref{pro:dhprp}(6), we have functorial isomorphisms for $\tilde \pi\in\mc{R}(\tilde G)$:
    \[\tx{Res}^{\tilde G}_GD_B^{\tilde G}(\tilde \pi)\cong D_B^G(\pi)\xrightarrow{\beta_\pi} D_A^G(\pi^\vee)=\tx{Res}^{\tilde G}_GD_A^{\tilde G}(\tilde \pi^\vee),\]
    that is, a functorial isomorphism $\alpha_{\tilde \pi}: D_B^{\tilde G}(\tilde \pi)\to D_A^{\tilde G}(\tilde \pi^\vee)$ of vector spaces respecting the action of $G(F)$. It then remains to show it also respects the action of $\tilde N(F)$. Fix $n\in \tilde N(F)$, and write $a=\tx{Ad}(n)$ and $\theta_a: a\pi\to \pi$ as before. By Lemma \ref{lem:dhdiscon} and construction of $D_A^{\tilde G}(\tilde \pi)(n)$, we reduce to checking the following diagram commutes:
    \[\xymatrix{
       aD_B^G(\pi)\ar[rr]^{H(\theta_a)}\ar[d]_{a\beta_\pi} && D_B^G(\pi) \ar[d]_{\beta_\pi} \\
       aD_A^G(\pi^\vee)\ar[rr]^{D_A^{\tilde G}(\tilde \pi^\vee)(n)}&& D_A^G(\pi^\vee),
    }\]
    but this is Proposition \ref{pro:dhdgequ}. Note that $Y(a,(\theta_a^{-1})^\vee)=D_A^{\tilde G}(\tilde \pi^\vee)(n)$ by construction.
\end{proof}

\begin{cor} \label{cor:dhsc}
    For $\tilde \pi\in \mc{R}_r(\tilde G)$, we have a functorial isomorphism $D_B^{\tilde G}(\tilde \pi)\cong \tilde \pi^\vee$. In particular, $D_B^{\tilde G}$ is equivalent to the contragredient functor on supercuspidal blocks.
\end{cor}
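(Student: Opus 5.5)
The natural approach is to combine Proposition \ref{pro:dhequdgcon} with Proposition \ref{pro:dgsc-disc}. Let $\tilde\pi \in \mc{R}_r(\tilde G)$. By Fact \ref{fct:conprp}(4), the contragredient preserves the factor $\mc{R}_r(\tilde G)$, so $\tilde\pi^\vee \in \mc{R}_r(\tilde G)$. Proposition \ref{pro:dhequdgcon} gives an isomorphism $\alpha_{\tilde\pi} : D_B^{\tilde G}(\tilde\pi) \to D_A^{\tilde G}(\tilde\pi^\vee)$, functorial in $\tilde\pi$. Proposition \ref{pro:dgsc-disc} gives an isomorphism of functors $\tx{id} \to D_A^{\tilde G}$ on $\mc{R}_r(\tilde G)$; evaluated at $\tilde\pi^\vee$, it yields $\tilde\pi^\vee \cong D_A^{\tilde G}(\tilde\pi^\vee)$. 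Composing the first isomorphism with the inverse of the second gives $D_B^{\tilde G}(\tilde\pi) \cong \tilde\pi^\vee$.

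Both isomorphisms are natural transformations: the first between contravariant functors $\mc{R}(\tilde G)\to\mc{R}(\tilde G)$, the second between covariant endofunctors of $\mc{R}_r(\tilde G)$. Precomposing the second with $(-)^\vee$ makes it a natural transformation of contravariant functors on $\mc{R}_r(\tilde G)$. The composite is therefore functorial in $\tilde\pi$.

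For the second statement, Remark following the Bernstein decomposition lemma observes that every supercuspidal representation of $\tilde G(F)$ lies in $\mc{R}_r(\tilde G)$, and that $\mc{R}_r(\tilde G)$ is a product of supercuspidal blocks. The first statement therefore specializes to each supercuspidal block.

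There is no serious obstacle here. The only care needed is bookkeeping of variance: $D_B^{\tilde G}$ and $(-)^\vee$ are contravariant while $D_A^{\tilde G}$ is covariant, so the naturality squares must be checked with the arrows in the correct direction. In particular, Proposition \ref{pro:dgsc-disc} is really an equality of underlying objects, so the isomorphism $\tilde\pi^\vee \to D_A^{\tilde G}(\tilde\pi^\vee)$ is the identity. Functoriality of the composite then reduces to that of $\alpha$, which Proposition \ref{pro:dhequdgcon} provides.
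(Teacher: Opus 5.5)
Your proposal is correct and is the paper's argument: the paper also deduces the corollary directly from Proposition \ref{pro:dhequdgcon} together with Proposition \ref{pro:dgsc-disc}, applied to $\tilde\pi^\vee$. Your extra steps, namely that $\tilde\pi^\vee$ stays in $\mc{R}_r(\tilde G)$ by Fact \ref{fct:conprp}(4) and the check of variance, spell out what the paper leaves implicit.
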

\begin{proof}
    This is a direct consequence of Propositions \ref{pro:dhequdgcon} and \ref{pro:dgsc-disc}.
\end{proof}

\begin{pro} \label{pro:dhctr}
    $D_B^{\tilde G}$ commutes with the contragredient functor.
\end{pro}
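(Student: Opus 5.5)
We want a functorial isomorphism $D_B^{\tilde G}(\tilde\pi^\vee)\cong D_B^{\tilde G}(\tilde\pi)^\vee$ for $\tilde\pi\in\mc{R}(\tilde G)$. We follow the same strategy as in Proposition \ref{pro:dhequdgcon}: first produce the isomorphism after restriction to $G(F)$ using the connected case, then check that it is $\tilde N(F)$-equivariant. Since $\tilde G(F)=G(F)\tilde N(F)$, this suffices.

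\emph{Step 1: the isomorphism over $G(F)$.} Put $\pi=\tx{Res}^{\tilde G}_G\tilde\pi$. Lemma \ref{lem:dhres}, Fact \ref{fct:conprp}(1) and Proposition \ref{pro:dhprp}(5) give a chain of functorial isomorphisms
\[ \tx{Res}^{\tilde G}_G D_B^{\tilde G}(\tilde\pi^\vee)\cong D_B^G(\pi^\vee)\xrightarrow{\gamma_\pi} D_B^G(\pi)^\vee \cong \tx{Res}^{\tilde G}_G\big(D_B^{\tilde G}(\tilde\pi)^\vee\big). \]
Here $\gamma$ denotes the isomorphism of \cite{YS26}. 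This yields a vector space isomorphism $\gamma_{\tilde\pi}:D_B^{\tilde G}(\tilde\pi^\vee)\to D_B^{\tilde G}(\tilde\pi)^\vee$, functorial in $\tilde\pi$ and $G(F)$-equivariant.

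\emph{Step 2: reduction to an equivariance square.} Fix $n\in\tilde N(F)$ and set $a=\tx{Ad}(n)$ and $\theta_a=\tilde\pi(n):a\pi\to\pi$. The action of $n$ on $\tilde\pi^\vee$ is $(\theta_a^{-1})^\vee$. By Lemma \ref{lem:dhdiscon}, applied to $\tilde\pi^\vee$ and to $\tilde\pi$, the action of $n$ on the source is $H((\theta_a^{-1})^\vee)$. The action on the target is the contragredient of the inverse of $H(\theta_a)$. We therefore must show
\[ \gamma_\pi\circ H((\theta_a^{-1})^\vee)=\big(H(\theta_a)^{-1}\big)^\vee\circ a\gamma_\pi . \]
Splitting $H$ as $D_B^G(\cdot)\circ c_h$, the $D_B^G(\theta)$ parts commute with $\gamma$ by naturality of $\gamma$. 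What remains is to check that $\gamma$ intertwines the twisting maps: $c_h(\pi^\vee)$ against the dual of $c_h(\pi)^{-1}$.

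\emph{Step 3: the main obstacle.} The difficulty is this compatibility of $\gamma$ with $c_h$, which requires knowing how $\gamma$ is built in \cite{YS26}. I would try to avoid that by transferring to $D_A$. By Proposition \ref{pro:dhdgequ}, $\beta$ intertwines $c_h$ with $c^a$, so the problem becomes showing that the isomorphism $D_A^G\circ(-)^\vee\cong(-)^\vee\circ D_A^G$ intertwines the maps $c^a$. The maps $c^a$ are completely explicit: they precompose with $a^{-1}$ in the inductions $X_P$ and apply $\lambda_J(a)$. So I expect this reduces to a termwise check on the complex $Y_\bullet$, using the contragredient isomorphisms $X_P(\pi)^\vee\cong X_{P^-}(\pi^\vee)$ built from Lemma \ref{lem:iprpcon}, and the fact that $a$ preserves both $(M_0,P_0)$ and $(M_0,P_0^-)$.

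The one genuinely delicate point is whether the sign conventions work out: $\lambda_J(a)$ and the opposite-root reordering must be compatible. If this direct route fails, the fallback is the uniqueness characterization, Proposition \ref{pro:dhuniq}. Transport of structure by $a$ sends the family $D_B^M$ to another family with the same properties, and uniqueness should force the comparison isomorphisms to be canonical and hence $a$-equivariant.
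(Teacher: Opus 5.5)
Your Steps 1 and 2 are a correct reduction. The gap is Step 3, which carries all of the content, and neither of your two routes closes it.

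\textbf{The direct route.} It rests on a termwise isomorphism $X_P(\pi)^\vee\cong X_{P^-}(\pi^\vee)$ that is false. Lemma \ref{lem:iprpcon} gives $X_P(\pi)^\vee\cong i_P^G r_{P^-}^G(\pi^\vee)$, not $i_{P^-}^G r_{P^-}^G(\pi^\vee)$. These two only share Jordan--H\"older factors. For $G=\tx{SL}_2$, $\pi=\mathbf{1}$, $P=B$, the first has $\tx{St}$ as its socle and the second has $\mathbf{1}$.

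The deeper problem is structural. Dualizing $Y_\bullet(\pi)$ presents $D_A^G(\pi)^\vee$ as the kernel of a complex whose arrows run the opposite way. By contrast, $D_A^G(\pi^\vee)$ is a cokernel of $Y_\bullet(\pi^\vee)$. The isomorphism $D_A^G\circ(-)^\vee\cong(-)^\vee\circ D_A^G$ is therefore not induced by any map of these complexes. It is the main theorem of \cite{YS26}, produced abstractly by the lifting isomorphism lemma. Passing from $\gamma$ to $D_A$ via Proposition \ref{pro:dhdgequ} only relocates the same non-explicit isomorphism. Its compatibility with $c^a$ cannot be checked termwise, and the sign bookkeeping you worry about is not the real issue.

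\textbf{The fallback.} It also fails as stated. Proposition \ref{pro:dhuniq} says the family $D_B^M$ is unique up to isomorphism. It does not say the isomorphism is unique, so it yields no equivariance. To rescue this you would need a rigidity statement: that the lifted isomorphism is uniquely determined by its restriction to the cuspidal level and its compatibility with the adjunctions. You would also need $a$-compatibility of every comparison isomorphism. At that point it is simpler to do what the paper does.

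\textbf{The missing idea.} Do not descend to $G(F)$ for this statement. The paper applies the lifting isomorphism lemma \cite[Corollary 3.7]{YS26} directly to the following data:
\begin{itemize}
\item the categories $\mc{A}=\mc{R}_t(\tilde G)$ and $\mc{B}=\prod_{|I|=t}\mc{R}_t(\tilde M_I)$;
\item the functors $i=\prod i_{\tilde P_I}^{\tilde G}$, $r=\prod r_{\tilde P_I^-}^{\tilde G}$ and their barred versions;
\item the exact covariant functors $F=D_B\circ(-)^\vee$ and $E=(-)^\vee\circ D_B$.
\end{itemize}
Every input is a disconnected statement already established in the paper:
\begin{itemize}
\item second adjointness (Corollary \ref{cor:secadjoint});
\item surjectivity of $\kappa$ and injectivity of $\zeta$, by the cuspidal-support argument;
\item commutation of $D_B^{\tilde G}$ and $(-)^\vee$ with $i$ and $r$ (Proposition \ref{pro:dhiprp}, Lemma \ref{lem:iprpcon});
\item the fact that both functors permute first and second adjointness;
\item the base case $F_b\cong E_b$, from Corollary \ref{cor:dhsc} applied to the disconnected Levis $\tilde M_I$.
\end{itemize}
The $\tilde N(F)$-equivariance you are fighting is thus needed only on supercuspidal blocks. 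There it was already settled in Proposition \ref{pro:dhequdgcon}, because $D_A^{\tilde M}=\tx{id}$ literally. The lemma then outputs the isomorphism on all of $\mc{R}_t(\tilde G)$ as $\tilde G(F)$-representations, without ever analysing how $\gamma$ interacts with $c_h$.
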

\begin{proof}
    This is proved in the connected case by using the "lifting isomorphism lemma" in \cite[Corollary 3.7]{YS26}, and this argument also works for the disconnected case. First, we fix some $0\leq t\leq r$. For any $I\subset S$ with $|I|=t$, let $\tilde P_I=\tilde M_I U_I$ be the corresponding standard wide parabolic subgroup and Levi decomposition. By Corollary \ref{cor:iprpdcmp}, we have functors:
    \[i_I:=i^{\tilde G}_{\tilde P_I}: \mc{R}_t(\tilde M_I) \to \mc{R}_t(\tilde G), \qquad r_I:=r^{\tilde G}_{\tilde P_I^-}: \mc{R}_t(\tilde G)\to \mc{R}_t(\tilde M_I)\]
    They form an adjoint pair by the second adjointness \ref{cor:secadjoint} and Bernstein decomposition. We then have adjunction maps
    \[\kappa_I(\tilde \pi): i_Ir_I\tilde \pi \to \tilde \pi, \qquad \zeta_I(\tilde \tau):\tilde \tau \to r_Ii_I\tilde \tau,\]
    for $\tilde \pi\in \mc{R}_t(\tilde G)$ and $\tilde \tau\in \mc{R}_t(\tilde M_I)$. Similarly, $i^{\tilde G}_{\tilde P^-_I}$ and $r^{\tilde G}_{\tilde P_I}$ also form such pairs, and are denoted by $\bar{i}_I$ and $\bar{r}_I$. Define
    \[\mc{A}:=\mc{R}_t(\tilde G),\qquad \mc{B}:=\prod_{|I|=t}\mc{R}_t(\tilde M_I)\]
    and 
    \[i:=\prod i_I: \mc{B}\to \mc{A},\qquad r:=\prod r_I: \mc{A}\to \mc{B}\]
    Then it is clear that $i$ and $r$ form an adjoint pair, with adjunction maps $\kappa=\prod \kappa_I$ and $\zeta=\prod \zeta_I$. Similarly, $\bar{i}:=\prod \bar{i}_I$ and $\bar{r}:=\prod \bar{r}_I$ also form an adjoint pair. For any $\tilde \pi\in \mc{R}_t(\tilde G)$, the following composition
    \[r\tilde\pi \xrightarrow{\zeta(r\tilde \pi)} rir\tilde \pi \xrightarrow{r(\kappa(\tilde \pi))} r\tilde \pi\]
    equals identity, by the definition of adjunction maps. In particular, we see that $r(\tx{Coker}(\kappa(\tilde \pi)))=0$ as $r$ is exact. As $\tx{Coker}(\kappa(\tilde \pi))\in \mc{R}_t(\tilde G)$, it has cuspidal support in the (standard) Levi subgroups $M_I$ with $|I|=t$ when restricting as a $G(F)$-representation. We then deduce that it is zero and hence $\kappa(\tilde \pi)$ is surjective. A similar argument shows that $\zeta(\tilde \tau)$ is injective, for $\tilde \tau=(\tilde \tau_I)\in \mc{B}$.
    
    Define $F_{I}(-):= D_B^{\tilde M_I}((-)^\vee)$, $E_{I}(-):= (D_B^{\tilde M_I}(-))^\vee$ as functors from $\mc{R}_t(\tilde M_I)$ to itself, and similarly $F_{\tilde G}$, $E_{\tilde G}$. Also define 
    \[F_b:=\prod F_I, E_b:=\prod E_I: \mc{B}\to \mc{B}.\] 
    These are exact covariant functors and we have equivalences of functors
    \[i\circ F_{b}\cong F_{\tilde G}\circ \bar{i}, \qquad r\circ F_{\tilde G} \cong F_b\circ \bar{r} \]
    and 
    \[i\circ E_b \cong E_{\tilde G}\circ \bar{i}, \qquad r\circ E_{\tilde G} \cong E_b\circ \bar{r},\]
    by Lemma \ref{lem:iprpcon} and Proposition \ref{pro:dhiprp}. On the block $\mc{R}_t(\tilde M_I)$, we know that $D_B^{\tilde M_I}$ is equivalent to $(-)^\vee$ by Corollary \ref{cor:dhsc}. Therefore, we have a functorial isomorphism $\alpha_I(\tilde \tau): F_{\tilde M_I}(\tilde \tau)\to E_{\tilde M_I}(\tilde \tau)$ and hence an isomorphism $\alpha:= \prod \alpha_I: F_b\to E_b$. We can now apply the "lifting isomorphism lemma" in \cite{YS26} to lift the isomorphism $\alpha: F_b \to E_b$ to $\alpha': F_{\tilde G}\to E_{\tilde G}$, provided that both $D_B^{\tilde G}$ and $(-)^\vee$ permute the first adjointness (fact \ref{fct:adjoint}) and the second adjointness. We have introduced this notion in \S\ref{sub:dguniq}. By first adjointness, we have adjunction maps 
    \[\eta_I(\tilde{\pi}): \tilde \pi \to \bar{i}_Ir_I \tilde \pi, \qquad \epsilon_I(\tilde \tau): r_I\bar{i}_I\tilde \tau\to\tilde \tau,\]
    for $\tilde \pi\in \mc{R}_t(\tilde G)$ and $\tilde \tau \in \mc{R}_t(\tilde M_I)$. If the composition of maps
    \[ i_Ir_ID_B^{\tilde G}(\tilde \pi) \xrightarrow{\simeq} D_B^{\tilde G}(\bar{i}_Ir_I\tilde \pi) \xrightarrow{D_B^{\tilde G}(\eta_I)} D_B^{\tilde G}(\tilde \pi)\]
    equals $\kappa_I(D_B^{\tilde G}(\tilde \pi))$, and 
    \[D^{\tilde M_I}_B(\tilde \tau)\xrightarrow{D^{\tilde M_I}_B(\epsilon_I(\tilde \tau))}D^{\tilde M_I}_B(r_I\bar{i}_I\tilde \tau) \xrightarrow{\simeq} r_Ii_ID_B^{\tilde M_I}(\tilde \tau)\]
    equals $\zeta_I(D^{\tilde M}_B(\tilde \tau))$, we say that $D_B^{\tilde G}$ takes first adjointness to second adjointness. If it also takes second adjointness to the first, we say that $D_B^{\tilde G}$ permutes first and second adjointness. Similarly, since $(-)^\vee$ also has similar commutative properties with parabolic induction and restriction (Lemma \ref{lem:iprpcon}), it also makes sense to say $(-)^\vee$ permutes first and second adjointness. Strictly speaking, the lemma in \cite{YS26} requires $D^{\tilde G}_B$ and $(-)^\vee$ to permute the adjunction maps between $(i,r)$ and $(\bar{i}, \bar{r})$ in the above sense, but since they are finite direct products of $\kappa_I, \zeta_I$ and $\eta_I, \epsilon_I$, and since both $D^{\tilde G}_B$ and $(-)^\vee$ are additive, we may reduce to each $I$.

    It remains to show that both $D_B^{\tilde G}$ and $(-)^\vee$ permute first and second adjointness. For $(-)^\vee$, this is clear, since we obtain second adjointness by $(-)^\vee$ and first adjointness, see Corollary \ref{cor:secadjoint}. The hard part is to show $D_B^{\tilde G}$ also does. In the connected case, this is verified directly by using the proofs of the fact that $D_B^G$ is an involution and that $D_B^G\circ i^G_P \cong i^G_{P^-}\circ D_B^M$, $D_B^M\circ r^G_P\cong r^G_P\circ D_B^G$. Since all these proofs can be generalized in the disconnected case, see Proposition \ref{pro:dhinvlo} and \ref{pro:dhiprp}, the argument in \cite[Proposition 2.3]{YS26} can also be generalized in the disconnected case, which is simply tracing elements. We refer to that paper for more details.
\end{proof}

\begin{cor}
    The Aubert duality $D_A^{\tilde G}$ satisfies the following properties.
    \begin{enumerate}
        \item $D_A^{\tilde G}$ is an involution.
        \item $D_A^{\tilde G }\circ i^{\tilde G}_{\tilde P}$ is equivalent to $i^{\tilde G}_{\tilde P^-}\circ D_A^{\tilde M}$.
        \item $D_A^{\tilde M}\circ r^{\tilde G}_{\tilde P}$ is equivalent to $r^{\tilde G}_{\tilde P^-}\circ D_A^{\tilde G}$.
        \item $D_A^{\tilde G}$ commutes with the contragredient functor.
    \end{enumerate}
\end{cor}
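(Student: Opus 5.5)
The plan is to transfer everything from $D_B^{\tilde G}$ to $D_A^{\tilde G}$ through the isomorphism $D_A^{\tilde G}\circ(-)^\vee\cong D_B^{\tilde G}$ of Proposition \ref{pro:dhequdgcon}, mirroring the connected case (Proposition \ref{pro:dgprp}). Write $D_A^{\tilde G}\cong D_B^{\tilde G}\circ(-)^\vee$, using $((-)^\vee)^\vee\cong\tx{id}$ (Fact \ref{fct:conprp}(3)). I would prove (4) first, since the other items depend on it.

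For (4), Proposition \ref{pro:dhctr} gives $D_B^{\tilde G}\circ(-)^\vee\cong(-)^\vee\circ D_B^{\tilde G}$. Then
\[ D_A^{\tilde G}(\tilde\pi^\vee)\cong D_B^{\tilde G}(\tilde\pi)\cong D_B^{\tilde G}(\tilde\pi^{\vee\vee})\cong (D_B^{\tilde G}(\tilde\pi^\vee))^\vee\cong (D_A^{\tilde G}(\tilde\pi))^\vee , \]
and each step is functorial in $\tilde\pi$.

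For (1), write $D_A^{\tilde G}\circ D_A^{\tilde G}\cong D_B^{\tilde G}\circ(-)^\vee\circ D_B^{\tilde G}\circ(-)^\vee$. Commute the middle $(-)^\vee$ past $D_B^{\tilde G}$ using Proposition \ref{pro:dhctr}, then apply Fact \ref{fct:conprp}(3) and Proposition \ref{pro:dhinvlo}. This gives $\tx{id}$.

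For (2), compute
\[ D_A^{\tilde G}\circ i_{\tilde P}^{\tilde G}\cong D_B^{\tilde G}\circ(-)^\vee\circ i_{\tilde P}^{\tilde G}\cong D_B^{\tilde G}\circ i_{\tilde P}^{\tilde G}\circ(-)^\vee , \]
using Lemma \ref{lem:iprpcon}(2). Proposition \ref{pro:dhiprp}(1) is stated as $i^{\tilde G}_{\tilde P}\circ D_B^{\tilde M}\cong D_B^{\tilde G}\circ i^{\tilde G}_{\tilde P^-}$. Applied to the pair $(\tilde P^-,\tilde M)$, whose opposite is $\tilde P$ since $(\tilde P^-)^-=\tilde MU=\tilde P$, it gives $D_B^{\tilde G}\circ i_{\tilde P}^{\tilde G}\cong i_{\tilde P^-}^{\tilde G}\circ D_B^{\tilde M}$. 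Therefore $D_A^{\tilde G}\circ i_{\tilde P}^{\tilde G}\cong i_{\tilde P^-}^{\tilde G}\circ D_B^{\tilde M}\circ(-)^\vee\cong i_{\tilde P^-}^{\tilde G}\circ D_A^{\tilde M}$, using Proposition \ref{pro:dhequdgcon} for $\tilde M$.

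For (3), compute
\[ D_A^{\tilde M}\circ r_{\tilde P}^{\tilde G}\cong D_B^{\tilde M}\circ(-)^\vee\circ r_{\tilde P}^{\tilde G}\cong D_B^{\tilde M}\circ r_{\tilde P^-}^{\tilde G}\circ(-)^\vee , \]
using Lemma \ref{lem:iprpcon}(1) applied to $\tilde P^-$, again with $(\tilde P^-)^-=\tilde P$. Proposition \ref{pro:dhiprp}(2) for $\tilde P^-$ turns this into $r_{\tilde P^-}^{\tilde G}\circ D_B^{\tilde G}\circ(-)^\vee\cong r_{\tilde P^-}^{\tilde G}\circ D_A^{\tilde G}$.

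The main point needing care is the bookkeeping of opposites. Applying the earlier results to $\tilde P^-$ in place of $\tilde P$ requires the following checks:
\begin{enumerate}
\item $\tilde P^-$ is a parabolic subgroup of $\tilde G$ with Levi factor $\tilde M$, which is noted after Definition \ref{dfn:paropdisc}.
\item Its $\tilde M$-opposite is $\tilde P$. This holds because $U^{--}=U$.
\end{enumerate}
The other thing to verify is that Propositions \ref{pro:dhequdgcon} and \ref{pro:dhctr} hold for the disconnected Levi $\tilde M$ as well as for $\tilde G$. This is immediate, since $\tilde M$ is itself an affine group with reductive identity component $M$, so all results of this section apply to it.
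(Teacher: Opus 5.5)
Your proposal is correct and follows the paper's route. Each item is transferred from $D_B^{\tilde G}$ through $D_A^{\tilde G}\circ(-)^\vee\cong D_B^{\tilde G}$ (Proposition \ref{pro:dhequdgcon}), together with Propositions \ref{pro:dhinvlo}, \ref{pro:dhiprp}, \ref{pro:dhctr} and Lemma \ref{lem:iprpcon}, with the same handling of opposite parabolics. Your use of Proposition \ref{pro:dhctr} in item (1), to move the middle contragredient past $D_B^{\tilde G}$, is genuinely needed, even though the paper's one-line proof cites only Propositions \ref{pro:dhequdgcon} and \ref{pro:dhinvlo} for that item.
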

\begin{proof}
    (1) follows from Proposition \ref{pro:dhequdgcon} and \ref{pro:dhinvlo}. (2) and (3) follow from Proposition \ref{pro:dhequdgcon}, \ref{pro:dhiprp} and Lemma \ref{lem:iprpcon}. (4) follows from \ref{pro:dhequdgcon} and \ref{pro:dhctr}.
\end{proof}

To end this section, we generalize the uniqueness property in \S\ref{sub:dguniq} to the disconnected case.

\begin{pro} \label{pro:dhuniqdisc}
    The family of functors $D^{\tilde M}_B: \mc{R}(\tilde M)\to \mc{R}(\tilde M)$, with $\tilde M$ varying over all (standard) Levi subgroups of $\tilde G$, is uniquely determined by the following properties:
    \begin{enumerate}
        \item $D^{\tilde M}_B$ is an exact contravariant functor for all $\tilde M$.
        \item $D_B^{\tilde G }\circ i^{\tilde G}_{\tilde P}$ is equivalent to $i^{\tilde G}_{\tilde P^-}\circ D_B^{\tilde M}$.
        \item $D_B^{\tilde M}\circ r^{\tilde G}_{\tilde P}$ is equivalent to $r^{\tilde G}_{\tilde P}\circ D_B^{\tilde G}$.
        \item $D_B^{\tilde G}$ permutes first and second adjointness as in the proof of Proposition \ref{pro:dhctr}.
        \item Each $D^{\tilde M}_B$ respects Bernstein decomposition, and it is isomorphic to the contragredient functor on supercuspidal blocks.
    \end{enumerate}
\end{pro}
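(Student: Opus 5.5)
The plan is to adapt the proof of Proposition \ref{pro:dhuniq} from \cite{YS26} to the disconnected setting, with the lifting isomorphism lemma of \cite[Corollary 3.7]{YS26} doing the main work, exactly as in the proof of Proposition \ref{pro:dhctr}. Let $D'^{\tilde M}_B$ be a second family with properties (1)--(5). We must construct, for every standard Levi $\tilde M$ of $\tilde G$, an isomorphism of functors $D'^{\tilde M}_B \to D^{\tilde M}_B$. Since both respect the Bernstein decomposition, it suffices to work on each factor $\mc{R}_t(\tilde G)$. We argue by induction on the semisimple rank of $\tilde M$, or equivalently by treating each $t$ directly with the minimal Levis $\tilde M_I$, $|I|=t$.

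\emph{Base case.} On $\mc{R}_t(\tilde M_I)$ with $|I|=t$ we are in a supercuspidal block. Property (5) gives isomorphisms $D'^{\tilde M_I}_B\cong (-)^\vee\cong D_B^{\tilde M_I}$, and composing them yields $\alpha_I$.

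\emph{Lifting to $\tilde G$.} We set $\mc{A}=\mc{R}_t(\tilde G)$ and $\mc{B}=\prod_{|I|=t}\mc{R}_t(\tilde M_I)$, with the adjoint pairs $(i,r)$ and $(\bar i,\bar r)$ defined as in the proof of Proposition \ref{pro:dhctr}. That proof already shows that $\kappa$ is surjective and $\zeta$ is injective. The lifting lemma needs covariant functors, so we compose with $(-)^\vee$: put $F=D_B\circ(-)^\vee$ and $E=D'_B\circ(-)^\vee$. By Lemma \ref{lem:iprpcon} and properties (2), (3), both are exact and covariant, and they intertwine $i$ with $\bar i$ and $r$ with $\bar r$. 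Property (4), together with the fact that $(-)^\vee$ permutes first and second adjointness, shows that $F$ and $E$ preserve the relevant adjunctions in the sense required by the lemma. Applying the lemma to $\alpha=\prod\alpha_I$ then gives $\alpha':F\to E$ on $\mc{A}$. Precomposing with $(-)^\vee$ and using Fact \ref{fct:conprp}(3), we obtain $D_B^{\tilde G}\cong D'^{\tilde G}_B$.

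\emph{Other Levis.} The same argument applies to each standard $\tilde M$ in place of $\tilde G$, since its Levis $\tilde M_I$ and the disconnected parabolic theory restrict to $\tilde M$.

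The main obstacle is verifying the hypotheses of the lifting lemma in the disconnected setting. The key point is that the isomorphism $\alpha'$ produced by the lemma is canonical, being built from $\kappa$, $\zeta$ and $\alpha$ alone. Its $\tilde G(F)$-equivariance therefore comes for free: all the functors already land in $\tilde G(F)$-representations, so no separate equivariance check against $\tilde N(F)$ is needed. What remains is to check that the compatibility isomorphisms in (2) and (3) for $F$ and $E$ are chosen consistently with the adjunction maps. This is exactly the content of (4), and it transfers through $(-)^\vee$ because $(-)^\vee$ converts first adjointness into second (Corollary \ref{cor:secadjoint}).
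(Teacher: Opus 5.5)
Your proposal is correct and follows essentially the paper's argument: you work block by block on $\mc{R}_t(\tilde G)$, take $\alpha=\prod\alpha_I$ on the supercuspidal blocks $\mc{R}_t(\tilde M_I)$ from property (5), and lift it with the lifting isomorphism lemma of \cite{YS26}, using (2)--(4) together with the surjectivity of $\kappa$ and injectivity of $\zeta$ established in the proof of Proposition \ref{pro:dhctr}. The only difference is cosmetic. The paper applies a contravariant form of the lemma directly to $D_B^{\tilde G}$ and the competing family, whereas you precompose with $(-)^\vee$ to reduce to the covariant form used in Proposition \ref{pro:dhctr}, and then remove the dual via Fact \ref{fct:conprp}(3),(4).
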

\begin{proof}
    This is a direct corollary of the "lifting isomorphism lemma" argument in the proof of Proposition \ref{pro:dhctr}. Let us use the notations in that proof. Suppose $(F_{\tilde M})_{\tilde M}$ is another family of functors satisfying the above properties. Define
    \[F_b:=\prod_{|I|=t}F_{\tilde M_I}, E_b:=\prod_{|I|=t}D^{\tilde M_I}_B: \mc{B}\to \mc{B}.\]
    Also define $E_{\tilde G}:=D^{\tilde G}_B$. These are exact contravariant functors and we have equivalences of functors
    \[i\circ F_b \cong F_{\tilde G}\circ \bar{i}, \qquad r\circ F_{\tilde G}\cong F_b\circ r\]
    and 
    \[i\circ E_b\cong E_{\tilde G}\circ \bar{i}, \qquad r\circ E_{\tilde G}\cong E_b\circ r.\]
    On $\mc{R}_t(\tilde M_I)$, we have an isomorphism
    \[F_b \cong \prod (-)^\vee \cong E_b.\]
    Since both $E_{\tilde M}$ and $F_{\tilde M}$ permute first and second adjointness, the lifting isomorphism lemma (for contravariant functors) applies and we obtain an isomorphism $D^{\tilde G}_B\cong F_{\tilde G}$ on the block $\mc{R}_t(\tilde G)$. A similar argument works for all $\mc{R}_l(\tilde M)$, and we complete the proof.
\end{proof}

Using the lifting isomorphism lemma (for covariant functors), we can also prove the following.

\begin{pro} \label{pro:dguniqdisc}
    The family of functors $D_A^{\tilde M}: \mc{R}(\tilde M)\to \mc{R}(\tilde M)$, with $\tilde M$ varying over all (standard) Levi subgroups of $\tilde G$, is uniquely determined by the following properties:
    \begin{enumerate}
        \item $D_A^{\tilde M}$ is an exact covariant functor for all $\tilde M$.
        \item $D_A^{\tilde G }\circ i^{\tilde G}_{\tilde P}$ is equivalent to $i^{\tilde G}_{\tilde P^-}\circ D_A^{\tilde M}$.
        \item $D_A^{\tilde M}\circ r^{\tilde G}_{\tilde P}$ is equivalent to $r^{\tilde G}_{\tilde P^-}\circ D_A^{\tilde G}$.
        \item $D_A^{\tilde G}$ preserves first adjointness, cf. Proposition \ref{pro:dguniq}.
        \item Each $D_A^{\tilde M}$ respects Bernstein decomposition, and it is isomorphic to the identity functor on supercuspidal blocks.
    \end{enumerate}
\end{pro}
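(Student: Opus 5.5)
The plan is to mirror the proof of Proposition \ref{pro:dhuniqdisc}, replacing the contravariant lifting isomorphism lemma of \cite{YS26} by its covariant version. Fix $0\leq t\leq r$ and keep the notation of the proof of Proposition \ref{pro:dhctr}. That is, $\mc{A}=\mc{R}_t(\tilde G)$ and $\mc{B}=\prod_{|I|=t}\mc{R}_t(\tilde M_I)$. The functors $i=\prod i^{\tilde G}_{\tilde P_I}$ and $r=\prod r^{\tilde G}_{\tilde P_I^-}$ form the second-adjointness pair, while $\bar i=\prod i^{\tilde G}_{\tilde P_I^-}$ and $\bar r=\prod r^{\tilde G}_{\tilde P_I}$ are the opposite pair. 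Both pairs are adjoint by Fact \ref{fct:adjoint}, Corollary \ref{cor:iprpdcmp} and Corollary \ref{cor:secadjoint}.

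Suppose $(F_{\tilde M})_{\tilde M}$ is a second family satisfying (1)--(5). Put $E_{\tilde G}=D_A^{\tilde G}$, $E_b=\prod_{|I|=t}D_A^{\tilde M_I}$ and $F_b=\prod_{|I|=t}F_{\tilde M_I}$. These are exact covariant functors, by (1) and by Lemma \ref{lem:dgres} for $D_A$. Properties (2) and (3), in the form $D_A^{\tilde G}\circ i^{\tilde G}_{\tilde P}\cong i^{\tilde G}_{\tilde P^-}\circ D_A^{\tilde M}$ and $D_A^{\tilde M}\circ r^{\tilde G}_{\tilde P}\cong r^{\tilde G}_{\tilde P^-}\circ D_A^{\tilde G}$, give intertwinings $E_{\tilde G}\circ i\cong \bar i\circ E_b$ and $E_b\circ \bar r\cong r\circ E_{\tilde G}$, and similarly for $F$. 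For $D_A$ these properties were established in the corollary following Proposition \ref{pro:dhctr}.

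Next, each $\mc{R}_t(\tilde M_I)$ with $|I|=t$ is a supercuspidal block for $\tilde M_I$, since $\tilde M_I$ has semisimple rank $t$. So (5) together with Proposition \ref{pro:dgsc-disc} gives isomorphisms $F_{\tilde M_I}\cong \tx{id}\cong D_A^{\tilde M_I}$, and hence an isomorphism $\alpha:F_b\to E_b$. The covariant lifting isomorphism lemma then lifts $\alpha$ to $\alpha':F_{\tilde G}\to E_{\tilde G}$ on $\mc{A}$. Its hypotheses are as follows. First, $\kappa$ must be surjective and $\zeta$ injective; this was proved in Proposition \ref{pro:dhctr} from the Bernstein decomposition. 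Second, both $F_{\tilde G}$ and $D_A^{\tilde G}$ must preserve first adjointness, i.e. carry $(\eta,\epsilon)$ for $(r^{\tilde G}_{\tilde P},i^{\tilde G}_{\tilde P})$ to $(\eta,\epsilon)$ for $(r^{\tilde G}_{\tilde P^-},i^{\tilde G}_{\tilde P^-})$, and likewise for second adjointness. I would again reduce this to each $I$ separately by additivity, exactly as in Proposition \ref{pro:dhctr}. Finally, I would run the same argument with $\tilde G$ replaced by every standard Levi $\tilde M$ and every $t$. Since everything respects the Bernstein decomposition by (5), this yields $F_{\tilde M}\cong D_A^{\tilde M}$ for all $\tilde M$.

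The main obstacle is verifying that $D_A^{\tilde G}$ itself satisfies (4). For $F$ this is a hypothesis. For $D_A^{\tilde G}$ I would transport the property from $D_B^{\tilde G}$ via Proposition \ref{pro:dhequdgcon}, $D_A^{\tilde G}\cong D_B^{\tilde G}\circ(-)^\vee$. Two facts are available here. $D_B^{\tilde G}$ permutes first and second adjointness; this is the disconnected version of \cite[Proposition 2.3]{YS26}, noted in the proof of Proposition \ref{pro:dhctr}. $(-)^\vee$ also permutes them, by Lemma \ref{lem:iprpcon} and Corollary \ref{cor:secadjoint}. Composing two permutations should give preservation, exactly as in the connected case treated in \cite{YS26}. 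The delicate point is that the isomorphisms of (2) and (3) used for $D_A$ must be the composites of those for $D_B$ and $(-)^\vee$. With that choice the diagram chase is a formal consequence of the two permutation statements. Should (4) refer to second adjointness as well, the same composition argument applies.
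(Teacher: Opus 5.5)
Your proposal is correct and takes the route the paper indicates: you rerun the proof of Proposition \ref{pro:dhuniqdisc} with the covariant lifting isomorphism lemma, take $\alpha$ from Proposition \ref{pro:dgsc-disc} on the supercuspidal blocks $\mc{R}_t(\tilde M_I)$ with $|I|=t$, and derive (4) for $D_A^{\tilde G}$ by composing the adjointness permutations of $D_B^{\tilde G}$ and $(-)^\vee$ through Proposition \ref{pro:dhequdgcon}, which supplies a step the paper leaves implicit. One caution: hypothesis (4) gives the competing family $F$ only first-adjointness compatibility, so you should invoke the covariant lemma in the form that needs only that (as in Proposition \ref{pro:dguniq}), rather than listing second-adjointness compatibility of $F$ among its hypotheses.
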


\section{The Steinberg representation for disconnected groups} \label{sec:stein}

We continue with the set-up of \S\ref{sec:aubdisc}. Thus $F$ is a non-archimedean local field, $\tilde G$ is an affine algebraic group whose identity component $G$ is reductive. The group $G(F)$ has a natural representation, called the Steinberg representation. We will produce in this section a natural extension of this representation to $\tilde G(F)$, compute its character, and discuss twisted endoscopic transfer.

\subsection{The Steinberg representation} \label{sub:stein}

A quick way to define the Steinberg representation of $G(F)$ is as the Aubert-dual of the trivial representation:
\[ \tx{St} = D_A^G(\textbf{1}).\]
We can however describe it in more concrete terms, following \cite{Cas73}, as follows. Let $P_0 \subset G$ be a minimal parabolic subgroup. Consider the induced representation 
\[ i_{P_0}^G(\delta_{P_0}^{-1/2})=\tx{Ind}_{P_0}^G\textbf{1}_{P_0}, \]
which is the right regular representation of $G(F)$ on the space of functions $\mc{C}^\infty(P_0(F) \lmod G(F))$. We identify this space with the subspace of $\mc{C}^\infty(G(F))$ consisting of functions that are left-invariant under $P_0(F)$. Every parabolic subgroup $P_0 \subset P$ gives the submodule $\mc{C}^\infty(P(F) \lmod G(F))$ of $\mc{C}^\infty(P_0(F) \lmod G(F))$. The Steinberg representation is the quotient of $\mc{C}^\infty(P_0(F) \lmod G(F))$ by the sum (not direct) of all $\mc{C}^\infty(P(F) \lmod G(F))$ for $P_0 \subsetneq P$. Call this sum $\Sigma_0$ for future reference.

The fact that this procedure coincides with $D_A^G(\textbf{1})$ is seen as follows. For any $P \subset G$ we have $I_P^GR_P^G(\mathbf{1})=\mc{C}^\infty(P(F) \lmod G(F))$. Since $\mathbf{1} \in \mc{R}_0(G)$, the complex in Theorem \ref{thm:aubmain} has the form
    \[ 0 \to Y_r \to Y_{r-1} \to \dots \to Y_0, \]
so $\tx{St} = \tx{cok}(Y_1 \to Y_0)$, but 
$Y_0 = \mc{C}^\infty(P_0(F) \lmod G(F))$ and the image of the above map is $\Sigma_0$. 

It is well-known, see e.g. \cite{Cas73}, that for a regular semi-simple element $g \in G(F)$ the character of $\tx{St}$ is given by 
\begin{equation} \label{eq:charstein0}
\sum_{(M,P)} (-1)^{\dim(A_0/A_M)}\delta_P(g)^{-\frac{1}{2}}|D_{G/M}(g)|^{-\frac{1}{2}},
\end{equation}
where the sum runs over the set of those parabolic pairs for which $g \in M$.

\subsection{The natural extension} \label{sub:discstein}

We have the following ways to define an extension of the representation $\tx{St}$ of $G(F)$ to a representation $\tilde{\tx{St}}$ of $\tilde G(F)$. First we could take $D_A^{\tilde G}(\tilde{\textbf{1}})$,
where $\tilde{\textbf{1}}$ is the trivial representation of $\tilde G(F)$ and $D_A^{\tilde G}$ is the functor defined in Definition \ref{dfn:aubdisc1}. But we could also use the alternative functor $'D_A^{\tilde G}$ of \S\ref{sub:sign} and consider $'D_A^{\tilde G}(\tilde{\textbf{1}})$.

Finally, we could imitate the explicit description of \S\ref{sub:stein}, as follows. Let $(M_0,P_0)$ be a minimal parabolic pair for $G$. In every coset $\tilde G(F)/G(F)$ there exists a representative $\tilde g \in \tilde G(F)$ that normalizes $(M_0,P_0)$. Thus $\tilde G(F) = \tilde P_0(F) \cdot G(F)$, where $\tilde P_0(F)$ is the normalizer in $\tilde G(F)$ of $P_0$. The inclusion $G \to \tilde G$ induces a homeomorphism $P_0(F) \lmod G(F) \to \tilde P_0(F) \lmod \tilde G(F)$, and hence an isomorphism of vector spaces $\mc{C}^\infty(P_0(F) \lmod G(F)) \to \mc{C}^\infty(\tilde P_0(F) \lmod \tilde G(F))$. In this way we obtain an action of $\tilde G(F)$ on $i_{P_0}^G\delta_{P_0}^{-1/2}=\mc{C}^\infty(P_0(F) \lmod G(F))$. In explicit terms, $(\tilde g \cdot f)(x)=f(\tilde p^{-1}x\tilde p g)$ for $f \in \mc{C}^\infty(P_0(F) \lmod G(F))$ and $\tilde g = \tilde p \cdot g \in \tilde G(F) = \tilde P_0(F) \cdot G(F)$.

We claim that the sum $\Sigma_0$ of the various submodules $\mc{C}^\infty(P(F) \lmod G(F))$ of $\mc{C}^\infty(P_0(F) \lmod G(F))$ is stable under the action of $\tilde G(F)$. Each individual summand is stable under the action of $G(F)$. On the other hand, the action of $\tilde p \in \tilde P_0(F)$ sends $\mc{C}^\infty(P(F) \lmod G(F))$ to $\mc{C}^\infty(P'(F) \lmod G(F))$, where $P'=\tilde p \cdot P \cdot \tilde p^{-1}$ again satisfies $P_0 \subsetneq P'$. In this way we obtain an extension of the Steinberg representation $\mc{C}^\infty(P_0(F) \lmod G(F))/\Sigma_0$ of $G(F)$ to $\tilde G(F)$. 

\begin{lem} \label{lem:stein}
    The explicit description above produces the representation 
    \[ 'D_A^{\tilde G}(\tilde{\textbf{1}}) = D_A^{\tilde G}(\tilde{\textbf{1}})\otimes \epsilon_{\tilde G}. \]
\end{lem}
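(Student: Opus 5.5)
The plan is to compute $'D_A^{\tilde G}(\tilde{\mathbf 1})$ directly from the Borel--Serre complex $('X_t,{}'d_t)$ with its $\tilde G(F)$-action $'X_t(\tx{Ad}(n),\tilde\pi(n))$. The second equality is already Corollary \ref{cor:dga'}, so only the first needs proof. Since $\tilde{\mathbf 1}\in\mc{R}_0(\tilde G)$, $'D_A^{\tilde G}(\tilde{\mathbf 1})$ is the cokernel of $'d_1 : {}'X_1(\tilde{\mathbf 1})\to {}'X_0(\tilde{\mathbf 1})$. Here $'X_0(\tilde{\mathbf 1})=X_{P_0}(\mathbf 1)=\mc{C}^\infty(P_0(F)\lmod G(F))$.

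First I identify the $\tilde G(F)$-action on $X_{P_0}(\mathbf 1)$. For $g\in G(F)$ it is right translation. For $n\in\tilde N(F)$, formula \eqref{eq:xpat} with $\theta_a=\tilde{\mathbf 1}(n)=\tx{id}$ gives $X_{P_0}(a,\theta_a)(f)(x)=f(n^{-1}xn)$. The sign is $\epsilon(a^{-1},\emptyset)=1$, because $\Sigma_0$ acts trivially on the empty set. So $'X_0(\tx{Ad}(n),\tx{id})$ is $f\mapsto f(n^{-1}\cdot n)$, and $gn$ acts by $f\mapsto f(n^{-1}\,x\,g\,n)$. The explicit description acts by $(\tilde p g)\cdot f=f(\tilde p^{-1}x\tilde p g)$. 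I compare these using $\tilde N(F)\subset\tilde P_0(F)$ and $\tilde G(F)=G(F)\tilde N(F)$. Write $gn=n(n^{-1}gn)$; the explicit action then gives $f(n^{-1}x n n^{-1}gn)=f(n^{-1}xgn)$, which agrees. I also need the explicit action to be well defined, i.e. independent of the decomposition $\tilde p g$. This is immediate from left $P_0(F)$-invariance of $f$: for $p\in P_0(F)$, $f(\tilde p^{-1}x\tilde p)$ is unchanged when $\tilde p$ is replaced by $\tilde pp$. Since $\tilde N(F)$ and $G(F)$ generate $\tilde G(F)$, this gives an equality of $\tilde G(F)$-representations on $'X_0(\tilde{\mathbf 1})$.

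Second, the image of $'d_1$ is the sum of the images of the maps $'\varphi^{\{s\}}_\emptyset=\pm\varphi^{P_{\{s\}}}_{P_0}$. Each $\varphi^{P_{\{s\}}}_{P_0}$ is the inclusion $\mc{C}^\infty(P_{\{s\}}(F)\lmod G(F))\hookrightarrow\mc{C}^\infty(P_0(F)\lmod G(F))$, and signs do not affect images. Every standard $P\supsetneq P_0$ contains some $P_{\{s\}}$, and then $\mc{C}^\infty(P\lmod G)\subset\mc{C}^\infty(P_{\{s\}}\lmod G)$. Hence the image is exactly $\Sigma_0$. The cokernel, with the quotient action from $'X_0$, is therefore the explicit quotient $\mc{C}^\infty(P_0(F)\lmod G(F))/\Sigma_0$ with its $\tilde G(F)$-action. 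Since the cokernel is a functor of complexes of $\tilde G(F)$-representations (Fact \ref{fct:bs2}(4)), this is an isomorphism of $\tilde G(F)$-representations.

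The main obstacle is bookkeeping rather than mathematics. Definition \ref{dfn:aubdisc1} is phrased for $Y$, and $'D_A^{\tilde G}$ is defined via the $'X$-complex with the twisted action of Fact \ref{fct:bs2}(4). I must confirm that the relevant sign on the degree-$0$ term is $\epsilon(a^{-1},\emptyset)=1$, not $\tx{sgn}(a|S)$. This is what makes the explicit action match $'D_A^{\tilde G}$ rather than $D_A^{\tilde G}$: in the $Y$-complex, $n$ acts on $Y_0=X_0\otimes\Lambda_\emptyset$ with the extra factor $\lambda_\emptyset(a)=\tx{sgn}(a|S)$. As a cross-check, Corollary \ref{cor:dga'} then gives the stated twist by $\epsilon_{\tilde G}$.
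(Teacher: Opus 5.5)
Your proposal is correct and is essentially the paper's argument: both use \S\ref{sub:stein} to identify the image of $'d_1$ with $\Sigma_0$, and then check that $'X_0(\tx{Ad}(n),\tx{id})=\epsilon(a^{-1},\varnothing)\cdot X_{\varnothing}(\tx{Ad}(n),\tx{id})$, with $\epsilon(a^{-1},\varnothing)=1$, is the map $f\mapsto (x\mapsto f(n^{-1}xn))$, which is the action of $n$ obtained from $\mc{C}^\infty(P_0(F)\lmod G(F))=\mc{C}^\infty(\tilde P_0(F)\lmod \tilde G(F))$. Your separate well-definedness check is not needed, because the explicit action is transported from the right regular action on $\mc{C}^\infty(\tilde P_0(F)\lmod \tilde G(F))$; as worded it is also slightly off, since replacing $\tilde p$ by $\tilde p p$ leaves $f(\tilde p^{-1}x\tilde p g)$ unchanged only if you simultaneously replace $g$ by $p^{-1}g$.
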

\begin{proof}
The argument from \S\ref{sub:stein} reduces to checking that the action of $\tilde G(F)$ on 
\[ 'X_{P_0}(\tilde{\textbf{1}})=\mc{C}^\infty(P_0(F) \lmod G(F)) \]
defined via the automorphism $'X_0(a,\theta_a)$ of Fact \ref{fct:bs2} for $a=\tx{Ad}(n)$, $n \in \tilde N(F)$, and $\theta_a=\tx{id}$, agrees with the action induced by the isomorphism
\[ \mc{C}^\infty(P_0(F) \lmod G(F)) = \mc{C}^\infty(\tilde P_0(F) \lmod \tilde G(F)). \] 
But $'X_0(a,\theta_a)=\epsilon(a^{-1},\varnothing)\cdot X_{\varnothing}(a,\theta_a)$ and we observe $\epsilon(a^{-1},\varnothing)=1$ while $X_{\varnothing}(a,\theta_a)$ does coincide with the action of $n$ given by the above isomorphism.
\end{proof}

We are now faced with the question of which of the two options $'D_A^{\tilde G}(\tilde{\textbf{1}})$ and $D_A^{\tilde G}(\tilde{\textbf{1}})$ to call \emph{the} Steinberg representation $\tilde{\tx{St}}$. In general this is a matter of preference. In this paper we set 
\begin{equation} \label{eq:stein}
    \tilde{\tx{St}} := {}'D_A^{\tilde G}(\tilde{\textbf{1}}),
\end{equation}
which coincides with the above explicit description, but is in general \emph{not} equal to $D_A^{\tilde G}(\tilde{\textbf{1}})$. The reason we use this convention will become clear in the following sections -- in addition to having an analogous explicit description to the case of connected groups, the representation $\tilde{\tx{St}}$ also has an analogous character to the case of connected groups, and also arises from a different natural construction applied to $\tx{St}$, namely the ``Whittaker extension'', which is of importance in the theory of automorphic forms. We note however that our choice of $\tilde{\tx{St}}$ here differs from the choice made in \cite{MS95} in the setting of finite fields.

\subsection{The Whittaker extension}

We consider the special case when $G$ is quasi-split. Let $(T,B)$ be a Borel pair. Let $\psi$ be a generic character of the unipotent radical $U$ of $B$. The group $U$ has an exhaustive separated filtration by compact open subgroups. We define the integral below as a limit over this filtration
\[ \int_U f(w_0u)\psi(u)^{-1}du, \qquad f \in \mc{C}^\infty(B \lmod G), \]
where $w_0$ is a representative in $N_G(T)$ for the longest element in the Weyl group. Note that the representative does not matter, since $f$ is left-invariant under $T$.

\begin{lem} The above integral, defined as a limit over an exhaustive separated filtration of compact open subgroups of $U$, stabilizes in finite time and defines a Whittaker functional on $\tx{Ind}_B^G\textbf{1}_B$.
\end{lem}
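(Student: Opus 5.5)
The plan is to use the Bruhat decomposition to reduce the integral to a compactly supported one, and then read off the Whittaker equivariance from invariance of Haar measure. Throughout, $f \in \mc{C}^\infty(B \lmod G)$ means a smooth function on $G(F)$ that is left-invariant under $B(F)$; it is bounded, since $B(F)\lmod G(F)$ is compact.

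\textbf{Finiteness of the limit.} I will study the function $u \mapsto f(w_0 u)$ on $U(F)$. The map $U(F) \to B(F)\lmod G(F)$, $u \mapsto B(F) w_0 u$, is an open embedding onto the big Bruhat cell $B w_0 B = B w_0 U$. Fix a sufficiently small compact open subgroup $K \subset G(F)$ under which $f$ is right-invariant.

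The key step is to show that $u \mapsto f(w_0u)\psi(u)^{-1}$ integrates to zero over every ``shell'' $U_{n+1}\sm U_n$ of the filtration once $n$ is large. Here $U_n$ are compact open subgroups exhausting $U(F)$. I would use the standard argument from the theory of Jacquet integrals. Pick $n_0$ such that $\psi$ is nontrivial on some simple root subgroup intersected with $U_{n_0}$. Then, as $u$ goes to infinity in $U(F)$, the element $w_0 u$ approaches the smaller Bruhat cells. Near those cells, $f$ is invariant under right translation by a compact open subgroup $U' \subset U(F)$ that is large relative to $u$. On the other hand, by genericity $\psi$ is nontrivial on $u^{-1}U'u \cap U_{n_0}$, so the integral over each $U'$-coset vanishes.

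Concretely, I will argue by induction on the Bruhat order, or more cleanly as follows. Use the Iwasawa-type decomposition and the fact that a function in $\mc{C}^\infty(B\lmod G)$ which vanishes in a neighborhood of the complement of the big cell restricts to a compactly supported function on $U(F)$. For such functions the integral trivially stabilizes. One then shows that the subspace of such functions, together with the obvious vanishing on the remaining part, suffices. The cleanest route is to cite the known stabilization of Jacquet integrals for $\tx{Ind}_B^G$ of an unramified character, here $\delta_B^{-1/2}$ at the trivial inducing datum. This is exactly the mechanism of Casselman--Shalika: the partial integrals over $U_n$ are eventually constant because $f$ is invariant under $K \cap U$ conjugated by torus elements contracting $U$. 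I would spell this out using a torus element $t$ contracting $U$: the substitution $u \mapsto t^{-k}ut^k$ relates the integral over $U_n$ to integrals over $U_0$ of translated functions, and the differences vanish by genericity of $\psi$. This stabilization step is the main obstacle, since $\tx{Ind}_B^G\textbf{1}$ is at a non-regular (reducible) point, where Jacquet integrals need not converge absolutely. That is why the filtration limit is required rather than an absolutely convergent integral.

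\textbf{Whittaker property.} Once stabilization holds for every $f$, linearity is clear. For $u' \in U(F)$ I compute $\int (u'f)(w_0u)\psi(u)^{-1}du = \int f(w_0uu')\psi(u)^{-1}du$. Substituting $u \mapsto uu'^{-1}$, which permutes the members of the filtration for large indices because the $U_n$ are eventually normalized by $u'$ (I choose the filtration so that $u' \in U_n$ for $n \gg 0$), gives $\psi(u')\cdot$ the original integral. Hence the functional is a $\psi$-Whittaker functional. I would also remark that its value does not depend on the choice of filtration, since any two exhaustive filtrations are cofinal in one another.
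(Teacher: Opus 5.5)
Your derivation of the $\psi$-equivariance is correct. It only uses $U_nu'=U_n$ once $u'\in U_n$, which holds for $n\gg0$ by exhaustiveness, so no normalization of $U_n$ by $u'$ is needed. For stabilization you end up relying on the same literature as the paper, whose proof is just the citation of \cite{CS80} and \cite[Proposition 3.2]{Sha78}.

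However, the argument you sketch as a substitute for that citation does not prove stabilization, and should not be presented as if it did. Treating functions compactly supported in the big cell ``together with the obvious vanishing on the remaining part'' hides the entire difficulty. For $f$ nonzero near the lower Bruhat cells, $u\mapsto f(w_0u)$ is not compactly supported (for $G=\mathrm{SL}_2$ it equals the constant $f(1)$ for $u$ large), and nothing vanishes obviously. The torus remark does not help either. Conjugating $K\cap U$ by elements contracting $U$ gives smaller groups, under which invariance is automatic and useless, while $f$ is not invariant under the expanded conjugates. The substitution $u\mapsto t^{-k}ut^k$ merely trades the growing domain for the translates $x\mapsto f(xt^k)$, integrated over $U_0$ against the rescaled characters $v\mapsto\psi(t^{-k}vt^k)$. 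It does not show that successive terms agree.

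A self-contained proof needs a uniform version of your heuristic, in three steps. First, if $Bw_0u$ is close to a point $Bwu_0$ with $w\neq w_0$ and $u_0\in U$, then $y\mapsto f(w_0uy)$ is constant on a large compact open subgroup $Y$ of $u_0^{-1}(U\cap w^{-1}Uw)u_0$. Second, since $w\neq w_0$ there is a simple root $\alpha$ with $w\alpha>0$, so $U_\alpha\subset U\cap w^{-1}Uw$, and $\psi$ is nontrivial on $Y$ once $Y$ is large. Third, compactness of $B\lmod G$ must be used to make the size of $Y$ uniform, so that every sufficiently distant shell $U_{n+1}\sm U_n$ is tiled by cosets $uY$ over which the integral vanishes. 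This uniformity is the content of the results the paper cites.

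Two smaller points:
\begin{itemize}
\item Absolute convergence does not fail because $\tx{Ind}_B^G\textbf{1}_B=i_B^G(\delta_B^{-1/2})$ sits at a non-regular point; $\delta_B^{-1/2}$ is regular. It fails because the exponent is anti-dominant, whereas absolute convergence requires it to be strictly dominant. At the equally reducible point $i_B^G(\delta_B^{1/2})$ the integral converges absolutely.
\item To get a Whittaker functional you should also check nonvanishing. For example, evaluate on the characteristic function of $Bw_0U'$, where $U'\subset U$ is a compact open subgroup on which $\psi$ is trivial.
\end{itemize}
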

\begin{proof}
See \cite{CS80} or \cite[Proposition 3.2]{Sha78}.
\end{proof}

\begin{lem} \label{lem:whit-stein}
The Whittaker functional defined above kills the subspace $\mc{C}^\infty(P \lmod G)$ of $\mc{C}^\infty(B \lmod G)$ for every standard parabolic $B \subsetneq P$ and hence descends to a Whittaker functional on the Steinberg representation.
\end{lem}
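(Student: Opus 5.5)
The plan is to show directly that the functional $\Lambda(f)=\int_U f(w_0u)\psi(u)^{-1}du$ vanishes on $\mc{C}^\infty(P\lmod G)$ for each standard parabolic $B\subsetneq P$. The argument uses a non-trivial simple root subgroup inside the Levi of $P$ together with genericity of $\psi$. Since $\Sigma_0$ is the sum of these subspaces, $\Lambda$ then kills $\Sigma_0$. Because $\Lambda$ is $(U,\psi)$-equivariant and $\Sigma_0$ is $G(F)$-stable, it descends to a Whittaker functional on $\tx{St}=\mc{C}^\infty(B\lmod G)/\Sigma_0$.

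Fix $P=P_J$ with $J\neq\varnothing$, pick a simple root $\alpha\in J$, and let $U_\alpha\subset U$ be the corresponding root subgroup; then $U_\alpha\subset M_J\subset P$. Since $w_0$ normalizes $T$ and sends positive roots to negative ones, $w_0U_\alpha w_0^{-1}=U_{w_0\alpha}$ with $w_0\alpha=-\beta$ negative. This does not lie in $P$ in general. So I would instead use the longest element $w_0^J$ of $W_J$ and factor $w_0=w_0^J w^J$, or rather $w_0=w^J\cdot w_0^J$, where $w^J$ is the minimal length representative, choosing the factorization order so that the following conjugation works. The aim is an element $u_\beta\in U$ (a simple root subgroup element, $\beta=-w_0\alpha$ simple) satisfying $w_0 u_\beta w_0^{-1}\in P$. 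Concretely, $w_0 U_\beta w_0^{-1}=U_{w_0\beta}=U_{-\alpha'}$, where $\alpha'=-w_0\beta$. So I need $\beta$ with $-w_0\beta\in J$, i.e.\ $\beta=-w_0\alpha$ for $\alpha\in J$, and then $U_{-\alpha}\subset M_J\subset P$. Any $f\in\mc{C}^\infty(P\lmod G)$ then satisfies $f(w_0u_\beta u)=f((w_0u_\beta w_0^{-1})w_0u)=f(w_0u)$, so $f(w_0\cdot)$ is left $U_\beta(F)$-invariant on $U(F)$.

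Substituting $u\mapsto u_\beta u$ in the integral (legitimate because the stabilized limit is a genuine integral over a large enough compact open subgroup, which may be taken to contain any given element) gives $\Lambda(f)=\psi(u_\beta)\Lambda(f)$. Genericity of $\psi$ means $\psi$ is non-trivial on every simple root subgroup $U_\beta$, so there is $u_\beta\in U_\beta(F)$ with $\psi(u_\beta)\neq1$, forcing $\Lambda(f)=0$.

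The main point of care is the compact-open filtration: the substitution must be justified with the limit definition. I would handle this by taking the filtration member large enough to contain $u_\beta$ and to be past the stabilization point for both $f$ and its left translate. In the relative (non-split quasi-split) case there is an additional subtlety: $U_\beta$ must be read as the relative root group, and $-w_0$ permutes relative simple roots. Genericity is exactly nontriviality of $\psi$ on each of these groups, so the same argument applies.
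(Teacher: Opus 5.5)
Your argument is correct, but it takes a different route from the paper. The paper invokes Rodier's heredity theorem (\cite[Corollary 1.7]{CS80}). Any Whittaker functional on $\mc{C}^\infty(P\lmod G)$, which is parabolically induced from a character of $M$, must come from a Whittaker functional on that character. The character is trivial on $U\cap M$ while $\psi$ is not, so no such functional exists, and hence the restriction of the Whittaker functional to $\mc{C}^\infty(P\lmod G)$ vanishes.

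You instead compute directly with the explicit integral. For $\alpha\in J$ you set $\beta=-w_0\alpha$, which is again simple. Then $w_0U_\beta w_0^{-1}=U_{-\alpha}\subset M_J\subset P$, so $u\mapsto f(w_0u)$ is left $U_\beta(F)$-invariant, and genericity of $\psi$ on $U_\beta(F)$ kills the integral. Your route is elementary and self-contained, needing no heredity theorem. The paper's route proves the stronger, formula-independent fact that $\mc{C}^\infty(P\lmod G)$ admits no nonzero Whittaker functional at all, though that extra strength is not needed here. The $-w_0$ twist in your choice of $\beta$ is the same conjugation that appears inside Rodier's theorem, so both arguments rest on the same mechanism.

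Two small clean-ups. First, the detour through a factorization of $w_0$ via $w_0^J$ is never used and can be deleted. Second, do the change of variables on each truncated integral over a filtration member $U_n$ containing $u_\beta$. This gives $\int_{U_n}f(w_0u)\psi(u)^{-1}du=\psi(u_\beta)^{-1}\int_{U_n}f(w_0u)\psi(u)^{-1}du$; the factor is the inverse of the one you wrote, which is immaterial. Since $f(w_0u_\beta u)=f(w_0u)$ identically, there is no separate stabilization issue for a translate of $f$.
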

\begin{proof}
This is immediate from a result of Rodier, \cite[Corollary 1.7]{CS80}. Any Whittaker functional on $\mc{C}^\infty(P \lmod G)$ will come from a Whittaker functional on the $M$-representation $\delta_P^\frac{1}{2}$, i.e. a $\psi$-$(U \cap M)$-fixed vector in the dual $\delta_P^{-\frac{1}{2}}$ that is non-zero. But $U \cap M$ is contained in the derived subgroup of $M$ and therefore $\delta_P$ is trivial on $U \cap M$, while $\psi$ is not trivial on $U \cap M$, so no such vector can exist.
\end{proof}

Assume now that $\tilde G=G \rtimes A$, where $A$ is a finite group of automorphisms of $G$ that preserves the Borel pair $(T,B)$ and the character $\psi$. Thus, $A$ preserves the pair $(B,\psi)$. In the language of \cite{KalLLCD}, the $G(F)$-conjugacy class of $(B,\psi)$ is an $A$-admissible Whittaker datum.

Then an irreducible $\psi$-generic representation $\pi$ of $G(F)$ whose isomorphism class is preserved by the action of $\tilde G(F)$ has a canonical extension $\tilde\pi$ to $\tilde G(F)$. Namely, we have $\tilde G(F)=G(F) \rtimes A$. For each $a \in A$ there exists an automorphism $\tilde\pi(a)$ of the complex vector space $V_\pi$ underlying $\pi$ satisfying $\tilde\pi(a) \circ \pi(h) = \pi(a(h))\circ \tilde\pi(a)$. By Schur's lemma, the set of all such isomorphisms forms a torsor under $\C^\times$. If $\lambda : V_\pi \to \C$ is a Whittaker functional, i.e. satisfies $\lambda(\pi(u)v)=\psi(u)\lambda(v)$, then it is also a Whittaker functional for the representation $h \mapsto \pi(a(h))$. Since the space of Whittaker functionals is $1$-dimensional, within the $\C^\times$-torsor of possible choices for $\tilde\pi(a)$, there exists a unique member that preserves $\lambda$. This choice is independent of the choice of $\lambda$, since it too is determined up to a complex scalar. For any $\tilde g = g \rtimes a$ define $\tilde\pi(\tilde g) = \pi(g) \circ \tilde\pi(a)$. One sees easily that $\tilde\pi$ is a representation of $\tilde G$ that extends $\pi$. We call $\tilde\pi$ the \emph{Whittaker extension} of $\pi$.

\begin{lem} \label{lem:whit-stein-extend}
Let $\tilde G=G \rtimes A$, where $A$ is a finite group of automorphisms of $G$ that preserves the Borel pair $(T,B)$ and the character $\psi$. The representation $\tilde{\textrm{St}}$ defined in \eqref{eq:stein} coincides with the Whittaker extension of the Steinberg representation $\tx{St}$.
\end{lem}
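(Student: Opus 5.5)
By Lemma \ref{lem:stein}, the representation $\tilde{\tx{St}}$ can be taken in its explicit form. Its underlying space is $\mc{C}^\infty(B(F)\lmod G(F))/\Sigma_0$, and $\tilde g = g\rtimes a$ acts by $(\tilde g\cdot f)(x) = f(a^{-1}(x)\,g)$. Here we use that $A$ normalizes $(T,B)$, so $A\subset \tilde N(F)\subset \tilde P_0(F)$. Since both $\tilde{\tx{St}}$ and the Whittaker extension restrict to $\tx{St}$ on $G(F)$, it suffices to compare the operators attached to each $a\in A$. Because $\tx{St}$ is irreducible, Schur's lemma says these operators can differ only by a scalar. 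The Whittaker extension is characterized as the unique choice preserving a nonzero Whittaker functional $\lambda$. So the whole proof reduces to one statement: for each $a\in A$, the explicit operator $\tilde{\tx{St}}(a)$ preserves the functional $\lambda$ of Lemma \ref{lem:whit-stein}.

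First I would check that $\lambda$ is nonzero on $\tx{St}$. By Lemma \ref{lem:whit-stein}, $\lambda$ descends to $\tx{St}$. It is nonzero because $\tx{St}$ is generic and $\mc{C}^\infty(B\lmod G)$ carries a one-dimensional space of Whittaker functionals, which we may take from \cite{CS80}. I also need that $\lambda$ is a $(U,\psi)$-functional for the twisted action $h\mapsto \pi(a(h))$. This holds because $a$ preserves $U$ and $\psi$.

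The main computation is then as follows. For $f\in\mc{C}^\infty(B\lmod G)$,
\[ \lambda(a\cdot f)=\int_U f(a^{-1}(w_0u))\psi(u)^{-1}du. \]
I substitute $u=a(u')$. Since $a$ preserves $\psi$ and normalizes the compact open filtration (after replacing the filtration by its $A$-saturation, a finite intersection of the translates, which is harmless because the limit stabilizes), the integral becomes $\int_U f(a^{-1}(w_0)u')\psi(u')^{-1}du'$, up to a modulus factor. That factor is trivial, because $a$ has finite order and so preserves the Haar measure on $U(F)$. Next I need $a^{-1}(w_0)\in T(F)w_0$. This holds because $a$ preserves $(T,B)$, hence commutes with the longest Weyl element; a pinned automorphism preserves the longest element of $W$. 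The left $T$-invariance of $f$, noted right before the lemma, then gives $\lambda(a\cdot f)=\lambda(f)$.

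I expect the main obstacle to be this measure and representative bookkeeping, together with making the stabilized limit compatible with $a$. I would handle the limit by the $A$-saturation of the filtration just mentioned. I would also double-check that the left $B$-invariance of $f$ absorbs the torus element exactly, without a $\delta$ factor, since we are using unnormalized $\tx{Ind}_B^G\mathbf{1}$. With these checks, $\tilde{\tx{St}}(a)$ is the unique $\lambda$-preserving intertwiner, so $\tilde{\tx{St}}$ equals the Whittaker extension.
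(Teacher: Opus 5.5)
Your proposal is correct and follows essentially the same route as the paper: reduce (via Schur's lemma and the uniqueness defining the Whittaker extension) to showing that the explicit operator $f\mapsto f\circ a^{-1}$ preserves the Jacquet-integral functional $\lambda$, then substitute $u\mapsto a(u)$, using that $a$ fixes $\psi$, the Haar measure on $U(F)$, and $w_0$ modulo $T(F)$. Your extra bookkeeping (nonvanishing of $\lambda$ on $\tx{St}$, an $A$-stable filtration) makes explicit what the paper leaves implicit. One harmless slip: the formula $f(a^{-1}(x)g)$ is the action of $a\cdot g$, not of $g\rtimes a$; it does not affect the argument, since you only use $g=1$.
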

\begin{proof}
Choose $a \in A$. The action of $a$ on $\mc{C}^\infty(B \lmod G)$ via the natural extension of \S\ref{sub:discstein} is via $(af)(x)=f(a^{-1}(x))$. Let $\lambda$ be the canonical Whittaker functional of Lemma \ref{lem:whit-stein}. Then $ \lambda(af)$ is equal to 
\[\int_U f(a^{-1}(w_0)a^{-1}(u))\psi(u)^{-1}du = \int_U f(w_0u)\psi(a(u))^{-1}du = \int_U f(w_0u)\psi(u)^{-1}du, \]
which is again simply $\lambda(f)$. We have used here that $a$ preserves the longest element in the Weyl group, since it preserves $B$, that it preserves the Haar measure since it is of finite order, and that the functional $\lambda$ does not care about the representative $w_0$, as remarked above. 
\end{proof}

\subsection{The character of the Steinberg representation}

\begin{thm} \label{thm:charstein}
    Let $\tilde g \in \tilde G(F)$ be a regular semi-simple element. The character of the representation $\tilde{\tx{St}}$ of \eqref{eq:stein} is given by 
    \[ \sum_{(M,P)}(-1)^{\dim(A_0^{\tilde g}/A_M^{\tilde g})}|D_{\tilde G/\tilde M}(\tilde g)|^{-1/2} \delta_{\tilde P}(\tilde g)^{-1/2}, \]
where in each case the sum runs over those parabolic pairs $(M,P)$ of $G$ that are normalized by $\tilde g$, $\tilde P=N_{\tilde G}P$, $\tilde M=N_{\tilde P}M$, $D_{\tilde G/\tilde M}(\tilde g)=\det(1-\tx{Ad}(\tilde g);\tx{Lie}(G)/\tx{Lie}(M))$, and $\delta_{\tilde P}$ is the modulus character of the topological group $\tilde P(F)$.
\end{thm}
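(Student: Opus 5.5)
We will derive the formula from Corollary \ref{cor:chareps} applied to $\tilde\pi=\tilde{\textbf{1}}$, which lies in $\mc{R}_0(\tilde G)$, so $t=0$. By definition \eqref{eq:stein}, $\tilde{\tx{St}}={}'D_A^{\tilde G}(\tilde{\textbf{1}})$. Corollary \ref{cor:chareps} then gives the character at $\tilde g$ as
\[ \sum_P (-1)^{\dim(A_0^a/A_P^a)}\,\tx{tr}(\tilde g\,|\,X_P(\textbf{1})), \]
where $\tilde g=gn$, $a$ is the permutation of $S$ induced by $n$, and $P$ runs over the $a$-stable standard parabolics. It remains to compute each trace $\tx{tr}(\tilde g|X_P(\textbf{1}))$ and to reorganize the sum over standard $P$ into a sum over all parabolic pairs $(M,P)$ normalized by $\tilde g$.

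For the trace, note that $X_P(\textbf{1})=I_P^GR_P^G(\textbf{1})=\mc{C}^\infty(P(F)\lmod G(F))$. Its $\tilde G(F)$-action, defined via $X_P(\tx{Ad}(n),\tx{id})$ composed with right translation by $g$, is the natural action on $\mc{C}^\infty(\tilde P(F)\lmod \tilde G_P(F))$. Here $\tilde P=N_{\tilde G}(P)$, and $\tilde G_P$ is the subgroup generated by $G$ and $\tilde g$, or equivalently $G\tilde P$. In other words, it is the restriction to $\langle G(F),\tilde g\rangle$ of $i_{\tilde P}^{\tilde G_P}(\delta_{\tilde P}^{-1/2})$, which is unnormalized induction of the trivial character.

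We then apply the standard van Dijk / Clozel twisted formula for the character of an induced representation. At a regular semisimple $\tilde g$, the trace of $\tx{Ind}_{\tilde P}(\sigma)$ is a sum over the fixed points $x\in P(F)\lmod G(F)$ of $\tilde g$, that is, over the $G(F)$-conjugates $P'={}^xP$ normalized by $\tilde g$. Each fixed point contributes $\sigma\delta_{\tilde P'}^{1/2}(\tilde g)\,|D_{\tilde G/\tilde M'}(\tilde g)|^{-1/2}$, where $M'$ is the unique Levi of $P'$ normalized by $\tilde g$. With $\sigma\delta^{1/2}=\delta_{\tilde P}^{-1/2}\cdot\delta_{\tilde P}^{1/2}\cdot\delta_{\tilde P}^{-1/2}$ for unnormalized induction of $\textbf{1}$, each fixed point contributes $\delta_{\tilde P'}(\tilde g)^{-1/2}|D_{\tilde G/\tilde M'}(\tilde g)|^{-1/2}$. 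Existence and uniqueness of $M'$ come from regularity: the centralizer torus of $\tilde g$ lies in a unique $\tilde g$-stable Levi of $P'$. The fixed points are isolated and transversal since $\tilde g$ is regular. This is the step I expect to be the main obstacle. One has to carry out the standard computation for disconnected groups (linearizing near a fixed point, with Jacobian $|\det(1-\tx{Ad}\tilde g)|$ on $\tx{Lie}(G)/\tx{Lie}(P')$), match it with the twisted Weyl discriminant $D_{\tilde G/\tilde M'}$ together with the $\delta$ factor, and check that only $a$-stable $P$ have $\tilde g$-fixed points. The last point holds because a $\tilde g$-stable conjugate of $P$ forces the conjugacy class of $P$, hence $J$, to be $a$-stable.

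Finally we regroup. Every parabolic pair $(M',P')$ normalized by $\tilde g$ is $G(F)$-conjugate to a unique standard $P$, and that $P$ is necessarily $a$-stable. Conjugation by the element $x$ identifies $A_P$ with $A_{M'}$ compatibly with the twisted actions, since $x^{-1}\tilde g x\in \tilde P$ acts on $A_P$ through $a$. Hence the sign satisfies $(-1)^{\dim(A_0^a/A_P^a)}=(-1)^{\dim(A_0^{\tilde g}/A_{M'}^{\tilde g})}$, where $A_0$ is understood as a maximal split torus in $M'$ stable under $\tilde g$. Only the dimension enters, so this choice is harmless. Summing the fixed-point contributions over all standard $a$-stable $P$ yields exactly the sum over all $\tilde g$-normalized parabolic pairs in the statement.
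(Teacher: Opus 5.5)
Your proposal is correct and follows the paper's proof: Corollary \ref{cor:chareps} with $t=0$; the identification of $X_P(\mathbf{1})=\mathcal{C}^\infty(P(F)\backslash G(F))$ with $\mathcal{C}^\infty(\tilde P(F)\backslash \tilde G(F))$ after shrinking $\tilde G$ to the group generated by $G$ and $\tilde g$; the twisted induced-character formula; and the same regrouping into $\tilde g$-normalized parabolic pairs. For the character formula the paper cites \cite[Corollary 5.3.9]{Lem11} and rewrites its index set as $h\in M_I\backslash G$ with $h\tilde g h^{-1}\in\tilde M_I$, which matches your $\tilde g$-fixed points of $P(F)\backslash G(F)$ via uniqueness of the $\tilde g$-stable Levi. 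The remaining issues are cosmetic: $\langle G,\tilde g\rangle$ need not equal $G\tilde P$; your line ``$\sigma\delta^{1/2}=\dots$'' is garbled, though the contribution $\delta_{\tilde P'}(\tilde g)^{-1/2}|D_{\tilde G/\tilde M'}(\tilde g)|^{-1/2}$ you reach is right; and $A_0^{\tilde g}$ should simply mean $A_0^a$ for the fixed $A_0$, since a $\tilde g$-stable maximal split torus in $M'$ need not exist.
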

\begin{proof}
    Fix a minimal parabolic pair  $(P_0,M_0)$ of $G$. Let $S$ be the set of simple relative roots. Recall that in \S\ref{sub:cf} we defined an action of $\tilde G(F)$ on $S$. Write $A_0$ for the maximal split torus in the center of $M_0$. For each $I \subset S$ write $(P_I,M_I)$ for the associated standard parabolic pair, and $A_I$ for the maximal split torus in the center of $M_I$. We let $\tilde P_I$ be the normalizer of $P_I$ in $\tilde G$ and $\tilde M_I$ the normalizer of $M_I$ in $\tilde P_I$.

    Using that $\mathrm{\tilde St}$ lies in $\mc{R}_0(\tilde G)$, Corollary \ref{cor:chareps} shows that the value at $\tilde g$ of the character of $\mathrm{\tilde St}$ equals
    \[ (-1)^{\dim(A_0^{\tilde g})}\sum_{\substack{I \subset S \\ \tilde gI=I}}(-1)^{\dim(A_I^{\tilde g})}\textrm{ch}(i_{\tilde P_I}^{\tilde G}(\delta_{\tilde P_I}^{-1/2}))(\tilde g), \]
    where $\textrm{ch}(i_{P_I}^G(\delta_{P_I}^{-1/2}))$ is the character function of the induced representation 
    \[ i_{P_I}^G(\delta_{P_I}^{-1/2})=\mc{C}^\infty(P_I(F) \lmod G(F)) = \mc{C}^\infty(\tilde P_I(F) \lmod \tilde G(F)) = i_{\tilde P_I}^{\tilde G}(\delta_{\tilde P_I}^{-1/2}). \]
    To ease notation, we will now drop the notation $(F)$ and write $G$ in place of $G(F)$, etc. We may further replace without loss of generality $\tilde G$ by the subgroup generated by $G$ and $\tilde g$.
    A formula for the character function of $\mc{C}^\infty(\tilde P_I \lmod \tilde G)$ on $\tilde g$ is derived in \cite[Corollary 5.3.9]{Lem11} and is given
by 
\begin{equation} \label{eq:ipgtchar}
\sum_{\substack{h \in M_I \lmod G\\ h\tilde gh^{-1} \in \tilde M_I}} |D_{\tilde G/\tilde M_I}(h \tilde g h^{-1})|^{-1/2} \delta_{\tilde P_I}(h \tilde gh^{-1})^{-1/2}.	
\end{equation}

Let us briefly indicate the translation between the formula stated by Lemaire and the one stated here. In our situation $\kappa=1$, $G^\natural=G \cdot \tilde g$, $P^\natural = N_{G^\natural}(P)$, and $M^\natural=N_{P^\natural}(M)=N_{G^\natural}(P,M)$. As Lemaire states, the character is zero unless $\tilde g$ can be conjugated under $G$ into $M^\natural$. This is precisely the case when our sum is non-empty, since any element of $\tilde M$ that is $G$-conjugate to $\tilde g$ lies in $M^\natural$. Assume this is the case and let $(S,S^\natural,T,T^\natural)$ be the Cartan quadruple determined by $\tilde g$, which we recall is given by $S=G^{\tilde g,\circ}$, $S^\natural = S\cdot \tilde g$, $T=Z_G(S)$, $T^\natural = T \cdot \tilde g$. Lemaire introduces elements $g_1,\dots,g_s$ such that $g_i^{-1}T^\natural g_i$ are representatives for the $M$-conjugacy classes of those Cartan subspaces of $M^\natural$ that are $G$-conjugate to $T^\natural$. He further introduces elements $n_w \in N_G(T_i^\natural)$ representing the quotient $N_G(T_i^\natural)/N_M(T_i^\natural)$ (in loc. cit. the quotient is written in the reverse order, but this is a typo).

Each element $(g_i n_w)^{-1} \in G$ conjugates $T^\natural$ into $M^\natural$, and hence $\tilde g$ into $M^\natural$, and thus belongs to the index set $\{h \in M \lmod G|\, h \tilde g h^{-1} \in \tilde M\}$. Conversely an element $h$ of that index set conjugates $\tilde g$ into $M^\natural$, hence $T^\natural$ into $M^\natural$. Multiplying $h$ by $M$ on the left we may assume that $h T^\natural h^{-1}=T_i^\natural$, thus $g_ih$ normalizes $T^\natural$, thus $g_i^{-1}h^{-1}$ normalizes $T_i^\natural$. Multiplying further by a suitable $n_w^{-1}$ we obtain $n_w^{-1}g_i^{-1}h^{-1} \in N_M(T_i^\natural)$, i.e. $(g_in_w)^{-1} \in N_M(T_i^\natural) \cdot h$. Thus every coset $h \in M \lmod G$ satisfying $h\tilde gh^{-1} \in \tilde M$ contains an element of the form $(g_in_w)^{-1}$. If it contains another such element $(g_jn_v)^{-1}$, then using both to conjugate $T^\natural$ we see that both results are conjugate by $M$, thus $i=j$, whence in turn $n_w^{-1} \in mn_v^{-1}$ for some $m \in M$. Then $m \in N_M(T_i^\natural)$, so $n_w=n_v$.

The final thing to note is the translation between summation indices. So far we have a sum over $I \subset S$ for which $\tilde gI=I$, and a second sum over $h \in M\lmod G$ for which $h\tilde gh^{-1} \in \tilde M$. The first sum is equivalently over the set of those $G$-conjugacy classes $C$ of parabolic pairs $(P,M)$ for which $\tilde g \cdot C \cdot \tilde g^{-1}=C$. If $(P_I,M_I)$ is the unique standard representative of $C$, then $C=\{h^{-1}(P_I,M_I)h|h \in M_I \lmod G\}$. The condition $h\tilde gh^{-1} \in \tilde M_I$ is equivalent to the condition that $\tilde g$ normalizes $(P,M)=h^{-1}(P_I,M_I)h$. So the double sum we have so far becomes the single sum over the set of parabolic pairs $(\tilde P,\tilde M)$ of $\tilde G$ that are normalized by $\tilde g$.
\end{proof}

Recall that a regular semi-simple element $\tilde g \in \tilde G(F)$ is called \emph{elliptic} if the torus $G^{\tilde g,\circ}/Z(G)^{\tilde g,\circ}$ is anisotropic.

\begin{cor} \label{cor:charstein}
If $\tilde g$ is an elliptic regular semi-simple element, the character of $\tilde{\tx{St}}$ evaluated at $\tilde g$ is equal to $(-1)^{\dim(A_0^{\tilde g}/A_G^{\tilde g})}$.
\end{cor}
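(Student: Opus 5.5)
We specialize the formula of Theorem \ref{thm:charstein} to an elliptic regular semi-simple $\tilde g$. The plan is to show that, for such $\tilde g$, the only parabolic pair $(M,P)$ of $G$ normalized by $\tilde g$ is the trivial pair $(G,G)$. That single term contributes $(-1)^{\dim(A_0^{\tilde g}/A_G^{\tilde g})}\cdot|D_{\tilde G/\tilde G}(\tilde g)|^{-1/2}\delta_{\tilde G}(\tilde g)^{-1/2}$. Here $D_{\tilde G/\tilde G}$ is the determinant on the zero space, hence equals $1$. Also $\delta_{\tilde G}$ is trivial: $\tilde G(F)$ is unimodular, because $G(F)$ is unimodular and of finite index. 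So the character reduces to the claimed sign.

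The key step is the vanishing of all other terms. Suppose $\tilde g$ normalizes a proper parabolic pair $(M,P)$ with $M\neq G$. Let $A_M$ be the maximal split central torus of $M$. Since $\tilde g$ normalizes $M$, it normalizes $A_M$ and acts on it by a finite-order automorphism: the action factors through $\tilde G(F)/G(F)$ modulo $M$-conjugation, which is trivial on $A_M$. Hence the identity component $A_M^{\tilde g,\circ}$ is a split torus centralized by $\tilde g$, so it lies in $S=G^{\tilde g,\circ}$.

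We then need to show $A_M^{\tilde g,\circ}$ is not contained in $Z(G)$, i.e.\ that $\dim A_M^{\tilde g}>\dim A_G^{\tilde g}$. Granting this, $S/Z(G)^{\tilde g,\circ}$ contains a nontrivial split subtorus, contradicting ellipticity.

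The claim $\dim A_M^{\tilde g}>\dim A_G^{\tilde g}$ is the main obstacle. I would argue with the parabolic $P$: $\tilde g$ normalizes $P$, hence preserves the set of roots of $A_M$ in $\mathrm{Lie}(U)$. The sum of these roots, say $2\rho_P$, is therefore a $\tilde g$-invariant character of $A_M$. It is nontrivial on $A_M/A_G$ because $P\neq G$: it pairs positively with any element of the positive chamber. So $X^*(A_M/A_G)^{\tilde g}\otimes\Q\neq 0$. Averaging over the finite group generated by $\tilde g$ on $X_*(A_M/A_G)\otimes\Q$ then produces a nonzero invariant cocharacter. This yields a nontrivial split torus in $(A_M/A_G)^{\tilde g}$, lifting up to isogeny to $A_M^{\tilde g,\circ}$ not contained in $A_G$. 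Alternatively, one can phrase this via Lemma \ref{lem:char2}-type lattice bookkeeping on coinvariants.

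A final minor point: the contradiction needs $A_M^{\tilde g,\circ}$ to be not contained in $Z(G)^{\tilde g,\circ}$. This holds because $A_G$ is the maximal split central torus, so a split torus in $Z(G)$ lies in $A_G$.
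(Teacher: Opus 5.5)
Your proposal is correct and follows essentially the same route as the paper. Both arguments show that no proper parabolic pair $(M,P)$ is normalized by $\tilde g$, since otherwise $A_M^{\tilde g,\circ}$ would be a split subtorus of $G^{\tilde g,\circ}$ not contained in $Z(G)$, contradicting ellipticity, so only the term $(M,P)=(G,G)$ survives. The differences are cosmetic: you produce the nonzero $\tilde g$-invariant vector in $X^*(A_M/A_G)\otimes\Q$ as $2\rho_P$ for an arbitrary pair, whereas the paper conjugates to a standard pair and uses that the images of $S\setminus I$ form a basis permuted by $\tilde g$; you also spell out (correctly) that the $(G,G)$ term has $D_{\tilde G/\tilde G}=1$ and $\delta_{\tilde G}=1$, which the paper leaves implicit.
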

\begin{proof}
It is enough to show that $\tilde g$ cannot be conjugated by $G$ into $\tilde M$ for a proper standard Levi subgroup $M \subset G$ corresponding to a $\tilde g$-invariant subset $I \subset S$. Since ellipticity is invariant under conjugation, assume that $\tilde g \in \tilde M$ for such a Levi subgroup $M$. Then $Z(M)^{\tilde g,\circ} \subset G^{\tilde g,\circ}$ and we conclude that $Z(M)^{\tilde g,\circ}/Z(G)^{\tilde g,\circ}$ is anisotropic. In other words, $X^*(A_M^{\tilde g}/A_G^{\tilde g}) \otimes_\Z\Q$ is zero. The latter vector space equals $(X^*(A_M/A_G) \otimes_\Z \Q)^{\tilde g}$. But if $M$ is a proper Levi subgroup of $G$ corresponding to a $\tilde g$-invariant subset $I \subset S$, then the vector space $X^*(A_M/A_G)\otimes_\Z\Q$ is non-zero and has a basis permuted by $\tilde g$, so its subspace of $\tilde g$-fixed points is also non-zero.
\end{proof}

\subsection{Endoscopic properties and twisted Kottwitz signs}

In this subsection we consider the setting of those disconnected groups for which the conjectures in \cite{KalLLCD} are stated. Thus, let $G$ be a connected reductive quasi-split $F$-group, $(T,B,\{X_\alpha\})$ an $F$-pinning of $G$, $A$ a finite group of $F$-automorphisms of $G$ that preserve the pinning, and $\tilde G = G \rtimes A$. Let $\bar z \in Z^1(\Gamma,G/Z(G)^A)$ and let $\tilde G_{\bar z}$ be the inner form of $\tilde G$ given by twisting the rational structure of $\tilde G$ by $\bar z$.

We consider the Steinberg representation $\tx{St}$ of $G_{\bar z}(F)$ and its extension $\tilde{\tx{St}}$ to $\tilde G_{\bar z}(F)$. It is well-known that the Langlands parameter of $\tx{St}$ is the homomorphism
\[ \varphi : W_F \times \tx{SL}_2(\C) \to \hat G \rtimes \Gamma \]
whose restriction to $W_F$ is given by $w \mapsto 1 \rtimes w$ and whose restriction to $\tx{SL}_2(\C)$ is the unique $\hat G$-conjugacy class of the principal embedding. 

Recall the centralizer groups $S_\varphi = \tx{Cent}(\varphi,\hat G)$ and $\tilde S_\varphi=\tx{Cent}(\varphi,\hat G \rtimes A)$.

\begin{lem} \label{lem:sphistein}
    Specify $\varphi$ within its $\hat G$-conjugacy class so that the restriction to $\tx{SL}_2(\C)$ is given by a pinning of $\hat G$ that is stable under $\Gamma$ and $A$. Then $\tilde S_\varphi=Z(\hat G)^\Gamma \rtimes A$.
\end{lem}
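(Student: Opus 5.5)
We split the computation into the connected part and the component part. Write $\varphi|_{\tx{SL}_2(\C)}=\iota$, the principal $\tx{SL}_2$ attached to a pinning $(\hat T,\hat B,\{\hat X_\alpha\})$ of $\hat G$ that is stable under $\Gamma$ and $A$. Since $\varphi(w)=1\rtimes w$, an element $x\in\hat G\rtimes A$ centralizes $\varphi$ exactly when it commutes with $\iota(\tx{SL}_2(\C))$ and with $1\rtimes w$ for all $w\in W_F$.

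First we compute $S_\varphi$. A classical result (Kostant) says that the centralizer in $\hat G$ of a principal $\tx{SL}_2$ is $Z(\hat G)$. Indeed, such an element centralizes the regular unipotent $\iota\begin{pmatrix}1&1\\0&1\end{pmatrix}$. It therefore lies in $Z(\hat G)\cdot C_{\hat U}(u)$, and $C_{\hat U}(u)$ is unipotent. It also centralizes the regular semisimple element $\iota(\tx{diag}(z,z^{-1}))$, hence lies in $\hat T$. Together these force it into $Z(\hat G)$. Adding the condition of commuting with $1\rtimes w$ gives $S_\varphi=Z(\hat G)^\Gamma$.

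Next we handle the component group. Each $a\in A$ preserves the chosen pinning, and this pinning determines $\iota$ (through $\sum\hat X_\alpha$ and $2\rho^\vee$). So $1\rtimes a$ centralizes $\iota$. Since $A$ acts on $\hat G$ by pinned automorphisms that commute with the $\Gamma$-action (the dual actions commute because $A$ consists of $F$-automorphisms), $1\rtimes a$ also commutes with $1\rtimes w$. Hence $A\subset\tilde S_\varphi$.

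Finally, given $x=g\rtimes a\in\tilde S_\varphi$, the element $g=x(1\rtimes a)^{-1}$ lies in $S_\varphi=Z(\hat G)^\Gamma$. This yields $\tilde S_\varphi=Z(\hat G)^\Gamma\rtimes A$, with the product semidirect because $A$ normalizes $Z(\hat G)^\Gamma$.

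The main obstacle is justifying the centralizer statement $\tx{Cent}(\iota,\hat G)=Z(\hat G)$ cleanly. We would cite Kostant or Springer's results on regular unipotents. We would also check carefully that the $A$-action on $\hat G$ dual to the pinned action on $G$ preserves the dual pinning and commutes with $\Gamma$; this holds by construction of the L-group of $\tilde G$ in \cite{KalLLCD}.
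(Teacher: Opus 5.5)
Your proposal is correct and follows the paper's argument. Write $\tilde s=s\rtimes a$; since $a$ preserves the $\Gamma$- and $A$-stable pinning, it centralizes $\varphi$, so $s\in S_\varphi=Z(\hat G)\cap\hat G^\Gamma$, using that the centralizer of the principal $\mathrm{SL}_2$ is $Z(\hat G)$ and the centralizer of $\varphi|_{W_F}$ is $\hat G^\Gamma$. The paper only asserts these two centralizer facts, while you sketch the first; in that sketch, take $z$ of infinite order (or use the whole torus $\iota(\mathrm{diag}(z,z^{-1}))$) so that the centralizer is exactly $\hat T$.
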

\begin{proof}
    Let $\tilde s \in \tilde S_\varphi$. Write $\tilde s = s \rtimes a$. By construction $a$ centralizes $\varphi$, so $s$ must also centralize $\varphi$. But the centralizer of $\varphi|_{\tx{SL}_2(\C)}$ equals $Z(\hat G)$, while the centralizer of $\varphi|_{W_F}$ is $\hat G^\Gamma$.
\end{proof}

In particular, we have $S_\varphi=Z(\hat G)^\Gamma$. This implies via the refined local Langlands conjecture that the $L$-packet $\Pi_\varphi(G_{\bar z})$ must be a singleton, consisting just of $\tx{St}$. For a general $L$-parameter $\varphi$ the conjectures of \cite{KalLLCD} imply that $\Pi_\varphi(\tilde G_{\bar z})$ must consist of all those irreducible representations of $\tilde G_{\bar z}(F)$ whose restriction to $G_{\bar z}(F)$ intersect $\Pi_\varphi(G_{\bar z})$. The following Lemma shows that 
\[ \tx{Irr}(\tilde G_{\bar z}(F)/G_{\bar z}(F)) \to \Pi_\varphi(\tilde G_{\bar z}),\qquad \tau \mapsto \tilde{\tx{St}}\otimes\tau \]
is a bijection.

\begin{lem} \label{lem:clif}
    Let $\tilde\pi$ be a smooth irreducible representation of $\tilde G_{\bar z}(F)$ whose restriction $\pi$ to $G_{\bar z}(F)$ remains irreducible. Then $\tau \mapsto \tilde\pi\otimes\tau$ is a bijection between the set of irreducible representations of $\tilde G_{\bar z}(F)/G_{\bar z}(F)$ and the set of all smooth irreducible representations of $\tilde G_{\bar z}(F)$ whose restriction to $G_{\bar z}(F)$ contains $\pi$.
\end{lem}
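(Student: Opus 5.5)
This is Clifford theory / Gallagher's theorem for the finite-index normal subgroup $H:=G_{\bar z}(F)$ of $\Gamma:=\tilde G_{\bar z}(F)$, with $Q:=\Gamma/H$ finite. The plan is to identify both sides with data attached to the finite group $Q$ through the functor $\tau \mapsto \tilde\pi\otimes\tau$ and its right adjoint $\sigma\mapsto \tx{Hom}_H(\pi,\sigma)$.

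First, I check that the map is well defined. For $\tau$ an irreducible representation of $Q$, the restriction of $\tilde\pi\otimes\tau$ to $H$ is $\pi^{\oplus\dim\tau}$, so it contains $\pi$. For irreducibility I compute endomorphisms via Schur's lemma for $\pi$:
\[ \tx{Hom}_\Gamma(\tilde\pi\otimes\tau,\tilde\pi\otimes\tau')=\big(\tx{Hom}_H(\pi,\pi)\otimes\tx{Hom}_\C(\tau,\tau')\big)^{Q}=\tx{Hom}_Q(\tau,\tau'). \]
Here the $Q$-action on $\tx{Hom}_H(\pi,\pi)=\C$ is trivial, because conjugation by $\tilde\pi(x)$ fixes scalars. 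Since $\tilde\pi\otimes\tau$ is semisimple (it restricts to a semisimple representation and $H$ has finite index, as in the proof that $D_A^{\tilde G}$ preserves irreducibility), this computation gives irreducibility and shows that $\tau\not\cong\tau'$ yields non-isomorphic images. So the map is injective.

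For surjectivity, let $\sigma$ be an irreducible smooth representation of $\Gamma$ whose restriction contains $\pi$. Consider $W:=\tx{Hom}_H(\pi,\sigma)$, which is nonzero. It carries an action of $\Gamma$ given by $(x\cdot f)=\sigma(x)\circ f\circ\tilde\pi(x)^{-1}$, and $H$ acts trivially on it, so $W$ is a representation of $Q$. It is finite-dimensional because $\sigma|_H$ has finite length; indeed $\sigma$ is admissible, or one can note that $\sigma$ is a quotient of $\tx{Ind}_H^\Gamma\pi$. The evaluation map $\tilde\pi\otimes W\to\sigma$, $v\otimes f\mapsto f(v)$, is $\Gamma$-equivariant and nonzero, so it is surjective by irreducibility of $\sigma$. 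Choose an irreducible $Q$-subrepresentation $\tau\subset W$. Then $\tilde\pi\otimes\tau\to\sigma$ is nonzero. It is a map between irreducibles by the first step, hence an isomorphism. This gives surjectivity.

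The only mildly delicate point is the finite-dimensionality of $W$ and the semisimplicity claims. These follow from the finite index of $H$ in $\Gamma$: $\sigma|_H$ is a finite direct sum of conjugates of irreducibles. I expect no real obstacle here.
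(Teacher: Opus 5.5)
Your proof is correct, but it puts the work in a different place from the paper.

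\textbf{The paper's route.} The paper starts from an irreducible $\tilde\pi'$ whose restriction contains $\pi$. It first uses Clifford theory: $\tilde\pi'|_{G_{\bar z}(F)}$ is semisimple of finite length, and its constituents form a single $\tilde G_{\bar z}(F)/G_{\bar z}(F)$-orbit, which must be $\{\pi\}$ because $\pi$ extends. From this it gets $\tilde\pi'|_{G_{\bar z}(F)}\cong\pi^{\oplus d}$. It then takes the whole multiplicity space $\tau=\tx{Hom}_{G_{\bar z}(F)}(\tilde\pi,\tilde\pi')$. The evaluation map $\tilde\pi\otimes\tau\to\tilde\pi'$ is surjective, and it is injective because on restriction it is a surjection $\pi^{\oplus d}\to\pi^{\oplus d}$. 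Only then does the paper deduce that $\tau$ is irreducible. Injectivity is read off from $\tau\cong\tx{Hom}_{G_{\bar z}(F)}(\tilde\pi,\tilde\pi\otimes\tau)$, and irreducibility of $\tilde\pi\otimes\tau$ is taken as clear.

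\textbf{Your route.} You never need $\sigma|_{H}$ to be $\pi$-isotypic. The Gallagher-type computation $\tx{Hom}(\tilde\pi\otimes\tau,\tilde\pi\otimes\tau')=\tx{Hom}_Q(\tau,\tau')$, together with semisimplicity from finite index, gives irreducibility of $\tilde\pi\otimes\tau$ and injectivity at once. Surjectivity then reduces to the fact that a nonzero map between irreducibles is an isomorphism, applied to an irreducible $Q$-subspace of $W$.

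\textbf{Comparison.} Your approach proves the well-definedness that the paper leaves implicit, and it avoids the orbit lemma. The paper's approach yields more: the isotypic decomposition of $\tilde\pi'|_{G_{\bar z}(F)}$, and the fact that the full space $\tx{Hom}_{G_{\bar z}(F)}(\tilde\pi,\tilde\pi')$ is the irreducible $\tau$.

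Finite-dimensionality of $W$ is not actually used in your argument, since any nonzero representation of the finite group $Q$ contains an irreducible subrepresentation. It is the paper's length comparison $\pi^{\oplus d}\to\pi^{\oplus d}$ that needs $d<\infty$.
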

\begin{proof}
    It is clear that the map $\tau \mapsto \tilde\pi\otimes\tau$ is well-defined and goes between the two sets as claimed. It remains to prove injectivity and surjectivity.

    Let $\tilde\pi'$ be a smooth irreducible representation of $\tilde G_{\bar z}(F)$ whose restriction to $G_{\bar z}(F)$ contains $\pi$. This restriction is semi-simple and of finite length by \cite[Lemma A.3]{KalSLP}. Therefore the argument of 
    \cite[Lemma A.4(1)]{KalSLP} applies and shows that the set of irreducible constituents of $\tilde\pi'|_{G_{\bar z}(F)}$ is a single orbit for the action of $\tilde G_{\bar z}(F)/G_{\bar z}(F)$ on the set of isomorphism classes of smooth irreducible representations of $G_{\bar z}(F)$. The same is true for $\tilde\pi|_{G_{\bar z}(F)}$. By assumption $\pi$ lies in both of these orbits, which means that they are the same orbit. However, the orbit for $\tilde\pi$ is singleton, since $\tilde\pi$ is an extension of $\pi$. The same is therefore true for the orbit for $\tilde\pi'$. 
    
    This means that $\tilde\pi'|_{G_{\bar z}(F)}$ is a finite direct sum of copies of $\pi$. The representation
    \begin{equation} \label{eq:tau}
        \tau := \tx{Hom}_{G_{\bar z}(F)}(\tilde\pi,\tilde\pi')
    \end{equation}
    of $\tilde G_{\bar z}(F)/G_{\bar z}(F)$ is therefore non-zero and finite-dimensional; its dimension is in fact equal to the non-zero multiplicity of $\pi$ in $\tilde\pi'|_{G_{\bar z}(F)}$. Consider the map
    \begin{equation} \label{eq:taumap}
        \tilde\pi\otimes\tau \to \tilde\pi',\qquad v \otimes f \mapsto f(v).
    \end{equation}
    By construction this map is $\tilde G_{\bar z}(F)$-equivariant. It is non-zero, as evidenced by taking $0 \neq f \in \tx{Hom}_{G_{\bar z}(F)}(\tilde\pi,\tilde\pi')$ and $v \in V_\pi$ that is not in the kernel of $f$. Since the target is irreducible, Schur's lemma shows that the map is surjective. Restricting the action of $\tilde G_{\bar z}(F)$ to $G_{\bar z}(F)$ we obtain a $G_{\bar z}(F)$-equivariant map 
    \[ \pi^{\oplus\dim(\tau)} \to \pi^{\oplus\dim(\tau)} \]
    that is still surjective, hence must also be injective. We conclude that \eqref{eq:taumap} is an isomorphism. Using again the irreducibility of $\tilde\pi'$ we now infer the irreducibility of $\tau$.

    We have thus seen that any irreducible $\tilde\pi'$ whose restriction to $G_{\bar z}(F)$ contains $\pi$ is of the form $\tilde\pi\otimes\tau$ for an irreducible $\tau$. This establishes that the map $\tau \mapsto \tilde\pi\otimes\tau$ is surjective. Its injectivity follows from \eqref{eq:tau}.
\end{proof}

For the internal structure of this packet we fix $[z] \in H^1(\mc{E}_F^\tx{rig},Z(G)^A \to G)$ that is a lift of $[\bar z] \in H^1(\Gamma,G/Z(G)^A)$. Let $A^{[z]}$ be the stabilizer of $[z]$ under the action of $A$. The projection $\tilde G \to A$ induces an isomorphism $\tilde G_{\bar z}(F)/G_{\bar z}(F) \to A^{[z]}$. In other words, we obtain the bijection
\[ \tx{Irr}(A^{[z]}) \to \Pi_\varphi(\tilde G_{\bar z}),\qquad \tau \mapsto \tilde{\tx{St}} \otimes \tau. \]
On the other hand, the conjectures of \cite{KalLLCD} stipulate a bijection
\[ \tx{Irr}(\pi_0(\tilde S_\varphi^{+,[z]}),[z]) \to \Pi_\varphi(\tilde G_{\bar z}), \]
where $\tilde S_\varphi^{[z]}$ is the preimage of $A^{[z]}$ in $\tilde S_\varphi$, $\tilde S_\varphi^{+,[z]}$ is the preimage of $\tilde S_\varphi^{[z]}$ under the pro-cover $\hat{\bar G} \to \hat G$ given by the projective limit of the dual groups of $G/Z$ for all finite $Z \subset Z(G)^A$, and we are using $[z]$ to also denote the character of $\pi_0(Z(\hat{\bar G})^+)$ corresponding to the cohomology class $[z]$ under the Tate--Nakayama isomorphism. Now Lemma \ref{lem:sphistein} implies that $\tilde S_\varphi^{+,[z]} = \pi_0(Z(\hat{\bar G})^+) \rtimes A^{[z]}$, and $[z]$ has a natural extension to this group, trivial on $A^{[z]}$. Let us call this extension $[\tilde z]$. The argument of Lemma \ref{lem:clif} applies in this situation and gives the bijection
\[ \tx{Irr}(A^{[z]}) \to \tx{Irr}(\pi_0(\tilde S_\varphi^{+,[z]}),[z]),\qquad \tau \mapsto [\tilde z]\otimes \tau. \]
Combining the two bijections we obtain the bijection
\begin{equation} \label{eq:stbijbad}
\tx{Irr}(\pi_0(\tilde S_\varphi^{+,[z]}),[z]) \to \Pi_\varphi(\tilde G_{\bar z}),\qquad [\tilde z]\otimes \tau \mapsto \tilde{\tx{St}}\otimes\tau.
\end{equation}
However, it turns out that this bijection is \emph{not} the correct bijection for the conjecture of \cite{KalLLCD}, because it does not satisfy the character identities, as we will now check. Take $a \in A^{[z]} \subset \tilde S_\varphi^{+,[z]}$ and let $\hat H=\hat G^{a,\circ}$. Then $\hat H$ is stable under $\Gamma$. Write $\mc{H}=\hat H \rtimes \Gamma$. Let $H$ be the quasi-split connected reductive $F$-group with dual group $\hat H$ and $F$-structure given by $\mc{H}$. The action of $\Gamma$ on $\hat H$ preserves the pinning of $\hat H$ induced by the pinning of $\hat G$, hence $\mc{H}={^LH}$. The parameter $\varphi$ thus factors through $^LH$ and induces the Steinberg parameter for $H$.

We consider the character identity of \cite[Conjecture 7.2.1]{KalLLCD} and restrict to regular semi-simple elliptic elements of $\tilde G_{\bar z}(F)$. Since $\tilde\rho=[\tilde z]\otimes\tau$ corresponds to $\tilde\pi=\tilde{\tx{St}}\otimes\tau$, the ``canonical extension'' discussed in that conjecture is simply $\tilde{\tx{St}}_{\bar z} \boxtimes [\tilde z]^{-1}$ and the right-hand side of the character identity evaluated at $(\tilde g,\tilde s) \in \tilde G_{\bar z}(F) \times_{\<a\>} \pi_0(\tilde S_\varphi^{+,[z]})$ becomes $e(G_{\bar z})\cdot \Theta_{\tilde{\tx{St}}}(\tilde g) \cdot [\tilde z](a)$. Recall that by construction $[\tilde z](a)=1$. On the other hand, the element $\tilde g$ transfers to an element $h \in H(F)$ that is regular elliptic, and the left-hand side of the identity becomes $\Theta_{\tx{St}}(h)$. Using Corollary \ref{cor:charstein} we see that 
\[ \Theta_{\tilde{\tx{St}}}(\tilde g) = (-1)^{\dim(A_{\bar z}^{\tilde g}/A_G^{\tilde g})},\qquad \Theta_{\tx{St}}(h) = (-1)^{\dim(A_{0,H}/A_H)}, \]
where $A_{\bar z}$ is the maximal split torus in the minimal Levi subgroup of $G_{\bar z}$, while $A_{0,H}$ is the same for the group $H$. Since $H$ is an elliptic twisted endoscopic group of $G$ we have $A_G^{\tilde g}=A_G^a \cong A_H$ and $A_0^a=A_{0,H}$, where $A_0$ is the maximal split torus in the minimal Levi subgroup of $G$. This shows that the two sides of the desired character identity differ by the sign
\[ e(G_{\bar z})\cdot \Theta_{\tilde{\tx{St}}}(\tilde g) / \Theta_{\tx{St}}(h) = (-1)^{\dim(A_0/A_0^a)-\dim(A_{\bar z}/A_{\bar z}^{\tilde g})}. \]
Let $\epsilon_{\bar z}$ and $\epsilon_0$ denote the characters of \eqref{eq:eps} for the groups $\tilde G_{\bar z}$ and $\tilde G$, respectively. Corollary \ref{cor:eps} shows that the above discrepancy equals $\epsilon_{\bar z}(a) \cdot \epsilon_0(a)$.

Thus, we conclude that in order for \eqref{eq:stbijbad} to be compatible with the expected character identities we must modify it to
\begin{equation} \label{eq:stbijgood}
\tx{Irr}(\pi_0(\tilde S_\varphi^{+,[z]}),[z]) \to \Pi_\varphi(\tilde G_{\bar z}),\qquad [\tilde z]\otimes \tau \mapsto \tilde{\tx{St}}\otimes\tau \otimes \epsilon_{\bar z} \otimes \epsilon_0.
\end{equation}
One can think of 
\[ e([G \rtimes a]_{\bar z}):=\epsilon_{\bar z}(a) \cdot \epsilon_0(a) \cdot e(G_{\bar z})\]
as an analog of the Kottwitz sign $e(G_{\bar z})$ for the coset $[G \rtimes a]_{\bar z}$. It splits into a product of the elementary sign $\epsilon_{\bar z}(a) \cdot \epsilon_0(a)$ that is multiplicative in $a$ and the cohomological sign $e(G_{\bar z})$ coming from the identity component.

\bibliographystyle{amsalpha}
\bibliography{bibliography.bib}

\end{document}